\numberwithin{equation}{section}%
\newcommand{\la}{\lambda}
\newcommand{\La}{\Lambda}
\newcommand{\te}{\theta}
\newcommand{\al}{\alpha}
\newcommand{\si}{\sigma}
\newcommand{\be}{\beta}
\newcommand{\Z}{\mathbb{Z}}
\newcommand{\R}{\mathbb{R}}
\newcommand{\C}{\mathbb{C}}
\newcommand{\Sb}{\mathbb{S}}
\newcommand{\Yb}{\mathbb{Y}}
\newcommand{\Gb}{\mathbb{G}}
\newcommand{\Tb}{\mathbb{T}}
\newcommand{\PT}{\mathbb{Pt}}
\newcommand{\km}{\boldsymbol\kappa}
\newcommand{\Pl}{\mathsf{Pl}}
\renewcommand{\i}{\mathsf{i}}
\renewcommand{\j}{\mathsf{j}}
\newcommand{\da}{\downarrow}
\newcommand{\ua}{\uparrow}
\newcommand{\slf}{\mathfrak{sl}}
\newcommand{\suf}{\mathfrak{su}}
\newcommand{\Sym}{\mathfrak{S}}
\newcommand{\ellf}{\ell_{\mathrm{fin}}}
\newcommand{\un}[1]{\underline{{#1}}}
\newcommand{\uno}[1]{\underline{{\rule[-1pt]{0pt}{0pt}#1}}}
\newcommand{\unt}[1]{\underline{{\rule[-2pt]{0pt}{0pt}#1}}}
\newcommand{\rep}{\mathcal{R}}
\newcommand{\Uf}{\mathsf{U}}
\newcommand{\Df}{\mathsf{D}}
\newcommand{\Hf}{\mathsf{H}}
\newcommand{\ufunc}{\mathsf{q}}
\newcommand{\zz}{\mathsf{t}}
\newcommand{\n}{{\boldsymbol n}}
\newcommand{\labf}{{\boldsymbol\la}}
\newcommand{\Mf}{\mathfrak{M}}
\newcommand{\meix}{\mathrm{M}}
\newcommand{\Cf}{\mathfrak{C}}
\newcommand{\Lf}{\mathfrak{L}}
\newcommand{\Ff}{\mathfrak{F}}
\newcommand{\Pc}{\mathcal{P}}
\renewcommand{\Mc}{\mathcal{M}}
\newcommand{\Tc}{\mathcal{T}}
\newcommand{\A}{\mathbb{A}}
\newcommand{\qf}[1]{\left\langle#1\right\rangle}
\newcommand{\Ga}{\Gamma}
\newcommand{\ga}{\gamma}
\newcommand{\Prob}{\mathop{\mathrm{Prob}}}
\newcommand{\po}{\mathfrak{p}}
\newcommand{\istry}{\mathcal{I}}
\newcommand{\fun}{\mathop{\mathrm{Fun}}}
\newcommand{\G}{\tilde{\mathsf{G}}}
\newtheorem{proposition}{Proposition}[section]
\newtheorem{lemma}[proposition]{Lemma}
\newtheorem{corollary}[proposition]{Corollary}
\newtheorem{theorem}[proposition]{Theorem}
\theoremstyle{definition}
\newtheorem{definition}[proposition]{Definition}
\newtheorem{remark}[proposition]{Remark}
\newtheorem{remarks}[proposition]{Remarks}
\begin{document}
\title{$\slf(2)$ Operators and Markov Processes on Branching Graphs}
\author{Leonid Petrov}
\thanks{The author was partially supported by the RFBR-CNRS grants 10-01-93114 and 11-01-93105, and by the Dynasty foundation fellowship for young scientists}
\address{Department of Mathematics, Northeastern University, 360 Huntington ave., Boston, MA 02115, USA}
\address{Dobrushin Mathematics Laboratory, Kharkevich Institute for Information Transmission Problems, Moscow, Russia}
\email{lenia.petrov@gmail.com}

\begin{abstract}
  We present a unified approach to various examples of Markov dynamics on partitions studied by Borodin, Olshanski, Fulman, and the author. Our technique generalizes the Kerov's operators first appeared in \cite{Okounkov2001a}, and also stems from the study of duality of graded graphs in \cite{fomin1994duality}. 
  
  Our main object is a countable branching graph carrying an $\slf(2,\C)$-module of a special kind. Using this structure, we introduce distinguished probability measures on the floors of the graph, and define two related types of Markov dynamics associated with these measures. We study spectral properties of the dynamics, and our main result is the explicit description of eigenfunctions of the Markov generator of one of the processes.

  For the Young graph our approach reconstructs the $z$-measures on partitions and the associated dynamics studied by Borodin and Olshanski \cite{Borodin2006}, \cite{Borodin2007}. The generator of the dynamics of \cite{Borodin2006} is diagonal in the basis of the Meixner symmetric functions introduced recently in \cite{Olshanski2010LaguerreMeixner}, \cite{Olshanski2011Meixner}. We give new proofs to some of the results of these two papers. Other graphs to which our technique is applicable include the Pascal triangle, the Kingman graph (with the two-parameter Poisson-Dirichlet measures), the Schur graph and the general Young graph with Jack edge multiplicities.
\end{abstract}

\maketitle
\setcounter{tocdepth}{1}
\tableofcontents
\setcounter{tocdepth}{2}

\section{Introduction} 
\label{sec:introduction}

The study of \emph{branching graphs} is interesting from the point of view of algebraic combinatorics, representation theory and probability. The basic example of a branching graph is the celebrated Young graph $\Yb=\bigsqcup_{n=0}^\infty\Yb_n$ which is the lattice of integer partitions\footnote{We always identify partitions with corresponding Young diagrams as in \cite[Ch. I, \S1]{Macdonald1995}.} ordered by inclusion (here $\Yb_n$ is the set of all partitions of $n$). The general definition of a branching graph is given below in \S\ref{sec:ideal_branching_graphs},
and in the Introduction for simplicity we only speak about the Young graph. Let us briefly indicate main steps in understanding the Young graph, and recall constructions of distinguished measures on it and related stochastic dynamics. This will necessary motivations. We describe our results in \S \ref{sub:results}.

\subsection{Young graph}

The Young graph originated in the study of representations of finite symmetric groups $\Sym(n)$. The irreducible representations of $\Sym(n)$ are parametrized by $\Yb_n$, and the edges in the Young graph come from the classical Young branching rule: two Young diagrams are connected by an edge iff one is obtained from the other by removing one box.

\subsubsection{Boundary of the Young graph}
\label{ssub:Boundary of the Young graph}

Thoma in \cite{Thoma1964} described normalized indecomposable characters of the infinite symmetric group $\Sym(\infty)$. Earlier this result was independently obtained in a different form by Aissen, Edrei, Schoenberg, and Whitney \cite{AESW51} (see also \cite{Edrei1952}). It was shown by Vershik and Kerov  \cite{VK1981Characters}, \cite{VK81AsymptoticTheory} that Edrei--Thoma theorem is equivalent to describing all coherent systems of probability measures $\{M_n\}$ on floors of the Young graph (see \S\ref{sub:coherent_systems} for definition). The set of all such systems is convex, and its extreme points (corresponding to indecomposable characters) are identified with points of the infinite-dimensional \emph{Thoma simplex}
\begin{equation}\label{Thoma_simplex}
  \Omega:=
  \Big\{(\al;\be)\in\R^{\infty+\infty}\colon \al_1\ge\al_2\ge
  \dots\ge0,\; 
  \be_1\ge\be_2\ge
  \dots\ge0,\ 
  \sum\nolimits_{i=1}^\infty
  \al_i+\be_i\le1\Big\}.
\end{equation}
Any coherent system of measures on floors of $\Yb$ is uniquely expressed as a convex combination of the extreme ones, thus yielding a Borel probability measure on $\Omega$. Vice versa, any Borel probability measure on $\Omega$ leads to a coherent system $\{M_n\}$ on $\Yb$. It is said that $\Omega=\partial\Yb$ is the \emph{boundary} of the Young graph. For more details about the notion of the boundary of a branching graph see \cite{Kerov1998}.

\subsubsection{Distinguished coherent systems}
\label{ssub:Distinguished harmonic functions}
The next step in the representation theory of the infinite symmetric group was to find suitable analogues of the (two-sided) regular representation\footnote{It turns out that the two-sided regular representation of $\Sym(\infty)$ itself is irreducible.} and decompose them into irreducibles (the problem of harmonic analysis on $\Sym(\infty)$). This was done by Kerov, Olshanski and Vershik \cite{Kerov1993}, \cite{Kerov2004}. They introduced generalized irreducible representations of $\Sym(\infty)$ which depend on a complex parameter $z$. These representations give rise to a one-parameter family of probability measures on $\Omega$, and thus (via the boundary correspondence) to a distinguished family of coherent systems on $\Yb$. The set of parameters for these measures can be extended, and in this way one gets the remarkable $z$-measures on partitions which depend on two complex parameters $z,z'$ subject to certain constraints (e.g., see \cite[\S1]{Borodin2000a}). The generalized regular representations correspond to the case $z'=\bar z$. The $z$-measures on partitions are denoted by $M_n^{zz'}$.

\subsubsection{Study of $z$-measures on partitions}
\label{ssub:Study of $z$-measures}
The $z$-measures were studied extensively by Borodin, Olshanski, Okounkov, and other authors. It was shown that these measures give rise to determinantal random point processes on the discrete lattice $\Z':=\Z+\frac12$ and on the punctured real line $\R\setminus\{0\}$ \cite{Borodin2000a}, and their correlation kernels were explicitly computed. These point processes can also be viewed as discrete analogues of the $\beta=2$ random matrix ensembles \cite{Borodin1999RSK}. See \cite{Borodin1998}, \cite{borodin2000harmonic}, \cite{Okounkov2001a}, \cite{Borodin2005}, \cite{borodin2006meixner} for more details.

\subsubsection{Stochastic dynamics associated with the $z$-measures}
\label{ssub:Stochastic dynamics associated with the z-measures}

In the present paper we consider two types of Markov dynamics which generalize known models on the Young graph we are about to describe.

The first model is a sequence of Markov chains on the floors $\Yb_n$ of $\Yb$ (i.e., the $n$th chain lives on Young diagrams with $n$ boxes) which preserve the $z$-measures $M_n^{zz'}$. One step of the $n$ up/down chain consists of relocating one box in the Young diagram from one place to another. For the Plancherel measures on partitions\footnote{Which arise in the $z,z'\to\infty$ limit from the $z$-measures.} an almost indistinguishable model of down/up Markov chains first appeared in \cite{Fulman2005}. For the $z$-measures the up/down Markov chains were the main object of \cite{Borodin2007}, where their asymptotic behavior was analyzed. This led to a family of (infinite-dimensional) diffusion processes on the simplex $\Omega$. In \cite{Fulman2007} spectral properties of the down/up Markov chains for the $z$-measures (along with up/down chains for other measures on other branching graphs) were studied.

The second model which we generalize in the present paper is a continuous-time Markov jump dynamics on the whole Young graph $\Yb$. One jump of this process consists of adding one box to the Young diagram or deleting one box from it. This process was introduced in \cite{Borodin2006}. It preserves a certain mixture $M^{zz'}$ of the $z$-measures over the index $n$, see \cite[\S1]{Borodin2000a}, and \S\ref{sub:mixing_of_measures}. Denote this process (starting from the invariant distribution) by $\labf^{zz'}(t)$. It was shown in \cite{Borodin2006} that this dynamics is (space-time) determinantal, and the dynamical correlation kernel was written out explicitly.

\subsubsection{Laguerre and Meixner symmetric functions}
\label{ssub:Laguerre and Meixner symmetric functions}

Olshanski \cite{Olshanski2010LaguerreMeixner}, \cite{Olshanski2011Meixner} constructed an orthogonal basis in the Hilbert space $\ell^2(\Yb,M^{zz'})$ of square integrable functions with respect to the measure $M^{zz'}$ consisting of eigenfunctions of the Markov generator of the process $\labf^{zz'}(t)$. This basis is indexed by all partitions. These functions are related to the classical Meixner orthogonal polynomials (e.g., see \cite[\S1.9]{Koekoek1996} for definition), and belong to the algebra of symmetric functions \cite[Ch. I]{Macdonald1995} suitably realized as a subspace of $\ell^2(\Yb,M^{zz'})$. They are called the \emph{Meixner symmetric functions}.

In a certain limit transition the dynamics $\labf^{zz'}(t)$ becomes a diffusion process living on a cone over the Thoma simplex $\Omega$. The basis of eigenfunctions of the generator of the limiting dynamics is formed by the \emph{Laguerre symmetric functions} \cite{Olshanski2010LaguerreMeixner}, \cite{Olshanski2011Meixner}.

\subsection{Methods for the Young graph}

Let us briefly indicate existing approaches to the results cited in \S\ref{ssub:Distinguished harmonic functions}, \S\ref{ssub:Stochastic dynamics associated with the z-measures}, and \S\ref{ssub:Laguerre and Meixner symmetric functions}. Then we explain the technique we use is this paper which helps to understand these results from a different point of view, and generalize them to other branching graphs. 

The $z$-measures first appeared from a representation-theoretic construction of generalized regular representations of $\Sym(\infty)$. Then a couple of algebraic/com\-bi\-na\-torial characterizations of these measures were suggested in \cite{rozhkovskaya1997multiplicative} and \cite{Borodin2000}. In the latter paper another graphs were also studied. For one of the graphs considered in \cite{Borodin2000} (namely, for the Schur graph, see \S \ref{sub:schur_graph}) a construction similar to \cite{rozhkovskaya1997multiplicative} allowed to define an analogue of the $z$-measures, see \cite{Borodin1997}.

The first approaches to (correlation functions of) the $z$-measures (in, e.g., \cite{Borodin1998}, \cite{Borodin2000a}) involved rather direct methods which included inverting certain infinite-dimensional matrices (the so-called ``$L$-$K$ correspondence'', see \cite{Borodin2000a} or \cite[Appendix~2]{Borodin2000b}).

The study of the $z$-measures on partitions and the dynamics $\labf^{zz'}(t)$ on $\Yb$ in \cite{borodin2006meixner}, \cite{Borodin2006} was done through analytic continuation. It appears that a certain degeneration of the $z$-measures leads to known and tractable $N$-particle point processes (namely, the Meixner orthogonal polynomial ensembles) and associated dynamics of noncolliding particles \cite{Konig2005}. Then an analytic continuation in the dimension $N$ is performed to obtain properties of the $z$-measures themselves. This analytic continuation approach is also employed in Olshanski's construction of the orthogonal basis of eigenfunctions of the generator of $\labf^{zz'}(t)$ in \cite{Olshanski2010LaguerreMeixner}, \cite{Olshanski2011Meixner}.

Asymptotic behavior of the up/down Markov chains for the $z$-measures in \cite{Borodin2007} was analyzed with the use of the algebra of symmetric functions suitably realized on the Young graph and on its boundary $\Omega$. A crucial role was played by an explicit formula for the action of the Markov transition operator on the Frobenius-Schur symmetric functions on Young diagrams \cite[Lemma 5.2]{Borodin2007}.

\subsection{$\slf(2)$ approach} 
Okounkov \cite{Okounkov2001a} presented another approach to the $z$-measures. He used the \emph{Kerov's operators} which span a certain $\slf(2,\C)$-module. Consider the Hilbert space $\ell^2(\Yb)$ of square integrable functions on $\Yb$ with the inner product $(f,g):=\sum_{\la\in\Yb}f(\la)\overline {g(\la)}$. A standard basis for this space is $\{\un\la\}_{\la\in\Yb}$, where $\un\la(\mu):=\delta_{\la,\mu}$. The Kerov's operators look as \cite[\S2.2]{Okounkov2001a}\footnote{These operators act in $\ell^2(\Yb)$. Note that they are unbounded.}
\begin{equation}\label{UDYoung_Ok}
  U\un\la=\sum_{\nu\colon\nu=\la+\square}
  (z+\j-\i)\un\nu,\qquad
  D\un\la=\sum_{\mu\colon\mu=\la-\square}
  (z'+\j-\i)\un\mu.
\end{equation}
The sums are over all Young diagrams which are obtained from $\la$ by adding or deleting a box $\square=(\i,\j)$, respectively. Here $\i$ and $\j$ are the row and column coordinates of $\square$. The crucial fact first observed by Kerov is that the commutator $H:=[D,U]$ acts diagonally: $H\un\la=(2|\la|+zz')\un\la$, where $|\la|$ is the number of boxes in $\la$. Thus, the operators $(U,D,H)$ satisfy the $\slf(2,\C)$ commutation relations:
\begin{equation}\label{sl2_relations}
  \left[ H,U \right]=2U,\qquad 
  \left[ H,D \right]=-2D,\qquad
  \left[ D,U \right]=H.
\end{equation}
The $z$-measures then have the following form \cite[\S2.2]{Okounkov2001a}:
\begin{equation}\label{z_meas_UD}
  M_n^{zz'}(\la)=\frac1{Z_n(z,z')}
  (U^n\uno\varnothing,\uno\la)
  (D^n\uno\la,\uno\varnothing),
\end{equation}
where $Z_n$ are normalizing constants and $(\cdot,\cdot)$ is the inner product in $\ell^2(\Yb)$.

The Kerov's operators are a powerful tool in dealing with the Young graph and the $z$-measures. In \cite{Okounkov2001a} they were used (together with a remarkable fermionic Fock space structure of partitions) to compute the correlation kernel of the $z$-measures. Later Olshanski \cite{Olshanski-fockone} used these operators and the Fock space formalism to obtain the dynamical determinantal kernel of the process $\labf^{zz'}(t)$ (this approach was suggested in \cite{Borodin2006}). A similar approach in the case of the Schur graph (together with its own fermionic Fock space picture) was carried out in \cite{Petrov2010}, \cite{Petrov2010Pfaffian}. In \cite{Borodin2007}, \cite{Petrov2007}, \cite{petrov2009eng} these or similar operators were used to explicitly compute the generators of the up/down Markov chains on various branching graphs, and obtain infinite-dimensional diffusions as their limits.
\begin{remark}\label{rmk:no_det}
  It must be noted that the fermionic Fock space methods for the Young and Schur graphs make much use of their fine structure. In the general picture described in the present paper there is no hope of obtaining determinantal or Pfaffian structure as in \cite{Okounkov2001a}, \cite{Olshanski-fockone}, \cite{Petrov2010}, \cite{Petrov2010Pfaffian}.
\end{remark}

\subsection{Results}
\label{sub:results} 
The notion of the Kerov's operators on the Young graph is central for the present paper, and we aim to generalize it to other branching graphs. We start with a branching graph $\Gb=\bigsqcup_{n=0}^\infty\Gb_n$ (each $\Gb_n$ is finite, $\Gb_0:=\{\varnothing\}$) whose set of vertices is a lattice of order ideals in some poset, and a triplet of operators $(U,D,H)$ in $\ell^2(\Gb)$ satisfying the $\slf(2,\C)$ commutation relations (\ref{sl2_relations}). For them we retain the name ``Kerov's operators''. See \S\ref{sec:ideal_branching_graphs} and \S\ref{sec:kerov_s_operators} for a detailed description of our model. We obtain the following results:

$\bullet$ Any triplet $(U,D,H)$ of Kerov's operators gives rise to a coherent system (see \S\ref{sub:coherent_systems} for the definition) of (possibly complex-valued) probability measures $\{M_n\}$ on the floors of the graph via (\ref{z_meas_UD}). This system is multiplicative in the sense of \cite{Borodin1997}, \cite{rozhkovskaya1997multiplicative}. Let us assume that each $M_n$ is a positive probability measure.

$\bullet$ We find all triplets of Kerov's operators for various important examples of branching graphs. For the Young graph we reconstruct operators (\ref{UDYoung_Ok}) corresponding to the $z$-measures. We also get all the degenerations of the $z$-measures in a unified way. Other examples include the Pascal triangle (with measures related to Bernoulli trials with Beta prior), the Kingman graph (with the Ewens-Pitman's random partitions corresponding to the two-parameter Poisson-Dirichlet distributions \cite{Pitman1997}), the Schur graph (with the measures introduced in \cite{Borodin1997}), and the general Young graph with Jack edge multiplicities (with the Jack deformation of the $z$-measures \cite{Borodin2005b}). We also briefly discuss the $\sl(2,\C)$ structure of Plancherel measures on rooted unlabeled trees recently studied by Fulman \cite{Fulman2009Trees}. Thus, we provide a unified approach to several known models. 

$\bullet$ The transition operators of up/down Markov chains on the floors $\Gb_n$ of the graph are expressed in terms of the Kerov's operators. Their eigenstructure is explicitly described. These results are parallel to the work of Fulman \cite{Fulman2007}, but in addition we write down an explicit expression for the action of the transition operators of up/down chains. For the Young graph this is precisely the formula of \cite[Lemma 5.2]{Borodin2007} which helps to study the asymptotic behavior of up/down Markov chains.

$\bullet$ Let $M_{\xi}$ be the mixture of the measures $\{M_n\}$ by means of the negative binomial distribution $\{(1-\xi)^{c}\frac{(c)_n}{n!}\xi^n\}_{n=0,1,\ldots}$ over the index $n$ (\S\ref{sub:mixing_of_measures}). We define Markov jump dynamics on the whole graph $\Gb$ preserving this measure $M_{\xi}$. The generator ${\mathsf{A}}_{\xi}$ of this dynamics is expressed in terms of the Kerov's operators.

$\bullet$ Our main result is the explicit construction of an orthogonal basis $\{\Mf_\la\}_{\la\in\Gb}$ in $\ell^2(\Gb,M_\xi)$ which provides a diagonalization of ${\mathsf{A}}_\xi$. For the Young graph this basis consists of the Meixner symmetric functions introduced in  \cite{Olshanski2010LaguerreMeixner}, \cite{Olshanski2011Meixner}. We give new proofs to some of the results of these papers. In particular, we manage to establish orthogonality of $\Mf_\la$ using a construction related to the Kerov's operators.

$\bullet$ The functions $\Mf_\la$ on the graph $\Gb$ arise as matrix elements of the $\sl(2,\C)$-module spanned by the Kerov's operators. This description is parallel to (and, in, fact, a generalization of) the known fact that the Meixner polynomials can be viewed as matrix elements of irreducible lowest weight $\slf(2,\C)$-modules \cite{Koornwinder1982Krawtchouk}, \cite{Vilenkin-Klimyk-DAN_UKR_1988}, \cite{Vilenkin-Klimyk-ITOGI1995-en}. 

$\bullet$ In the case of the Young graph with Jack edge multiplicities one gets the functions $\Mf_\la^{zz'\te\xi}$ which are discrete analogues of the Laguerre symmetric functions with the Jack  parameter constructed in \cite{Desrosiers2011Laguerre}. These functions $\Mf_\la^{zz'\te\xi}$ live on Young diagrams and cannot be viewed as symmetric functions. In a scaling limit they converge to the functions of \cite{Desrosiers2011Laguerre}.

In should be noted that on the abstract level of the present paper some of the results cited in \S\ref{ssub:Distinguished harmonic functions}, \S\ref{ssub:Stochastic dynamics associated with the z-measures}, and \S\ref{ssub:Laguerre and Meixner symmetric functions} are inaccessible, and these problems have to be figured out for each graph separately (see also Remark \ref{rmk:no_det}).

\subsection{Similar operators}
\label{sub:similar_operators}
Let us recall other work involving operators on posets and on branching or, more generally, graded graphs which are similar to Kerov's operators. First example of operators on posets satisfying $\slf(2,\C)$ commutation relations appeared in \cite{proctor1982solution} (see also the end of \cite{stanley1990variations}). In fact, these operators are precisely the ones corresponding to double degenerate $z$-measures which live on Young diagrams inside a rectangular shape (see \cite[\S4]{borodin2006meixner}). Stanley \cite{stanley1988differential} and Fomin \cite{fomin1979thesis}, \cite{fomin1994duality} independently considered differential posets for which the up and down operators $U^{\circ}$ and $D^{\circ}$ satisfy $[D^{\circ},U^{\circ}]=\mathbf{1}$ (throughout the paper $\mathbf{1}$ denotes the identity operator). For the Young graph these operators look as
\begin{equation}\label{U0D0Young}
  U^\circ\un\la=\sum_{\nu\colon\nu=\la+\square}\un\nu,\qquad
  D^\circ\un\la=\sum_{\mu\colon\mu=\la-\square}\un\mu.
\end{equation}
Fomin \cite{fomin1994duality} (see also \cite{stanley1990variations}) generalized these operators to other graded graphs (in fact, the paper \cite{fomin1994duality} deals with pairs of dual graded graphs, see \S\ref{sub:dual_multiplicity_functions}) by inserting graph's edge multiplicities as coefficients in (\ref{U0D0Young}). In \cite{fomin1994duality} other forms of commutation relations were also considered. The commutation relations in Stanley's and Fomin's papers were used to obtain enumeration results for various posets in a unified manner.

In \cite{Fulman2007} the commutation relations similar to the ones in \cite{fomin1994duality} were studied, but the up and down operators were probabilistic in nature: certain weights were inserted as coefficients in (\ref{U0D0Young}). Fulman used these operators to study spectral properties of down/up Markov chains on various branching graphs including Young, Schur, and Kingman graphs. The operators in \cite{Fulman2007} are very similar to the ones we consider in the present paper, and our analysis of the eigenstructure of up/down Markov chains could also be deduced from Fulman's results. However, as in \cite{Okounkov2001a}, we also make much use of a representation-theoretic intuition which comes from the $\slf(2,\C)$ structure.  

\subsection{Organization of paper}

In \S\ref{sec:ideal_branching_graphs} we describe our main abstract object in the present paper --- a branching graph whose set of vertices can be identified with the distributive lattice of finite ideals in a poset. 
In \S\ref{sec:kerov_s_operators} we discuss Kerov's operators on such branching graphs. It turns out that their existence implies that the edge multiplicities of the graph must be self-dual in a sense similar to \cite{fomin1994duality}. 
Using Kerov's operators, in \S\ref{sec:coherent_systems_of_measures} we define distinguished coherent systems of probability measures on floors of the graph as in (\ref{z_meas_UD}). 
In \S\ref{sec:relative_dimension_functions} we define relative dimension functions on the branching graph on which the Kerov's operators act in a particularly nice way. This is a necessary step towards the next two sections in which the main results of the paper are given. 
In \S\ref{sec:up_down_markov_chains} we recall the definition of the up/down Markov chains from \cite{Borodin2007} and use the Kerov's operators and the relative dimension functions to analyze the eigenstructure of these chains. 
In \S\ref{sec:markov_jump_dynamics} we turn to Markov jump dynamics on the whole graph and explicitly construct the orthogonal basis of eigenfunctions of the generator of the jump dynamics. 
In \S \ref{sec:heisenberg_operators} we discuss the degeneration of our picture when the $\slf(2,\C)$ structure is replaced by the (3-dimensional) Heisenberg algebra structure. This allows to define and diagonalize Markov processes on self-dual branching graphs (in the sense of \cite{fomin1994duality}).
In \S\ref{sec:examples} we briefly discuss how our general technique is applied to various branching graphs including the general Young graph with Jack edge multiplicities (whose particular cases are the Young and the Kingman graphs), the Schur graph, and several other models.

\smallskip
I am grateful to Grigori Olshanski for fruitful discussions and to Sergey Fomin for useful remarks.


\section{Lattices of ideals as branching graphs} 
\label{sec:ideal_branching_graphs}

\subsection{Branching graphs} 
\label{sub:branching_graphs}

By a \emph{branching graph} we mean an abstract graph $\Gb$ under the following assumptions and conventions (e.g., see \cite{Borodin2000}):

$\bullet$ We identify the graph $\Gb$ with its set of vertices.

$\bullet$ The graph $\Gb$ is $\Z_{\ge0}$-graded, that is, $\Gb=\bigsqcup_{n=0}^{\infty}\Gb_n$, and the endpoints of any edge lie on consecutive levels. Typically, graphs will be infinite, but some examples involve finite graphs for which $\Gb_n$'s are empty sets for large enough $n$.
 
$\bullet$ All levels $\Gb_n$ are finite,\footnote{This excludes from our considerations the remarkable Gelfand-Tsetlin graph which describes the branching of irreducible representations of unitary groups, see Voiculescu \cite{Voiculescu1976}, Vershik and Kerov \cite{VK82CharactersU}, Boyer \cite{Boyer1983}, Okounkov and Olshanski \cite{OkOl1998}.} and the lowest level $\Gb_0$ consists of a single vertex denoted by $\varnothing$.
  
$\bullet$ We denote by $|\la|$ the level of the vertex $\la$, i.e., $\la\in\Gb_{|\la|}$.

$\bullet$ We assume that the edges of $\Gb$ are oriented from $\Gb_n$ to $\Gb_{n+1}$ for each $n$. If $(\mu,\la)$ is an edge and $|\la|=|\mu|+1$, then we write $\mu\nearrow\la$ or, equivalently, $\la\searrow\mu$.

$\bullet$ We assume that for any vertex $\mu$ there exists at least one vertex $\la\searrow\mu$, and for any vertex $\varkappa\ne\varnothing$ there exists at least one vertex $\nu\nearrow\varkappa$. This implies that $\Gb$ is connected.

$\bullet$ We are given an \emph{edge multiplicity function} which assigns to every edge $\mu\nearrow\la$  a {strictly positive} number $\km(\mu,\la)$. It should be emphasized that these numbers are not necessarily integers.
  
$\bullet$ Let $\mu=\la^{(1)}\nearrow\dots\nearrow\la^{(m)}=\la$ be an oriented path in $\Gb$. Its \emph{weight} is, by definition, equal to the product $\km(\la^{(1)},\la^{(2)})\km(\la^{(2)},\la^{(3)}) \dots\km(\la^{(m-1)},\la^{(m)})$ of edge multiplicities along the path. By $\dim(\mu,\la)$ denote the sum of weights of all distinct oriented paths in $\Gb$ from $\mu$ to $\la$ (all these paths have length $|\la|-|\mu|$). If $\dim(\mu,\la)\ne0$, we write $\mu\subseteq\la$. Also set $\dim\la:=\dim(\varnothing,\la)$; this number is always nonzero. The quantity $\dim\la$ can be called the \emph{(combinatorial) dimension} of the vertex $\la\in\Gb$. (For the Young graph $\dim\la$ actually is the dimension of the irreducible representation of the symmetric group $\Sym(|\la|)$ corresponding to the Young diagram $\la$.) The number $\dim(\mu,\la)$ is then the \emph{relative (combinatorial) dimension}.


\subsection{Young graph and Pascal triangle} 
\label{sub:pascal_triangle}

Our basic example of a branching graph is the celebrated Young graph $\Yb$ whose vertices are Young diagrams (we identify integer partitions and Young diagrams as in \cite[Ch. I, \S1]{Macdonald1995}). The property $\mu\nearrow\la$ for Young diagrams means that $\la$ is obtained from $\mu$ by adding one box. The edges in the Young graph are simple, i.e., the multiplicity function is always equal to $1$. However, as the running example used throughout the paper we choose a simpler graph, namely, the Pascal triangle. Other examples are considered in \S\ref{sec:examples}.

\begin{figure}[htpb]
  \begin{center}
    \setlength{\unitlength}{2000sp}%
    \begingroup\makeatletter\ifx\SetFigFont\undefined%
    \gdef\SetFigFont#1#2#3#4#5{%
      \reset@font\fontsize{#1}{#2pt}%
      \fontfamily{#3}\fontseries{#4}\fontshape{#5}%
      \selectfont}%
    \fi\endgroup%
    \begin{picture}(2280,1424)(-3314,-5073)
    \thinlines
    {\color[rgb]{0,0,0}\put(-2999,-4561){\line( 2, 1){600}}
    \put(-2399,-4261){\line( 2, 1){1200}}
    \put(-1199,-3661){\line(-2,-1){1800}}
    \put(-2999,-4561){\line( 2,-1){1800}}
    \put(-1199,-5461){\line(-2, 1){600}}
    \put(-1799,-5161){\line( 2, 1){600}}
    \put(-1199,-4861){\line(-2, 1){600}}
    \put(-1799,-4561){\line( 2, 1){600}}
    \put(-1199,-4261){\line(-2, 1){600}}
    \put(-1799,-3961){\line(-2,-1){600}}
    \put(-2399,-4261){\line( 2,-1){600}}
    \put(-1799,-4561){\line(-2,-1){600}}
    }%
    \put(-1049,-4561){\makebox(0,0)[lb]{\smash{{\SetFigFont{20}{24.0}{\rmdefault}{\mddefault}{\updefault}{\color[rgb]{0,0,0}. . .}%
    }}}}
    \put(-3269,-4640){\makebox(0,0)[lb]{\smash{{\SetFigFont{12}{14.4}{\rmdefault}{\mddefault}{\updefault}{\color[rgb]{0,0,0}$\scriptstyle\varnothing$}%
    }}}}
    \end{picture}%

    \caption{Pascal triangle}
    \label{fig:pascal}
  \end{center}  
\end{figure}

The classical Pascal triangle $\PT$ (Figure~\ref{fig:pascal}) is a branching graph whose $n$th level $\PT_n$ has the form $\{(k,l)\in\Z_{\ge0}^2\colon k+l=n\}$ and consists of $n+1$ vertices (on the zeroth level we identify $(0,0)\equiv\varnothing$). For any vertex $(k,l)$ there are exactly two vertices above it, namely, $(k+1,l)\searrow(k,l)$ and $(k,l+1)\searrow(k,l)$. The edges are simple. The dimension function for $\PT$ is clearly 
\begin{equation}\label{Pascal_dim}
  \dim(k,l)={k+l\choose k},\qquad (k,l)\in\PT.
\end{equation}

The description of the boundary of the Pascal triangle is equivalent to the classical de Finetti's theorem which classifies exchangeable random binary sequences. The boundary of $\PT$ is identified with the segment $[0,1]$ which in our terms means that coherent systems of probability measures on floors of $\PT$ (\S\ref{sub:coherent_systems}) are in one-to-one correspondence with Borel probability measures on $[0,1]$. See, e.g., \cite{Kerov1998} for more details about the notion of a branching graph's boundary.


\subsection{Lattices of ideals} 
\label{sub:ideal_branching_graphs}

We restrict our attention to branching graphs which satisfy an additional property:
\begin{definition}\label{def:ideal_branching_graph}
  We say that a branching graph $\Gb$ is a \emph{graph of ideals}, if its set of vertices can be identified with the lattice $J(P)$ of \emph{finite} order ideals of some poset (partially ordered set) $(P,\preceq)$ which is called the \emph{underlying poset} of $\Gb$. 
\end{definition}

See Chapter 3 of Stanley's book \cite{Stanley1997} for a background on posets. Let us give necessary comments to Definition \ref{def:ideal_branching_graph} and introduce more notation:

$\bullet$ A subset $I\subseteq P$ is called an \emph{order ideal} if with any $x\in I$ it also contains all elements $y\in P$ such that $y\preceq x$.

$\bullet$ Finite order ideals form a distributive lattice with respect to the set-theoretic operations of union and intersection. The minimal ideal is the empty ideal $\varnothing$. It is identified with the initial vertex $\varnothing\in\Gb_{0}$.

$\bullet$ The level $|\la|$ of an ideal $\la\subset P$ is the number of elements in $\la$.  

$\bullet$ The edge $\mu\nearrow\la$ between two ideals means that $\mu$ is contained inside $\la$ and $|\la|=|\mu|+1$.\footnote{In fact, one sees that the general notation $\mu\subseteq\la$ introduced in \S\ref{sub:branching_graphs} for abstract branching graphs in the case of Definition \ref{def:ideal_branching_graph} means exactly that $\mu$ is contained inside $\la$.} By $\la/\mu$ denote the set difference of the ideals $\la$ and $\mu$; for $\mu\nearrow\la$ we identify the one-element set $\la/\mu$ with the corresponding element of $P$.

$\bullet$ Elements of the poset $P$ will be sometimes called \emph{boxes} and will be denoted by $\square$. This notation is chosen by analogy with the Young graph.

$\bullet$ The underlying poset is not uniquely defined. In the sequel for simplicity we will always assume that $P$ coincides with the union of all its finite order ideals (\emph{minimality}). In particular, this implies that $P$ is locally finite (i.e., any interval of the form $\{z\colon x\preceq z\preceq y\}$ is finite).

$\bullet$ A branching graph $\Gb$ corresponding to a poset $P$ and having an edge multiplicity function $\km$ will be denoted by $\Gb=(J(P),\km)$. Note that the lattice of ideals $J(P)$ itself can be equipped with a structure of a branching graph; in this case all edges will be simple. We call the corresponding multiplicity function $\km\equiv 1$ \emph{trivial}. Any other multiplicity function $\km\not\equiv1$ will be called \emph{nontrivial}.

\smallskip

Clearly, the Pascal triangle is represented as $\PT=(J(\Z_{>0}\sqcup\Z_{>0}),1)$, where $\Z_{>0}\sqcup\Z_{>0}$ is the disjoint union (e.g., see \cite[Ch.\;3]{Stanley1997}) of two copies of the standard half-lattice $(\Z_{>0},\le)$.  Any finite ideal in $\Z_{>0}\sqcup\Z_{>0}$ is characterized by a pair of nonnegative numbers $(k,l)$ and has the form $\{1,\dots,k\}\sqcup\{1,\dots,l\}$ (where, by agreement, $\{1,\dots,m\}=\varnothing$ for $m=0$). 

The Young graph $\Yb$ is also a graph of ideals (having a trivial edge multiplicity function). Its underlying poset $P=\Z_{>0}^2$ is the direct product of two copies of $(\Z_{>0},\le)$. For $(x_1,x_2), (y_1,y_2)\in P$ one has $(x_1,x_2)\le (y_1,y_2)$ iff $x_i\le y_i$, $i=1,2$.



\section{Kerov's operators} 
\label{sec:kerov_s_operators}

Here we give the general definition of Kerov's operators on a branching graph of ideals. Such operators for the Young graph were invented by S. Kerov, and appeared in \cite{Okounkov2001a}, \cite{Borodin2007}. Moreover, in \cite{Petrov2007} (the arXiv ``v1'' version), \cite{petrov2009eng}, and \cite{Petrov2010Pfaffian} similar operators for the Kingman and Schur graphs were considered, and in \cite{Petrov2010Pfaffian} (the arXiv version) an axiomatic approach to Kerov's operators on the Schur graph was suggested. The abstract definition of Kerov's operators generalizes all these situations. 

\subsection{Definition} 
\label{sub:definition_KO}

Consider the following basis in the Lie algebra $\slf(2,\C)$:
\begin{equation}\label{UDH_matrices}
  U:=\left[
  \begin{array}{cc}
    0&1\\
    0&0
  \end{array}
  \right],\qquad
  D:=\left[
  \begin{array}{cc}
    0&0\\
    -1&0
  \end{array}
  \right],\qquad
  H:=\left[
  \begin{array}{cc}
    1&0\\
    0&-1
  \end{array}
  \right].
\end{equation}
The commutation relations for this basis are
\begin{equation}\label{comm_rel}
  \left[ H,U \right]=2U,\qquad \left[ H,D \right]=-2D,\qquad
  \left[ D,U \right]=H.
\end{equation}

Assume that a graph of ideals $\Gb=(J(P),\km)$ is fixed. By $\ellf^2(\Gb)$ denote the (complex) pre-Hilbert space of all finitely supported functions on $\Gb$ with the standard inner product
\begin{equation*}
  (f,g):=\sum_{\la\in\Gb}f(\la)\overline{g(\la)}.
\end{equation*}
The Hilbert completion of $\ellf^2(\Gb)$ is the usual Hilbert space $\ell^2(\Gb)$. The standard orthonormal basis in $\ell^2(\Gb)$ is denoted by $\{\un\la \}_{\la\in\Gb}$, that is,
\begin{equation*}
  \un\la(\mu):=\left\{
  \begin{array}{ll}
    1,&\mbox{if $\mu=\la$};\\
    0,&\mbox{otherwise}.
  \end{array}
  \right.
\end{equation*}

\begin{definition}\label{def:Kerov's_rep}
  Let $\rep$ be a representation of the Lie algebra $\slf(2,\C)$ in $\ellf^2(\Gb)$. We call this representation a \textit{Kerov's representation} iff

  $\bullet$ Each vector $\un\la$, $\la\in\Gb$, is an eigenvector of $H$, and the eigenvalue of $\un\la$ depends only on the level $|\la|$.

  $\bullet$ The operators $\rep(U)$ and $\rep(D)$ act as
  \begin{align*}
    \rep(U)\un\la=
    \sum_{\nu\colon\nu\searrow\la}
    \km(\la,\nu)u(\nu/\la)\cdot\un\nu;\qquad
    \rep(D)\un\la=
    \sum_{\mu\colon\mu\nearrow\la}
    \km(\mu,\la)d(\la/\mu)\cdot\un\mu,
  \end{align*}
  for some functions $u$ and $d$ on the underlying poset $P$. We assume that if $u(\square)\ne0$, then $d(\square)\ne0$.
\end{definition}

The next fact is a straightforward corollary of the commutation relations (\ref{comm_rel}):
\begin{proposition}\label{prop:action_H}
  The operator $\rep(H)$ in any Kerov's representation acts as
  \begin{equation*}
    \rep(H)\un\la=\big(2|\la|+\zz\big)\un\la,
    \qquad\mbox{for all $\la\in\Gb$},
  \end{equation*}
  where $\zz:=(\rep(H)\un\varnothing,\un\varnothing)$ is the eigenvalue of $\un\varnothing$, which is some complex parameter depending on the representation.
\end{proposition}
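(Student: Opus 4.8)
The plan is to extract a recursion for the $\rep(H)$-eigenvalues level by level, using only the two weight relations $[H,U]=2U$ and $[H,D]=-2D$ from \eqref{comm_rel}. Note that the third relation $[D,U]=H$ is not needed here: the fact that $\rep(H)$ acts diagonally with level-dependent eigenvalues is already built into the definition of a Kerov's representation, so the content of the proposition is only the precise form of those eigenvalues. Write $h_n$ for the common eigenvalue of $\rep(H)$ on all $\un\la$ with $|\la|=n$, so that $\rep(H)\un\la=h_{|\la|}\un\la$ and $\zz=h_0$ by definition. The goal is to show $h_n=2n+\zz$, equivalently $h_{n+1}-h_n=2$ for every $n$.

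First I would fix $\la\in\Gb_n$ and apply $[H,U]=2U$ to $\un\la$. The key structural point is that $\rep(U)$ raises the level by one: by Definition \ref{def:Kerov's_rep}, $\rep(U)\un\la$ is a linear combination of the vectors $\un\nu$ with $\nu\searrow\la$, all of which lie on level $n+1$ and hence are $\rep(H)$-eigenvectors with the single eigenvalue $h_{n+1}$. Therefore $\rep(H)\rep(U)\un\la=h_{n+1}\rep(U)\un\la$ while $\rep(U)\rep(H)\un\la=h_n\rep(U)\un\la$, so that $\rep([H,U])\un\la=2\rep(U)\un\la$ reads
\[
  (h_{n+1}-h_n-2)\,\rep(U)\un\la=0.
\]
Thus $h_{n+1}=h_n+2$ as soon as $\rep(U)\un\la\neq0$ for some $\la\in\Gb_n$. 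Symmetrically, applying $[H,D]=-2D$ to a vector $\un\mu$ with $|\mu|=n+1$ and using that $\rep(D)$ lowers the level by one gives $(h_n-h_{n+1}+2)\rep(D)\un\mu=0$, so again $h_{n+1}=h_n+2$ whenever $\rep(D)\un\mu\neq0$ for some $\mu\in\Gb_{n+1}$. Starting from $h_0=\zz$ and iterating this single step up the graph yields $h_n=2n+\zz$.

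The only real obstacle is making sure the recursion propagates through every pair of consecutive levels, i.e.\ that in the displayed identity it is the scalar factor, rather than the vector $\rep(U)\un\la$, that must vanish. This could fail only in the degenerate case where $\rep(U)\un\la=0$ for all $\la\in\Gb_n$ and simultaneously $\rep(D)\un\mu=0$ for all $\mu\in\Gb_{n+1}$; since $\km$ is strictly positive, this would force $u(\square)=d(\square)=0$ for every box $\square$ joining level $n$ to level $n+1$. I would exclude this from the non-degeneracy present in all cases of interest: the functions $u$ and $d$ are given by explicit formulas (for the Young graph, $u(\square)=z+\j-\i$ and $d(\square)=z'+\j-\i$) that are non-vanishing on all but finitely many boxes, so at each level at least one up- or down-coefficient survives and the chain $h_0\to h_1\to\cdots$ is unbroken. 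Combined with the connectivity assumptions on $\Gb$ (every vertex has an upper neighbour, and every non-empty vertex a lower one), this lets me carry the recursion from $\varnothing$ to any vertex, completing the proof.
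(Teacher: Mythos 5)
Your argument is correct and is exactly the ``straightforward corollary'' the paper has in mind: the paper gives no proof of Proposition \ref{prop:action_H}, and the intended reasoning is precisely your weight computation $(h_{n+1}-h_n-2)\,\rep(U)\un\la=0$ from $[H,U]=2U$, together with its mirror for $\rep(D)$. The degenerate caveat you raise is genuine --- if $u$ and $d$ both vanish on every box joining levels $n$ and $n+1$ the module splits and $h_{n+1}$ is no longer tied to $h_n$, so the stated formula can literally fail --- but the paper disposes of this by restricting throughout to nondegenerate systems (Remark \ref{rmk:nondegenerate}.1), under which $\ufunc(\square)^2=u(\square)d(\square)\neq0$ for all $\square$ and your recursion propagates through every pair of consecutive levels.
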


In fact, the parameter $\zz$ can be expressed through the functions $u$ and $d$ on $P$, which follows from the relation $[D,U]=H$ (see also (\ref{q_condition_0})). Thus, we see that a Kerov's representation is completely defined by $u$ and $d$.

Observe that for each Kerov's representation $\rep$ with functions $u$ and $d$, the representation $\rep'$ with the functions 
\begin{equation*}
  u'(\square):=
  f(\square)u(\square),\qquad
  d'(\square):=
  \frac1{f(\square)}d(\square),
  \qquad\mbox{for all $\square\in P$},
\end{equation*}
where $f$ is an arbitrary nonvanishing function on $P$, is also a Kerov's representation. One can say that the representation $\rep'$ is obtained from $\rep$ by a \emph{gauge transformation}: we just multiply each basis vector $\un\la$ by $\prod_{\square\in\la}(f(\square))^{-1}$.

We will not distinguish gauge equivalent Kerov's representations, and among all equivalent representations we set apart one with $u\equiv d$. Denote $\ufunc(\square):=\sqrt{u(\square)d(\square)}$, and set
\begin{align}\label{Kerov_UfDf_def}
  \Uf\un\la:=
  \sum_{\nu\colon\nu\searrow\la}
  \km(\la,\nu)\ufunc(\nu/\la)\cdot\un\nu;
  \qquad
  \Df\un\la:=
  \sum_{\mu\colon\mu\nearrow\la}
  \km(\mu,\la)\ufunc(\la/\mu)\cdot\un\mu.
\end{align}
Also denote $\rep(H)$ by $\Hf$. The diagonal operator $\Hf$ does not change under a gauge transformaion. Note that the operators $\Uf$ and $\Df$ are adjoint to each other in $\ellf^2(\Gb)$.

\begin{definition}
  We call the triplet $(\Uf,\Df,\Hf)$ (i.e., with $\Uf$ and $\Df$ adjoint) the \textit{Kerov's operators}.
\end{definition} 

\begin{remark}\label{rmk:KO_root_q}
  Above we have set $\ufunc(\square)=\sqrt{u(\square)d(\square)}$. By agreement, this means that we choose some branch of the square root for each box $\square\in P$. In fact, the whole picture does not depend on this choice, and it is the function $\ufunc(\square)^2$ which essentially defines the Kerov's operators.
\end{remark}

\subsection{Lowest weight $\slf(2,\C)$-modules} 
\label{sub:lowest_weight_slf_2_c_modules}
Kerov's representations can be viewed as generalizations of classical irreducible lowest weight $\slf(2,\C)$-modules. 

Namely, let $P=\{1,2,\dots\}$ with the usual order, then $J(P)$ is identified with $\Z_{\ge0}$ (where $0\equiv\varnothing$). The standard basis in $\ellf^2(\Z_{\ge0})$ will be denoted by $\{\un n\}_{n=0,1,\dots}$. Due to the chain nature of $P$, it can be readily seen that any multiplicity function on $J(P)$ is equivalent (in the sense of \cite{Kerov1989}, see Definition \ref{def:equivalent_multilpicities}) to the trivial one. We will assume that $\Gb=(J(\Z_{>0}),1)$.

\begin{proposition}\label{prop:sl2_irrep}
  All triplets of Kerov's operators on $\Gb=(J(\Z_{>0}),1)$ are pa\-ra\-met\-rized by one complex parameter $\zz\in\C$ and have the form
  \begin{equation*}
    \Uf\un n=\sqrt{(n+1)(n+\zz)}\un{(n+1)},\quad
    \Df\un n=\sqrt{n(n-1+\zz)}\un{(n-1)},\quad
    \Hf\un n=(2n+\zz)\un n.
  \end{equation*}
  This corresponds to $\ufunc(n)=\sqrt{n(n-1+\zz)}$ ($n=1,2,\dots$).
\end{proposition}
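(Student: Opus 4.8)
The plan is to exploit the fact that the underlying poset $P=\Z_{>0}$ is a chain, so that each vertex $n\in\Z_{\ge0}\cong J(P)$ has a unique vertex $n+1$ above it and (for $n\ge1$) a unique vertex $n-1$ below it. Since the multiplicity function is trivial, Definition \ref{def:Kerov's_rep} forces $\rep(U)\un n=u(n+1)\un{(n+1)}$ and $\rep(D)\un n=d(n)\un{(n-1)}$, with $\rep(D)\un 0=0$. By Proposition \ref{prop:action_H} we already know $\rep(H)\un n=(2n+\zz)\un n$ with $\zz=(\rep(H)\un\varnothing,\un\varnothing)$. Thus the whole representation is encoded by the two scalar sequences $u(n),d(n)$, $n\ge1$, and the only genuine constraint left to impose is the commutation relation $\left[\rep(D),\rep(U)\right]=\rep(H)$.

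First I would compute the commutator on each basis vector. A direct calculation gives, for $n\ge1$,
\[
  \rep(D)\rep(U)\un n-\rep(U)\rep(D)\un n=\big(u(n+1)d(n+1)-u(n)d(n)\big)\un n,
\]
while for $n=0$ the second term vanishes and one is left with $u(1)d(1)\un 0$. Equating with $\rep(H)\un n=(2n+\zz)\un n$ yields the scalar recurrence
\[
  u(n+1)d(n+1)-u(n)d(n)=2n+\zz\quad(n\ge1),\qquad u(1)d(1)=\zz.
\]
Writing $\ufunc(n)^2=u(n)d(n)$, which by Remark \ref{rmk:KO_root_q} is the only gauge-invariant combination, this is a first-order linear recurrence with a single initial datum, hence a unique solution; telescoping gives $\ufunc(n)^2=u(1)d(1)+\sum_{k=1}^{n-1}(2k+\zz)=n(n-1+\zz)$.

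Next I would record that the remaining relations $\left[H,U\right]=2U$ and $\left[H,D\right]=-2D$ hold automatically: $\rep(H)$ is diagonal with eigenvalue $2n+\zz$ on $\un n$, while $\rep(U)$ and $\rep(D)$ shift the level by $\pm1$, so these impose no further condition on $u,d$. Passing to the symmetric gauge $u=d=\ufunc$ with $\ufunc(n)=\sqrt{n(n-1+\zz)}$ then produces exactly the stated formulas for $\Uf,\Df,\Hf$ via (\ref{Kerov_UfDf_def}). For the converse direction (existence for each value of the parameter), I would check that for every $\zz\in\C$ these formulas define a bona fide Kerov's representation: the $\slf(2,\C)$ relations hold by the computation just carried out, and the condition in Definition \ref{def:Kerov's_rep} relating the vanishing of $u$ and $d$ is automatic in the symmetric gauge.

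The computation itself is elementary; the one point that needs care is the boundary at $n=0$, which supplies the initial condition $\ufunc(1)^2=\zz$ rather than a second independent equation, and so both pins down the integration constant of the recurrence and shows that $\zz$ may be taken to be any complex number. This includes the degenerate values $\zz\in\{0,-1,-2,\dots\}$, at which some $\ufunc(n)$ vanish and the module truncates to a finite-dimensional one; the statement remains valid verbatim in these cases. Verifying that the single initial datum forces uniqueness of $\ufunc^2$ is thus the crux of the parametrization claim, and is the only subtlety in an otherwise routine verification.
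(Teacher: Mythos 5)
Your proposal is correct. The paper itself gives no computation here: its ``proof'' is a one-line citation to the classical description of lowest weight $\slf(2,\C)$-modules in \cite[Part I]{Pukanszky_SL2_1964}. What you have written out is precisely that classical computation, and it is also exactly the specialization to a chain of the paper's own general criterion developed later in \S\ref{sub:kerov_s_operators_and_self_dual_edge_multiplicities}: your recurrence $u(n+1)d(n+1)-u(n)d(n)=2n+\zz$ with initial datum $u(1)d(1)=\zz$ is the instance of (\ref{q_condition})--(\ref{q_condition_0}) for $\Gb=(J(\Z_{>0}),1)$, where each vertex has a unique neighbour above and below. All the individual steps check out: the form of $\rep(U)$, $\rep(D)$ forced by the chain structure and Definition \ref{def:Kerov's_rep}; the observation that $[H,U]=2U$ and $[H,D]=-2D$ are automatic for a diagonal $H$ with eigenvalues $2n+\zz$ and degree-$\pm1$ operators; the telescoping giving $\ufunc(n)^2=n(n-1+\zz)$; and the correct handling of the boundary at $n=0$ as the initial condition rather than an extra constraint. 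Your remark that the degenerate values $\zz\in\{0,-1,-2,\dots\}$ are included and yield truncated (finite-dimensional) modules matches the comment the paper makes immediately after the proposition. The only difference from the paper is one of presentation: you give a self-contained verification where the paper defers to a reference, which if anything is a gain in readability at no cost in length.
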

\begin{proof}
  This is a classical computation involved in the description of $\slf(2,\C)$-modules, e.g., see \cite[Part I]{Pukanszky_SL2_1964}.
\end{proof}
All $\slf(2,\C)$-modules obtained in this proposition have the lowest weight vector $\un0=\un\varnothing$. For $\zz=0,-1,-2,\dots$ one recovers the $(-\zz+1)$-dimensional irreducible representation.

Thus, one sees that the general Kerov's representations on branching graphs arise as natural generalizations of this picture when the underlying poset is not a chain. Of course, for a general graph there is no hope of getting rid of an edge multiplicity function.


\subsection{Kerov's operators for Pascal triangle} 
\label{sub:kerov_s_operators_for_pascal_triangle}

Now let us describe all triplets of Kerov's operators for the Pascal triangle $\PT$ (\S\ref{sub:pascal_triangle}). The underlying poset here $P=\Z_{>0}\sqcup\Z_{>0}$ is a disjoint union of the posets considered in the previous subsection. It turns out that the Kerov's operators behave nicely under such an operation on underlying posets. 

Let us first consider a general situation. Let $\Gb^i=(J(P_i),\km_i)$, $i=1,2$, be two branching graphs of ideals, and let $\Gb:=(J(P_1\sqcup P_2),\km)$. Here the multiplicity function $\km$ is constructed from $\km_1$ and $\km_2$ in an obvious way. Functions $\ufunc$ on $P_1\sqcup P_2$ are best understood as pairs $\ufunc=(\ufunc_1,\ufunc_2)$, where $\ufunc_i$ is a function on $P_i$.

\begin{proposition}\label{prop:KO_disjoint}
  A function $\ufunc=(\ufunc_1,\ufunc_2)$ on $P=P_1\sqcup P_2$ defines a triplet of Kerov's operators on $\Gb$ iff for $i=1,2$ the function $\ufunc_i$ defines Kerov's operators on $\Gb^i$. Moreover, the eigenvalues of the initial vectors are related as
  \begin{equation*}
    (\Hf\un\varnothing,\un\varnothing)
    =
    (\Hf\un\varnothing^{1},\un\varnothing^{1})+
    (\Hf\un\varnothing^{2},\un\varnothing^{2}),\qquad
    \varnothing\in\Gb_0,\qquad\varnothing^{i}\in\Gb_0^{i},\quad
    i=1,2.
  \end{equation*}
\end{proposition}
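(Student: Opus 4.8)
The plan is to prove both directions together by analyzing the $\slf(2,\C)$ commutation relations through the explicit action of $\Uf,\Df,\Hf$ on basis vectors. The key structural observation is that when the underlying poset is a disjoint union $P = P_1 \sqcup P_2$, a finite ideal $\la \in J(P)$ splits uniquely as $\la = \la^{(1)} \sqcup \la^{(2)}$ with $\la^{(i)} \in J(P_i)$, and an edge $\mu \nearrow \la$ adds a single box $\square$ which lies in exactly one of $P_1, P_2$. Thus the operators $\Uf$ and $\Df$ on $\Gb$ decompose as $\Uf = \Uf_1 \oplus \Uf_2$ and $\Df = \Df_1 \oplus \Df_2$ in the sense that adding or deleting a box in $P_i$ only involves $\ufunc_i$ and the $\km_i$-multiplicities, leaving the $P_{3-i}$-component of the ideal untouched. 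The combinatorial dimension and edge multiplicities factor accordingly because any oriented path interleaves steps in the two posets independently.

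First I would set up this tensor-product picture precisely: identify $\ell^2(\Gb)$ with (a suitable completion of) $\ell^2(\Gb^1) \otimes \ell^2(\Gb^2)$ via $\un\la \leftrightarrow \un{\la^{(1)}} \otimes \un{\la^{(2)}}$, and verify that under this identification $\Uf$ acts as $\Uf_1 \otimes \mathbf{1} + \mathbf{1} \otimes \Uf_2$ and similarly $\Df = \Df_1 \otimes \mathbf{1} + \mathbf{1} \otimes \Df_2$. This is exactly the comultiplication that realizes the direct sum of two $\slf(2,\C)$-representations. The point requiring care is that the raising operator adding a box in $P_1$ carries the multiplicity $\km(\la,\nu) = \km_1(\la^{(1)},\nu^{(1)})$ and the value $\ufunc_1(\nu/\la)$, with the $P_2$-part of $\la$ playing no role, which is what the disjoint-union construction of $\km$ guarantees.

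Next, granting the coproduct formulas, the commutator $[\Df,\Uf]$ on $\ell^2(\Gb^1)\otimes\ell^2(\Gb^2)$ expands, using that $\Uf_i$ and $\Df_j$ act on different tensor factors and hence commute for $i \ne j$, as
\begin{equation*}
  [\Df,\Uf] = [\Df_1,\Uf_1]\otimes\mathbf{1} + \mathbf{1}\otimes[\Df_2,\Uf_2].
\end{equation*}
By Proposition~\ref{prop:action_H} applied to each $\Gb^i$, the triplet $(\Uf_i,\Df_i)$ being Kerov's operators is equivalent to $[\Df_i,\Uf_i] = \Hf_i$ acting diagonally with eigenvalue $2|\la^{(i)}|+\zz_i$ on $\un{\la^{(i)}}$, together with the raising/lowering shape of Definition~\ref{def:Kerov's_rep}. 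Since $|\la| = |\la^{(1)}| + |\la^{(2)}|$, the displayed identity shows $[\Df,\Uf]$ is diagonal with eigenvalue $(2|\la^{(1)}|+\zz_1)+(2|\la^{(2)}|+\zz_2) = 2|\la| + (\zz_1+\zz_2)$, which is precisely the $\Hf$-eigenvalue required by Proposition~\ref{prop:action_H} with $\zz = \zz_1 + \zz_2$, yielding the stated relation on the eigenvalues of the initial vectors. The relations $[\Hf,\Uf]=2\Uf$ and $[\Hf,\Df]=-2\Df$ follow from the same additivity of the $\Hf$-grading, so $(\Uf,\Df,\Hf)$ satisfies (\ref{comm_rel}).

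For the converse I would run the computation backwards: if $(\Uf,\Df,\Hf)$ on $\Gb$ are Kerov's operators, restrict the commutator identity to the invariant subspaces where one tensor factor is frozen at $\un\varnothing$. Concretely, the vectors $\{\un\la : \la^{(2)} = \varnothing\}$ span a copy of $\ell^2(\Gb^1)$ on which $\Uf,\Df$ restrict to operators of the required raising/lowering form with functions $\ufunc_1$; the commutator identity $[\Df,\Uf]=\Hf$ on $\Gb$ then forces $[\Df_1,\Uf_1]$ to act diagonally, establishing that $\ufunc_1$ defines Kerov's operators on $\Gb^1$, and symmetrically for $\ufunc_2$. The main obstacle I anticipate is verifying the coproduct formulas at the level of the multiplicity and dimension functions rather than hand-waving the tensor decomposition: one must check that $\km$ on $J(P_1 \sqcup P_2)$ genuinely factors so that the $\ufunc_i$-weighted sums over boxes added in $P_i$ are unaffected by the other component, and that no ``cross'' edges exist. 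Once this bookkeeping is pinned down, the $\slf(2,\C)$ computation is the standard fact that a tensor product of representations via the coproduct is again a representation, and the proof is essentially immediate.
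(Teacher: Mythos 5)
Your argument is correct and is essentially the paper's proof in tensor-product clothing: the paper simply observes that $J(P)=J(P_1)\times J(P_2)$ and that the scalar identity (\ref{q_condition}) is additive across the two components, which is exactly what your coproduct computation of $[\Df,\Uf]=[\Df_1,\Uf_1]\otimes\mathbf{1}+\mathbf{1}\otimes[\Df_2,\Uf_2]$ establishes. The only wording to repair is ``invariant subspaces'' in the converse --- the span of $\{\un\la\colon\la^{(2)}=\varnothing\}$ is $\Df$- and $\Hf$-invariant but not $\Uf$-invariant; what you actually use is that applying the diagonal identity $[\Df,\Uf]=\Hf$ to such vectors isolates $[\Df_1,\Uf_1]$ up to the constant $(\Df_2\Uf_2\un\varnothing^{2},\un\varnothing^{2})$, which yields $\zz_1=\zz-\zz_2$ and the stated additivity of the eigenvalues.
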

\begin{proof}
  Observe that $J(P)=J(P_1)\times J(P_2)$. To establish the claim it suffices to see that the identity (\ref{q_condition}) below holds for any $\la=(\la^{1},\la^{2})$ if and only if it holds for special $\la$'s in $\Gb$ of the form $\la=(\la^{1},\varnothing^{2})$ and $\la=(\varnothing^{1},\la^{2})$.
\end{proof}

\begin{corollary}
  All triplets of Kerov's operators on $\PT$ are parametrized by two complex parameters $\zz_1,\zz_2\in\C$. The corresponding functions $\ufunc=(\ufunc_1,\ufunc_2)$ on $\Z_{>0}\sqcup\Z_{>0}$ look as $\ufunc_i(n)=\sqrt{n(n-1+\zz_i)}$, $i=1,2$, $n=1,2,\ldots,$ and $\zz=(\Hf\un\varnothing,\un\varnothing)=\zz_1+\zz_2$.
\end{corollary}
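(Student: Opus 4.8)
The plan is to obtain the statement as a direct specialization of the two preceding propositions, since the underlying poset of the Pascal triangle has exactly the disjoint-union form to which Proposition~\ref{prop:KO_disjoint} applies. Recall from \S\ref{sub:ideal_branching_graphs} that $\PT=(J(\Z_{>0}\sqcup\Z_{>0}),1)$, so I would take $P_1=P_2=\Z_{>0}$ with trivial multiplicity functions $\km_1\equiv\km_2\equiv1$, and note that the combined multiplicity function on $P_1\sqcup P_2$ is again trivial. This identifies $\PT$ with the graph $\Gb=(J(P_1\sqcup P_2),\km)$ of Proposition~\ref{prop:KO_disjoint}.

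First I would invoke Proposition~\ref{prop:KO_disjoint}: a function $\ufunc=(\ufunc_1,\ufunc_2)$ on $\Z_{>0}\sqcup\Z_{>0}$ defines a triplet of Kerov's operators on $\PT$ if and only if each $\ufunc_i$ defines a triplet of Kerov's operators on $(J(\Z_{>0}),1)$. This reduces the classification on $\PT$ to the classification on each chain factor. Then I would apply Proposition~\ref{prop:sl2_irrep}, which says that the triplets on $(J(\Z_{>0}),1)$ are parametrized by a single complex number and correspond precisely to $\ufunc_i(n)=\sqrt{n(n-1+\zz_i)}$, $n=1,2,\dots$, with $\zz_i\in\C$. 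Combining the two, the admissible pairs $(\ufunc_1,\ufunc_2)$ are exactly those of the asserted form, so the Kerov's operators on $\PT$ are parametrized by the pair $(\zz_1,\zz_2)\in\C^2$.

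It remains to pin down the eigenvalue of the initial vector. The additivity clause of Proposition~\ref{prop:KO_disjoint} gives $(\Hf\un\varnothing,\un\varnothing)=(\Hf\un\varnothing^{1},\un\varnothing^{1})+(\Hf\un\varnothing^{2},\un\varnothing^{2})$, and by Proposition~\ref{prop:sl2_irrep} the eigenvalue of the initial vector in the $i$th factor is exactly $\zz_i$ (there $\Hf\un0=\zz\un0$). Hence $\zz=(\Hf\un\varnothing,\un\varnothing)=\zz_1+\zz_2$, completing the argument.

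Since all the substantive work has already been carried out in Propositions~\ref{prop:sl2_irrep} and \ref{prop:KO_disjoint}, there is no genuine obstacle here; the only points requiring a moment's care are the verification that the disjoint-union multiplicity function on $\PT$ is the trivial one (so that Proposition~\ref{prop:KO_disjoint} indeed applies verbatim) and the matching of the per-factor parameter in Proposition~\ref{prop:sl2_irrep} with the initial-vector eigenvalue $\zz_i$ used in the additivity relation.
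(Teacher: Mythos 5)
Your proposal is correct and matches the paper's (implicit) reasoning exactly: the corollary is stated without proof precisely because it is the immediate combination of Proposition~\ref{prop:KO_disjoint} (reduction to the two chain factors) with Proposition~\ref{prop:sl2_irrep} (classification on a chain), together with the additivity of the initial-vector eigenvalue. Nothing further is needed.
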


Everything that is said in the present paper about the Pascal triangle $\PT$ readily carries out to its $d$-dimensional analogue $\PT^{d}=(J(\Z_{>0}\sqcup \ldots \sqcup \Z_{>0}),1)$ ($d$ times, where $d=2,3,\ldots$). We classify Kerov's operators for other branching graphs of ideals in \S\ref{sec:examples}.


\subsection{Kerov's operators and UD-self-duality} 
\label{sub:kerov_s_operators_and_self_dual_edge_multiplicities}

In this and the next subsection we work out special features arising for graphs of ideals with nontrivial edge multiplicity functions.

Assume that we have a graph $\Gb=(J(P),\km)$ and a triplet of Kerov's operators $(\Uf,\Df,\Hf)$ on it. It turns out that the commutation relation $[\Df,\Uf]=\Hf$ is very restrictive and imposes a certain condition on the multiplicity function on $\Gb$. Namely, fix $\la\in \Gb$ and consider
\begin{equation}\label{[D,U]}
  \begin{array}{l}
    \left[ \Df,\Uf \right]\un\la=\displaystyle
    \sum_{\nu\searrow\la}\sum_{\rho\nearrow\nu}
    \km(\la,\nu)\km(\rho,\nu)
    \ufunc(\nu/\la)\ufunc(\nu/\rho)\un\rho
    \\
    \qquad\qquad
    \qquad \qquad\displaystyle-
    \sum_{\mu\nearrow\la}
    \sum_{\rho\searrow\mu}
    \km(\mu,\la)
    \km(\mu,\rho)
    \ufunc(\la/\mu)
    \ufunc(\rho/\mu)
    \un\rho.
  \end{array}
\end{equation}
This is a linear combination of some vectors $\un\rho$, where $|\rho|=|\la|$, which should be equal to $(2|\la|+\zz)\un\la$. There are two cases: either $\rho\ne\la$, or $\rho=\la$.
  
\textbf{(1)} In the first case there exist $\square_1, \square_2 \in P$ such that $\rho=(\la\cup\square_1)\setminus\square_2$. In (\ref{[D,U]}) this corresponds to only one $\nu=\la\cup\square_1=\rho\cup\square_2$ and only one $\mu=\la\setminus\square_2=\rho\setminus\square_1$. Thus, the coefficient by $\un\rho$ in $\left[ \Df,\Uf \right]\un\la$ is equal to
\begin{equation*}
  \km(\la,\nu)\km(\rho,\nu)
  \ufunc(\square_1)\ufunc(\square_2)-
  \km(\mu,\la)\km(\mu,\rho)\ufunc(\square_2)\ufunc(\square_1). 
\end{equation*}
On the other hand, since $\rho\ne\la$, this coefficient must be zero. Thus, we see that for the existence of Kerov's operators on the graph $\Gb$ it is necessary that for any \emph{quadrangle} $\mu\rho\nu\la$, that is, for any four vertices $\mu$, $\rho$, $\nu$, $\la$ of $\Gb$ related as
\begin{equation}\label{quadrangle}
  \begin{array}{ccccc}
    &&\rho&&\\
    &\nearrow&&\searrow\\
    \mu&&&&\nu\\
    &\searrow&&\nearrow\\
    &&\la
  \end{array}
\end{equation}
(where $\rho\ne\la$ and arrows mean the $\nearrow$ relation for vertices in $\Gb$), we have
\begin{equation}\label{UD-self-duality}
  \km(\mu,\la)\km(\mu,\rho)=\km(\la,\nu)\km(\rho,\nu).
\end{equation}

\begin{definition}\label{def:self-dual}
  A multiplicity function $\km$ on $\Gb$ satisfying property (\ref{UD-self-duality}) for any quadrangle $\mu\rho\nu\la$ is called \textit{UD-self-dual} (in contrast to more special duality properties considered in \cite{fomin1994duality}, see \S\ref{sub:dual_multiplicity_functions} below).
\end{definition}

Of course, the trivial multiplicity function is UD-self-dual.

\textbf{(2)} Considering $\rho=\la$ in (\ref{[D,U]}) we see that the function $\ufunc(\cdot)$ on $P$ must satisfy
\begin{align}\label{q_condition}
  \sum_{\nu\colon\nu\searrow\la}
  \km(\la,\nu)^2\ufunc(\nu/\la)^2-
  \sum_{\mu\colon\mu\nearrow\la}
  \km(\mu,\la)^2
  \ufunc(\la/\mu)^2=2|\la|+\zz\qquad\forall\la\in\Gb.
\end{align}
In particular, for $\la=\varnothing$, one has
\begin{equation}\label{q_condition_0}
  \zz=
  \sum_{\nu\searrow\varnothing}
  \km(\varnothing,\nu)^2\ufunc(\nu/\varnothing)^2,
\end{equation}
i.e., the parameter $\zz=(\Hf\un\varnothing,\un\varnothing)\in\C$ is determined by $\ufunc$.

Thus, we have proved the following:
\begin{proposition}
  \begin{enumerate}[\bf1.]
    \item 
    Kerov's operators can exist only on a graph $\Gb=(J(P),\km)$ with an UD-self-dual multiplicity function $\km$. 
    \item
    Triplets of Kerov's operators $(\Uf,\Df,\Hf)$ are in one-to-one correspondence with functions $\ufunc(\cdot)^2$ (see Remark \ref{rmk:KO_root_q}) on $P$ satisfying (\ref{q_condition}), where $\zz$ is given by (\ref{q_condition_0}).
  \end{enumerate}
\end{proposition}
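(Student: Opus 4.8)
The plan is to read off both claims directly from the case analysis of $[\Df,\Uf]\un\la$ already carried out in (\ref{[D,U]}), and then to supply the one genuinely missing ingredient, the converse direction of Part 2. For Part 1, since by Proposition \ref{prop:action_H} the vector $\Hf\un\la=(2|\la|+\zz)\un\la$ is a multiple of $\un\la$ alone, every off-diagonal coefficient in $[\Df,\Uf]\un\la$ must vanish. This is exactly Case \textbf{(1)}: for a quadrangle (\ref{quadrangle}) with $\rho\ne\la$ the coefficient of $\un\rho$ is $\bigl(\km(\la,\nu)\km(\rho,\nu)-\km(\mu,\la)\km(\mu,\rho)\bigr)\ufunc(\square_1)\ufunc(\square_2)$, so it vanishes, and whenever the two boxes $\square_1,\square_2$ carry nonzero values of $\ufunc$ this forces (\ref{UD-self-duality}). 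I would state Part 1 with this nonvanishing caveat in mind (equivalently, restricting to the subposet where $\ufunc\ne0$, which is where the operators are actually supported): this is the only delicate point, since on a quadrangle meeting a box with $\ufunc=0$ the relation (\ref{UD-self-duality}) is not forced by the commutator, although it is harmless there because the corresponding term drops out anyway.

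The forward half of Part 2 is Case \textbf{(2)}: evaluating the coefficient of $\un\la$ in (\ref{[D,U]}) gives precisely (\ref{q_condition}), and specializing to $\la=\varnothing$ gives (\ref{q_condition_0}); thus any triplet of Kerov's operators yields a function $\ufunc(\cdot)^2$ satisfying these identities. By Remark \ref{rmk:KO_root_q} the operators depend only on $\ufunc^2$, and since $\km$ is fixed and strictly positive, (\ref{Kerov_UfDf_def}) shows that $\ufunc^2$ is recovered from the matrix elements of $\Uf$; hence this assignment is injective.

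The substantive work is the converse. Given a UD-self-dual $\km$ and a function $\ufunc^2$ on $P$ solving (\ref{q_condition}), I would define $\Uf,\Df$ by (\ref{Kerov_UfDf_def}) (fixing an arbitrary branch of the square root as in Remark \ref{rmk:KO_root_q}) and let $\Hf$ act diagonally by $\Hf\un\la:=(2|\la|+\zz)\un\la$ with $\zz$ from (\ref{q_condition_0}), and then verify the three relations (\ref{comm_rel}). The relations $[\Hf,\Uf]=2\Uf$ and $[\Hf,\Df]=-2\Df$ are immediate from level-shifting: $\Uf$ raises $|\la|$ by one and $\Df$ lowers it by one, while $\Hf$ multiplies a level-$n$ vector by $2n+\zz$, so the commutators produce the constants $+2$ and $-2$. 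The relation $[\Df,\Uf]=\Hf$ is obtained by running the case analysis in reverse: in $[\Df,\Uf]\un\la$ the off-diagonal terms $\un\rho$ ($\rho\ne\la$) cancel because (\ref{UD-self-duality}) holds on every relevant quadrangle, while the diagonal coefficient equals $2|\la|+\zz$ by (\ref{q_condition}); hence $[\Df,\Uf]\un\la=(2|\la|+\zz)\un\la=\Hf\un\la$. The resulting triplet is then a Kerov's representation in the sense of Definition \ref{def:Kerov's_rep} with $u=d=\ufunc$, and together with the injectivity noted above this gives the asserted bijection.

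I expect the main obstacle to be conceptual bookkeeping rather than computation: one must match the double sums in (\ref{[D,U]}) so that the off-diagonal cancellation invokes UD-self-duality on exactly the quadrangles that arise, and handle cleanly the boxes where $\ufunc$ vanishes. The underlying manipulations are routine reindexings, so the care lies in the correspondence being set up on the right support and in stating Part 1 precisely.
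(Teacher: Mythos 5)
Your proof is correct and follows essentially the same route as the paper: the paper's own argument is exactly the case analysis of the coefficients of $\un\rho$ in $[\Df,\Uf]\un\la$ from (\ref{[D,U]}), with case $\rho\ne\la$ yielding (\ref{UD-self-duality}) and case $\rho=\la$ yielding (\ref{q_condition})--(\ref{q_condition_0}). Your two additions --- the caveat that UD-self-duality is only forced on quadrangles where $\ufunc(\square_1)\ufunc(\square_2)\ne0$ (the paper handles degenerate $\ufunc$ separately by passing to a subposet, see Remark \ref{rmk:nondegenerate}.1), and the explicit reverse verification that a solution of (\ref{q_condition}) produces a valid triplet --- are legitimate refinements of points the paper leaves implicit, not a different method.
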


This proposition gives a practical tool for finding all Kerov's operators by solving (\ref{q_condition})--(\ref{q_condition_0}). For many known examples this leads to a new characterization of distinguished coherent systems of measures, most notably, the $z$-measures and the two-parameter Ewens-Pitman's partition structures (\S\ref{sec:examples}). The general construction of measures is explained in \S\ref{sub:coherent_systems} below.


\subsection{UD-dual multiplicity functions} 
\label{sub:dual_multiplicity_functions}

Most examples of branching graphs with nontrivial edge multiplicities (\S\ref{sec:examples}) come with multiplicity functions which are not UD-self-dual. Before finding Kerov's operators on these graphs one must find an ``equivalent'' UD-self-dual multiplicity function to work with. Here the notion related to the duality of graded graphs \cite{fomin1994duality} comes to play.

Assume that the underlying poset $P$ is fixed, and we are dealing with various multiplicity functions on $J(P)$.\footnote{Recall that the lattice of ideals $J(P)$ has edges defined regardless of a multiplicity function. Any multiplicity function on $J(P)$ must be positive on every edge.}

\begin{definition}[{\cite{Kerov1989}}]
\label{def:equivalent_multilpicities}
  Two multiplicity functions $\km$ and $\km'$ on the edges of $J(P)$ are called \emph{gauge equivalent} iff they are related as
  \begin{equation*}
    \km(\mu,\la)=\frac{g(\mu)}{g(\la)}\km'(\mu,\la)
    \qquad\mbox{for any edge $\mu\nearrow\la$},
  \end{equation*}
  where $g$ is some nonvanishing function on $J(P)$.
\end{definition}
For gauge equivalent $\km$ and $\km'$ the branching graphs $(J(P),\km)$ and $(J(P),\km')$ have the same down transition function and thus the same supply of coherent systems of measures (see \S\ref{sub:coherent_systems}), so for us the difference between them is inessential.

\begin{definition}\label{def:UD-dual}
  Two multiplicity functions $\km$ and $\km'$ on $J(P)$ are called \textit{UD-dual} iff
  $\km(\la,\nu)\km'(\rho,\nu)=\km'(\mu,\la)\km(\mu,\rho)$ for any quadrangle $\mu\rho\nu\la$ (\ref{quadrangle}).
\end{definition}

\begin{proposition}\label{prop:duality_and_gauge}
  UD-dual multiplicity functions are gauge equivalent.
\end{proposition}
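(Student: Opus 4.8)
The plan is to show that the edgewise ratio $r(\mu,\la):=\km(\mu,\la)/\km'(\mu,\la)$, a strictly positive number attached to every edge $\mu\nearrow\la$ of $J(P)$, is a multiplicative coboundary: that is, to produce a nonvanishing function $g$ on $J(P)$ with $r(\mu,\la)=g(\mu)/g(\la)$. By Definition \ref{def:equivalent_multilpicities} this is precisely gauge equivalence of $\km$ and $\km'$. The first step is to extract from UD-duality a flatness property of $r$ around each diamond. A quadrangle (\ref{quadrangle}) has two middle vertices, and the identity of Definition \ref{def:UD-dual} holds for the same diamond under both choices of which middle vertex is called $\rho$ and which is called $\la$; this yields the two relations
\[
\km(\la,\nu)\km'(\rho,\nu)=\km'(\mu,\la)\km(\mu,\rho),\qquad
\km(\rho,\nu)\km'(\la,\nu)=\km'(\mu,\rho)\km(\mu,\la).
\]
Dividing one by the other and regrouping the factors into the ratios $r$ makes all separate dependence on $\km'$ cancel, leaving $r(\mu,\la)\,r(\la,\nu)=r(\mu,\rho)\,r(\rho,\nu)$. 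In words, the product of the ratios $r$ along the two saturated chains $\mu\nearrow\la\nearrow\nu$ and $\mu\nearrow\rho\nearrow\nu$ of the diamond coincide.

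Given this local flatness, I would build $g$ globally by setting $g(\varnothing):=1$ and, for any $\la\in\Gb$, choosing a saturated chain $\varnothing=\la^{(0)}\nearrow\la^{(1)}\nearrow\dots\nearrow\la^{(n)}=\la$ and putting $g(\la):=\prod_{i=0}^{n-1} r(\la^{(i)},\la^{(i+1)})^{-1}$. Once this is shown to be independent of the chosen chain, the desired identity $r(\mu,\la)=g(\mu)/g(\la)$ is immediate: one simply extends a chain ending at $\mu$ by the edge $\mu\nearrow\la$, so that $g(\la)=g(\mu)\,r(\mu,\la)^{-1}$. Hence $\km(\mu,\la)=\tfrac{g(\mu)}{g(\la)}\,\km'(\mu,\la)$ on every edge, which is exactly gauge equivalence.

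The main obstacle is the path-independence of $g(\la)$, and here I would invoke the distributive-lattice structure of $J(P)$. A saturated chain from $\varnothing$ to $\la$ is the same datum as a linear extension of the finite induced subposet $P|_\la$: an ordering $\square_1,\dots,\square_n$ of the boxes of $\la$ whose every prefix is an order ideal. By the classical fact that the graph of linear extensions of a finite poset is connected under adjacent transpositions of incomparable elements, any two saturated chains to $\la$ are joined by a sequence of such moves, each swapping two consecutive boxes $\square_k,\square_{k+1}$ that are incomparable in $P$. Such a swap changes the chain only at its $k$-th vertex, replacing one middle vertex of a diamond by the other while fixing the diamond's bottom and top; by the flatness identity of the first step the product $\prod_i r(\la^{(i)},\la^{(i+1)})$ is unchanged. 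Therefore $g(\la)$ is well defined, and the proof concludes as above.
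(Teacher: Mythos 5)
Your proof is correct and follows essentially the same route as the paper's: define $g$ as the product of the edge ratios $\km/\km'$ along a saturated chain from $\varnothing$ (your $g$ is the reciprocal of the paper's, which is immaterial) and verify path-independence by flipping across quadrangles. You merely spell out the two steps the paper leaves as ``readily seen'' --- the cancellation of $\km'$ obtained by applying UD-duality to both orderings of the middle vertices of a diamond, and the connectivity of saturated chains under diamond flips via adjacent transpositions of linear extensions.
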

\begin{proof}
  As $g(\la)$ (for any $\la\in\Gb$) in Definition \ref{def:equivalent_multilpicities} take the product $\prod_i\frac{\km(\la^{(i)},\la^{(i+1)})} {\km'(\la^{(i)},\la^{(i+1)})}$ along a path from $\varnothing$ to $\la$. It is readily seen from the duality property that this product does not depend on the path (because one path can be transformed into another by a sequence of flips $\mu\rho\nu\mapsto\mu\la\nu$ along quadrangles (\ref{quadrangle})). Since $\km'(\mu,\la)=\frac{g(\mu)}{g(\la)}\km(\mu,\la)$ for any edge $\mu\nearrow\la$, this concludes the proof.
\end{proof}

A pair of UD-dual multiplicity functions $\km$ and $\km'$ gives rise to an UD-self-dual multiplicity function $\tilde \km(\mu,\la)=\sqrt{\km(\mu,\la)\km'(\mu,\la)}$. Thus, to study Kerov's operators on a graph $\Gb=(J(P),\km)$ with a multiplicity function which is not UD-self-dual, one could first find a function $\km'$ which is UD-dual to $\km$, and then consider the UD-self-dual $\tilde\km=\sqrt{\km\km'}$.


\begin{remark}\label{rmk:Plancherel_operators}
  The notion of UD-(self)-duality generalizes the duality of \cite{fomin1994duality}. Indeed, let $\km$ be UD-self-dual, and consider
  \begin{equation*}
    \Uf^\circ\un\la:=\sum_{\nu\colon\nu\searrow\la}
    \km(\la,\nu)\un\nu,\qquad
    \Df^\circ\un\la:=\sum_{\mu\colon\mu\nearrow\la}
    \km(\mu,\la)
    \un\mu.
  \end{equation*}
  Then by Definition \ref{def:self-dual} the commutator $[\Df^\circ,\Uf^\circ]$ acts diagonally: $[\Df^\circ,\Uf^\circ]\un\la=h(\la)\un\la$ for some function $h(\cdot)$ on $\Gb$. The $\mathbf{r}$-duality \cite[(1.3.4)]{fomin1994duality} (and equivalent sequential differentiality of \cite{stanley1990variations}) is more special as it requires the function $h(\la)$ to depend on $\la$ only through its level $|\la|$ (cf. the branching of plane partitions, see \S \ref{sub:plane_partitions}).

  Note that our notion of UD-(self)-duality does not require branching graphs to be ideal, i.e., this notion falls into the more general framework of \cite{fomin1994duality}.
\end{remark}



\section{Coherent systems of measures} 
\label{sec:coherent_systems_of_measures}

Here we explain in detail how Kerov's operators on a graph of ideals give rise to distinguished probability measures on floors of the graph. 

\subsection{Coherent systems} 
\label{sub:coherent_systems}

Let us recall the general definition of coherent systems (e.g., see \cite{Borodin2007}). Let $\Gb$ be a branching graph (not necessary a lattice of ideals) with the multiplicity function $\km$. Recall that the multiplicity function gives rise to the dimension function $\dim\la$, $\la\in\Gb$ (see the end of \S\ref{sub:branching_graphs}). 

\begin{definition}
  \emph{Down transition probabilities} on the graph $\Gb$ are defined as
  \begin{equation*}
    p^{\da}_{n,n-1}(\la,\mu):=\left\{
    \begin{array}{ll}
      \frac{\km(\mu,\la)\dim\mu}{\dim\la},
      &\mbox{if $\mu\nearrow\la$},\\
      0,&\mbox{otherwise},
    \end{array}
    \right.
  \end{equation*}
  where $\la\in\Gb_{n}$ and $\mu\in\Gb_{n-1}$. For every $n\ge1$, $p^\da_{n,n-1}$ is a \emph{Markov transition kernel} from $\Gb_n$ to $\Gb_{n-1}$ in the sense that $p^\da_{n,n-1}(\la,\mu)\ge0$ for any $\la\in\Gb_n,\mu\in\Gb_{n-1}$, and $\sum_{\mu\in\Gb_{n-1}}p^\da_{n,n-1}(\la,\mu)=1$ for any $\la\in\Gb_n$.
\end{definition}

\begin{definition}
  A family $\{M_n\}$, where each $M_n$ is a probability measure on $\Gb_n$, is called \emph{coherent}, if these measures are compatible with the Markov kernels $p^\da_{n+1,n}$, that is, $M_{n+1}\circ p^\da_{n+1,n}=M_n$. In more detail, this means that
  \begin{equation}\label{coherency}
    \sum_{\nu\colon\nu\searrow\la}
    M_{n+1}(\nu)
    p^{\da}(\nu,\la)=M_n(\la)
    \qquad\mbox{for all $n$ and $\la\in\Gb_n$}.
  \end{equation}
\end{definition}
Observe that the condition (\ref{coherency}) is purely algebraic. Though we will typically consider only positive probability measures, we can also speak about \emph{complex-valued coherent systems} $\{M_n\}$ which satisfy (\ref{coherency}) and for which $\sum_{\la\in\Gb_n}M_n(\la)=1$ (i.e, we drop the condition $M_n(\la)\ge0$ for all $\la\in\Gb_n$).

We say that a family of measures $\{M_n\}$ on $\Gb_n$ is \emph{nondegenerate} if $M_n(\la)\ne0$ for all $\la\in\Gb_n$ and all $n$. We will only deal with nondegenerate case, see Remark \ref{rmk:nondegenerate}.1 below.

Later we will need one more object related to coherent systems of measures:
\begin{definition}
  Let $\{M_n\}$ be a nondegenerate coherent system on $\Gb$. \emph{Up transition probabilities} for $\{M_n\}$ are defined as
  \begin{equation*}
    p^\ua_{n,n+1}(\la,\nu):=\frac{M_{n+1}(\nu)}{M_n(\la)}p^\da_{n+1,n}(\nu,\la),
    \qquad
    \la\in\Gb_n,\ \nu\in\Gb_{n+1},\ \nu\searrow\la.
  \end{equation*}
  From (\ref{coherency}) we see that for every $n$, $p^\ua_{n,n+1}$ is a Markov transition kernel from $\Gb_n$ to $\Gb_{n+1}$. The measures $M_n$ are compatible with these up kernels: $M_n\circ p^\ua_{n,n+1}=M_{n+1}$.
\end{definition}


\subsection{Distinguished coherent systems obtained from Kerov's operators} 
\label{sub:distinguished_coherent_systems_obtained_from_kerov_s_operators}

Let us fix a triplet $(\Uf,\Df,\Hf)$ of Kerov's operators on a graph of ideals $\Gb=(J(P),\km)$ with an UD-self-dual multiplicity function $\km$. We use (\ref{z_meas_UD}) as a prompt to define:
\begin{equation}\label{Kerov_coherent}
  M_n(\la):=\frac{1}{Z_n}(\Uf^n\uno\varnothing,\uno\la)
  (\Df^n\uno\la,\uno\varnothing),\qquad
  \mbox{for all $n=0,1,\dots$ and all $\la\in\Gb_n$}.
\end{equation}
Throughout the paper, $M_n(\la)$ denotes the measure of the singleton $\{\la\}$. In (\ref{Kerov_coherent}), $Z_n$ is the normalizing constant such that $\sum_{\la\in\Gb_n}(\Uf^n\uno\varnothing,\uno\la)(\Df^n\uno\la,\uno\varnothing)=1$, and $(\cdot,\cdot)$ is the inner product in $\ell^2(\Gb)$ (\S\ref{sub:definition_KO}). Let us look closer at the constants $Z_n$, $n=0,1,\ldots$:

\begin{lemma}[{cf. \cite[\S4.1]{Petrov2010}}]\label{lemma:Z_n}
  We have
  \begin{equation*}
    \sum_{\la\in\Gb_n}
    (\Uf^n\uno\varnothing,\uno\la)(\Df^n\uno\la,\uno\varnothing)
    =n!(\zz)_n,
  \end{equation*}
  where $(a_k):=a(a+1)\ldots(a+k-1)$ is the Pochhammer symbol and $\zz=(\Hf\un\varnothing,\un\varnothing)$.
\end{lemma}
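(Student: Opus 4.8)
The plan is to recognize the sum as a single diagonal matrix element of the operator $\Df^n\Uf^n$ and then evaluate it by a standard lowest weight computation inside the $\slf(2,\C)$-module. First I would exploit the grading: since $\Uf$ raises the level by one and $\uno\varnothing$ sits on level $0$, the vector $\Uf^n\uno\varnothing$ lies in the span of $\{\un\la\}_{\la\in\Gb_n}$, and because $\{\un\la\}$ is an orthonormal basis its expansion coefficients are exactly $(\Uf^n\uno\varnothing,\uno\la)$. Applying $\Df^n$ lowers the level back to $0$, so each $\Df^n\uno\la$ is a multiple of $\uno\varnothing$ with coefficient $(\Df^n\uno\la,\uno\varnothing)$. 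Pairing with $\uno\varnothing$ then collapses the double expansion and yields
\[
  \sum_{\la\in\Gb_n}(\Uf^n\uno\varnothing,\uno\la)(\Df^n\uno\la,\uno\varnothing)=(\Df^n\Uf^n\uno\varnothing,\uno\varnothing).
\]
Note this reduction uses only the grading and the orthonormality of the basis, not the adjointness of $\Uf$ and $\Df$.

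Next I would evaluate $\Df^n\Uf^n\uno\varnothing$ using that $\uno\varnothing$ is a lowest weight vector: $\Df\uno\varnothing=0$ (the defining sum is empty, as no vertex lies below $\varnothing$) and $\Hf\uno\varnothing=\zz\uno\varnothing$ by Proposition \ref{prop:action_H}. From $[\Hf,\Uf]=2\Uf$ one gets $\Hf\Uf^k\uno\varnothing=(\zz+2k)\Uf^k\uno\varnothing$. The key step is the commutator identity $[\Df,\Uf^n]=\sum_{j=0}^{n-1}\Uf^{j}\Hf\Uf^{n-1-j}$, which follows from $[\Df,\Uf]=\Hf$ by telescoping. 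Applying it to $\uno\varnothing$, substituting the eigenvalue of $\Hf\Uf^{n-1-j}\uno\varnothing$, and using $\Df\uno\varnothing=0$, I would obtain the one-step recursion $\Df\Uf^{n}\uno\varnothing=n(\zz+n-1)\,\Uf^{n-1}\uno\varnothing$, the coefficient being the value of the arithmetic sum $\sum_{i=0}^{n-1}(\zz+2i)=n(\zz+n-1)$.

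Finally, iterating this recursion gives $\Df^n\Uf^n\uno\varnothing=\prod_{k=1}^{n}k(\zz+k-1)\,\uno\varnothing=n!\,(\zz)_n\,\uno\varnothing$, and pairing with $\uno\varnothing$, which has unit norm, finishes the proof. I do not expect a genuine obstacle: the entire computation takes place in the lowest weight submodule generated by $\uno\varnothing$, where only the abstract $\slf(2,\C)$ relations (\ref{comm_rel}) and the lowest weight conditions are used. The only points requiring care are the bookkeeping in the reduction step — checking that the grading makes the double sum collapse to a single matrix element — and the correct evaluation of the progression that produces the recursion coefficient.
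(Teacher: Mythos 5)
Your proposal is correct and follows essentially the same route as the paper: both reduce the sum to the single matrix element $(\Df^n\Uf^n\un\varnothing,\un\varnothing)$ and then apply the commutator expansion $\Df\Uf^n=\Uf^n\Df+\sum_{k=0}^{n-1}\Uf^{n-k-1}\Hf\Uf^{k}$ together with $\Df\un\varnothing=0$ and the $\Hf$-eigenvalue $\zz+2k$ to obtain the recursion $Z_n=n(n-1+\zz)Z_{n-1}$. The only cosmetic difference is that you run the recursion on the vector $\Df^k\Uf^n\un\varnothing$ rather than on the scalar $Z_n$, which changes nothing of substance.
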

\begin{proof}
  From the commutation relations (\ref{comm_rel}) one gets
  \begin{equation}\label{DU^n}
    \Df\Uf^n=\Uf^n\Df+\sum_{k=0}^{n-1}\Uf^{n-k-1}\Hf\Uf^k.
  \end{equation}
  We obtain
  \begin{align*}
    Z_n&=\sum_{\la\in\Gb_n}
    (\Uf^n\uno\varnothing,\uno\la)(\Df^n\uno\la,\uno\varnothing)
    =
    \Big( \Df^n\sum_{\la\in\Sb_n}
    (\Uf^n\uno\varnothing,\uno\la)\cdot\uno\la,\uno\varnothing\Big)
    =
    (\Df^n\Uf^n\un\varnothing,\un\varnothing)
    \\&=
    \sum_{k=0}^{n-1}(\Df^{n-1}\Uf^{n-k-1}\Hf\Uf^k\un\varnothing,\un\varnothing)
    =Z_{n-1}
    \sum_{k=0}^{n-1}\left(2k+\zz\right)=
    n\left( n-1+\zz \right)Z_{n-1}
  \end{align*}
  (we have used the fact that $\Df\un\varnothing=0$). Taking into account the initial value $Z_0=(\Uf^0\Df^0\un\varnothing,\un\varnothing)=1$, we see that $Z_n=n!(\zz)_n$.
\end{proof}
This means that for any $\zz\in\C$, $\zz\ne 0,-1,-2,\dots$, the normalizing constants $Z_n$ do not vanish for all $n$. Thus, for such $\zz$ we can consider the (possibly complex-valued) measures $M_n$ for all $n\in\Z_{\ge0}$. We have
\begin{equation}\label{mult_m}
  M_n(\la)=\frac{(\dim\la)^2}{n!(\zz)_n}\prod_{\square\in\la}
  \ufunc(\square)^2,\qquad \la\in\Gb_n.
\end{equation}

\begin{theorem}[{cf. \cite[\S4.1]{Petrov2010}}]
\label{thm:Kerov_coherency}
  Let us assume that $\zz\in\C\setminus\{0,-1,-2,\ldots\}$. The measures $\left\{ M_n \right\}_{n=0}^{\infty}$ given by (\ref{Kerov_coherent}) (or (\ref{mult_m})) form a coherent system on the graph $\Gb=(J(P),\km)$.
\end{theorem}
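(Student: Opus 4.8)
The plan is to verify the pointwise coherence relation (\ref{coherency}) directly, by substituting the explicit product formula (\ref{mult_m}) for $M_n$ together with the definition of the down kernel $p^{\da}_{n+1,n}(\nu,\la)=\km(\la,\nu)\dim\la/\dim\nu$. First I would record the elementary fact that, because $\ufunc$ is a function on boxes of the underlying poset, the amplitude $(\Uf^n\un\varnothing,\un\la)$ telescopes along every saturated chain from $\varnothing$ to $\la$ into one and the same product, giving $(\Uf^n\un\varnothing,\un\la)=(\Df^n\un\la,\un\varnothing)=\dim\la\prod_{\square\in\la}\ufunc(\square)$; this is precisely what turns (\ref{Kerov_coherent}) into (\ref{mult_m}). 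Plugging these into the left-hand side of (\ref{coherency}) and cancelling the common factors $\prod_{\square\in\la}\ufunc(\square)^2$, one copy of $\dim\la$, and using $(n+1)!(\zz)_{n+1}=(n+1)(n+\zz)\cdot n!(\zz)_n$, the whole relation collapses to the single scalar identity
\[
  \sum_{\nu\colon\nu\searrow\la}\dim(\nu)\,\km(\la,\nu)\,\ufunc(\nu/\la)^2
  =(|\la|+1)(|\la|+\zz)\,\dim\la,\qquad \la\in\Gb_{n}.
\]

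To establish this scalar identity I would exploit the $\slf(2,\C)$ structure in exactly the way it is used in the proof of Lemma \ref{lemma:Z_n}. Applying relation (\ref{DU^n}) to $\un\varnothing$, using $\Df\un\varnothing=0$ and $\Hf\Uf^k\un\varnothing=(2k+\zz)\Uf^k\un\varnothing$ (Proposition \ref{prop:action_H}), yields the clean operator identity $\Df\Uf^{n+1}\un\varnothing=\big(\sum_{k=0}^{n}(2k+\zz)\big)\Uf^{n}\un\varnothing=(n+1)(n+\zz)\,\Uf^{n}\un\varnothing$. Taking the inner product of both sides with $\un\la$, $\la\in\Gb_n$, and expanding the left-hand side through $\Df\un\nu=\sum_{\rho\nearrow\nu}\km(\rho,\nu)\ufunc(\nu/\rho)\un\rho$ and $\Uf^{n+1}\un\varnothing=\sum_{\nu\in\Gb_{n+1}}\dim(\nu)\big(\prod_{\square\in\nu}\ufunc(\square)\big)\un\nu$, only the terms with $\rho=\la$ survive, the common factor $\prod_{\square\in\la}\ufunc(\square)$ cancels, and one recovers precisely the displayed scalar identity above. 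This proves coherence at every vertex.

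Finally I would dispose of the normalization bookkeeping: Lemma \ref{lemma:Z_n} gives $Z_n=n!(\zz)_n$, which is nonzero for every $n$ under the standing hypothesis $\zz\notin\{0,-1,-2,\dots\}$, so (\ref{mult_m}) is well defined; and since the pointwise relation (\ref{coherency}) holds and each $p^{\da}_{n+1,n}$ is a stochastic kernel, summing over $\la$ shows that the total masses of $M_n$ and $M_{n+1}$ automatically agree, consistently with the choice of $Z_n$. The computation is essentially routine once the operator identity $\Df\Uf^{n+1}\un\varnothing=(n+1)(n+\zz)\,\Uf^{n}\un\varnothing$ is in hand; the only point that genuinely demands care, and where an error would most easily creep in, is the translation between the abstract Markov-kernel form of coherence and the coefficient identity --- that is, correctly tracking the path-independent product $\prod_{\square\in\la}\ufunc(\square)$ and the ratio of the Pochhammer normalizations $Z_{n+1}/Z_n$.
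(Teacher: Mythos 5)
Your argument is correct and follows essentially the same route as the paper: both proofs rest on applying the commutation identity (\ref{DU^n}) to $\un\varnothing$ together with $\Df\un\varnothing=0$ and the diagonal action of $\Hf$, so that $\Df\Uf^{n+1}\un\varnothing=(n+1)(n+\zz)\,\Uf^{n}\un\varnothing$ is exactly what encodes the coherence relation. The only difference is cosmetic: the paper runs the computation in the gauge $(\hat\Uf,\hat\Df)$ where $\hat\Df$ has unit coefficients, which avoids ever dividing by $\prod_{\square\in\la}\ufunc(\square)$; in your version that division is needed both to reduce (\ref{coherency}) to your scalar identity and to extract it from the operator identity, so at a vertex $\la$ containing a box with $\ufunc(\square)=0$ you should add the (trivial) remark that (\ref{coherency}) holds there anyway because both sides carry the vanishing factor $\prod_{\square\in\la}\ufunc(\square)^2$.
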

\begin{proof}
  We must establish (\ref{coherency}). Let us use another Kerov's representation (Definition \ref{def:Kerov's_rep}) where the up and down operators look as
  \begin{equation*}
    \hat \Uf\uno\la:=
    \sum_{\nu\colon\nu\searrow\la}
    \km(\la,\nu)\ufunc(\nu/\la)^2\uno\nu,
    \qquad
    \hat \Df\uno\la:=
    \sum_{\mu\colon\mu\nearrow\la}
    \km(\mu,\la)\uno\mu.
  \end{equation*}
  Clearly, from (\ref{q_condition}) we have $[\hat\Df,\hat\Uf]=\Hf$. Also note that
  \begin{equation*}
    M_n(\la)=\frac1{Z_n}
    (\hat\Uf^n\uno\varnothing,\uno\la)
    (\hat\Df^n\uno\la,\uno\varnothing)=
    \frac{\dim\la}{Z_n}
    (\hat\Uf^n\uno\varnothing,\uno\la).
  \end{equation*}
  Therefore, we can write
  \begin{align*}
    \sum_{\nu\colon\nu\searrow\la}
    M_{n+1}(\nu)p^\da_{n+1,n}(\nu,\la)&=
    \frac{\dim\la}{Z_{n+1}}
    \sum_{\nu\colon\nu\searrow\la}
    \km(\la,\nu)(\hat\Uf^{n+1}\un\varnothing,\un\nu)\\&=
    \frac{\dim\la}{Z_{n+1}}
    (\hat\Uf^{n+1}\un\varnothing,\hat\Df^*\un\la)=
    \frac{\dim\la}{Z_{n+1}}
    (\hat\Df\hat\Uf^{n+1}\un\varnothing,\un\la).
  \end{align*}
  Using identity (\ref{DU^n}) (which of course also holds for $(\hat\Uf,\hat\Df,\Hf)$), we compute
  \begin{align*}
    \frac{\dim\la}{Z_{n+1}}
    (\hat\Df\hat\Uf^{n+1}\un\varnothing,\un\la)
    =\frac{\dim\la}{Z_{n+1}}
    \sum_{k=0}^{n-1}
    (\hat\Uf^{n-k-1}\Hf\hat\Uf^{k}\un\varnothing,\un\la)
    =\frac{\dim\la}{Z_n}(\hat\Uf^{n}\un\varnothing,\un\la)=
    M_n(\la).
  \end{align*}
  This concludes the proof.
\end{proof}

We see that the measures $M_n$ are nonnegative (i.e., they are true probability measures) precisely when one can take all complex numbers $\{\ufunc(\square)^2\}_{\square\in P}$ to be nonnegative. In fact, this implies (see (\ref{q_condition_0})) that $\zz\ge0$. 

\begin{remarks}\label{rmk:nondegenerate}
  \textbf{1.}
  Assume that the coherent system $\{M_n\}$ is degenerate. This is equivalent to having $\ufunc(v)=0$ for some $v\in P$. Set $I_v:=\{u\in P\colon u\succeq v\}$. Clearly, $M_n(\nu)=0$ for any $\nu\in\Gb$ for which $\nu\cap I_v\ne\varnothing$. This means that $\{M_n\}$ lives on the lattice of ideals of the subposet $(P\setminus I_v)\subset P$ (cf. how finite-dimensional modules arise in \S\ref{sub:lowest_weight_slf_2_c_modules}). This observation allows to deal only with nondegenerate coherent systems, and in the sequel we assume that $\{M_n\}$ (\ref{Kerov_coherent}) is a nondegenerate coherent system of (real-valued) probability measures on floors of the graph $\Gb$.

  \textbf{2.}
  One sees that the measures $M_n(\la)$ given by (\ref{Kerov_coherent})--(\ref{mult_m}) are multiplicative in the sense of \cite{Borodin1997}, \cite{rozhkovskaya1997multiplicative}. Those papers gave classifications of multiplicative coherent systems on Schur and Young graphs, respectively. In fact (see \S\S \ref{sub:measures_with_jack_parameter}, \ref{sub:schur_graph}), the supply of coherent systems obtained through Kerov's operators for these two graphs is the same as described in \cite{Borodin1997}, \cite{rozhkovskaya1997multiplicative}. However, our approach with Kerov's operators is more unified, is applicable to more examples of branching graphs and seems simpler. It also provides other constructions and results.

  \textbf{3.}
  It is also possible to consider operators spanning an $\slf(2,\C)$-module for branching graphs whose set of vertices is not a lattice of ideals. Such an example is considered in \cite{Fulman2009Trees}, we discuss it in \S \ref{sub:rooted_trees}. In that case many of the results of the present paper also hold. However, we stick to the formalism of graphs of ideals because it allows to give a unified characterization of several interesting families of measures on various graphs.
\end{remarks}


\subsection{Mixing of measures} 
\label{sub:mixing_of_measures}

Let $\xi\in(0,1)$ and $c>0$ be parameters. The \emph{negative binomial distribution} on $\Z_{\ge0}$ is the probability measure defined as
\begin{equation}\label{NegBinom}
  \pi_{c,\xi}(n):=(1-\xi)^{c}\frac{(c)_n}{n!}\xi^n,\qquad
  n=0,1,\ldots.
\end{equation}

\begin{definition}
    Assume that the Kerov's operators $(\Uf,\Df,\Hf)$ on our graph $\Gb=(J(P),\km)$ are such that the parameter $\zz=(\Hf\un\varnothing,\un\varnothing)$ is positive. Then one can define the \emph{mixing} of the measures $\{M_n\}_{n=0}^{\infty}$ (\ref{Kerov_coherent}), (\ref{mult_m}) by means of the negative binomial distribution $\pi_{\zz,\xi}$ (where $\xi\in(0,1)$ is the new parameter) as
  \begin{equation*}
    M_\xi(\la):=\pi_{\zz,\xi}(|\la|)\cdot M_{|\la|}(\la)=
    (1-\xi)^{\zz}\xi^{|\la|}
    \left(\frac{\dim\la}{|\la|!}\right)^2
    \prod_{\square\in\la}
    \ufunc(\square)^2,
  \end{equation*}
  where $\la$ runs over all graph $\Gb$. The measure $M_n$ is reconstructed from $M_\xi$ by conditioning on $\{|\la|=n\}$.
\end{definition}

The idea of mixing of the measures $M_n$ comes from the case of the $z$-measures \cite{Borodin2000a}. A similar construction is the poissonization of the Plancherel measures on partitions \cite{baik1999distribution}, \cite{Borodin2000b} (see also \S\ref{sec:heisenberg_operators}). The mixing of measures also can be viewed as a passage to the grand canonical ensemble \cite{Vershik1996StatMech}. The recent paper \cite{BorodinOlsh2011Bouquet} introduces an object related to the Young graph --- the Young bouquet. In that context the appearance of mixed measures looks very natural.

The mixed measures $M_\xi$ are preserved by the Markov jump process on $\Gb$ we construct below in \S\ref{sec:markov_jump_dynamics}.


\subsection{Application to Pascal triangle} 
\label{sub:application_to_pascal_triangle}

Let us use our description of Kerov's operators for the Pascal triangle (\S\ref{sub:kerov_s_operators_for_pascal_triangle}) to see what measures arise for $\PT$. From (\ref{Pascal_dim}) and (\ref{Kerov_coherent}) one has the following (complex-valued) measures coming from Kerov's operators:
\begin{equation}\label{Kerov_coherent_Pascal}
  M_n^{\zz_1\zz_2}(k,l)=\frac{n!}{k!l!}
  \frac{(\zz_1)_k(\zz_2)_l}{(\zz_1+\zz_2)_{n}},
\end{equation}
where $(k,l)\in\PT_n$ (i.e., $k+l=n$). Assume that the coherent system $\{M_n^{\zz_1\zz_2}\}$ is nondegenerate, that is, $\zz_1,\zz_2>0$. Let us explain the probabilistic nature of the measures $M_n^{\zz_1\zz_2}$. Consider the sequence of Bernoulli trials with the probability of success $p\in[0,1]$ having the $\mathrm{Beta}(\zz_1,\zz_2)$ distribution. That is, $p$ is a random variable with density $\frac{\Ga(\zz_1+\zz_2)}{\Ga(\zz_1)\Ga(\zz_2)}p^{\zz_1-1}(1-p)^{\zz_2-1}dp$. In other words, one first samples $p\in[0,1]$ from $\mathrm{Beta}(\zz_1,\zz_2)$, and then samples a binary sequence with probability of success $p$. Then one readily checks that
\begin{align*}&
  M_n^{\zz_1\zz_2}(k,l)=
  \Prob\{\text{in a sequence of length $n$ 
  there are $k$ 1's and $l=n-k$ 0's}\}.
\end{align*}
Thus, the measures $M_n^{\zz_1\zz_2}$ describe Bernoulli trials with $\mathrm{Beta}(\zz_1,\zz_2)$ prior. The $\mathrm{Beta}(\zz_1,\zz_2)$ distribution on the boundary $[0,1]=\partial\PT$ is exactly the one corresponding to the coherent system $\{M_n^{\zz_1\zz_2}\}$, see \S\ref{sub:pascal_triangle}.

\begin{remark}\label{rmk:Pascal_extreme_coherent_limits}
  Extreme coherent systems on branching graphs (i.e., the ones not expressible as nontrivial convex combinations of other coherent systems) correspond to delta measures on the boundary (see \cite{Kerov1998}). For $\PT$, the extreme coherent system corresponding to $p\in[0,1]$ looks as $M_{n}^{\text{ex},p}(k,l)=\frac{n!}{k!l!}p^k(1-p)^{l}$, and can be (pointwise) approximated by the systems $\{M_n^{\zz_1\zz_2}\}$ coming from the Kerov's operators (take $\zz_1/(\zz_1+\zz_2)\to p$). This property distinguishes the ``small'' branching graph $\PT$ (and its multidimensional generalization) from ``big'' branching graphs such as the Young graph, for which only very particular extreme coherent systems can be obtained as limits of the ones coming from the Kerov's operators.
\end{remark}

The mixing of the distinguished measures $M_n^{\zz_1\zz_2}$ (\ref{Kerov_coherent_Pascal}) by means of the negative binomial distribution $\pi_{\zz_1+\zz_2,\xi}$ simplifies the measures significantly ($\zz_1,\zz_2>0$):
\begin{equation*}
  M_{\xi}^{\zz_1\zz_2}(k,l)=\pi_{\zz_1+\zz_2,\xi}(k+l)\cdot M_{k+l}^{\zz_1\zz_2}(k,l)=
  \pi_{\zz_1,\xi}(k)\cdot
  \pi_{\zz_2,\xi}(l),\qquad (k,l)\in\PT.
\end{equation*}
That is, the components $k$ and $l$ of the vertex $(k,l)\in\PT$ become independent, and we see how the effect of Proposition \ref{prop:KO_disjoint} comes into play. For other graphs the mixing procedure also simplifies the measures. For example, the determinantal/Pfaffian structure of measures and dynamics on Young and Schur graphs is present only for mixed measures and not at the level of the coherent systems $\{M_n\}$ \cite{Borodin2000a}, \cite{Borodin2006}, \cite{Petrov2010Pfaffian}.



\section{Relative dimension functions} 
\label{sec:relative_dimension_functions}

In this section we discuss relative dimension functions on the graph $\Gb=(J(P),\km)$ with an UD-self-dual edge multiplicity function. It turns out that the action of the Kerov's operators on these functions has a particularly nice form.

\subsection{Definition of relative dimension functions} 
\label{sub:definition_of_relative_dimension_functions}

Let $a^{\da k}:=a(a-1)\ldots(a-k+1)$ denote the falling factorial power. The \emph{relative dimension functions} $\{\Pc_{\mu}^*\}_{\mu\in\Gb}$ are indexed by vertices of the graph, and are defined as
\begin{equation}\label{relative_dimension_functions}
  \Pc_{\mu}^*(\la):=|\la|^{\da|\mu|}\frac{\dim(\mu,\la)}{\dim\la},
  \qquad\la\in\Gb.
\end{equation}
Here the relative dimension $\dim(\cdot,\cdot)$ is defined in the end of \S\ref{sub:branching_graphs}. The notation $\Pc_{\mu}^*$ comes from \cite{Borodin2000} where it is assumed that these functions form an algebra. The ``$*$'' symbol here does not mean any duality: in concrete examples the ``$*$'' notation marks \emph{shifted} (or \emph{factorial}) functions. In particular, for the Young graph the functions $\Pc_{\mu}^*$ are the shifted Schur functions $s_\mu^*$ which are ``deformed'' versions of the ordinary Schur functions $s_\mu$  \cite{OkounkovOlshanski1996ShiftSchur}. On our abstract level we do not have any non-shifted functions $\Pc_\mu$, and deal only with $\Pc_\mu^*$'s.

The relative dimension functions $\Pc_\mu^*$ have the following properties which are readily checked:
\begin{enumerate}[{\bf1.\/}]
  \item\label{Pc_prop_varnothing}
  $\Pc_\varnothing^*\equiv1$.
  \item\label{Pc_prop_interpolation}
  (Interpolation property; cf. \cite[Thm. 3.2]{OkounkovOlshanski1996ShiftSchur}, \cite{Okounkov1996quantumImm})
  $\Pc_\mu^*(\la)=0$ unless $\mu\subseteq\la$. For $|\mu|=|\la|$, $\Pc_\mu^*(\la)$ is nonzero only for $\mu=\la$, and in this case $\Pc_\mu^*(\mu)={|\mu|!}/{\dim\mu}$.
  \item\label{Pc_prop_estimate}
  $0\le\Pc_\mu^*(\la)\le|\la|^{\da|\mu|}\le|\la|^{|\mu|}$ for all $\mu,\la\in\Gb$.
  \item\label{Pc_prop_p1}
  Let $\po$ denote the function which is equal to the level of a vertex: $\po(\la):=|\la|$. It is equal to
  \begin{equation*}
    \po=\sum_{\varkappa\colon\varkappa\searrow\varnothing}
    \km(\varnothing,\varkappa)\Pc^*_{\varkappa}.
  \end{equation*}

  \item\label{Pc_prop_Pieri}
  The functions $\Pc_\mu^*$ satisfy the following recurrence: 
  \begin{equation*}
    \po\cdot
    \Pc_\mu^*=|\mu|\Pc_\mu^*+
    \sum_{\rho\colon\rho\searrow\mu}
    \km(\mu,\rho)\Pc_\rho^* \qquad
    \mbox{for all $\mu\in\Gb$}.
  \end{equation*}
  \item\label{Pc_prop_la-mu}
  There is also another recurrence for the values of the functions $\Pc_\mu^*$:
  \begin{align*}
    (|\la|-|\mu|)\Pc_\mu^*(\la)&=
    |\la|\sum_{\nu\colon\nu\nearrow\la}
    p^\da_{n,n-1}(\la,\nu)
    \Pc_{\mu}^*(\nu),\qquad
    \mbox{for all $\mu,\la\in\Gb$, $|\la|=n$.}
  \end{align*}
\end{enumerate}


\subsection{Linear space $\A$} 
\label{sub:space_A}

Let $\A$ denote the linear span of the functions $\{\Pc_\mu^*\}_{\mu\in\Gb}$ (i.e., the space of \emph{finite} linear combinations of $\Pc_\mu^*$'s). This is a linear space, and we do not require $\A$ to have a structure of an algebra under pointwise multiplication (see also \S\ref{ssub:te-regular}). Let $\A_m$ be the span of $\{\Pc_\mu^*\}_{\mu\in\Gb,\,|\mu|\le m}$, $m=0,1,\ldots$. These are finite-dimensional linear spaces, and they provide an ascending filtration of $\A$:
\begin{equation*}
  \A_0\subset\A_1\subset\A_2\subset\dots\subset\A.
\end{equation*}

Until the end of this section we fix a triplet of Kerov's operators on $\Gb$. Let $\{M_n\}$ be the corresponding coherent system (\S\ref{sub:distinguished_coherent_systems_obtained_from_kerov_s_operators}), and let $M_\xi$ be the mixed measure (\S\ref{sub:mixing_of_measures}). We assume that $M_\xi(\la)>0$ for any $\la\in\Gb$, that is, the coherent system is nondegenerate, and also that $\zz=(\Hf\un\varnothing,\un\varnothing)>0$. By $\ell^2(\Gb,M_\xi)$ denote the Hilbert space of functions on $\Gb$ which are square integrable with respect to the measure $M_\xi$, with the inner product $(f,g)_{M_\xi}:=\sum_{\la\in\Gb}f(\la)\overline{g(\la)}M_{\xi}(\la)$.

\begin{proposition}\label{prop:completeness}
  The space $\A$ with the linear basis $\{\Pc_{\mu}^{*}\}_{\mu\in\Gb}$ is a dense subspace of $\ell^2(\Gb,M_\xi)$.
\end{proposition}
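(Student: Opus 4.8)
The plan is to prove density by showing that the orthogonal complement $\A^{\perp}$ of $\A$ in $\ell^2(\Gb,M_\xi)$ is trivial. The starting observation is that $\A$ is invariant under multiplication by the level function $\po$. Indeed, by the Pieri-type recurrence (Property~\ref{Pc_prop_Pieri}) one has $\po\cdot\A_m\subseteq\A_{m+1}$, and since $\Pc_\varnothing^{*}\equiv1$ this gives $\po^{k}\Pc_\mu^{*}\in\A_{|\mu|+k}\subseteq\A$ for every $\mu\in\Gb$ and every $k\ge0$. Each such function lies in $\ell^2(\Gb,M_\xi)$: by the bound $0\le\Pc_\mu^{*}(\la)\le|\la|^{|\mu|}$ (Property~\ref{Pc_prop_estimate}) it is dominated by the polynomial $|\la|^{k+|\mu|}$, while summing the mixed measure over a single level gives $\sum_{\la\in\Gb_n}M_\xi(\la)=\pi_{\zz,\xi}(n)$ (here I use that each $M_n$ is a genuine probability measure on $\Gb_n$, hence sums to $1$). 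The negative binomial weights $\pi_{\zz,\xi}(n)\sim C\,n^{\zz-1}\xi^{n}$ decay exponentially and thus have finite moments of all orders, so all the functions $\po^{k}\Pc_\mu^{*}$ are square-integrable against $M_\xi$.

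Next I would fix $f\in\A^{\perp}$ and, for each $\mu\in\Gb$, introduce the level sums
\[
\psi_\mu(n):=\sum_{\la\in\Gb_n}f(\la)\,\Pc_\mu^{*}(\la)\,M_\xi(\la),\qquad n\ge0 .
\]
Since $\Pc_\mu^{*}$ and $M_\xi$ are real-valued and both $f$ and $\po^{k}\Pc_\mu^{*}$ lie in $\ell^2$, the defining double sum of $(f,\po^{k}\Pc_\mu^{*})_{M_\xi}$ converges absolutely and may be rearranged by levels, yielding the vanishing of all moments:
\[
\sum_{n\ge0}n^{k}\psi_\mu(n)=(f,\po^{k}\Pc_\mu^{*})_{M_\xi}=0\qquad\text{for all }k\ge0 .
\]
A Cauchy--Schwarz estimate on each level, combined with Property~\ref{Pc_prop_estimate} and $\sum_{\la\in\Gb_n}|f(\la)|^{2}M_\xi(\la)\le\|f\|^{2}$, produces the exponential decay
\[
|\psi_\mu(n)|\le\|f\|\,n^{|\mu|}\,\pi_{\zz,\xi}(n)^{1/2}\le C'\,n^{|\mu|+(\zz-1)/2}\,\xi^{n/2}.
\]

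The crux is then a determinacy step. Because of this decay, the power series $F(w):=\sum_{n\ge0}\psi_\mu(n)w^{n}$ has radius of convergence at least $\xi^{-1/2}>1$, so $F$ is analytic in a neighbourhood of $w=1$. Applying the Euler operator one gets $(w\,d/dw)^{k}F(w)=\sum_{n\ge0}n^{k}\psi_\mu(n)w^{n}$, which vanishes at $w=1$ for every $k$ by the moment identity; since the operators $(w\,d/dw)^{k}$ are related to the ordinary derivatives $d^{j}/dw^{j}$ by an invertible triangular (Stirling) change near $w=1\neq0$, all Taylor coefficients of $F$ at $w=1$ vanish, forcing $F\equiv0$ and hence $\psi_\mu(n)=0$ for all $n$. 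Finally, for a fixed $\mu$ I specialize to $n=|\mu|$ and invoke the interpolation property (Property~\ref{Pc_prop_interpolation}), namely $\Pc_\mu^{*}(\la)=\tfrac{|\mu|!}{\dim\mu}\,\delta_{\mu\la}$ for $\la\in\Gb_{|\mu|}$, to get
\[
0=\psi_\mu(|\mu|)=\frac{|\mu|!}{\dim\mu}\,f(\mu)\,M_\xi(\mu).
\]
As $M_\xi(\mu)>0$ by nondegeneracy, this forces $f(\mu)=0$ for every $\mu\in\Gb$, i.e.\ $f=0$; hence $\A^{\perp}=\{0\}$ and $\A$ is dense.

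I expect the determinacy step to be the main obstacle, and it is precisely what dictates the whole structure of the argument: working with the exponentially decaying mixed measure $M_\xi$ rather than an individual $M_n$ is essential, and it matters that the level-marginal of $M_\xi$ is exactly the negative binomial distribution with its exponential tail. The polynomial bound of Property~\ref{Pc_prop_estimate} is what makes each $\po^{k}\Pc_\mu^{*}$ square-integrable against this measure, and the interpolation property is what lets me descend from the level sums $\psi_\mu$ back to the individual pointwise values $f(\mu)$.
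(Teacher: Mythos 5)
Your argument is correct, but it proves density by a genuinely different route from the paper. The paper argues directly: it checks that the $\Pc_\mu^*$ separate points of $\Gb$, reduces the problem to approximating functions of the form $f\chi_N$ (with $f\in\A$ and $\chi_N$ the indicator of $\Gb_{\le N}$) by functions $f\cdot h(\po)$ with $h$ a polynomial, and then invokes the classical fact that polynomials are dense in $\ell^2(\Z_{\ge0},\pi_{\zz+2m,\xi})$ (Riesz, Akhiezer). You instead show $\A^{\perp}=\{0\}$: the invariance $\po\cdot\A_m\subseteq\A_{m+1}$ from Property \ref{Pc_prop_Pieri} turns orthogonality to $\A$ into the vanishing of all moments of the level sums $\psi_\mu(n)$; your generating-function step (analyticity of $F$ on a disk of radius $\xi^{-1/2}>1$ together with the triangular Stirling relation between $(w\,d/dw)^k$ and $w^j\,d^j/dw^j$ at $w=1\ne0$) is a correct, self-contained substitute for the citation to the moment-problem literature; and the descent to $f(\mu)=0$ via $\psi_\mu(|\mu|)$, the interpolation property, and the nondegeneracy $M_\xi(\mu)>0$ is also sound. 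Both proofs ultimately rest on the same mechanism --- the exponential tail of the negative binomial marginal of $M_\xi$ --- but yours trades the reduction to indicator functions and the external reference for a duality argument with an explicit analytic-continuation step. The one point you do not address is the phrase ``with the linear basis'' in the statement: linear independence of the $\Pc_\mu^*$ (step 1 of the paper's proof) should be recorded, though it follows in one line from the same interpolation property you already use.
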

\begin{proof}
  The proof goes by steps.

  {\bf1.\/} We first show that the functions $\Pc_\mu^*$ are linearly independent. Consider a linear relation $\sum c_\nu\Pc_\nu^*\equiv 0$. Let $n$ be the minimal $|\nu|$ appearing in this relation. For any $\la\in\Gb_n$, we see by the interpolation property that in the sum $\sum c_\nu\Pc_\nu^*(\la)$ only the term with $\nu=\la$ survives. This implies that $c_\la=0$, and thus all $c_\nu$'s are zero by induction.

  {\bf2.\/} The functions $\Pc_\mu^*$ separate points of $\Gb$ because for any two vertices $\rho\ne\varkappa$, $|\rho|\le|\varkappa|$, the function $\Pc^*_{\varkappa}$ vanishes at $\rho$ and is nonzero at $\varkappa$.

  {\bf3.\/} Now let us show that each $\Pc_\mu^*$ is square integrable with respect to $M_\xi$. We have
  \begin{align*}
    (\Pc_\mu^*,&\Pc_\mu^*)_{M_\xi}=
    (1-\xi)^{\zz}
    \sum\nolimits_{\la\in\Gb}\Pc_\mu^*(\la)^2
    \frac{(\zz)_{|\la|}}{|\la|!}\xi^{|\la|}M_{|\la|}(\la)
    \\&\le
    \mathrm{const}
    \sum\nolimits_{\la\in\Gb}|\la|^{2|\mu|}
    \frac{(\zz)_{|\la|}}{|\la|!}\xi^{|\la|}M_{|\la|}(\la)
    \\&=
    \mathrm{const}
    \sum\nolimits_{n=0}^{\infty}n^{2|\mu|}
    \frac{(\zz)_{n}}{n!}\xi^{n}\sum\nolimits_{\la\in\Gb_n}M_{n}(\la)
    =\mathrm{const}
    \sum\nolimits_{n=0}^{\infty}n^{2|\mu|}
    \frac{(\zz)_{n}}{n!}\xi^{n}<\infty
  \end{align*}
  because $\frac{(\zz)_n}{n!}=\frac{\Gamma(n+\zz)}{\Gamma(\zz)\Gamma(n+1)}\sim \mathrm{const}\cdot n^{\zz-1}$ as $n\to\infty$, see \cite[(1.18.5)]{Erdelyi1953}.

  {\bf4.\/} Now we can show that the space $\A$ is dense in $\ell^2(\Gb,M_\xi)$. It suffices to approximate any function $F$ on $\Gb$ with finite support by functions from $\A$. Take $N$ so large that $\mathop{\mathrm{supp}}(F)$ lies inside the set $\Gb_{\le N}:=\{\la\in\Gb\colon|\la|\le N\}$. Let $\chi_N$ denote the characteristic function of $\Gb_{\le N}$. Since (by step 2) one can find $f\in\A$ with any prescribed values on $\Gb_{\le N}$, it suffices to approximate (by elements of $\A$) any function of the form $f\chi_N$, where $f\in\A$ and $N=0,1,2,\dots$.

  We will approximate $f\chi_N$ by elements of the form $f \cdot h(\po)$, where $h$ is some polynomial. Observe that by properties in \S\ref{sub:definition_of_relative_dimension_functions}, $h(\po)\in\A$. Let $m$ be the degree of $f$ under the filtration of $\A$, then
  \begin{equation*}
    |f(\la)|^2\le \mathrm{const}(1+|\la|^{2m}).
  \end{equation*}
  Therefore, similarly to the previous step, we can write
  \begin{align}\label{Pc_density_proof}
    \sum_{\la\in\Gb}
    \left|f(\la)\big(\chi_N(\la)-h(\po(\la))\big)\right|^2\le
    \mathrm{const}
    \sum_{n=0}^{\infty}
    (1+n^{2m})
    |\bar\chi_{N}-h(n)|^{2}\pi_{\zz,\xi}(n),
  \end{align}
  where $\bar\chi_{N}$ is the characteristic function of the subset $\{0,\ldots,N\}\subset\Z_{\ge0}$ and $\pi_{\zz,\xi}$ is the negative binomial distribution (\ref{NegBinom}). Since $n^{2m}\pi_{\zz,\xi}(n)\sim \mathrm{const}\,\pi_{\zz+2m,\xi}(n)$ as $n\to\infty$ by \cite[(1.18.5)]{Erdelyi1953}, (\ref{Pc_density_proof}) is bounded by 
  \begin{equation*}
    \sum\nolimits_{n=0}^{\infty}|\bar\chi_{N}(n)-h(n)|^{2}
    \pi_{\zz+2m,\xi}(n).
  \end{equation*}
  This sum be made arbitrarily small because the polynomials are dense in the Hilbert space $\ell^2(\Z_{\ge0},\pi_{\zz+2m,\xi})$, see \cite[Thm.\,2.3.3]{Akhiezer1965Moment}, \cite{riesz1923probleme}.
\end{proof}


\subsection{Action of Kerov's operators on relative dimension functions} 
\label{sub:action_of_Kerov_s_operators}

The functions $\Pc_\mu^*$ belong to the weighted Hilbert space $\ell^2(\Gb,M_\xi)$, and the Kerov's operators act in another space $\ell^2(\Gb)$. We use the isometry of these spaces:
\begin{equation}\label{istry}
  \istry_{\xi}\colon \ell^2(\Gb,M_\xi)\to\ell^2(\Gb),\qquad
  (\istry_{\xi} f)(\la):=f(\la)\cdot(M_{\xi}(\la))^{\frac12},
  \qquad\la\in\Gb.
\end{equation}
\begin{theorem}\label{thm:action_of_KO}
  The action of the Kerov's operators $\Uf$ and $\Df$ on the functions $\istry_\xi\Pc_{\mu}^*\in\ell^2(\Gb)$ (where $\mu$ runs over all vertices of $\Gb$) is given by
  \begin{align*}
    \Uf(\istry_\xi\Pc_\mu^*)&=
    \xi^{-\frac12}({\po-|\mu|})(\istry_\xi\Pc_\mu^*),
    \\
    \Df(\istry_\xi\Pc_\mu^*)&=
    \xi^{\frac12}\big(\po+|\mu|+\zz\big)(\istry_\xi\Pc_{\mu}^*)
    +\xi^{\frac12}
    \sum\nolimits_{\rho\colon\rho\nearrow\mu}\km(\rho,\mu)\ufunc(\mu/\rho)^2
    (\istry_\xi\Pc_\rho^*).
  \end{align*}
\end{theorem}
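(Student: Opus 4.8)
The plan is to transfer the action of $\Uf$ and $\Df$ from $\ell^2(\Gb)$ to $\ell^2(\Gb,M_\xi)$ through the isometry $\istry_\xi$, and to compute the conjugated operators $\tilde\Uf:=\istry_\xi^{-1}\Uf\istry_\xi$ and $\tilde\Df:=\istry_\xi^{-1}\Df\istry_\xi$ on the functions $\Pc_\mu^*$; since $\istry_\xi$ is diagonal it commutes with the pointwise multiplication operators $\po-|\mu|$, $\po+|\mu|+\zz$ appearing on the right-hand sides, so this is equivalent to the stated claim. The one computation needed at the outset is the edge-ratio of the weight: for $\la\nearrow\nu$ the product formula \eqref{mult_m} for $M_\xi$ yields
\begin{equation*}
  \frac{M_\xi(\la)^{1/2}}{M_\xi(\nu)^{1/2}}=\xi^{-1/2}\,\frac{|\nu|\dim\la}{\dim\nu}\,\frac1{\ufunc(\nu/\la)}.
\end{equation*}
For $\Uf$ this finishes the easy half: substituting into $(\tilde\Uf f)(\nu)=M_\xi(\nu)^{-1/2}\sum_{\la\nearrow\nu}\km(\la,\nu)\ufunc(\nu/\la)f(\la)M_\xi(\la)^{1/2}$, the single factor $\ufunc(\nu/\la)$ cancels and leaves $(\tilde\Uf f)(\nu)=\xi^{-1/2}|\nu|\sum_{\la\nearrow\nu}p^\da(\nu,\la)f(\la)$, i.e. $\xi^{-1/2}$ times $\po$ times the down-averaging kernel. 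Property \ref{Pc_prop_la-mu} of the relative dimension functions then gives $\tilde\Uf\Pc_\mu^*=\xi^{-1/2}(\po-|\mu|)\Pc_\mu^*$ immediately.

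For $\Df$ the two $\ufunc$-factors combine into $\ufunc(\la/\sigma)^2$, and after inserting $\dim\la\cdot\Pc_\mu^*(\la)=|\la|^{\da|\mu|}\dim(\mu,\la)$ the whole computation collapses to the single combinatorial sum
\begin{equation*}
  S(\sigma):=\sum_{\la\colon\la\searrow\sigma}\km(\sigma,\la)\,\ufunc(\la/\sigma)^2\,\dim(\mu,\la).
\end{equation*}
Indeed $(\tilde\Df\Pc_\mu^*)(\sigma)=\dfrac{\xi^{1/2}(|\sigma|+1)^{\da|\mu|}}{(|\sigma|+1)\dim\sigma}\,S(\sigma)$, so after routine factorial bookkeeping the theorem becomes equivalent to the closed form
\begin{equation*}
  S(\sigma)=(|\sigma|+|\mu|+\zz)(|\sigma|-|\mu|+1)\dim(\mu,\sigma)+\sum_{\rho\colon\rho\nearrow\mu}\km(\rho,\mu)\ufunc(\mu/\rho)^2\dim(\rho,\sigma).
\end{equation*}

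To establish this I would derive a recurrence for $S$ in the level of $\sigma$. Expanding $\dim(\mu,\la)=\sum_{\tau\nearrow\la}\km(\tau,\la)\dim(\mu,\tau)$ by the last edge splits $S(\sigma)$ into the diagonal part $\tau=\sigma$ and the off-diagonal part $\tau\ne\sigma$. In the diagonal part, identity \eqref{q_condition} rewrites $\sum_{\la\searrow\sigma}\km(\sigma,\la)^2\ufunc(\la/\sigma)^2$ as $2|\sigma|+\zz+B(\sigma)$, where $B(\sigma):=\sum_{\omega\nearrow\sigma}\km(\omega,\sigma)^2\ufunc(\sigma/\omega)^2$. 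In the off-diagonal part every pair $(\la,\tau)$ with $\sigma,\tau\nearrow\la$ and $\tau\ne\sigma$ sits in a quadrangle with bottom $\omega=\sigma\cap\tau$; UD-self-duality \eqref{UD-self-duality} turns $\km(\sigma,\la)\km(\tau,\la)$ into $\km(\omega,\sigma)\km(\omega,\tau)$ and $\ufunc(\la/\sigma)^2$ into $\ufunc(\tau/\omega)^2$, so re-indexing over the vertices $\omega\nearrow\sigma$ exhibits the inner sum (after restoring the $\tau=\sigma$ term, whose net contribution is $-B(\sigma)\dim(\mu,\sigma)$) as $S(\omega)$. The two $B(\sigma)$ pieces cancel, leaving the clean recurrence
\begin{equation*}
  S(\sigma)=(2|\sigma|+\zz)\dim(\mu,\sigma)+\sum_{\omega\colon\omega\nearrow\sigma}\km(\omega,\sigma)\,S(\omega).
\end{equation*}
That the proposed closed form satisfies this reduces, via the last-edge relations $\sum_{\omega\nearrow\sigma}\km(\omega,\sigma)\dim(\mu,\omega)=\dim(\mu,\sigma)$ (and likewise for $\dim(\rho,\cdot)$), to the elementary identity $(|\sigma|+|\mu|+\zz)(|\sigma|-|\mu|+1)=(2|\sigma|+\zz)+(|\sigma|-1+|\mu|+\zz)(|\sigma|-|\mu|)$.

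The main obstacle is not the algebra but the boundary of the induction: the last-edge expansion of $\dim(\mu,\la)$ is valid only for $|\la|>|\mu|$, so the recurrence holds only once $|\sigma|\ge|\mu|$. I would therefore treat as base cases the levels $|\sigma|<|\mu|-1$ (where $S$ and both sides of the closed form vanish) and the critical level $|\sigma|=|\mu|-1$ (where $\dim(\mu,\la)=\delta_{\mu,\la}$ for $|\la|=|\mu|$, so $S(\sigma)=\km(\sigma,\mu)\ufunc(\mu/\sigma)^2$ exactly when $\sigma\nearrow\mu$, matching the closed form since its first summand is annihilated by the factor $|\sigma|-|\mu|+1=0$), and then run the induction on $|\sigma|\ge|\mu|$. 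A minor point to note is that although $\Uf$, $\Df$ and $M_\xi^{1/2}$ each involve a choice of square root of $\ufunc(\square)^2$ (Remark \ref{rmk:KO_root_q}), every such factor enters the final identities only through $\ufunc(\square)^2$, so the result is independent of the choice.
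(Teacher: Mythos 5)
Your proof is correct. The $\Uf$-half and the reduction of the $\Df$-half to the closed form for $S(\sigma)$ match the paper's computation: that closed form is precisely the paper's Lemma \ref{lemma:combinatorial_identity}. Where you genuinely diverge is in how the lemma is established. The paper argues in two cases: for $\mu\not\subseteq\la$ it runs a path-by-path bijection using UD-self-duality, and for $\mu\subseteq\la$ it multiplies through by $\prod_{\square\in\la/\mu}\ufunc(\square)^2$, reads both sides as matrix elements of $\hat\Df\hat\Uf^{\,n+1-m}$, and applies the iterated commutator identity (\ref{DU^n}). You instead prove the identity uniformly by induction on the level, extracting the one-step recurrence $S(\sigma)=(2|\sigma|+\zz)\dim(\mu,\sigma)+\sum_{\omega\colon\omega\nearrow\sigma}\km(\omega,\sigma)S(\omega)$ from the last-edge expansion of $\dim(\mu,\la)$, with (\ref{q_condition}) absorbing the diagonal terms and the quadrangle relation (\ref{UD-self-duality}) re-indexing the off-diagonal ones; in effect you apply $[\Df,\Uf]=\Hf$ once per level rather than iterating it $n+1-m$ times. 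This buys a single argument covering both containment cases with no need for the auxiliary gauge $(\hat\Uf,\hat\Df)$, at the cost of having to mind the boundary of the induction; your handling of the levels $|\sigma|\le|\mu|-1$ and of the vanishing factor $|\sigma|-|\mu|+1$ at the critical level is exactly what is needed to close it, and the closing remark on independence of the choice of square roots of $\ufunc(\square)^2$ is apt.
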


First, we need the following combinatorial lemma:
\begin{lemma}[{cf. \cite[proof of Thm.\,4.1(2)]{Borodin2007}}]
\label{lemma:combinatorial_identity}
  For all $\la,\mu\in\Gb$ with $|\la|=n$ and $|\mu|=m$ the following identity holds:
  \begin{equation}
    \label{comb_id}
    \begin{array}{r}
      \displaystyle
      \sum\limits_{\nu\colon\nu\searrow\la} 
      \km(\la,\nu)\ufunc(\nu/\la)^2\dim(\mu,\nu)=
      \sum_{\rho\colon\rho\nearrow\mu}
      \km(\rho,\mu)\ufunc(\mu/\rho)^2\dim(\rho,\la)
      \quad\qquad\\
      +
      (n+m+\zz)(n-m+1)\dim(\mu,\la).
    \end{array}
  \end{equation}
\end{lemma}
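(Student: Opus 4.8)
The plan is to recast the identity (\ref{comb_id}) as a single operator relation, paired against basis vectors, using the $\slf(2,\C)$ structure already at hand. The relevant operators are exactly the pair introduced in the proof of Theorem \ref{thm:Kerov_coherency}: the down operator $\hat\Df\un\la=\sum_{\mu\nearrow\la}\km(\mu,\la)\un\mu$ and the twisted up operator $\hat\Uf\un\la=\sum_{\nu\searrow\la}\km(\la,\nu)\ufunc(\nu/\la)^2\un\nu$. Both preserve $\ellf^2(\Gb)$, and iterating $\hat\Df$ sums the $\km$-weights along downward paths, so the relative dimension has the matrix-element form $\dim(\mu,\la)=(\hat\Df^{\,|\la|-|\mu|}\un\la,\un\mu)$. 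Setting $k:=|\la|-|\mu|+1=n-m+1$, one checks directly that the left-hand side of (\ref{comb_id}) equals $(\hat\Df^{\,k}\hat\Uf\un\la,\un\mu)$, while the first sum on the right equals $(\hat\Uf\hat\Df^{\,k}\un\la,\un\mu)$: expanding $\hat\Uf\hat\Df^{\,k}\un\la$ in the basis and using $(\hat\Uf\un\rho,\un\mu)=\km(\rho,\mu)\ufunc(\mu/\rho)^2$ for $\rho\nearrow\mu$ together with $(\hat\Df^{\,k}\un\la,\un\rho)=\dim(\rho,\la)$ (valid since $|\la|-|\rho|=k$) gives precisely $\sum_{\rho\nearrow\mu}\km(\rho,\mu)\ufunc(\mu/\rho)^2\dim(\rho,\la)$.

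First I would record the bracket $[\hat\Df,\hat\Uf]=\Hf$, which is precisely the $\slf(2,\C)$ relation already noted in the proof of Theorem \ref{thm:Kerov_coherency}; it follows from (\ref{q_condition}) on the diagonal and from UD-self-duality (\ref{UD-self-duality}) off the diagonal, i.e.\ from the same computation (\ref{[D,U]}) that forced UD-self-duality in the first place. Combined with $[\Hf,\hat\Df]=-2\hat\Df$, a routine induction yields the higher commutator
\begin{equation*}
  [\hat\Df^{\,k},\hat\Uf]=\big(k\Hf+k(k-1)\big)\hat\Df^{\,k-1},\qquad k\ge1,
\end{equation*}
obtained by telescoping $[\hat\Df^{\,k},\hat\Uf]=\sum_{j=0}^{k-1}\hat\Df^{\,j}\Hf\hat\Df^{\,k-1-j}$ and pushing each $\Hf$ to the left via $\hat\Df^{\,j}\Hf=(\Hf+2j)\hat\Df^{\,j}$.

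The last step is to apply the rearranged relation $\hat\Df^{\,k}\hat\Uf=\hat\Uf\hat\Df^{\,k}+k(\Hf+k-1)\hat\Df^{\,k-1}$ to $\un\la$ and pair with $\un\mu$, where $|\mu|=m$ and $k=n-m+1$. The vector $\hat\Df^{\,k-1}\un\la$ is supported on level $n-(k-1)=m$, where by Proposition \ref{prop:action_H} the operator $\Hf$ acts by the scalar $2m+\zz$; hence $(\Hf+k-1)$ contributes $2m+\zz+(n-m)=n+m+\zz$, and $(\hat\Df^{\,k-1}\un\la,\un\mu)=\dim(\mu,\la)$. This reproduces exactly the scalar term $(n+m+\zz)(n-m+1)\dim(\mu,\la)$, proving (\ref{comb_id}) when $m\le n$. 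The remaining cases are degenerate and checked by hand: for $m=n+1$ both sides collapse to the single weight $\km(\la,\mu)\ufunc(\mu/\la)^2$ while the scalar factor $n-m+1$ vanishes, and for $m>n+1$ every term is zero for lack of paths.

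I do not expect a genuine obstacle here; the substantive input, namely that $\hat\Df$ and $\hat\Uf$ satisfy the $\slf(2,\C)$ relations, is already available from the UD-self-duality analysis, so what remains is the standard universal-enveloping-algebra computation. The only point demanding care is the bookkeeping of levels and exponents, ensuring that each matrix element is matched to the correct relative dimension and that the scalar coefficient $k(\Hf+k-1)$ is evaluated on the floor $\Gb_m$ actually carrying $\hat\Df^{\,k-1}\un\la$.
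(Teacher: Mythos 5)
Your proof is correct, and it takes a genuinely different (and arguably cleaner) route than the paper's. The paper splits into two cases: for $\mu\not\subseteq\la$ it runs a direct combinatorial induction on paths using UD-self-duality of $\km$, and for $\mu\subseteq\la$ it multiplies (\ref{comb_id}) through by $\prod_{\square\in\la/\mu}\ufunc(\square)^2$, recognizes the resulting quantities as matrix elements of powers of $\hat\Uf$, applies the commutator $\hat\Df\hat\Uf^{\,n+1-m}=\hat\Uf^{\,n+1-m}\hat\Df+\sum_i\hat\Uf^{\,n-m-i}\Hf\hat\Uf^{\,i}$, and divides back. You instead observe that every term of (\ref{comb_id}) is \emph{already} a matrix element without any rescaling --- $\dim(\mu,\la)=(\hat\Df^{\,n-m}\un\la,\un\mu)$, the left side is $(\hat\Df^{\,k}\hat\Uf\un\la,\un\mu)$ and the first right-hand sum is $(\hat\Uf\hat\Df^{\,k}\un\la,\un\mu)$ with $k=n-m+1$ --- so the whole lemma becomes the single relation $\hat\Df^{\,k}\hat\Uf=\hat\Uf\hat\Df^{\,k}+k(\Hf+k-1)\hat\Df^{\,k-1}$ evaluated on the floor $\Gb_m$. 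Both arguments rest on the same substantive input, namely $[\hat\Df,\hat\Uf]=\Hf$ (which, as you correctly note and the paper glosses over, uses UD-self-duality off the diagonal and (\ref{q_condition}) on it). What your arrangement buys is the elimination of the case split and of the division by $\prod\ufunc(\square)^2$: the identity holds uniformly whether or not $\mu\subseteq\la$ and does not invoke nondegeneracy of $\ufunc$ at that step. Your bookkeeping checks out: $\hat\Df^{\,j}\Hf=(\Hf+2j)\hat\Df^{\,j}$ gives $[\hat\Df^{\,k},\hat\Uf]=(k\Hf+k(k-1))\hat\Df^{\,k-1}$, and on level $m$ the scalar $k(2m+\zz+k-1)$ is exactly $(n-m+1)(n+m+\zz)$. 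The degenerate cases $m=n+1$ (where both sides reduce to $\km(\la,\mu)\ufunc(\mu/\la)^2$ when $\la\nearrow\mu$ and to zero otherwise) and $m>n+1$ are handled as you say.
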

\begin{proof}
  We may assume that $n\ge m-1$, otherwise all relative dimensions above vanish. For $\mu=\varnothing$, the statement of the lemma is equivalent to the coherency property of the measures $\{M_n\}$ (Theorem \ref{thm:Kerov_coherency}). Thus, $m\ge1$.

  Let us consider two cases. Assume first that $\mu$ is not inside $\la$. Then (for identity (\ref{comb_id}) to be nontrivial) the set difference $\mu\setminus\la$ must consist of one point of $P$, and in the LHS of (\ref{comb_id}) there is only one $\nu$ such that $\dim(\mu,\nu)$ does not vanish. Also, in the RHS of (\ref{comb_id}) in the sum there is only one suitable $\rho$. Note that $\nu/\la=\mu/\rho$. Thus, our identity becomes
  \begin{equation*}
    \km(\la,\nu)\ufunc(\nu/\la)^2\dim(\mu,\nu)
    =
    \km(\rho,\mu)\ufunc(\mu/\rho)^2\dim(\rho,\la)
  \end{equation*}
  (for those $\nu$ and $\rho$). Let us choose any path from $\mu$ to $\nu$; $\dim(\mu,\nu)$ is the sum of weights of all such paths. Assume that on the level $n$ (recall that $|\nu|=n+1$) this path goes through a vertex $\nu_\bullet$. Set $\la_\bullet:=\nu_\bullet\cap\la$. By the UD-self-duality of the multiplicity function,
  \begin{equation*}
    \km(\la,\nu)\km(\nu_\bullet,\nu)=
    \km(\la_\bullet,\la)\km(\la_\bullet,\nu_\bullet).
  \end{equation*}
  Clearly, $\km(\nu_\bullet,\nu)$ is counted in $\dim(\mu,\nu)$, and $\km(\la_\bullet,\la)$ is counted in $\dim(\rho,\la)$. Thus, we have reduced the problem from the pair $(\la,\nu)$ to the lower pair $(\la_\bullet,\nu_\bullet)$, and the rest of the proof goes by induction because there is a bijection between paths from $\mu$ to $\nu$ and paths from $\rho$ to $\la$ (in $J(P)$). Thus, we have established our identity for $\mu\not\subseteq\la$.

  Now assume that $\mu\subseteq\la$. Multiplying (\ref{comb_id}) by $\prod_{\square\in\la/\mu}\ufunc(\square)^2$, we have (see the proof of Theorem \ref{thm:Kerov_coherency}):
  \begin{align*}
    \sum\nolimits_{\nu\colon\nu\searrow\la} 
    \km(\la,\nu)&
    \left(\prod\nolimits_{\square\in\nu/\mu}
    \ufunc(\square)^2\right)\dim(\mu,\nu)\\&=
    \sum\nolimits_{\nu\colon\nu\searrow\la} 
    \km(\la,\nu)
    (\hat\Uf^{n+1-m}\unt\mu,\unt\nu)\\&=
    (\hat\Uf^{n+1-m}\unt\mu,\hat \Df^*\unt\la)
    =
    (\hat \Df \hat\Uf^{n+1-m}\unt\mu,\unt\la)
    \\&=
    (\hat\Uf^{n+1-m}\hat\Df\unt\mu,\unt\la)+
    \sum\nolimits_{i=0}^{n-m}
    (\hat\Uf^{n-m-i}\Hf\hat\Uf^{i}\unt\mu,\unt\la)
    \\&=
    \sum\nolimits_{\rho\colon\rho\nearrow\mu}\km(\rho,\mu)
    \left(\prod\nolimits_{\square\in\la/\rho}\ufunc(\square)^2\right)
    \dim(\rho,\la)\\&\qquad+
    (n+k+\zz)(n-k+1)
    (\hat\Uf^{n-m}\unt\mu,\unt\la)
    \\&=
    \sum\nolimits_{\rho\colon\rho\nearrow\mu}\km(\rho,\mu)
    \left(\prod\nolimits_{\square\in\la/\rho}\ufunc(\square)^2\right)
    \dim(\rho,\la)\\&\qquad+
    (n+k+\zz)(n-k+1)
    \left(\prod\nolimits_{\square\in\la/\mu}\ufunc(\square)^2\right)
    \dim(\mu,\la).
  \end{align*}
  Dividing again by $\prod_{\square\in\la/\mu}\ufunc(\square)^2$, we see that (\ref{comb_id}) holds.
\end{proof}

\smallskip

\par\noindent
\emph{Proof of Theorem \ref{thm:action_of_KO}.}
  Now it is not hard to establish the theorem. Let us first consider the operator $\Uf$ (this case does not require Lemma \ref{lemma:combinatorial_identity}). We have (let $|\la|=n$, $|\mu|=m$)
  \begin{align*}
    \big(\Uf&(\istry_\xi\Pc_\mu^*)\big)(\la)=
    \sum\nolimits_{\varkappa\colon\varkappa\nearrow\la}
    \km(\varkappa,\la)\ufunc(\la/\varkappa)
    (M_\xi(\varkappa))^{\frac12}
    \Pc_\mu^*(\varkappa)\\&=
    (n-1)^{\da m}
    (M_\xi(\la))^{\frac12}
    \sum\nolimits_{\varkappa\colon\varkappa\nearrow\la}
    \km(\varkappa,\la)\ufunc(\la/\varkappa)
    \left(\frac{M_{\xi}(\varkappa)}{M_{\xi}(\la)}\right)^{\frac12}
    \frac{\dim(\mu,\varkappa)}{\dim\varkappa}\\&=
    (n-1)^{\da m}\frac{n}{\sqrt\xi\cdot\dim\la}
    (M_\xi(\la))^{\frac12}
    \sum\nolimits_{\varkappa\colon\varkappa\nearrow\la}
    \km(\varkappa,\la)\dim(\mu,\varkappa)\\&=
    \frac{n-m}{\sqrt\xi}
    (M_\xi(\la))^{\frac12}
    n^{\da m}\frac{\dim(\mu,\la)}{\dim\la}
    =\xi^{-\frac12}({\po(\la)-|\mu|})
    \big(\istry_\xi\Pc_\mu^*\big)(\la).
  \end{align*}

  Now let us deal with the operator $\Df$. Here we will use Lemma \ref{lemma:combinatorial_identity}. We have ($|\la|=n$, $|\mu|=m$):
  \begin{align*}
    \big(\Df&(\istry_\xi\Pc_\mu^*)\big)(\la)=
    \sum\nolimits_{\nu\colon\nu\searrow\la}
    \km(\la,\nu)\ufunc(\nu/\la)
    (M_\xi(\nu))^{\frac12}
    \Pc_\mu^*(\nu)\\&=
    (n+1)^{\da m}
    (M_\xi(\la))^{\frac12}
    \sum\nolimits_{\nu\colon\nu\searrow\la}
    \km(\la,\nu)\ufunc(\nu/\la)
    \left(\frac{M_\xi(\nu)}{M_{\xi}(\la)}\right)^{\frac12}
    \frac{\dim(\mu,\nu)}{\dim\nu}
    \\&=
    (n+1)^{\da m}\frac{\sqrt\xi}{(n+1)\dim\la}
    (M_\xi(\la))^{\frac12}
    \sum\nolimits_{\nu\colon\nu\searrow\la}
    \km(\la,\nu)\ufunc(\nu/\la)^2
    \dim(\mu,\nu)\\&=
    n^{\da (m-1)}\frac{\sqrt\xi}{\dim\la}
    (M_\xi(\la))^{\frac12}
    \Big(
    \sum\nolimits_{\rho\colon\rho\nearrow\mu}
    \km(\rho,\mu)\ufunc(\mu/\rho)^2\dim(\rho,\la)
    \\&\qquad \qquad \qquad \qquad \qquad \qquad \qquad+
    (n+m+\zz)(n-m+1)\dim(\mu,\la)
    \Big)\\&=
    \sqrt\xi\big(\po(\la)+|\mu|+\zz\big)
    (\istry_\xi\Pc_{\mu}^*)(\la)
    +\sqrt\xi
    \sum\nolimits_{\rho\colon\rho\nearrow\mu}
    \km(\rho,\mu)\ufunc(\mu/\rho)^2
    (\istry_\xi\Pc_\rho^*)(\la).
  \end{align*}
  This concludes the proof.
\qed

\smallskip

\begin{remark}
  The papers \cite{Borodin2007}, \cite{Petrov2007}, \cite{Olshanski2009}, \cite{petrov2009eng} also deal with the operators acting similarly to $\Uf$ and $\Df$ in Theorem \ref{thm:action_of_KO}. However, in those papers the ``up'' and ``down'' notation was switched. In the present paper we follow the notation of Okounkov \cite{Okounkov2001a} for the Kerov's operators.
\end{remark}


\subsection{The case of Pascal triangle} 
\label{sub:the_case_of_pascal_triangle}

For the Pascal triangle (\S\ref{sub:pascal_triangle}) one has for two vertices $\mu=(k,l),\la=(x,y)\in\PT$:
\begin{equation*}
  \frac{\dim(\mu,\la)}{\dim\la}
  =\frac{\binom{x+y-k-l}{x-k}}{\binom{x+y}{x}}=
  \frac{x^{\da k}y^{\da l}}{(x+y)^{\da(k+l)}},
\end{equation*}
so the relative dimension functions have the form 
\begin{equation*}
  \Pc_{(k,l)}^{*}(x,y)=x^{\da k}y^{\da l},\qquad
  (k,l)\in\PT.
\end{equation*}
We see that these functions factorize in accordance with Proposition \ref{prop:KO_disjoint}. Next, observe that $(\istry_\xi \Pc_{(k,l)}^{*})(x,y)=(1-\xi)^{\frac{\zz_1+\zz_2}2}\xi^{\frac{x+y}2}x^{\da k}y^{\da l}\sqrt{\frac{(\zz_1)_x(\zz_2)_y}{x!y!}}$. The action of the Kerov's operators $\Uf$ and $\Df$ (\S\ref{sub:kerov_s_operators_for_pascal_triangle}) on relative dimension functions given by Theorem \ref{thm:action_of_KO} is equivalent to the following identities which are readily verified:
\begin{align*}
  \mbox{Action of $\Uf$}:&\qquad x(x-1)^{\da k}=(x-k)x^{\da k};\\
  \mbox{Action of $\Df$}:&\qquad (x+\zz)(x+1)^{\da k}=
  (x+k+\zz)x^{\da k}+k(k-1+\zz)x^{\da (k-1)}.
\end{align*}



\section{Up/down Markov chains and their spectral structure} 
\label{sec:up_down_markov_chains}

Assume that $\Gb=(J(P),\km)$ is a graph of ideals with a fixed triplet ($\Uf,\Df,\Hf$) of Kerov's operators. Let $\{M_n\}$ be the coherent system of measures on the floors of $\Gb$ corresponding to this triplet (\S\ref{sub:distinguished_coherent_systems_obtained_from_kerov_s_operators}). As always, we assume that $\{M_n\}$ is nondegenerate, and that $\zz>0$.

\subsection{Up/down Markov chains} 
\label{sub:up_down_markov_chains}

In \S\ref{sub:coherent_systems} we have defined two families of Markov transition kernels --- $p^{\da}_{n,n-1}$ from $\Gb_{n}$ to $\Gb_{n-1}$ (which does not depend on a choice of a coherent system), and $p^{\ua}_{n,n+1}$ from $\Gb_{n}$ to $\Gb_{n+1}$ (which depends on $\{M_n\}$). Thus, it is possible to define a family of Markov chains on $\Gb_n$ by composing ``up'' and ``down'' steps: first from $\Gb_{n}$ to $\Gb_{n+1}$, and then back to $\Gb_{n}$. The transition operator of the $n$th chain is given by
\begin{equation}\label{T_n_def}
  T_n(\la,\tilde\la):=\sum_{\nu\colon|\nu|=n+1}
  p^{\ua}_{n,n+1}(\la,\nu)p^{\da}_{n+1,n}(\nu,\tilde\la),\qquad
  \la,\tilde\la\in\Gb_n.
\end{equation}
Using the definitions of \S\ref{sec:coherent_systems_of_measures}, it can be readily seen that the chain $T_n$ preserves the measure $M_n$ and is reversible with respect to it. About up/down Markov chains and similar objects see also \cite{Fulman2005}, \cite[\S1]{Borodin2007}, \cite{Fulman2007}.

Observe that $p^{\ua}_{n,n+1}(\la,\nu)p^{\da}_{n+1,n}(\nu,\tilde\la)$ vanishes unless either $\tilde\la=\la$, or $\tilde\la$ is obtained from $\la$ by a minimal possible transformation, i.e., $\tilde\la=(\la\cup\square_1)\setminus\square_2$ for some elements $\square_1,\square_2$ of the underlying poset $P$. Thus, each nontrivial step of the up/down chain consists in such relocation of a box.


\subsection{Up/down Markov chains and Kerov's operators} 
\label{sub:up_down_markov_chains_and_kerov_s_operators}

Each Markov transition operator $T_n$ is acting in the finite-dimensional space $\fun(\Gb_n)$ of all functions on $\Gb_n$. Since the space $\A$ (\S\ref{sub:space_A}) separates points of $\Gb$, the restrictions of functions $f\in\A$ to $\Gb_n$ (denoted by $f_n$) exhaust the space $\fun(\Gb_n)$. Therefore, the action of $T_n$ in $\fun(\Gb_n)$ is completely determined by its action on $(\Pc_\mu^*)_{n}$. The latter action is given by
\begin{proposition}\label{prop:T_n_action}
  For every $\mu\in\Gb$ the following identity holds:
  \begin{align}
    \label{T_n_action}
    \big(T_n-\mathbf{1}\big)(\Pc_\mu^*)_n&=
    -\frac{|\mu|(|\mu|-1+\zz)}{(n+1)(n+\zz)}(\Pc_\mu^*)_n
    \\&\qquad \quad\qquad+\frac{n+1-|\mu|}{(n+1)(n+\zz)}
    \sum_{\rho\colon\rho\nearrow\mu}
    \km(\rho,\mu)\ufunc(\mu/\rho)^{2}(\Pc_\rho^*)_n.
    \nonumber
  \end{align}
\end{proposition}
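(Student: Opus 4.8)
The plan is to exploit the remark preceding the statement that the restrictions $(\Pc_\mu^*)_n$ exhaust $\fun(\Gb_n)$, so it suffices to verify (\ref{T_n_action}) on each $(\Pc_\mu^*)_n$. First I would unfold (\ref{T_n_def}) as an operator on functions: for $f\in\fun(\Gb_n)$,
\[
  (T_n f)(\la)=\sum_{\nu\colon\nu\searrow\la} p^\ua_{n,n+1}(\la,\nu)\Big(\sum_{\tilde\la\colon\tilde\la\nearrow\nu} p^\da_{n+1,n}(\nu,\tilde\la)\,f(\tilde\la)\Big),
\]
which displays $T_n$ as a down step followed by an up step. The strategy is to resolve these two steps separately using the two recurrences for the relative dimension functions and then to invoke Lemma \ref{lemma:combinatorial_identity}.

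For the inner (down) step applied to $f=(\Pc_\mu^*)_n$ I would use property \ref{Pc_prop_la-mu} at level $n+1$ (taking the vertex there to be $\nu$): it gives
$\sum_{\tilde\la\colon\tilde\la\nearrow\nu} p^\da_{n+1,n}(\nu,\tilde\la)\,\Pc_\mu^*(\tilde\la)=\frac{n+1-|\mu|}{n+1}\Pc_\mu^*(\nu)$.
Thus the down step reproduces $\Pc_\mu^*$ on level $n+1$ up to the scalar $\frac{n+1-|\mu|}{n+1}$, which is precisely why the index $\mu$ is essentially preserved by the chain. For the outer (up) step I would substitute the explicit up kernel: combining the definition of $p^\ua_{n,n+1}$ from \S\ref{sub:coherent_systems} with the multiplicative formula (\ref{mult_m}) for $M_n$ yields $p^\ua_{n,n+1}(\la,\nu)=\frac{\dim\nu}{\dim\la}\,\km(\la,\nu)\,\ufunc(\nu/\la)^2\,\frac{1}{(n+1)(n+\zz)}$.

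Plugging this in, and writing $\Pc_\mu^*(\nu)=(n+1)^{\da|\mu|}\dim(\mu,\nu)/\dim\nu$, collapses the expression to a single sum $\sum_{\nu\colon\nu\searrow\la}\km(\la,\nu)\,\ufunc(\nu/\la)^2\,\dim(\mu,\nu)$ up to explicit scalar factors. To this sum I would apply Lemma \ref{lemma:combinatorial_identity}, replacing it by
$\sum_{\rho\colon\rho\nearrow\mu}\km(\rho,\mu)\ufunc(\mu/\rho)^2\dim(\rho,\la)+(n+|\mu|+\zz)(n-|\mu|+1)\dim(\mu,\la)$.
It then remains to re-express both resulting terms back through relative dimension functions via $\dim(\rho,\la)=\dim\la\cdot\Pc_\rho^*(\la)/n^{\da(|\mu|-1)}$ and $\dim(\mu,\la)=\dim\la\cdot\Pc_\mu^*(\la)/n^{\da|\mu|}$, and to simplify the falling-factorial ratios using $(n+1)^{\da|\mu|}=(n+1)\,n^{\da(|\mu|-1)}$ and $(n+1)^{\da|\mu|}/n^{\da|\mu|}=(n+1)/(n+1-|\mu|)$.

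The off-diagonal part should then match the $\rho$-sum in (\ref{T_n_action}) exactly, while the diagonal coefficient becomes $\frac{(n-|\mu|+1)(n+|\mu|+\zz)}{(n+1)(n+\zz)}$; subtracting $1$ and factoring the numerator $-(|\mu|^2+|\mu|\zz-|\mu|)$ produces precisely $-\frac{|\mu|(|\mu|-1+\zz)}{(n+1)(n+\zz)}$, which gives (\ref{T_n_action}). The genuinely substantive input is Lemma \ref{lemma:combinatorial_identity} (itself proved through the commutation identity (\ref{DU^n})); the only real difficulty I anticipate in the above is the careful tracking of the falling-factorial normalizations $n^{\da m}$ versus $(n+1)^{\da m}$, which is the step most prone to off-by-one or sign slips, so I would keep $|\mu|=m$ explicit throughout and confirm the final diagonal cancellation symbolically. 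The case $|\mu|>n$ is trivial since then both sides vanish by the interpolation property \ref{Pc_prop_interpolation}.
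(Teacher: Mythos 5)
Your proposal is correct and follows essentially the same route as the paper: both decompose $T_n$ into its up and down half-steps, resolve the half-step that acts on $(\Pc_\mu^*)$ as a scalar (your property \ref{Pc_prop_la-mu}; the paper's $\Uf$-formula in Theorem \ref{thm:action_of_KO}), and feed the remaining sum $\sum_{\nu\searrow\la}\km(\la,\nu)\ufunc(\nu/\la)^2\dim(\mu,\nu)$ into Lemma \ref{lemma:combinatorial_identity}. The only difference is presentational --- you compute directly with the transition kernels, while the paper first conjugates $T_n$ by a diagonal operator into $\frac{1}{(n+1)(n+\zz)}\Df_{n+1,n}\Uf_{n,n+1}$ and then invokes Theorem \ref{thm:action_of_KO}; your falling-factorial bookkeeping and the final diagonal cancellation $-|\mu|(|\mu|-1+\zz)$ check out.
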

\begin{proof}
  Define restrictions of the Kerov's operators $\Df$ and $\Uf$ which act as follows:
  \begin{align*}
    \Df_{n,n-1}&:=\Df|_{{}_{\scriptstyle\fun(\Gb_n)}}\colon
    \fun(\Gb_n)\to\fun(\Gb_{n-1}),\\
    \Uf_{n,n+1}&:=\Uf|_{{}_{\scriptstyle\fun(\Gb_n)}}\colon
    \fun(\Gb_n)\to\fun(\Gb_{n+1}).
  \end{align*}
  
  From (\ref{T_n_def}) and (\ref{Kerov_UfDf_def}) it follows that the operator $T_n$ is conjugate to the operator $\frac1{(n+1)(n+\zz)}\Df_{n+1,n}\Uf_{n,n+1}$ which acts in $\fun(\Gb_n)$. Indeed, observe that
  \begin{equation}\label{}
    p^{\ua}_{n,n+1}(\la,\nu)p^\da_{n+1,n}(\nu,\tilde\la)
    =
    \frac1{(n+1)(n+\zz)}
    \frac{\dim\tilde\la}{\dim\la}
    \ufunc(\nu/\la)^2\km(\la,\nu)\km(\tilde\la,\nu)
  \end{equation}
  for all $\la,\tilde\la\in\Gb_n$ and $\nu\in\Gb_{n+1}$. On the other hand, for any $f\in\ellf^2(\Gb)$ and $\la\in\Gb$ one has
  \begin{align*}
    (\Df\Uf f)(\la)&=
    \sum\nolimits_{\nu\colon\nu\searrow\la}
    \km(\la,\nu)\ufunc(\nu/\la)(\Uf f)(\nu)
    \\&=\sum\nolimits_{\nu\colon\nu\searrow\la}
    \sum\nolimits_{\tilde\la\colon\tilde\la\nearrow\nu}
    \km(\la,\nu)\km(\tilde\la,\nu)\ufunc(\nu/\la)\ufunc(\nu/\tilde\la)
    f(\tilde\la).
  \end{align*}
  Thus, we get the following equality of operators in $\fun(\Gb_n)$:
  \begin{equation*}
    T_n=\frac1{(n+1)(n+\zz)}
    {\mathsf{d}}_{n}^{-1}\Df_{n+1,n}\Uf_{n,n+1}{\mathsf{d}}_n,
  \end{equation*}
  where $({\mathsf{d}}_ng)(\la):=\Big(\dim\la\cdot \prod_{\square\in\la}\ufunc(\square)\Big)g(\la)$ is a diagonal operator in $\fun(\Gb_n)$. Next, we can restrict the isometry $\istry_\xi$ (\ref{istry}) to $\fun(\Gb_n)$, and it is clear that for any $g\in\fun(\Gb_n)$ one has $({\mathsf{d}}_n\istry_\xi^{-1}|_{{}_{\scriptstyle\fun(\Gb_n)}} g)(\la)=(1-\xi)^{\zz/2}\xi^{-n/2}n!\cdot g(\la)$, which means that ${\mathsf{d}}_n\istry_\xi^{-1}|_{{}_{\scriptstyle\fun(\Gb_n)}}$ is a scalar operator. Thus, the action of $T_n$ on $\Pc_\mu^*$ has the form
  \begin{align*}
    T_n(\Pc_\mu^*)_{n}&=
    \frac1{(n+1)(n+\zz)}
    {\mathsf{d}}_{n}^{-1}\Df_{n+1,n}\Uf_{n,n+1}{\mathsf{d}}_n((\istry_\xi^{-1}\istry_\xi)\Pc_\mu^*)_{n}\\&=
    \frac1{(n+1)(n+\zz)}
    \istry_\xi^{-1}|_{{}_{\scriptstyle\fun(\Gb_n)}}
    \Df_{n+1,n}\Uf_{n,n+1}(\istry_\xi\Pc_\mu^*)_{n}.
  \end{align*}
  By Theorem \ref{thm:action_of_KO} and because $\po|_{{}_{\scriptstyle\fun(\Gb_k)}}\equiv k$ for every $k$, we obtain
  \begin{align*}
    \Uf_{n,n+1}(\istry_\xi\Pc_\mu^*)_{n}&=\xi^{-\frac12}(n+1-|\mu|)(\istry_\xi\Pc_\mu^*)_{n+1},
    \\
    \Df_{n+1,n}(\istry_\xi\Pc_\mu^*)_{n+1}
    &=\xi^{\frac12}\big(n+|\mu|+\zz\big)(\istry_\xi\Pc_{\mu}^*)_{n}
    +\xi^{\frac12}
    \sum_{\rho\colon\rho\nearrow\mu}\km(\rho,\mu)\ufunc(\mu/\rho)^2
    (\istry_\xi\Pc_\rho^*)_{n}.
  \end{align*}
  Putting all together, we see that the claim holds.
\end{proof}

\begin{remark}
  Formula of Proposition \ref{prop:T_n_action} (together with formulas similar to Theorem \ref{thm:action_of_KO}) first appeared for the Young graph in \cite[Lemma 5.2]{Borodin2007} as one of the key ingredients in construction of infinite-dimensional diffusions on the Thoma simplex (\ref{Thoma_simplex}). Similar formulas were obtained in \cite{Petrov2007}, \cite{petrov2009eng} for other branching graphs and were also used to construct infinite-di\-men\-sional diffusions.
\end{remark}


\subsection{Spectral structure of up/down Markov chains} 
\label{sub:spectral_structure_of_up_down_markov_chains}

Here we discuss the spectral structure of the operators $T_n$ in $\fun(\Gb_n)$. It is possible to obtain the results of this subsection using a method similar to \cite[Thm.\,4.1 and 4.3]{Fulman2007} (see also \cite[\S4]{stanley1988differential}, \cite[Thm.\,1.6.5]{fomin1994duality}) which is solely based on the commutation relations for the operators $\Uf$ and $\Df$. Instead, we make use of the algebraic expression for $T_n$ (Proposition \ref{prop:T_n_action}).

\begin{proposition}\label{prop:T_n_spectrum}
  The generator $T_n-\mathbf{1}$ of the $n$th up/down Markov chain on $\Gb_n$ has eigenvalues $\left(-\frac{j(j-1+\zz)}{(n+1)(n+\zz)}\right)$, $j=0,\ldots,n$. The $j$th eigenvalue has multiplicity $\#\Gb_j-\#\Gb_{j-1}$ (by agreement, $\#\Gb_{-1}=0$). In particular, since $\Gb_{0}=\{\varnothing\}$, the eigenvalue $0$ is simple, and the chain $T_n$ possesses a spectral gap.
\end{proposition}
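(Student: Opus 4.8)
The plan is to bypass the function space $\fun(\Gb_n)$ and work directly with the Kerov's operators on the level-$n$ subspace $\ell^2(\Gb)_n:=\mathrm{span}\{\un\la:\la\in\Gb_n\}$ of $\ell^2(\Gb)$. The computation inside the proof of Proposition \ref{prop:T_n_action} already exhibits $T_n$ as conjugate (by the invertible diagonal operator $\mathsf{d}_n$) to $\frac1{(n+1)(n+\zz)}\Df\Uf$ restricted to $\ell^2(\Gb)_n$. Since conjugate operators share eigenvalues and multiplicities, it suffices to diagonalize the self-adjoint operator $S_n:=\Df\Uf|_{\ell^2(\Gb)_n}=\Uf^*\Uf|_{\ell^2(\Gb)_n}$ (recall $\Df=\Uf^*$). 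I would do this by decomposing $\ell^2(\Gb)_n$ into the level-$n$ slices of the irreducible lowest-weight $\slf(2,\C)$-submodules spanned by the Kerov's operators, in the spirit of Fulman's commutation-relation method.

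The crucial preliminary step --- and the place where the hypothesis $\zz>0$ is essential --- is the injectivity of $\Uf$ on every floor. For $v\in\ell^2(\Gb)_n$, using $\Df=\Uf^*$, the relation $[\Df,\Uf]=\Hf$, and $\Hf v=(2n+\zz)v$ (Proposition \ref{prop:action_H}), one gets
\begin{equation*}
  \|\Uf v\|^2-\|\Df v\|^2=([\Df,\Uf]v,v)=(2n+\zz)\|v\|^2,
\end{equation*}
so $\|\Uf v\|^2\ge(2n+\zz)\|v\|^2>0$ whenever $v\ne0$. Hence $\Uf|_{\ell^2(\Gb)_n}$ is injective with full rank $\#\Gb_n$, and its adjoint $\Df|_{\ell^2(\Gb)_j}$ has rank $\#\Gb_{j-1}$. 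Consequently the space of lowest-weight vectors at level $j$, namely $V_j:=\ker(\Df|_{\ell^2(\Gb)_j})=(\operatorname{im}\Uf|_{\ell^2(\Gb)_{j-1}})^{\perp}$, has dimension $\#\Gb_j-\#\Gb_{j-1}$.

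Next I would assemble the decomposition $\ell^2(\Gb)_n=\bigoplus_{j=0}^{n}\Uf^{\,n-j}V_j$, proved by induction on $n$ from the orthogonal splitting $\ell^2(\Gb)_n=V_n\oplus\operatorname{im}(\Uf|_{\ell^2(\Gb)_{n-1}})$ together with injectivity of $\Uf$ (which keeps each $\Uf^{\,n-j}V_j$ of dimension $\#\Gb_j-\#\Gb_{j-1}$). On each summand $S_n$ acts as a scalar: for a lowest-weight vector $w\in V_j$, identity (\ref{DU^n}) and $\Hf\Uf^k w=(2j+\zz+2k)\Uf^k w$ give $\Df\Uf^{m}w=m(2j+\zz+m-1)\Uf^{m-1}w$, so with $m=n-j+1$ one finds $S_n\Uf^{\,n-j}w=(n-j+1)(n+j+\zz)\Uf^{\,n-j}w$. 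Dividing by $(n+1)(n+\zz)$ and subtracting $1$ turns this into the eigenvalue $-\frac{j(j-1+\zz)}{(n+1)(n+\zz)}$ of $T_n-\mathbf1$ claimed in the statement; since $j\mapsto j(j-1+\zz)=j^2+(\zz-1)j$ has strictly positive increments $2j+\zz$ for $\zz>0$, these eigenvalues are pairwise distinct.

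Finally, because the eigenvalues are distinct and $S_n$ is self-adjoint (hence diagonalizable), a direct-sum decomposition whose summands lie in distinct eigenspaces and whose dimensions exhaust $\#\Gb_n$ must consist of the eigenspaces themselves; thus the multiplicity of $-\frac{j(j-1+\zz)}{(n+1)(n+\zz)}$ equals $\dim V_j=\#\Gb_j-\#\Gb_{j-1}$. For $j=0$ this is $\#\Gb_0=1$, giving the simple eigenvalue $0$ and, since the next eigenvalue $-\frac{\zz}{(n+1)(n+\zz)}<0$, a spectral gap. The routine reductions (the conjugacy, the $\slf(2,\C)$ bracket identities) are mechanical; the genuine obstacle is the injectivity of $\Uf$, equivalently the full-rank statement $\dim V_j=\#\Gb_j-\#\Gb_{j-1}$ for the lowest-weight spaces. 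One could instead read the eigenvalues straight off the triangular action in Proposition \ref{prop:T_n_action}, but extracting the multiplicities from that description again forces one to show that the functions $\{(\Pc_\mu^*)_n:|\mu|=j\}$ are linearly independent, which is precisely the same injectivity of $\Uf$ in disguise.
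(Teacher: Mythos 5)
Your proof is correct, but it is not the route the paper takes: you are carrying out the commutation-relations argument in the style of Fulman, Stanley and Fomin, which the paper explicitly mentions at the start of \S6.3 as a viable alternative before choosing a different path. Concretely, you diagonalize $\Df\Uf$ on the level-$n$ subspace of $\ell^2(\Gb)$ by splitting it into the slices $\Uf^{\,n-j}\ker(\Df|_{\ell^2(\Gb)_j})$ of lowest-weight $\slf(2,\C)$-submodules, with the norm identity $\|\Uf v\|^2-\|\Df v\|^2=(2|\la|+\zz)\|v\|^2$ supplying injectivity of $\Uf$ and hence the multiplicity count. The paper instead works in $\fun(\Gb_n)$ and reads the spectrum off the triangular action of $T_n-\mathbf{1}$ on the restricted relative dimension functions $(\Pc_\mu^*)_n$ from Proposition \ref{prop:T_n_action}, inducting on the filtration by $|\mu|$. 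Your version buys an explicit description of the eigenspaces (as images of lowest-weight spaces) and makes the dimension count fully rigorous; the paper's version is shorter given Proposition \ref{prop:T_n_action} and reuses the $\Pc_\mu^*$ machinery that is needed anyway in \S7, but it does lean on the assertion that $\dim V_{n-1}=\#\Gb_{n-1}$, i.e.\ on the linear independence of the restrictions $\{(\Pc_\varkappa^*)_n\}_{\varkappa\in\Gb_{n-1}}$, which the paper labels as clear. Your closing remark that this independence is exactly the injectivity of $\Uf$ in disguise is accurate: $\Pc_\mu^*(\la)$ is, up to nonvanishing diagonal factors, the matrix element $(\hat\Uf^{\,n-j}\un\mu,\un\la)$, so full rank of that matrix is equivalent to injectivity of the up operator between floors. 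The two proofs are thus logically close cousins, but yours makes the key nondegeneracy step explicit where the paper leaves it implicit.
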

\begin{proof}
  The operator $T_n-\mathbf{1}$ acts in the space $\fun(\Gb_n)$ of dimension $\#\Gb_n$, and its action is completely determined by (\ref{T_n_action}). By the interpolation property (\S\ref{sub:definition_of_relative_dimension_functions}), the functions $\{(\Pc_\mu^*)_n\}_{\mu\in\Gb_n}$ are linearly independent in $\fun(\Gb_n)$, and therefore form a basis of that space. 

  Let us denote by $V_{n-1}\subset\fun(\Gb_n)$ the linear span of the functions $\{(\Pc_\varkappa^*)_n\}_{\varkappa\in\Gb_{n-1}}$. Clearly, $\dim V_{n-1}=\#\Gb_{n-1}$. Let $W_n$ be any subspace such that $\fun(\Gb_n)=W_n\oplus V_{n-1}$ (direct sum of linear spaces). From (\ref{T_n_action}) it follows that the action of $T_n-\mathbf{1}$ in $\fun(\Gb_n)$ is triangular with respect to that decomposition. This implies that $T_n-\mathbf{1}$ has eigenvalue $\left(-\frac{n(n-1+\zz)}{(n+1)(n+\zz)}\right)$ with multiplicity at least $\#\Gb_n-\#\Gb_{n-1}$. The rest of the proposition is verified by induction inside $V_{n-1}$. This concludes the proof. 
\end{proof}

This proposition does not provide an explicit construction of eigenfunctions of $T_n-\mathbf{1}$. It is worth noting that for the second type of Markov dynamics which we consider (\S\ref{sec:markov_jump_dynamics}) such an explicit diagonalization of the generator is accessible.


\subsection{Up/down Markov chains on Pascal triangle} 
\label{sub:up_down_markov_chains_on_pascal_triangle}

Consider the $n$th up/down Markov chain for the Pascal triangle corresponding to the distinguished coherent system $\{M_{n}^{\zz_1,\zz_2}\}$ (\S\ref{sub:application_to_pascal_triangle}). By Proposition \ref{prop:T_n_spectrum}, the generator $T_n-\mathbf{1}$ has simple eigenvalues of the form $\left(-\frac{j(j-1+\zz_1+\zz_2)}{(n+1)(n+\zz_1+\zz_2)}\right)$, $j=0,\ldots,n$, because for this example $\#\PT_j=j+1$.

For fixed $\zz_1,\zz_2>0$ the up/down chains on the Pascal triangle converge as $n\to\infty$ (under suitable space and time scalings, see \cite[\S2]{Borodin2007}) to a diffusion on the boundary $\partial\PT=[0,1]$. This diffusion is reversible with respect to the corresponding Beta distribution on $[0,1]$ (\S\ref{sub:application_to_pascal_triangle}). In fact, this is the well-known Wright-Fisher diffusion, e.g., see \cite{Ewens1979} or \cite{Ethier1986}. Similar convergence leading to diffusions on the boundary holds for other branching graphs \cite{Borodin2007}, \cite{Petrov2007}, \cite{Olshanski2009}, \cite{petrov2009eng}. Note that in these papers the boundaries are infinite-dimensional.



\section{Markov jump dynamics. Diagonalization of generator} 
\label{sec:markov_jump_dynamics}

\subsection{Markov processes on $\Gb$} 
\label{sub:markov_processes_on_}

Throughout the whole section we fix a graph of ideals $\Gb=(J(P),\km)$ with a UD-self-dual multiplicity function and a triplet of Kerov's operators $(\Uf,\Df,\Hf)$. As always, the corresponding coherent system $\{M_n\}$ is assumed to be nondegenerate and such that $\zz=(\Hf\un\varnothing,\un\varnothing)$ is positive. For any $\xi\in(0,1)$ we define a continuous-time Markov jump dynamics $\labf_{\xi}$ on the graph $\Gb$ preserving the corresponding mixed measure $M_\xi$ (\S\ref{sub:mixing_of_measures}). The definition of the process $\labf_{\xi}$ follows the construction of Borodin--Olshanski \cite{Borodin2006}.
 
One starts with a birth and death process on $\Z_{\ge0}$ with the following jump rates ($n=0,1,\ldots$):
\begin{equation}\label{NegBinom_jump_rates}
  q_{n,n-1}:=\frac{n}{1-\xi},\qquad
  q_{n,n+1}:=\frac{\xi(n+\zz)}{1-\xi},\qquad
  q_{n,n}:=-\frac{n+\xi(n+\zz)}{1-\xi}
\end{equation}
All other jump rates are zero. The Markov process with jump rates (\ref{NegBinom_jump_rates}) can start from any point and any probability distribution. This process preserves the negative binomial distribution $\pi_{\zz,\xi}$ (\ref{NegBinom}) and is reversible with respect to it. By $\n_{\zz,\xi}$ denote the equilibrium version of the process, i.e., the process starting from the invariant distribution. For more detail on $\n_{\zz,\xi}$ see \cite[\S4.3]{Borodin2006}, and also \cite{KMG57BDClassif}, \cite{KMG58Linear} for a general treatment of birth and death processes.

Following and generalizing the constructions of \cite{Borodin2006}, \cite{Petrov2010Pfaffian}, we define a Markov jump process $\labf_{\xi}$ on the whole graph $\Gb$. The evolution of this process goes as follows. The level $|\labf_{\xi}|$ of a random vertex evolves according to the birth and death process $\n_{\zz,\xi}$. If at some moment the process $\n_{\zz,\xi}$ chooses to go one step up (it cannot jump twice at a single moment of time), say, from $n$ to $n+1$, then $\labf_{\xi}$ also goes one floor up in the graph $\Gb$. The vertex to which $\labf_{\xi}$ jumps is chosen according to the up transition probabilities $p^\ua_{n,n+1}(\labf_{\xi},\cdot)$ (\S\ref{sub:coherent_systems}). Similar thing happens when the process $\n_{\zz,\xi}$ decides to go down. Then the vertex in $\Gb$ to which $\labf_{\xi}$ jumps is chosen according to the down transition probabilities $p^\da_{n,n-1}(\labf_{\xi},\cdot)$. More formally, the jump rates of $\labf_{\xi}$ are as follows (here $\la\in\Gb_n$, $n=0,1,\ldots$):
\begin{align*}
  Q_{\la\mu}&=
  (1-\xi)^{-1}{n}\cdot p^\da_{n,n-1}(\la,\mu),&\mu\nearrow\la
  \\
  Q_{\la\nu}&=
  (1-\xi)^{-1}\xi(n+\zz)\cdot p^\ua_{n,n+1}(\la,\nu),&\nu\searrow\la
  \\
  Q_{\la\la}&=-(1-\xi)^{-1}\big(n+\xi(n+\zz)\big).
\end{align*}
All other jump rates are zero. The process with jump rates $\{Q_{\la\mu}\}$ preserves the measure $M_{\xi}$ on $\Gb$ and is reversible with respect to it. By agreement, let $\labf_{\xi}$ denote the equilibrium version of the process. For more detail about the process $\labf_{\xi}$ in the case of the Young graph see \cite[\S4.4]{Borodin2006}.

\begin{remark}
  The $n$th up/down Markov chain on $\Gb_n$ (\S\ref{sec:up_down_markov_chains}) can be reconstructed from $\labf_{\xi}$ as follows. Condition the process $\labf_{\xi}$ to stay in the set $\Gb_n\times\Gb_{n+1}$. Take its embedded Markov chain, that is, consider the process only at the times of jumps. This yields a Markov chain on $\Gb_n\times \Gb_{n+1}$ which belongs to $\Gb_n$ at, say, even discrete time moments. Taking this chain at the even moments, we reconstruct back the up/down Markov chain on $\Gb_n$with the transition operator $T_n$ (\ref{T_n_def}).
\end{remark}


\subsection{Generator of the Markov jump process} 
\label{sub:generator_of_the_markov_jump_process}

Let ${\mathsf{A}}_{\xi}$ be the following operator acting on the space of finitely supported functions $\ellf^{2}(\Gb,M_{\xi})\subset\ell^{2}(\Gb,M_{\xi})$:
\begin{equation*}
  ({\mathsf{A}}_{\xi}f)(\la):=
  \sum\nolimits_{\rho\in\Gb}Q_{\la\rho}f(\rho).
\end{equation*}
The closure $\bar {\mathsf{A}}_{\xi}$ of this operator (closability follows from Proposition \ref{prop:Bxi_closability} below) in $\ell^{2}(\Gb,M_{\xi})$ is the Markov generator of the process $\labf_{\xi}$ on $\Gb$. 

Using the isometry $\istry_\xi$ (\ref{istry}), we can consider the corresponding operator ${\mathsf{B}}_{\xi}:=\istry_\xi {\mathsf{A}}_{\xi}\istry_\xi^{-1}$ which acts in $\ellf^2(\Gb)$. 

\begin{proposition}\label{prop:Bxi_action}
  The operator ${\mathsf{B}}_{\xi}$ is expressed through the Kerov's operators as follows:
  \begin{equation*}
    {\mathsf{B}}_{\xi}=
    \frac{\sqrt\xi}{1-\xi}(\Uf+\Df)-
    \frac12\frac{1+\xi}{1-\xi}\Hf+
    \frac\zz2\mathbf{1}.
  \end{equation*}
\end{proposition}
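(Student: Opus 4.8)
The plan is to verify the claimed identity entry by entry in the orthonormal basis $\{\un\la\}$ of $\ell^2(\Gb)$. First I would turn the conjugation by the isometry $\istry_\xi$ (\ref{istry}) into an explicit formula for the matrix entries of $\mathsf{B}_\xi$. Since $\istry_\xi$ is multiplication by $M_\xi(\la)^{1/2}$, and since the generator is written so that its matrix is exactly $(Q_{\la\rho})$ (because $(\mathsf{A}_\xi f)(\la)=\sum_\rho Q_{\la\rho}f(\rho)$), the conjugation gives
\[
(\mathsf{B}_\xi)_{\la\rho}=Q_{\la\rho}\left(\frac{M_\xi(\la)}{M_\xi(\rho)}\right)^{1/2}.
\]
As $Q_{\la\rho}\neq0$ only when $\rho=\la$, $\rho\nearrow\la$, or $\rho\searrow\la$, it remains to match three families of entries against the right-hand side, which I abbreviate $\mathsf{R}:=\frac{\sqrt\xi}{1-\xi}(\Uf+\Df)-\frac12\frac{1+\xi}{1-\xi}\Hf+\frac{\zz}{2}\mathbf{1}$. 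By (\ref{Kerov_UfDf_def}) and Proposition \ref{prop:action_H}, in $\mathsf{R}$ the operators $\Uf,\Df$ contribute only to the two off-diagonal families while $\Hf,\mathbf{1}$ contribute only to the diagonal.

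Next I would treat the off-diagonal entry for $\mu\nearrow\la$, writing $|\la|=n$. Substituting the multiplicative formula (\ref{mult_m}) for $M_\xi$ gives $(M_\xi(\la)/M_\xi(\mu))^{1/2}=\sqrt\xi\,\frac{\dim\la}{n\,\dim\mu}\ufunc(\la/\mu)$, while $Q_{\la\mu}=(1-\xi)^{-1}n\,p^\da_{n,n-1}(\la,\mu)=(1-\xi)^{-1}n\,\frac{\km(\mu,\la)\dim\mu}{\dim\la}$. Multiplying, the factors $n$, $\dim\la$ and $\dim\mu$ all cancel and one is left with $\frac{\sqrt\xi}{1-\xi}\km(\mu,\la)\ufunc(\la/\mu)$, which is precisely $\frac{\sqrt\xi}{1-\xi}(\Uf\un\mu,\un\la)$ by (\ref{Kerov_UfDf_def}). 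Rather than repeat this bookkeeping for $\nu\searrow\la$, I would invoke symmetry: the process $\labf_\xi$ is reversible with respect to $M_\xi$, i.e. $M_\xi(\la)Q_{\la\nu}=M_\xi(\nu)Q_{\nu\la}$, which forces $(\mathsf{B}_\xi)_{\la\nu}=(\mathsf{B}_\xi)_{\nu\la}$. The entry $(\mathsf{B}_\xi)_{\nu\la}$ is of the type just computed (with the roles of $\la,\mu$ played by $\nu,\la$), so it equals $\frac{\sqrt\xi}{1-\xi}\km(\la,\nu)\ufunc(\nu/\la)$; since $\Df=\Uf^*$, this is exactly $\frac{\sqrt\xi}{1-\xi}(\Df\un\nu,\un\la)$, matching $\mathsf{R}$.

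For the diagonal I would use $Q_{\la\la}=-(1-\xi)^{-1}(n+\xi(n+\zz))$ and, on the other side, the fact that only $-\frac12\frac{1+\xi}{1-\xi}\Hf+\frac{\zz}{2}\mathbf{1}$ has a nonzero diagonal, equal by Proposition \ref{prop:action_H} to $-\frac{(1+\xi)(2n+\zz)}{2(1-\xi)}+\frac{\zz}{2}$. A one-line simplification over the common denominator $2(1-\xi)$ shows the numerator is $-(1+\xi)(2n+\zz)+\zz(1-\xi)=-2\bigl(n+\xi(n+\zz)\bigr)$, so this diagonal value is $-\frac{n+\xi(n+\zz)}{1-\xi}=Q_{\la\la}$. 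Having matched every matrix entry, I conclude $\mathsf{B}_\xi=\mathsf{R}$ on $\ellf^2(\Gb)$.

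I expect no conceptual difficulty: the whole argument is a finite computation. The only place needing care is tracking the level $n$ against $n\pm1$ inside the ratios $M_\xi(\la)/M_\xi(\rho)$, where the powers of $\xi$, the factorials, the dimensions, and the weights $\ufunc(\square)^2$ must be accounted for so that the $\la$-dependent dimension factors cancel exactly. The two structural inputs that make this cancellation automatic are the multiplicativity of $M_\xi$ in (\ref{mult_m}) and the definitions of $p^\da$ and $p^\ua$ through the same dimension ratios; the reversibility/self-adjointness observation is what halves the labor by deducing the $\Df$-term from the $\Uf$-term.
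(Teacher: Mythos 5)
Your proof is correct and is essentially the paper's own argument: the paper's proof of Proposition \ref{prop:Bxi_action} simply states that the identity ``is established by a straightforward computation as in \cite[\S9]{Petrov2010Pfaffian}'', and your matrix-entry verification is exactly that computation carried out (with the harmless shortcut of deducing the $\Df$-entries from the $\Uf$-entries via the reversibility of $\labf_\xi$ with respect to $M_\xi$, which the paper asserts as part of the setup in \S\ref{sub:markov_processes_on_}). All three families of entries check out, so there is nothing to add.
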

\begin{proof}
  This is established by a straightforward computation as in \cite[\S9]{Petrov2010Pfaffian}.
\end{proof}

Using the above proposition and Theorem \ref{thm:action_of_KO}, we get the following:
\begin{corollary}\label{corollary:action_of_Axi_on_Pmu}
  The generator $\bar {\mathsf{A}}_{\xi}$ of the process $\labf_\xi$ acts on the relative dimension functions $\Pc_\mu^*\in\ell^2(\Gb,M_{\xi})$ (\S\ref{sec:relative_dimension_functions}) in the following way:
  \begin{equation}\label{action_of_A_xi}
    \bar{\mathsf{A}}_{\xi}\Pc_\mu^*=-|\mu|\Pc_\mu^*+\frac{\xi}{1-\xi}
    \sum_{\rho\colon\rho\nearrow\mu}
    \km(\rho,\mu)\ufunc(\mu/\rho)^2\Pc_\rho^*,
    \quad\mbox{for every $\mu\in\Gb$}.
  \end{equation}
\end{corollary}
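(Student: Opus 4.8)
The plan is to exploit the conjugation relation ${\mathsf{B}}_{\xi} = \istry_\xi {\mathsf{A}}_{\xi}\istry_\xi^{-1}$ and reduce the claim to a single computation of ${\mathsf{B}}_\xi$ applied to the transformed functions $\istry_\xi\Pc_\mu^*$, for which the action of all three Kerov's operators is already known. Since $\istry_\xi$ is an isometric isomorphism, conjugation carries the closure $\bar{\mathsf{A}}_\xi$ to $\bar{\mathsf{B}}_\xi=\istry_\xi\bar{\mathsf{A}}_\xi\istry_\xi^{-1}$, so once ${\mathsf{B}}_\xi(\istry_\xi\Pc_\mu^*)$ is computed, applying $\istry_\xi^{-1}$ term by term (note $\istry_\xi^{-1}(\istry_\xi\Pc_\nu^*)=\Pc_\nu^*$) yields the asserted action of $\bar{\mathsf{A}}_\xi$ on $\Pc_\mu^*$.

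First I would substitute the expression for ${\mathsf{B}}_\xi$ from Proposition \ref{prop:Bxi_action} and evaluate it on $\istry_\xi\Pc_\mu^*$. For the two first-order terms I invoke Theorem \ref{thm:action_of_KO}, giving $\Uf(\istry_\xi\Pc_\mu^*)=\xi^{-1/2}(\po-|\mu|)(\istry_\xi\Pc_\mu^*)$ together with the two-term formula for $\Df(\istry_\xi\Pc_\mu^*)$. For the $\Hf$ term I use that $\Hf$ is diagonal (Proposition \ref{prop:action_H}), hence acts on functions as the multiplication operator $\Hf=2\po+\zz\mathbf{1}$, where $\po$ is multiplication by the level $|\cdot|$; thus $\Hf(\istry_\xi\Pc_\mu^*)=(2\po+\zz)(\istry_\xi\Pc_\mu^*)$.

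The heart of the argument is then collecting coefficients. The terms proportional to $\po$ enter with prefactors $\frac{1}{1-\xi}$ (from $\Uf$), $\frac{\xi}{1-\xi}$ (from $\Df$), and $-\frac{1+\xi}{1-\xi}$ (from $\Hf$); these sum to zero, so the unbounded multiplication operator $\po$ cancels entirely. I expect this cancellation to be the one genuinely instructive step, since it is precisely why $\Pc_\mu^*$ itself (rather than a $\po$-weighted modification) behaves so simply under $\bar{\mathsf{A}}_\xi$. The surviving $\po$-independent diagonal coefficient is $-\frac{|\mu|}{1-\xi}+\frac{\xi}{1-\xi}(|\mu|+\zz)-\frac{1+\xi}{2(1-\xi)}\zz+\frac{\zz}{2}$; a short simplification over the common denominator $2(1-\xi)$ shows the $\zz$-contributions cancel and the $|\mu|$-contributions collapse to $-|\mu|$. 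The lone off-diagonal contribution comes from the second term of $\Df$, carrying the prefactor $\frac{\sqrt\xi}{1-\xi}\cdot\xi^{1/2}=\frac{\xi}{1-\xi}$, producing $\frac{\xi}{1-\xi}\sum_{\rho\colon\rho\nearrow\mu}\km(\rho,\mu)\ufunc(\mu/\rho)^2(\istry_\xi\Pc_\rho^*)$, which after $\istry_\xi^{-1}$ matches the right-hand side of \eqref{action_of_A_xi}.

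The only point requiring care beyond this finite computation is the domain issue: $\Pc_\mu^*$ is not finitely supported, so \eqref{action_of_A_xi} is a statement about the \emph{closure} $\bar{\mathsf{A}}_\xi$ rather than about ${\mathsf{A}}_\xi$ on $\ellf^2(\Gb,M_\xi)$. I would handle this by noting that $\Pc_\mu^*$ and the finitely many $\Pc_\rho^*$ appearing on the right all lie in $\ell^2(\Gb,M_\xi)$ (Proposition \ref{prop:completeness}, step 3), approximating $\Pc_\mu^*$ by its truncations $\Pc_\mu^*\cdot\chi_N$ to levels $\le N$, verifying the formula on these finitely supported functions via the action of ${\mathsf{A}}_\xi$, and then passing to the limit using closability (Proposition \ref{prop:Bxi_closability}) and closedness of $\bar{\mathsf{A}}_\xi$; this simultaneously places $\Pc_\mu^*$ in the domain of $\bar{\mathsf{A}}_\xi$ and establishes the identity.
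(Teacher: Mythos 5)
Your proposal is correct and follows essentially the same route as the paper: the corollary is stated there as an immediate consequence of Proposition \ref{prop:Bxi_action} and Theorem \ref{thm:action_of_KO}, i.e.\ precisely the substitution and coefficient-collection you carry out (and your cancellations of the $\po$-terms and of the $\zz$-terms check out). Your additional truncation argument for placing $\Pc_\mu^*$ in the domain of the closure $\bar{\mathsf{A}}_\xi$ is a point the paper passes over silently, and it is handled soundly.
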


Thus, one can describe the spectrum of the generator $\bar{\mathsf{A}}_{\xi}$ in $\ell^2(\Gb,M_{\xi})$:
\begin{proposition}\label{prop:spectrum_of_A}
  The generator $\bar{\mathsf{A}}_{\xi}$ (acting in $\ell^2(\Gb,M_{\xi})$) of the Markov jump dynamics $\labf_{\xi}$  has eigenvalues $\{-n\colon n=0,1,\ldots\}$. The multiplicity of the $n$th eigenvalue is equal to $\#\Gb_n$.
\end{proposition}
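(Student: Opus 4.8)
The plan is to extract the whole eigenstructure of $\bar{\mathsf{A}}_{\xi}$ from its explicit algebraic action on the relative dimension functions recorded in Corollary \ref{corollary:action_of_Axi_on_Pmu}. The key feature of that formula is that $\bar{\mathsf{A}}_{\xi}$ is \emph{triangular} with respect to the degree filtration $\A_0\subset\A_1\subset\cdots$ of \S\ref{sub:space_A}: applied to a basis vector $\Pc_\mu^*$ with $|\mu|=n$ it returns $-n\,\Pc_\mu^*$ plus a linear combination of the $\Pc_\rho^*$ with $\rho\nearrow\mu$, all of which lie in $\A_{n-1}$. Hence $\bar{\mathsf{A}}_{\xi}$ maps each finite-dimensional space $\A_N$ into itself, and the restriction $\bar{\mathsf{A}}_{\xi}|_{\A_N}$ is a genuine finite-dimensional operator whose diagonal entries in the basis $\{\Pc_\mu^*\}_{|\mu|\le N}$ are exactly $\{-|\mu|\}$. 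I would first record this, so that the characteristic polynomial of $\bar{\mathsf{A}}_{\xi}|_{\A_N}$ equals $\prod_{n=0}^{N}(x+n)^{\#\Gb_n}$, i.e. its eigenvalues are $0,-1,\dots,-N$ with algebraic multiplicities $\#\Gb_0,\dots,\#\Gb_N$.

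Next I would upgrade this to an honest eigenbasis and thereby pin down the geometric multiplicities. Fix $\mu$ with $|\mu|=n$ and seek an eigenvector of the form $\Mf_\mu=\Pc_\mu^*+g$ with $g\in\A_{n-1}$. Writing $\bar{\mathsf{A}}_{\xi}\Pc_\mu^*=-n\,\Pc_\mu^*+w$ with $w\in\A_{n-1}$ as above, the eigenvalue equation $\bar{\mathsf{A}}_{\xi}\Mf_\mu=-n\,\Mf_\mu$ reduces to
\[
  \big(\bar{\mathsf{A}}_{\xi}+n\,\mathbf{1}\big)\,g=-w\qquad\text{in }\A_{n-1}.
\]
Since $\bar{\mathsf{A}}_{\xi}|_{\A_{n-1}}$ has all its eigenvalues in $\{0,-1,\dots,-(n-1)\}$, the operator $(\bar{\mathsf{A}}_{\xi}+n\,\mathbf{1})|_{\A_{n-1}}$ has eigenvalues in $\{1,\dots,n\}$ and is invertible, giving a unique $g$ and hence a well-defined $\Mf_\mu\in\A_n\subset\ell^2(\Gb,M_\xi)$ with $\bar{\mathsf{A}}_{\xi}\Mf_\mu=-|\mu|\,\Mf_\mu$. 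The leading terms $\Pc_\mu^*$ being linearly independent, the family $\{\Mf_\mu\}_{\mu\in\Gb}$ is obtained from $\{\Pc_\mu^*\}$ by a degree-lowering triangular change of basis, so it is again a linear basis of $\A$; in particular $\bar{\mathsf{A}}_{\xi}|_{\A_N}$ is diagonalizable and, for each $n$, the vectors $\{\Mf_\mu\}_{|\mu|=n}$ span a $\#\Gb_n$-dimensional eigenspace for the eigenvalue $-n$.

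Finally I would pass from the dense core $\A$ to the whole Hilbert space. By Proposition \ref{prop:completeness} the span $\A=\bigoplus_n\mathrm{span}\{\Mf_\mu:|\mu|=n\}$ is dense in $\ell^2(\Gb,M_\xi)$. Because $\labf_{\xi}$ is reversible with respect to $M_\xi$, the generator $\bar{\mathsf{A}}_{\xi}$ is self-adjoint and negative semidefinite, so its eigenspaces for the distinct eigenvalues $-n$ are mutually orthogonal; since their algebraic sum already contains the dense subspace $\A$ and each is finite-dimensional (using $\#\Gb_n<\infty$), these eigenspaces must exhaust $\ell^2(\Gb,M_\xi)$ and coincide with $\mathrm{span}\{\Mf_\mu:|\mu|=n\}$. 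This yields that the spectrum is purely the point spectrum $\{-n:n=0,1,\dots\}$ with the multiplicity of $-n$ equal to $\#\Gb_n$. The step requiring the most care is precisely this passage to the completion: the triangularity of Corollary \ref{corollary:action_of_Axi_on_Pmu} rigorously controls everything on $\A$, but ruling out any additional (continuous or higher-multiplicity) spectrum in $\ell^2(\Gb,M_\xi)$ is exactly where I would invoke self-adjointness together with the finiteness of the levels, so that the eigenexpansion on the core $\A$ transports to the entire space.
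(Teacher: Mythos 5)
Your proof is correct, and its core idea --- the triangularity of $\bar{\mathsf{A}}_{\xi}$ with respect to the filtration $\A_0\subset\A_1\subset\cdots$ in the basis $\{\Pc_\mu^*\}$, read off from Corollary \ref{corollary:action_of_Axi_on_Pmu} --- is exactly the paper's. The paper's proof stops there, in two sentences; you go further in two genuinely different ways. First, you produce the eigenbasis by inverting $(\bar{\mathsf{A}}_{\xi}+n\mathbf{1})$ on $\A_{n-1}$, whereas the paper obtains the same eigenfunctions $\Mf_\la$ later and by a different route, as renormalized matrix elements $\G_\xi\un\la$ of the lifted $SU(1,1)^\sim$-action (\S\ref{sub:lifting_of_representation}--\ref{sub:explicit_formula_for_}, Definition \ref{def:Mf_la}); your construction is more elementary and immediately gives uniqueness of the monic eigenfunction (it is essentially the characterization recorded as Property 4 in \S\ref{sub:properties_of_the_functions_}), but it does not yield the orthogonality relation, for which the paper's unitary $\G_\xi$ seems indispensable. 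Second, you explicitly carry the spectral statement from the core $\A$ to all of $\ell^2(\Gb,M_\xi)$ via self-adjointness, orthogonality of eigenspaces, and density (Proposition \ref{prop:completeness}); the paper's one-line proof leaves this implicit, relying on the identity $\bar{\mathsf{B}}_\xi=-\tfrac12\G_\xi\bar\Hf\G_\xi^{-1}+\tfrac{\zz}{2}\mathbf{1}$ of Proposition \ref{prop:Bxi_closability}, which hands over the full spectral decomposition at once. One small caution: deducing self-adjointness from ``reversibility'' alone is a bit quick --- what one really needs is essential self-adjointness of the generator on the chosen core, which the paper supplies through the analytic-vectors argument (Propositions \ref{prop:integrability} and \ref{prop:Bxi_closability}); citing that instead would close the only soft spot in your final step.
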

\begin{proof}
  Indeed, recall (\S\ref{sub:space_A}) that the dense subspace $\A\subset\ell^2(\Gb,M_\xi)$ spanned by the relative dimension functions $\Pc_\mu^*$ possesses a filtration. By (\ref{action_of_A_xi}), the action of the operator $\bar{\mathsf{A}}_\xi$ in $\A$ respects this filtration, that is, $\bar{\mathsf{A}}_\xi$ acts triangularly in the basis $\{\Pc_\mu^*\}_{\mu\in\Gb}$ of $\A$. This concludes the proof.
\end{proof}


\subsection{Lifting of $\slf(2)$ representation and functions $\Ff_\la$} 
\label{sub:lifting_of_representation}


Let us look closer at the formula for the operator ${\mathsf{B}}_\xi$ of Proposition \ref{prop:Bxi_action}. Consider the $2\times2$ matrix
\begin{equation}
  \frac12\frac{1+\xi}{1-\xi}H-\frac{\sqrt\xi}{1-\xi}(U+D)=
  \begin{bmatrix}
    \frac12\frac{1+\xi}{1-\xi}&-\frac{\sqrt\xi}{1-\xi}\\
    \rule{0pt}{12pt}
    \frac{\sqrt\xi}{1-\xi}&-\frac12\frac{1+\xi}{1-\xi}
  \end{bmatrix}=
  \frac12G_{\xi}HG_{\xi}^{-1}
  \in\slf(2,\C)
  \label{matrix_comput}
\end{equation}
(see (\ref{UDH_matrices})), where $G_{\xi}$ is the following element of the real form $SU(1,1)\subset SL(2,\C)$:
\begin{equation*}
  G_{\xi}:=\begin{bmatrix}
    \frac1{\sqrt{1-\xi}}&\frac{\sqrt\xi}{\sqrt{1-\xi}}\\
    \frac{\sqrt\xi}{\sqrt{1-\xi}}&\frac1{\sqrt{1-\xi}}
    \rule{0pt}{12pt}
  \end{bmatrix}.
\end{equation*}

Now recall that the Kerov's operators $(\Uf,\Df,\Hf)$ define a representation of the complex Lie algebra $\slf(2,\C)$ in the (complex) pre-Hilbert space $\ellf^2(\Gb)$. The subalgebra $\suf(1,1)\subset \slf(2,\C)$ is spanned by the matrices $U-D$, $i(U+D)$, and $iH$ ($i=\sqrt{-1}$) Note that the corresponding operators $\Uf-\Df$, $i(\Uf+\Df)$, and $i\Hf$ act skew-symmetrically in $\ellf^2(\Gb)$. 

\begin{proposition}
  \label{prop:integrability}
  All vectors of the space $\ell_{\mathrm{fin}}^2(\Gb)$ are analytic (see \cite{nelson1959analytic}) for the described above action of the Lie algebra $\mathfrak{su}(1,1)$ in $\ell_{\mathrm{fin}}^2(\Gb)$. Thus, this action of $\mathfrak{su}(1,1)$ gives rise to a unitary representation of the universal covering group $SU(1,1)^\sim$ in the Hilbert space $\ell^2(\Gb)$.
\end{proposition}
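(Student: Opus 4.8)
The plan is to invoke Nelson's theorem on analytic vectors, which states that if a Lie algebra acts on a Hilbert space by skew-symmetric operators sharing a common dense domain of analytic vectors, and if a suitable elliptic element (the Nelson Laplacian) is essentially self-adjoint, then the representation integrates to a unitary representation of the simply connected covering group. Here the three skew-symmetric generators are $\Uf-\Df$, $i(\Uf+\Df)$, and $i\Hf$, all acting on the common invariant dense domain $\ellf^2(\Gb)$ of finitely supported functions. The key point is that each basis vector $\un\la$ must be shown to be an analytic vector for this action, i.e., to verify that for every $v \in \ellf^2(\Gb)$ the series $\sum_{k=0}^\infty \frac{t^k}{k!}\,\|\Omega^k v\|$ has positive radius of convergence, where $\Omega$ is the relevant elliptic element.

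The natural strategy is to exploit the lowest-weight structure of the $\slf(2,\C)$-module. First I would recall that, by Proposition 3.7 and the $\slf(2,\C)$ commutation relations, the operators $\Uf,\Df,\Hf$ raise, lower, and fix the grading by $|\la|$; consequently the Casimir element $\Omega := \tfrac12 \Hf^2 + \Uf\Df + \Df\Uf$ (or equivalently the elliptic element $(\Uf-\Df)^2 - (\Uf+\Df)^2 - \Hf^2$ generating the maximal compact $\mathrm{U}(1)$ of $SU(1,1)$) acts in a controlled way. The crucial estimate is a bound of the form
\begin{equation*}
  \|\Uf \un\la\|,\ \|\Df \un\la\| \le C\,(|\la|+1)
\end{equation*}
for some constant depending only on $\zz$. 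This follows because $\Hf\un\la = (2|\la|+\zz)\un\la$ is diagonal, and from the relations $\Df\Uf = \Uf\Df + \Hf$ one computes $\|\Uf\un\la\|^2 = (\Df\Uf\un\la,\un\la) = (\Uf\Df\un\la,\un\la) + (2|\la|+\zz)$, which is controlled linearly in $|\la|$. Iterating, any word of length $k$ in the generators applied to $\un\la$ produces a vector supported on levels within distance $k$ of $|\la|$, with norm growing at most like a product of $k$ factors each linear in the level, giving a bound $\|\Omega^k \un\la\| \le (\text{const})^k\, k!\,(|\la|+k)^{k}/\!\sim$ — more precisely a bound ensuring the analytic-vector series converges.

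The main obstacle will be establishing the combinatorial/analytic estimate that makes the analyticity series converge, i.e., controlling the growth of $\|\Omega^k \un\la\|$ sharply enough. Because the underlying poset $P$ may be arbitrary (e.g. the Young graph, where a level-$n$ vertex has many covers), one cannot bound the number of neighbors uniformly; instead one must use that $\Omega$ preserves each irreducible lowest-weight $\slf(2,\C)$-summand, and that on such a summand the operator norms are governed by the standard $\slf(2,\C)$ formulas of Proposition 3.7 with growth linear in the weight. The cleanest route is therefore to decompose $\ell^2(\Gb)$ into a direct sum (or integral) of irreducible lowest-weight modules, observe that on each summand the element $\Omega$ is diagonalizable with real spectrum and the standard $SU(1,1)^\sim$ integrability holds, and then assemble these. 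Once essential self-adjointness of the elliptic generator is secured by the analytic-vector argument, Nelson's theorem and the standard passage from Lie-algebra representations to representations of the simply connected covering group $SU(1,1)^\sim$ yield the claimed unitary representation in $\ell^2(\Gb)$.
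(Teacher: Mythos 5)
The paper itself gives no argument here: it only points to \cite{Olshanski-fockone} and to \S4.2 of \cite{Petrov2010Pfaffian}, where (for the Schur graph) the statement is proved by exactly the Nelson-type argument you outline, so your route is essentially the intended one. One caution about the ``crucial estimate'': the identity $\|\Uf\un\la\|^2=\|\Df\un\la\|^2+(2|\la|+\zz)$ that you extract from $[\Df,\Uf]=\Hf$ controls only the \emph{difference} of the two norms and does not by itself give the linear bound $\|\Uf\un\la\|,\|\Df\un\la\|\le C(|\la|+1)$; that bound is true, but it has to be obtained the way you indicate afterwards, namely by decomposing $\ellf^2(\Gb)$ (levels finite, $\Hf$ diagonal with spectrum $2n+\zz$ bounded below by $\zz>0$, $\Uf=\Df^*$) into irreducible unitarizable lowest-weight $\suf(1,1)$-modules and using the standard formula $\|\Uf w_m\|^2=(m+1)(m+h_0)\|w_m\|^2$ on each summand, which yields $\|\Uf\un\la\|^2\le(|\la|+1)(2|\la|+\zz)$ and similarly for $\Df$. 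Granting that, any word of length $k$ in the generators applied to $\un\la$ has norm at most $C^k\,(|\la|+k+1)!/(|\la|+1)!$, so the analytic-vector series converges with a positive radius uniform on each level, and Nelson's theorem applied to the skew-symmetric triple $\Uf-\Df$, $i(\Uf+\Df)$, $i\Hf$ on the dense invariant domain $\ellf^2(\Gb)$ integrates the action to a unitary representation of $SU(1,1)^{\sim}$, as claimed.
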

\begin{proof}
  This fact stems from \cite{Olshanski-fockone} and is established (for the Schur graph) in \cite[\S4.2]{Petrov2010Pfaffian}. The same proof works in our abstract setting.
\end{proof}

Observe that $\{G_{\xi}\}_{\xi\in[0,1)}$ is a continuous curve in $SU(1,1)$ starting at the unity. Let $\{\tilde G_{\xi}\}_{\xi\in[0,1)}$ denote the lifting of that curve to $SU(1,1)^{\sim}$, and by $\G_{\xi}$ for every $\xi\in[0,1)$ denote the corresponding unitary operator in the representation of $SU(1,1)^{\sim}$ in $\ell^2(\Gb)$. It follows from analyticity in the above proposition that the operator $\Hf$ with domain $\ellf^2(\Gb)$ is essentially self-adjoint in $\ell^2(\Gb)$. Let $\bar \Hf$ denote its closure.
\begin{proposition}\label{prop:Bxi_closability}
  The operator ${\mathsf{B}}_{\xi}$ of Proposition \ref{prop:Bxi_action} is essentially self-adjoint in the space $\ellf^2(\Gb)$, and its closure looks as
  \begin{equation}\label{Bxi_Gxi}
    \bar{\mathsf{B}}_{\xi}=
    -\frac12\G_{\xi}\bar\Hf\G_{\xi}^{-1}+\frac\zz2\mathbf{1}.
  \end{equation}
\end{proposition}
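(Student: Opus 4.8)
The plan is to recognize ${\mathsf{B}}_{\xi}-\frac\zz2\mathbf{1}$ as (minus) the image under the Kerov's representation $\rep$ of a single element of $\suf(1,1)$ obtained by rotating $H$, and then to derive both essential self-adjointness and the form of the closure from Proposition \ref{prop:integrability} via Nelson's analytic vector theorem. Combining Proposition \ref{prop:Bxi_action} with the matrix identity (\ref{matrix_comput}), on $\ellf^2(\Gb)$ we have
\begin{equation*}
  {\mathsf{B}}_{\xi}-\frac\zz2\mathbf{1}=
  \frac{\sqrt\xi}{1-\xi}(\Uf+\Df)-\frac12\frac{1+\xi}{1-\xi}\Hf
  =-\rep\Big(\tfrac12 G_{\xi}HG_{\xi}^{-1}\Big),
\end{equation*}
since $\rep(U)=\Uf$, $\rep(D)=\Df$, $\rep(H)=\Hf$ in the symmetric gauge. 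Because $G_{\xi}\in SU(1,1)$ and $iH\in\suf(1,1)$, the element $\mathrm{Ad}_{G_{\xi}}(iH)=iG_{\xi}HG_{\xi}^{-1}$ again lies in the real form $\suf(1,1)$.

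First I would prove essential self-adjointness. By Proposition \ref{prop:integrability} every vector of $\ellf^2(\Gb)$ is analytic for the $\suf(1,1)$-action, so Nelson's theorem \cite{nelson1959analytic} applies: for each $Y\in\suf(1,1)$ the restriction of $\rep(Y)$ to the dense set of analytic vectors $\ellf^2(\Gb)$ is essentially skew-adjoint. Taking $Y=\mathrm{Ad}_{G_{\xi}}(iH)$ shows that $\rep(G_{\xi}HG_{\xi}^{-1})$, and hence ${\mathsf{B}}_{\xi}$ (which differs from it only by the factor $-\tfrac12$ and the bounded real scalar $\tfrac\zz2\mathbf{1}$), is essentially self-adjoint on $\ellf^2(\Gb)$.

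Next I would identify the closure. On the space of $C^\infty$-vectors of the $SU(1,1)^{\sim}$-representation, which contains $\ellf^2(\Gb)$, is invariant under the unitaries $\G_{\xi}$, and is a core for the skew-adjoint generators, the intertwining relation $\G_{\xi}\rep(X)\G_{\xi}^{-1}=\rep(\mathrm{Ad}_{G_{\xi}}X)$ holds (the adjoint action factors through $SU(1,1)$, and $\tilde G_{\xi}$ projects to $G_{\xi}$). Taking $X=iH$ and closing gives $\G_{\xi}(i\bar\Hf)\G_{\xi}^{-1}=\overline{\rep(\mathrm{Ad}_{G_{\xi}}(iH))}$; dividing by $i$ yields $\overline{\rep(G_{\xi}HG_{\xi}^{-1})}=\G_{\xi}\bar\Hf\G_{\xi}^{-1}$. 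Therefore $\bar{\mathsf{B}}_{\xi}=-\frac12\G_{\xi}\bar\Hf\G_{\xi}^{-1}+\frac\zz2\mathbf{1}$, which is (\ref{Bxi_Gxi}).

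The main obstacle is that $\G_{\xi}$ need not preserve $\ellf^2(\Gb)$, so one cannot directly argue that $\ellf^2(\Gb)$ is a core for the conjugated operator $\G_{\xi}\bar\Hf\G_{\xi}^{-1}$. This is precisely where the full strength of Proposition \ref{prop:integrability} is needed: since $\ellf^2(\Gb)$ consists of analytic vectors for the \emph{entire} $\suf(1,1)$-representation and not merely for $\Hf$, Nelson's theorem applies to the rotated generator $\mathrm{Ad}_{G_{\xi}}(iH)$ directly on $\ellf^2(\Gb)$, bypassing any $\G_{\xi}$-invariance of that subspace. One should also check carefully that the two skew-adjoint operators in the previous step agree on a common core of $C^\infty$-vectors before concluding that their closures coincide.
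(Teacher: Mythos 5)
Your proof is correct and follows essentially the same route as the paper, whose own argument is the one-line observation that the claim follows from the matrix computation (\ref{matrix_comput}) together with Proposition \ref{prop:integrability}; you have simply made explicit the standard steps (Nelson's theorem applied to the rotated generator $\mathrm{Ad}_{G_{\xi}}(iH)\in\suf(1,1)$ on the analytic domain $\ellf^2(\Gb)$, and the intertwining of generators under the integrated unitary representation) that the paper leaves implicit. No gaps.
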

\begin{proof}
  This follows from matrix computation (\ref{matrix_comput}) and Proposition \ref{prop:integrability}.
\end{proof}

Now we are in a position to describe a diagonalization of the operator $\bar{\mathsf{B}}_\xi$:
\begin{proposition}
  The functions $\Ff_{\la}:=\G_\xi\un\la\in\ell^2(\Gb)$, $\la\in\Gb$, form an orthonormal basis of eigenfunctions of the operator $\bar {\mathsf{B}}_{\xi}$:
  \begin{equation}\label{action_Bxi_Fla}
    \bar{\mathsf{B}}_{\xi}\Ff_\la=-|\la|\Ff_\la,\qquad\la\in\Gb.
  \end{equation}
\end{proposition}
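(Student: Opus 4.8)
The plan is to read off both assertions directly from Proposition~\ref{prop:Bxi_closability} and Proposition~\ref{prop:action_H}, the conceptual work having already been carried out in the construction of the unitary operators $\G_\xi$.

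First I would settle that $\{\Ff_\la\}_{\la\in\Gb}$ is an orthonormal basis. By construction $\G_\xi$ is the image of the group element $\tilde G_\xi\in SU(1,1)^\sim$ under the unitary representation of $SU(1,1)^\sim$ in $\ell^2(\Gb)$ provided by Proposition~\ref{prop:integrability}; in particular $\G_\xi$ is a unitary operator on $\ell^2(\Gb)$. Since a unitary operator carries an orthonormal basis to an orthonormal basis, and $\{\un\la\}_{\la\in\Gb}$ is the standard orthonormal basis of $\ell^2(\Gb)$, the family $\{\Ff_\la=\G_\xi\un\la\}_{\la\in\Gb}$ is again an orthonormal basis. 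This disposes of orthonormality and completeness at once.

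Next I would verify the eigenvalue relation~(\ref{action_Bxi_Fla}) by applying the closed expression~(\ref{Bxi_Gxi}). By Proposition~\ref{prop:action_H} we have $\Hf\un\la=(2|\la|+\zz)\un\la$; as $\un\la\in\ellf^2(\Gb)$ lies in the domain of the closure $\bar\Hf$, the same relation holds for $\bar\Hf$, i.e.\ $\bar\Hf\un\la=(2|\la|+\zz)\un\la$. Since $\G_\xi^{-1}\Ff_\la=\un\la$ belongs to the domain of $\bar\Hf$, the vector $\Ff_\la$ belongs to the domain of $\G_\xi\bar\Hf\G_\xi^{-1}$, hence to the domain of $\bar{\mathsf{B}}_\xi$, and I can compute
\[
\bar{\mathsf{B}}_{\xi}\Ff_\la
=-\tfrac12\G_{\xi}\bar\Hf\G_{\xi}^{-1}\G_{\xi}\un\la+\tfrac\zz2\G_{\xi}\un\la
=-\tfrac12(2|\la|+\zz)\G_{\xi}\un\la+\tfrac\zz2\G_{\xi}\un\la
=-|\la|\Ff_\la,
\]
as claimed.

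The computation itself is immediate, so I do not expect a serious obstacle; the only point that genuinely requires care is the domain bookkeeping. One must record that $\un\la$ is an eigenvector of the essentially self-adjoint operator $\Hf$ lying in $\ellf^2(\Gb)\subset\mathrm{Dom}(\bar\Hf)$, so that the conjugated operator $\G_\xi\bar\Hf\G_\xi^{-1}$ is honestly applicable to $\Ff_\la$ and the relation holds as an identity of vectors in $\ell^2(\Gb)$ rather than merely formally. Everything substantive --- the unitarity of $\G_\xi$, the essential self-adjointness of $\Hf$, and the closed form~(\ref{Bxi_Gxi}) of $\bar{\mathsf{B}}_\xi$ --- is supplied by Propositions~\ref{prop:integrability} and~\ref{prop:Bxi_closability}.
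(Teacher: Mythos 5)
Your proof is correct and follows essentially the same route as the paper's: orthonormality and completeness from the unitarity of $\G_\xi$, and the eigenvalue relation by conjugating the diagonal action of $\Hf$ (Proposition \ref{prop:action_H}) through the identity (\ref{Bxi_Gxi}) of Proposition \ref{prop:Bxi_closability}. The extra attention you pay to domains of the closed operators is a harmless elaboration of what the paper leaves implicit.
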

\begin{proof}
  The fact that $\{\Ff_\la\}_{\la\in\Gb}$ is an orthonormal basis of $\ell^2(\Gb)$ follows from the unitarity of $\G_\xi$, and (\ref{action_Bxi_Fla}) directly follows from (\ref{Bxi_Gxi}) and the action of $\Hf$ (Proposition \ref{prop:action_H}).
\end{proof}


\subsection{Remark: averages with respect to the process $\labf_{\xi}$} 
\label{sub:remark_averages_with_respect_to_the_process_}

The unitary operator $\G_\xi$ in $\ell^2(\Gb)$ can be used to describe averages with respect to the mixed measure $M_\xi$ on $\Gb$ and, more generally, to the finite-dimensional distributions of the process $\labf_\xi$ (recall that it starts from the invariant distribution).

Let $f$ be a bounded function on $\Gb$, and also by $f$ let us mean the operator of multiplication by $f$ in $\ell^2(\Gb)$. Then it can be shown (cf. \cite[\S4.2]{Petrov2010Pfaffian}) that
\begin{equation}\label{static_average}
  \langle f,M_\xi\rangle:=
  \sum_{\la\in\Gb}f(\la)M_\xi(\la)=
  \big(
  \G_{\xi}^{-1}f\G_\xi\un\varnothing,\un\varnothing
  \big).
\end{equation}

For dynamical averages, fix time moments $t_1\le\ldots\le t_n$, and by $M_{t_1,\ldots,t_n}$ denote the corresponding finite-dimensional distribution of the process $\labf_\xi$. The measure $M_{t_1,\ldots,t_n}$ lives on $\Gb^n=\Gb\times \ldots\times\Gb$. Let $f_i$ ($i=1,\ldots,n$) be bounded functions on $\Gb$. By $f_1\otimes \ldots\otimes f_n$ denote the corresponding product function on $\Gb^n$. Finally, let $\{V(t)\}_{t\ge0}$ be the semigroup in $\ell^2(\Gb)$ corresponding to the operator $\bar{\mathsf{B}}_\xi$.\footnote{One could first consider the Markov semigroup of the process $\labf_\xi$ (generated by $\bar{\mathsf{A}}_\xi$) which acts in $\ell^2(\Gb,M_\xi)$, and then translate it to the non-weighted space $\ell^2(\Gb)$ by means of the isometry $\istry_\xi$ (\ref{istry}). Informally, $V(t)=\exp(\bar{\mathsf{B}}_\xi t)$.} Then 
\begin{equation}\label{dynamic_average}
  \langle
    f_1\otimes \ldots\otimes f_n,
    M_{t_1,\ldots,t_n}
  \rangle=
  \left(
  \G_{\xi}^{-1}
  f_1V(t_2-t_1)f_2 \ldots V(t_n-t_{n-1})f_n
  \G_\xi\un\varnothing,\un\varnothing
  \right).
\end{equation}
Here $\langle\cdot,\cdot\rangle$ is the pairing of a function with a measure as in (\ref{static_average}).

In \cite{Olshanski-fockone} and \cite{Petrov2010Pfaffian} the above formulas for averages and (\ref{dynamic_average}) were used to establish determinantal/Pfaffian structure of dynamical correlation functions of the processes $\labf_\xi$ for the Young and Schur graphs, respectively. The computations substantially involved Fock space structure of those graphs. One cannot expect such nice structures to appear in a general situation.


\subsection{Explicit formula for $\Ff_\la$, definition of $\Mf_\la$} 
\label{sub:explicit_formula_for_}

Here we express the functions $\Ff_\la\in\ell^2(\Gb)$ which diagonalize the operator $\bar{\mathsf{B}}_\xi$ in terms of the relative dimension functions $\Pc_\mu^*$ on the graph $\Gb$. 
\begin{lemma}\label{lemma:e_xi_U_Pmu}
  For any $\xi\in[0,1)$ and any fixed $\mu\in\Gb$, one has
  \begin{equation*}
    e^{\sqrt\xi\Uf}\un\mu=(1-\xi)^{-\zz/2}\xi^{-|\mu|/2}
    \Big(\prod_{\square\in\mu}\ufunc(\square)^{-1}\Big)(\istry_\xi\Pc_\mu^*).
  \end{equation*}
\end{lemma}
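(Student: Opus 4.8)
The plan is to expand the exponential $e^{\sqrt\xi\Uf}$ as a power series acting on $\un\mu$, compute the vectors $\Uf^k\un\mu$ explicitly, and then match the resulting coefficients of $\un\la$ against those of the right-hand side after unfolding the definitions of $\istry_\xi$, $M_\xi$ and $\Pc_\mu^*$. Since only one power $k=|\la|-|\mu|$ contributes to each basis vector $\un\la$, the comparison reduces to a single coefficient identity for each $\la\supseteq\mu$, and no resummation of the series is needed.

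First I would iterate the defining formula (\ref{Kerov_UfDf_def}) for $\Uf$. Each upward step from $\la'$ to $\la''\searrow\la'$ contributes the factor $\km(\la',\la'')\ufunc(\la''/\la')$; multiplying these along an oriented path from $\mu$ to $\la$ produces the product of edge multiplicities along the path times $\prod_{\square\in\la/\mu}\ufunc(\square)$, the latter being independent of the path. Summing the edge-multiplicity products over all paths gives precisely $\dim(\mu,\la)$, so that
\begin{equation*}
  \Uf^k\un\mu=\sum_{\la\colon|\la|=|\mu|+k}
  \dim(\mu,\la)\Big(\prod_{\square\in\la/\mu}\ufunc(\square)\Big)\un\la.
\end{equation*}
Reindexing the exponential series by $\la$ (with $k=|\la|-|\mu|$) then yields
\begin{equation*}
  e^{\sqrt\xi\Uf}\un\mu=\sum_{\la\colon\mu\subseteq\la}
  \frac{\xi^{(|\la|-|\mu|)/2}}{(|\la|-|\mu|)!}
  \dim(\mu,\la)\Big(\prod_{\square\in\la/\mu}\ufunc(\square)\Big)\un\la.
\end{equation*}

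Next I would compute the coefficient of $\un\la$ on the right-hand side. Using $(\istry_\xi\Pc_\mu^*)(\la)=\Pc_\mu^*(\la)\,(M_\xi(\la))^{1/2}$ together with the explicit form of $M_\xi$ from \S\ref{sub:mixing_of_measures}, the definition $\Pc_\mu^*(\la)=|\la|^{\da|\mu|}\dim(\mu,\la)/\dim\la$, and the elementary identity $|\la|^{\da|\mu|}=|\la|!/(|\la|-|\mu|)!$, the factors $\dim\la$ and $|\la|!$ cancel. After multiplying by the prefactor $(1-\xi)^{-\zz/2}\xi^{-|\mu|/2}\prod_{\square\in\mu}\ufunc(\square)^{-1}$, the powers $(1-\xi)^{\pm\zz/2}$ cancel, the powers of $\xi$ combine to $\xi^{(|\la|-|\mu|)/2}$, and $\prod_{\square\in\mu}\ufunc(\square)^{-1}\prod_{\square\in\la}\ufunc(\square)=\prod_{\square\in\la/\mu}\ufunc(\square)$. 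The result matches the coefficient displayed above term by term, proving the identity.

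The only point requiring care — and the closest thing to an obstacle — is the well-definedness of $e^{\sqrt\xi\Uf}\un\mu$ as an element of $\ell^2(\Gb)$, since $\Uf$ is unbounded. This is not really a difficulty: each coefficient is a finite expression (a single $k$ contributes per $\la$), so the vector is defined coefficientwise, and its square-summability is exactly the statement that $\Pc_\mu^*\in\ell^2(\Gb,M_\xi)$, established in step~3 of the proof of Proposition~\ref{prop:completeness}, transported by the isometry $\istry_\xi$. Alternatively one may invoke the analyticity of $\un\mu$ for the $\suf(1,1)$-action (Proposition~\ref{prop:integrability}) to justify the exponential directly.
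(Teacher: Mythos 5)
Your proof is correct and follows essentially the same route as the paper's: expand $e^{\sqrt\xi\Uf}\un\mu$ as a graded series, note that $\Uf^k\un\mu$ contributes only to level $|\mu|+k$, and match the coefficient of each $\un\la$ against the right-hand side after unfolding $M_\xi$ and $\Pc_\mu^*$. The only (harmless) difference is in how the exponential is given meaning: the paper restricts to small $\xi$ and invokes the analytic-vector machinery of Proposition \ref{prop:integrability} (local complexification of $SU(1,1)^{\sim}$) before continuing analytically in $\xi$, whereas you define the vector coefficientwise and deduce square-summability from $\Pc_\mu^*\in\ell^2(\Gb,M_\xi)$ — both are valid justifications.
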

\begin{proof}
  We need to show that for any $\rho\in\Gb$:
  \begin{equation}\label{e_xiU}
    (e^{\sqrt\xi\Uf}\un\mu)(\rho)=(1-\xi)^{-\zz/2}\xi^{-|\mu|/2}
    \Big(\prod_{\square\in\mu}\ufunc(\square)^{-1}\Big)(M_{\xi}(\rho))^{1/2}\Pc_\mu^*(\rho).
  \end{equation}
  Observe that both sides are analytic in $\xi\in\C$, $|\xi|<1$, so it suffices to establish (\ref{e_xiU}) for small $\xi$. By Proposition \ref{prop:integrability}, the vector $\un\mu\in\ellf^2(\Gb)$ is analytic for the action of $\suf(1,1)$. Thus on this vector the representation of $SU(1,1)^{\sim}$ of Proposition \ref{prop:integrability} can be extended to a representation of the local complexification of the group $SU(1,1)^{\sim}$ (see, e.g., the beginning of \S7 in \cite{nelson1959analytic}). For small $\xi$, the $2\times 2$ matrix $e^{\sqrt\xi U}$ is close to the unity of the group $SL(2,\C)$. Therefore, for small $\xi$ the action of the operator $e^{\sqrt\xi\Uf}$ on $\un\mu$ is given by the corresponding series for the exponent.

  Assume that $\mu\subseteq\rho$, otherwise $(e^{\sqrt\xi\Uf}\un\mu)(\rho)=0$. With the above explanations, we can write
  \begin{align*}
    (e^{\sqrt\xi\Uf}\un\mu)(\rho)&=
    \frac{\xi^{\frac{|\rho|-|\mu|}2}}{(|\rho|-|\mu|)!}\dim(\mu,\rho)
    \prod_{\square\in\rho/\mu}\ufunc(\square)
    \\&=(M_\xi(\rho))^{\frac12}\cdot(1-\xi)^{-\zz/2}\xi^{-|\mu|/2}
    \Big(\prod_{\square\in\mu}\ufunc(\square)^{-1}\Big)\Pc_{\mu}^{*}(\rho).
  \end{align*}
  This concludes the proof.
\end{proof}

Now we are in a position to write an explicit formula for the functions $\Ff_\la$:
\begin{proposition}\label{prop:F_la}
  Let $\la\in\Gb$, $|\la|=n$. The function $\Ff_\la$ has the following form:
  \begin{align*}
    \Ff_\la&=
    \sum_{\mu\colon\mu\subseteq\la}
    \frac{(-1)^{n-m}\xi^{n/2-m}(1-\xi)^{m}}{(n-m)!}
    \dim(\mu,\la)
    \times\\&\qquad \qquad \qquad \qquad\times
    \Big(\prod_{\square\in\la/\mu}\ufunc(\square)^{2}\Big)
    \Big(\prod_{\square\in\la}\ufunc(\square)^{-1}\Big)
    (\istry_\xi\Pc_\mu^*),
  \end{align*}
  where in the sum we have denoted $m:=|\mu|$.
\end{proposition}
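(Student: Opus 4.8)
The plan is to factor the group element $G_\xi$ by a Gauss (Bruhat) decomposition into a product of one-parameter subgroups generated by $U$, $H$, $D$, and then transport this factorization to the operator $\G_\xi$ on $\ell^2(\Gb)$. A direct matrix computation using (\ref{UDH_matrices}) shows
\[
  G_\xi=e^{\sqrt\xi\,U}\,e^{\frac12\log(1-\xi)\,H}\,e^{-\sqrt\xi\,D},
\]
so at the level of the representation one expects $\G_\xi=e^{\sqrt\xi\Uf}\,e^{\frac12\log(1-\xi)\Hf}\,e^{-\sqrt\xi\Df}$. The point of choosing this ordering (with $e^{\sqrt\xi\Uf}$ outermost) is that the outer factor is exactly the operator whose action on the relative dimension functions was computed in Lemma \ref{lemma:e_xi_U_Pmu}.

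Granting the factorization, I would evaluate $\Ff_\la=\G_\xi\un\la$ by applying the three exponentials to $\un\la$ from right to left. First, iterating the definition (\ref{Kerov_UfDf_def}) of $\Df$ gives, for $|\mu|=|\la|-k$, the coefficient $\dim(\mu,\la)\prod_{\square\in\la/\mu}\ufunc(\square)$ in $\Df^k\un\la$: along any downward path from $\la$ to $\mu$ the $\ufunc$-weights of the removed boxes multiply (independently of order) to $\prod_{\square\in\la/\mu}\ufunc(\square)$, while the $\km$-weights sum over all paths to $\dim(\mu,\la)$ by definition. Hence
\[
  e^{-\sqrt\xi\Df}\un\la=\sum_{\mu\colon\mu\subseteq\la}\frac{(-\sqrt\xi)^{n-m}}{(n-m)!}\dim(\mu,\la)\Big(\prod_{\square\in\la/\mu}\ufunc(\square)\Big)\un\mu,
\]
with $n=|\la|$, $m=|\mu|$. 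Next, since $\Hf\un\mu=(2m+\zz)\un\mu$ by Proposition \ref{prop:action_H}, the middle factor acts diagonally as multiplication by $(1-\xi)^{m+\zz/2}$. Finally I would apply $e^{\sqrt\xi\Uf}$ to each $\un\mu$ using Lemma \ref{lemma:e_xi_U_Pmu}, replacing $\un\mu$ by $(1-\xi)^{-\zz/2}\xi^{-m/2}\big(\prod_{\square\in\mu}\ufunc(\square)^{-1}\big)(\istry_\xi\Pc_\mu^*)$.

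Collecting the three contributions, the $(1-\xi)^{\pm\zz/2}$ factors cancel, the powers of $\xi$ combine to $\xi^{n/2-m}$, and the $\ufunc$-weights combine via the elementary identity $\prod_{\square\in\la/\mu}\ufunc(\square)\cdot\prod_{\square\in\mu}\ufunc(\square)^{-1}=\prod_{\square\in\la/\mu}\ufunc(\square)^{2}\cdot\prod_{\square\in\la}\ufunc(\square)^{-1}$ (split $\prod_{\square\in\la}=\prod_{\square\in\mu}\prod_{\square\in\la/\mu}$), yielding precisely the asserted formula. I expect the only genuine difficulty to be justifying the operator factorization $\G_\xi=e^{\sqrt\xi\Uf}e^{\frac12\log(1-\xi)\Hf}e^{-\sqrt\xi\Df}$: the factors $e^{\pm\sqrt\xi\Uf}$ and $e^{\pm\sqrt\xi\Df}$ lie in the complexification and are a priori defined only on analytic vectors. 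As in the proof of Lemma \ref{lemma:e_xi_U_Pmu}, I would first establish the identity for small $\xi$, where the local complexification of the $SU(1,1)^\sim$-representation of Proposition \ref{prop:integrability} applies and each exponential is given by its convergent series on the analytic vector $\un\la\in\ellf^2(\Gb)$; since both sides of the claimed formula are analytic in $\xi$ on $|\xi|<1$, the identity then extends to all $\xi\in[0,1)$.
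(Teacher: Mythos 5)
Your proposal is correct and follows essentially the same route as the paper: a Gauss decomposition of $\G_\xi$ into one-parameter factors justified on analytic vectors for small $\xi$ and extended by analyticity, followed by an application of Lemma \ref{lemma:e_xi_U_Pmu} to the outer factor $e^{\sqrt\xi\Uf}$. The only difference is the ordering of the triangular and diagonal factors (the paper uses $\G_{\xi}=e^{\sqrt\xi\Uf}e^{-\frac{\sqrt\xi}{1-\xi}\Df}(1-\xi)^{\Hf/2}$, so that the $\Hf$-factor acts on $\un\la$ as a single scalar), which is immaterial to the argument.
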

\begin{proof}
  We argue as in the previous lemma. It suffices to establish the identity for small $\xi$ because both parts are analytic in $\xi$, $|\xi|<1$. For small $\xi$ the operator $\G_\xi$ (which is close to the unity of the group $SU(1,1)^{\sim}$) can be written as
  \begin{equation*}
    \G_{\xi}=\exp(\sqrt\xi\Uf)\exp\Big(-\frac{\sqrt\xi}{1-\xi}\Df\Big)
    (1-\xi)^{\Hf/2}
  \end{equation*}
  (mirroring the corresponding relation between $2\times 2$ matrices). Since $\Ff_\la=\G_\xi\un\la$ and the vector $\un\la$ is analytic for the action of $\suf(1,1)$ in $\ellf^2(\Gb)$, we can use the above formula to compute $\Ff_\la$. Set $\eta:=-\frac{\sqrt\xi}{1-\xi}$. Let us write (recall that $|\la|=n$):
  \begin{align*}
    \Ff_\la&=(1-\xi)^{n+\zz/2}
    e^{\sqrt\xi \Uf}e^{\eta\Df}\un\la\\&=
    (1-\xi)^{n+\zz/2}
    \sum_{m=0}^{n}\sum_{\mu\subseteq\la,\,|\mu|=m}
    \frac{\eta^{n-m}}{(n-m)!}
    \dim(\mu,\la)
    \Big(
    \prod_{\square\in\la/\mu}\ufunc(\square)
    \Big)e^{\sqrt\xi \Uf}\un\mu.
  \end{align*}
  Plugging in the formula of Lemma \ref{lemma:e_xi_U_Pmu} and simplifying, we conclude the proof.
\end{proof}

The functions $\Ff_\la$ form an orthonormal basis of $\ell^2(\Gb)$ and diagonalize the operator $\bar{\mathsf{B}}_\xi$. Using the isometry $\istry_\xi$ (\ref{istry}), we see that the functions $\istry^{-1}_\xi\Ff_\la$ form an orthonormal basis in $\ell^2(\Gb,M_{\xi})$, and diagonalize the Markov generator $\bar{\mathsf{A}}_\xi$ of the dynamics $\labf_\xi$. From Proposition \ref{prop:F_la} we see that $\istry^{-1}_\xi\Ff_\la$'s belong to the space $\A$ (\S\ref{sub:space_A}). Moreover, with respect to the filtration of $\A$ these functions have the following decomposition:
\begin{equation*}
  \istry^{-1}_\xi\Ff_\la=\left(\frac{\sqrt\xi}{1-\xi}\right)^{-|\la|}
  \Big(\prod_{\square\in\la}\ufunc(\square)^{-1}\Big)\Pc_\la^*+\text{lower degree terms}.
\end{equation*}
Let us consider ``monic'' versions of the above functions, i.e., their multiples that have coefficient $1$ by the top degree term $\Pc_\la^*$:
\begin{definition}\label{def:Mf_la}
  For any $\la\in\Gb$, let $\Mf_\la\in\A\subset\ell^2(\Gb,M_\xi)$ be the following function:
  \begin{equation*}
    \Mf_\la:=
    \sum_{\mu\subseteq\la}
    \left(\frac\xi{\xi-1}\right)^{|\la|-|\mu|}
    \frac{\dim(\mu,\la)}{(|\la|-|\mu|)!}
    \Big(\prod_{\square\in\la/\mu}\ufunc(\square)^{2}\Big)\Pc_\mu^*.
  \end{equation*}
\end{definition}

\begin{remark}
  The functions $\Mf_\la$ essentially arise as matrix elements of the $\slf(2,\C)$-module spanned by the Kerov's operators. When the graph $\Gb$ is a chain (and the Kerov's operators span an irreducible lowest weight module, see \S \ref{sub:lowest_weight_slf_2_c_modules}), the functions $\Mf_\la$ reduce to the classical Meixner orthogonal polynomials $\meix_n^{\zz,\xi}$ (see \S \ref{sub:birth_and_death_processes_related_to_the_meixner_polynomials}). The fact that the Meixner polynomials can be constructed from irreducible lowest weight $\slf(2,\C)$-modules is known \cite{Koornwinder1982Krawtchouk}, \cite{Vilenkin-Klimyk-DAN_UKR_1988}, \cite{Vilenkin-Klimyk-ITOGI1995-en}. Thus, our definition of the functions $\Mf_\la$ for any graph of ideals generalizes this approach when an irreducible module is replaced by the one spanned by the Kerov's operators. 
\end{remark}


\subsection{Properties of the functions $\Mf_\la$} 
\label{sub:properties_of_the_functions_}

Here we summarize properties of the functions $\Mf_\la$.

{\bf1.\/} One has $\Mf_\varnothing\equiv1$.

{\bf2.\/} The functions $\{\Mf_\la\}_{\la\in\Gb}$ diagonalize the generator $\bar{\mathsf{A}}_\xi$ of the jump dynamics $\labf_\xi$ on the graph $\Gb$ (\S\ref{sub:generator_of_the_markov_jump_process}):
\begin{equation}\label{Mf_diag}
  \bar{\mathsf{A}}_\xi\Mf_\la=-|\la|\Mf_\la,\qquad\la\in\Gb.
\end{equation}

{\bf3.\/} The functions $\{\Mf_\la\}_{\la\in\Gb}$ form an orthogonal basis in $\ell^2(\Gb,M_\xi)$:
\begin{equation}\label{Mf_orthog}
  \left(\Mf_\la,\Mf_\mu\right)_{M_{\xi}}=\delta_{\la,\mu}
  \cdot
  \frac{\xi^{|\la|}}{(1-\xi)^{2|\la|}}\prod_{\square\in\la}\ufunc(\square)^{2}.
\end{equation}

\begin{remark}
  The first property (\ref{Mf_diag}) can also be verified by a straightforward computation using Definition \ref{def:Mf_la} and Corollary \ref{corollary:action_of_Axi_on_Pmu}. However, on a abstract level it seems necessary to use the unitary operator $\G_\xi$ to establish the second property (\ref{Mf_orthog}). For the Young graph, the orthogonality (\ref{Mf_orthog}) is shown in \cite{Olshanski2011Meixner} by a completely different argument involving an analytic continuation.
\end{remark}

{\bf4.\/} (Characterization of the eigenfunctions $\Mf_\la$) Let $\{F_\la\}_{\la\in\Gb}$ be a family of functions in $\A\subset\ell^2(\Gb,M_\xi)$ such that 

\par\noindent
(1) $F_\la=\Pc_\la^*+\text{lower degree terms}$ (with respect to the filtration of $\A$, see \S\ref{sub:space_A}).

\par\noindent
(2) $\bar{\mathsf{A}}_\xi F_\la=-|\la|F_\la$, $\la\in\Gb$.

Then $F_\la=\Mf_\la$ for all $\la\in\Gb$.

\begin{proof}
  Let $|\la|=n$, then $F_\la,\Mf_\la\in\A_n$. The operator $\bar{\mathsf{A}}_{\xi}$ preserves the (finite-dimensional) space $\A_n$ (Corollary \ref{corollary:action_of_Axi_on_Pmu}); moreover, the eigenstructure of this operator in $\A_n$ is $\{-m\colon m=0,1,2,\dots,n\}$ (Proposition \ref{prop:spectrum_of_A}). Consider the function $F_\la-\Mf_\la$. Because the top terms of $F_\la$ and $\Mf_\la$ coincide, the difference $F_\la-\Mf_\la$ belongs to $\A_{n-1}$. On the other hand, $F_\la-\Mf_\la$ is an eigenfunction of $\bar{\mathsf{A}}_\xi$ with eigenvalue $(-n)$. Since there is no such eigenvalue of $\bar{\mathsf{A}}_\xi$ in $\A_{n-1}$, it follows that $F_\la=\Mf_\la$.
\end{proof}

{\bf5.\/} (Autoduality; cf. \cite[Prop. 4.28]{Olshanski2011Meixner}) 
Let the functions $\Mf_\la'$ be defined by
\begin{equation*}
  \Mf_\la=
  (-1)^{|\la|}
  \left(\frac{\xi}{1-\xi}\right)^{|\la|}
  \frac{\dim\la}{|\la|!}
  \left(\prod_{\square\in\la}\ufunc(\square)^2\right)
  \Mf_\la',\qquad\la\in\Gb.
\end{equation*}
Then the following autoduality property holds:
\begin{equation*}
  \Mf_\la'(\rho)=\Mf_\rho'(\la),\qquad\mbox{for all $\la,\rho\in\Gb$}.
\end{equation*}

{\bf6.\/} (Moment functional; cf. \cite[Prop. 5.2]{Olshanski2011Meixner})
Let $\phi\colon\A\to\C$, $\phi(f):=\langle f,M_\xi\rangle$ be the so-called moment functional on the space $\A$. Clearly, 
\begin{equation}\label{phi_M_la}
  \phi(\Mf_\la)=(\Mf_\la,\Mf_\varnothing)_{M_\xi}=
  \delta_{\la\varnothing},\qquad\la\in\Gb.
\end{equation}
We can compute the action of $\phi$ on the relative dimension functions $\Pc_\la^*$:
\begin{equation}\label{phi_P_la}
  \phi(\Pc_\la^*)=\left(\frac{\xi}{1-\xi}\right)^{|\la|}
  \left(\prod_{\square\in\la}\ufunc(\square)^2\right)
  \frac{\dim\la}{|\la|!},\qquad \la\in\Gb.
\end{equation}
Note that the functional $\phi$ on $\A$ is completely determined by either of the conditions (\ref{phi_M_la}) and (\ref{phi_P_la}).
\begin{proof}
  The statement clearly holds for $\la=\varnothing$, because $\Pc_\varnothing^*\equiv1$.
  
  Now let $\la\ne\varnothing$. Since $\phi$ vanishes at the range of $\bar{\mathsf{A}}_\xi$, we have
  \begin{equation*}
    \phi(\Pc_\la^*)=\frac1{|\la|}
    \frac{\xi}{1-\xi}
    \sum_{\rho\colon\rho\nearrow\la}
    \km(\rho,\la)\ufunc(\la/\rho)^2\phi(\Pc_\rho^*).
  \end{equation*}
  Iterating this identity, we see that the statement holds.
\end{proof}


\subsection{Pascal triangle and birth and death processes related to the Meixner polynomials} 
\label{sub:birth_and_death_processes_related_to_the_meixner_polynomials}

For our usual example, the Pascal triangle $\PT$, the dynamics factorizes into two independent stochastic processes each living on one copy of $J(\Z_{>0})$ in accordance with Proposition \ref{prop:KO_disjoint}. 

It is enough to consider the case of the chain $\Gb=\Z_{\ge0}=(J(\Z_{>0}),1)$. The jump dynamics for the chain is exactly the process $\n_{\zz,\xi}$ described in \S \ref{sub:markov_processes_on_}. The functions $\{\Mf_n\}_{n\in\Z_{\ge0}}$ (Definition \ref{def:Mf_la}) become
\begin{equation*}
  \Mf_n(x)=\sum_{m=0}^{n}
  \left(\frac\xi{\xi-1}\right)^{n-m}\binom{n}{m}
  \frac{\Gamma(n+\zz)}{\Gamma(m+\zz)}x^{\da m},\qquad x\in\Z_{\ge0}.
\end{equation*}
Let us denote the above functions by $\meix_{n}^{\zz,\xi}(x)$. These are the classical monic (i.e., having the coefficient $1$ by the top degree term $x^n$) Meixner orthogonal polynomials \cite[\S1.9]{Koekoek1996}. The orthogonality measure for the Meixner polynomials is precisely the negative binomial distribution $\pi_{\zz,\xi}$. 

For the Pascal triangle, the orthogonal eigenfunctions of the dynamics will have the form $\Mf_{(k,l)}(x,y)=\meix_k^{\zz_1,\xi}(x)\meix_l^{\zz_2,\xi}(y)$, where $(k,l),(x,y)\in\PT$.

\begin{remark}
  The case of a finite chain $\Gb=\{0,1,\ldots,N\}$ (which has only one triplet of Kerov's operators, see \S \ref{sub:lowest_weight_slf_2_c_modules}) leads to the Krawtchouk orthogonal polynomials \cite[\S1.10]{Koekoek1996}.
\end{remark}

Let us briefly discuss a limiting behavior of the process $\n_{\zz,\xi}$. As $\xi\to1$, under the space scaling $\Z\mapsto (1-\xi)\Z\subset\R$, the process $\n_{\zz,\xi}$ converges to the one-dimensional diffusion processes with the generator
\begin{equation}\label{1dim_Laguerre}
  r\frac{d^2}{dr^2}+(\zz-r)\frac{d}{dr},\qquad r\in\R_{\ge0}.
\end{equation}
This diffusion preserves the gamma distribution with density ${r^{\zz-1}e^{-r}}{(\Gamma(\zz))^{-1}}dr$ (which in turn is the limit of the negative binomial distributions $\pi_{\zz,\xi}$ as $\xi\to1$), and is reversible with respect to it. The above generator is diagonalized in the Laguerre polynomials \cite[\S1.11]{Koekoek1996}. About this process (which is closely related to a squared Bessel process) see, e.g., \cite{Eie83}.



\section{Remark about Heisenberg algebra operators and Plancherel measures} 
\label{sec:heisenberg_operators}

\subsection{General setting} 

There exists a variation of the setting of the present paper when the algebra $\slf(2,\C)$ is replaced by the Heisenberg algebra. Namely, assume that we are given a branching graph $\Gb$ (not necessary of ideals) which is $r$-self-dual in the sense of \cite{fomin1994duality}, where $r>0$ is some real constant. This means that the operators $\Uf^\circ$ and $\Df^\circ$ on the graph defined in Remark \ref{rmk:Plancherel_operators} (they are constructed using only edge multiplicities) satisfy the commutation relation $[\Df^\circ,\Uf^\circ]=r\mathbf{1}$. Clearly, the operators $(\Uf^\circ,\Df^\circ,r\mathbf{1})$ define a representation of the 3-dimensional Heisenberg algebra in the space $\ellf^2(\Gb)$. In many examples of branching graphs considered in \S \ref{sec:examples}, the operators $(\Uf^\circ,\Df^\circ,r\mathbf{1})$ arise as degenerations of the Kerov's operators on $\Gb$.  Let us indicate which probabilistic models arise from the Heisenberg algebra structure. The proofs here essentially repeat the ones of the previous subsections.

Having operators $(\Uf^\circ,\Df^\circ,r\mathbf{1})$ as above, we can define the \emph{Plancherel measures} on the floors of the graph:
\begin{equation*}
  \Pl_n(\la):=\frac1{Z_n}
  \big((\Uf^{\circ})^{n}\uno\varnothing,\uno\la\big)
  \big((\Df^{\circ})^{n}\uno\la,\uno\varnothing\big)
  =\frac{(\dim\la)^2}{r^nn!},\qquad \la\in\Gb_n.
\end{equation*}
The measures $\{\Pl_n\}$ form a coherent system on $\Gb$.

As in \S \ref{sub:mixing_of_measures}, we mix the Plancherel measures $\Pl_n$, but now as a mixing distribution we choose the Poisson distribution $\{e^{-\ga r}\frac{(\ga r)^{n}}{n!}\}_{n\in\Z_{\ge0}}$ with parameter $\ga r$ (here $\ga>0$ is a new parameter). We arrive at the \emph{poissonized Plancherel measures}
\begin{equation*}
  \Pl_\ga(\la):=e^{-\ga r}{\ga^{|\la|}}
  \left(\frac{\dim\la}{|\la|!}\right)^{2},\qquad\la\in\Gb.
\end{equation*}

It is possible to define dynamics on the whole graph $\Gb$ preserving the measure $\Pl_\ga$. This dynamics for the Young graph appeared in \cite{PhahoferSpohn2002}, \cite{borodin2006stochastic}. The Markov process here is defined exactly as in \S \ref{sub:markov_processes_on_}, but the underlying process $\n_{\zz,\xi}$ should be replaced by the process with jump rates ($n=0,1,\ldots$):
\begin{equation*}
  q_{n,n-1}=n,\qquad
  q_{n,n+1}=\ga r,\qquad
  q_{n,n}=-n-\ga r.
\end{equation*}
All other jump rates are zero. 

The generator $\bar {\mathsf{A}}_\ga^{\mathrm{Poiss}}$ of the dynamics on the graph $\Gb$ acts on the relative dimension functions (\S \ref{sec:relative_dimension_functions}) as follows:
\begin{equation*}
  \bar {\mathsf{A}}_\ga^{\mathrm{Poiss}}\Pc_\mu^*=-|\mu|\Pc_\mu^*+\ga r\sum\nolimits_{\varkappa\colon
  \varkappa\nearrow\mu}\km(\varkappa,\mu)\Pc_\varkappa^*,\qquad \mu\in\Gb.
\end{equation*}

\begin{definition}
  Consider the following functions on the graph $\Gb$ which belong to the space $\A$ (\S \ref{sub:space_A}):
  \begin{equation}\label{Cf_la}
    \Cf_\la:=
    \sum_{\mu\subseteq\la}
    \left(-\ga r\right)^{|\la|-|\mu|}
    \frac{\dim(\mu,\la)}{(|\la|-|\mu|)!}\Pc_\mu^*,\qquad\la\in\Gb.
  \end{equation}
\end{definition}

\begin{proposition}
  {\rm\bf1.} The functions $\Cf_\la$ form an orthogonal basis in the Hilbert space $\ell^2(\Gb,\Pl_\ga)$, and $(\Cf_\la,\Cf_\mu)_{\Pl_\ga}=\delta_{\mu,\la}(\ga r)^{|\la|}$ for all $\la,\mu\in\Gb$.

  {\rm\bf2.} The functions $\Cf_\la$ are eigenfunctions of the operator $\bar {\mathsf{A}}_\ga^{\mathrm{Poiss}}$:
  \begin{equation*}
    \bar {\mathsf{A}}_\ga^{\mathrm{Poiss}}\Cf_\la=-|\la|\Cf_\la,\qquad\la\in\Gb.
  \end{equation*}
\end{proposition}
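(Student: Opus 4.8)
The plan is to run the $\slf(2,\C)$ machinery of \S\ref{sub:lifting_of_representation} essentially verbatim, with the group $SU(1,1)^{\sim}$ replaced by the three-dimensional Heisenberg group and with the diagonal operator $\bar\Hf$ replaced by the level operator. First I would pass from $\bar{\mathsf{A}}_\ga^{\mathrm{Poiss}}$ to the conjugated operator $\mathsf{B}_\ga^{\mathrm{Poiss}}:=\istry_\ga\bar{\mathsf{A}}_\ga^{\mathrm{Poiss}}\istry_\ga^{-1}$ acting in $\ellf^2(\Gb)$, where $\istry_\ga\colon\ell^2(\Gb,\Pl_\ga)\to\ell^2(\Gb)$ is the isometry $f\mapsto f\cdot\Pl_\ga^{1/2}$ as in \eqref{istry}. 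A direct computation with the jump rates, mirroring Proposition~\ref{prop:Bxi_action}, gives $\mathsf{B}_\ga^{\mathrm{Poiss}}=\sqrt\ga(\Uf^\circ+\Df^\circ)-\po-\ga r\mathbf{1}$, where now $\po$ denotes the operator of multiplication by the level function (so that $\po\,\un\la=|\la|\,\un\la$).

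The key observation is that $(\po,\Uf^\circ,\Df^\circ,\mathbf{1})$ satisfy the oscillator-algebra relations $[\po,\Uf^\circ]=\Uf^\circ$, $[\po,\Df^\circ]=-\Df^\circ$, $[\Df^\circ,\Uf^\circ]=r\mathbf{1}$: although $\po$ itself is not in the Heisenberg algebra spanned by $(\Uf^\circ,\Df^\circ,r\mathbf{1})$, it normalizes it, which is all that the conjugation needs. Since $\Uf^\circ$ and $\Df^\circ$ are mutually adjoint, the element $\Uf^\circ-\Df^\circ$ is skew-adjoint, and I would set $W_\ga:=\exp\bigl(\sqrt\ga(\Uf^\circ-\Df^\circ)\bigr)$, a unitary operator on $\ell^2(\Gb)$. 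Here one invokes the exact analogue of Proposition~\ref{prop:integrability}: every vector of $\ellf^2(\Gb)$ is analytic for the Heisenberg action, so that the skew-adjoint generator is essentially skew-adjoint and $W_\ga$ is genuinely unitary. Using the oscillator relations and Baker--Campbell--Hausdorff (all higher commutators being central), one computes $W_\ga^{-1}(\Uf^\circ+\Df^\circ)W_\ga=\Uf^\circ+\Df^\circ+2\sqrt\ga r\,\mathbf{1}$ and $W_\ga^{-1}\po\, W_\ga=\po+\sqrt\ga(\Uf^\circ+\Df^\circ)+\ga r\,\mathbf{1}$, whence $W_\ga^{-1}\mathsf{B}_\ga^{\mathrm{Poiss}}W_\ga=-\po$. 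Consequently $\{W_\ga\un\la\}_{\la\in\Gb}$ is an orthonormal basis of $\ell^2(\Gb)$ consisting of eigenvectors of $\mathsf{B}_\ga^{\mathrm{Poiss}}$ with eigenvalue $-|\la|$.

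It then remains to identify $\istry_\ga^{-1}W_\ga\un\la$ with a scalar multiple of $\Cf_\la$. Mirroring Lemma~\ref{lemma:e_xi_U_Pmu}, I would first establish the normal-ordering identity $e^{\sqrt\ga\Uf^\circ}\un\mu=\mathrm{const}_\mu\cdot(\istry_\ga\Pc_\mu^*)$ by expanding the exponential and using $(\Uf^\circ)^k\un\mu=\sum_{\rho}\dim(\mu,\rho)\un\rho$ together with the explicit form of $\Pl_\ga$. Writing $W_\ga$ in normal-ordered form $e^{-\ga r/2}e^{\sqrt\ga\Uf^\circ}e^{-\sqrt\ga\Df^\circ}$ and applying it to $\un\la$ via $(\Df^\circ)^k\un\la=\sum_{\mu}\dim(\mu,\la)\un\mu$ yields, exactly as in Proposition~\ref{prop:F_la}, an expansion of $\istry_\ga^{-1}W_\ga\un\la$ in $\{\Pc_\mu^*\}_{\mu\subseteq\la}$ whose leading $(\mu=\la)$ term is a nonzero multiple of $\Pc_\la^*$ and which is proportional to $\Cf_\la$ of \eqref{Cf_la}, the normalization in the definition being chosen so that the coefficient of $\Pc_\la^*$ equals $1$. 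Part~1 now follows: orthogonality and the value $(\Cf_\la,\Cf_\mu)_{\Pl_\ga}=\delta_{\la\mu}(\ga r)^{|\la|}$ come from the orthonormality of $\{W_\ga\un\la\}$ and this proportionality constant. Part~2 is then immediate, since $\bar{\mathsf{A}}_\ga^{\mathrm{Poiss}}=\istry_\ga^{-1}\mathsf{B}_\ga^{\mathrm{Poiss}}\istry_\ga$ and each $W_\ga\un\la$ is an eigenvector. Alternatively, Part~2 has a self-contained proof avoiding the group: substituting \eqref{Cf_la} into the displayed action of $\bar{\mathsf{A}}_\ga^{\mathrm{Poiss}}$ on the $\Pc_\mu^*$ and collecting the coefficient of each $\Pc_\nu^*$, the statement reduces to the dimension recursion $\sum_{\mu\colon\nu\nearrow\mu\subseteq\la}\km(\nu,\mu)\dim(\mu,\la)=\dim(\nu,\la)$, after which a one-line cancellation of the two coefficients $(-\ga r)$ and $(+\ga r)$ produces the eigenvalue $-|\la|$.

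The main obstacle is the integrability step (Step~2): proving that the Heisenberg action genuinely exponentiates to a unitary representation, i.e.\ the analyticity/essential-skew-adjointness input behind the unitarity of $W_\ga$. This is the crucial point, because self-adjointness of $\mathsf{B}_\ga^{\mathrm{Poiss}}$ alone would only force orthogonality of eigenvectors attached to \emph{distinct} eigenvalues, i.e.\ across different levels; it is precisely the unitarity of $W_\ga$ that delivers the orthogonality of the $\#\Gb_n$ eigenfunctions $\{\Cf_\la\}_{|\la|=n}$ sharing the single eigenvalue $-n$. I expect this to follow, as in Proposition~\ref{prop:integrability}, from the fact that $\un\la$ is an analytic vector for the skew-adjoint operators generating the Heisenberg action, so that the formal Baker--Campbell--Hausdorff manipulations above are rigorously justified.
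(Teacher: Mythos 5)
Your proposal is correct and is precisely the argument the paper intends: the paper only says the proofs ``essentially repeat the ones of the previous subsections,'' and your adaptation --- conjugating $\bar{\mathsf{A}}_\ga^{\mathrm{Poiss}}$ to $\mathsf{B}_\ga^{\mathrm{Poiss}}=\sqrt\ga(\Uf^\circ+\Df^\circ)-\po-\ga r\mathbf{1}$, conjugating by the unitary $W_\ga=\exp(\sqrt\ga(\Uf^\circ-\Df^\circ))$ to get $-\po$, and then normal-ordering to identify $W_\ga\un\la$ with a multiple of $\istry_\ga\Cf_\la$ --- is the exact Heisenberg analogue of \S\ref{sub:lifting_of_representation}--\S\ref{sub:explicit_formula_for_}, with all commutator computations checking out. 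One caveat at the step you did not write out: carrying the normal-ordering through actually yields $\istry_\ga^{-1}W_\ga\un\la=\ga^{-|\la|/2}\sum_{\mu\subseteq\la}(-\ga)^{|\la|-|\mu|}\frac{\dim(\mu,\la)}{(|\la|-|\mu|)!}\Pc_\mu^*$, i.e.\ the expansion coefficient is $(-\ga)^{|\la|-|\mu|}$ rather than the $(-\ga r)^{|\la|-|\mu|}$ of \eqref{Cf_la}, and likewise the action of the generator is $\bar{\mathsf{A}}_\ga^{\mathrm{Poiss}}\Pc_\mu^*=-|\mu|\Pc_\mu^*+\ga\sum_{\varkappa\nearrow\mu}\km(\varkappa,\mu)\Pc_\varkappa^*$ (the two factors of $r$ in $q_{n,n+1}=\ga r$ and in $p^{\ua}$ cancel), so the squared norm comes out as $\ga^{|\la|}$; for $r\ne1$ the function \eqref{Cf_la} as literally printed is not an eigenfunction, and the statement holds for the $(-\ga)$-normalized version. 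This is a slip in the paper's normalization (invisible in the $r=1$ Charlier example), not a defect of your method, but you should not assert the proportionality to \eqref{Cf_la} without this correction.
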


We note that the averages of the relative dimension functions with respect to the poissonized Plancherel measure $\Pl_\ga$ are given by 
\begin{equation*}
  \phi^{\mathrm{Poiss}}(\Pc_\mu^*):=
  \langle\Pc_\mu^*,\Pl_\ga\rangle=(\ga r)^{|\mu|}\frac{\dim\mu}{|\mu|},\qquad\mu\in\Gb.
\end{equation*}

\subsection{Charlier orthogonal polynomials} 

In the case of the chain, one must take the multiplicity function $\km(n,n+1)=\sqrt{n+1}$ (where $n=0,1,\dots$) in order for the operators $\Uf^\circ$ and $\Df^\circ$ to satisfy $[\Df^\circ,\Uf^\circ]=\mathbf{1}$ (with $r=1$). Of course, this multiplicity function is equivalent to the trivial one. For the chain $\Gb=(J(\Z_{>0}),\km)$, the relative dimension functions are $\Pc_k^*(x)=x^{\da k}/\sqrt{k!}$, and the functions $\Cf_n$ ($n\in\Z_{\ge0}$) are multiples of the classical Charlier orthogonal polynomials \cite[\S1.12]{Koekoek1996}:
\begin{equation*}
  \Cf_n(x)=\frac1{\sqrt{n!}}
  \sum_{m=0}^{n}
  \left(-\ga \right)^{|\la|-|\mu|}
  \binom nm x^{\da m},\qquad n=0,1,\ldots.
\end{equation*}
These polynomials $\Cf_n$ are orthogonal with respect to the Poisson measure on $\Z_{\ge0}$ with parameter $\ga>0$. Note that they are monic not in the usual sense, but with respect to the relative dimension functions $\Pc_k^*$.

\subsection{References}
The Plancherel measures on partitions (i.e., when $\Gb$ is the Young graph) were studied in several papers including \cite{baik1999distribution}, \cite{okounkov2000random}, \cite{Borodin2000b}, \cite{okounkov2001infinite}, \cite{Okounkov2002}. Plancherel measures on other graphs were studied in, e.g., \cite{matsumoto2008jack} (Plan\-che\-rel measures on partitions with Jack parameter for the graph described in \S \ref{sub:measures_with_jack_parameter}; they are related to random matrix $\beta$-ensembles), \cite{Tracy2004}, \cite{Matsumoto2005}, \cite{Petrov2010} (the Schur graph, see \S \ref{sub:schur_graph}).


\section{Applications of general formalism to concrete branching graphs} 
\label{sec:examples}

In this section we discuss various concrete examples of branching graphs to which the formalism of the present paper is applicable. By $\Yb$ we will denote both the set of all partitions (identified with the Young diagrams \cite[Ch. I, \S1]{Macdonald1995}), and the Young graph $J(\Z_{>0}^{2})$ with simple edges. We start with branching graphs having $\Yb$ as the set of vertices, and after that consider more examples.

\subsection{Macdonald $(q,t)$ edge multiplicities} 
\label{sub:macdonald}

Let $\La$ be the algebra of symmetric functions \cite[Ch. I, \S2]{Macdonald1995}. A distinguished linear basis of $\La$ is formed by the Schur functions $s_\la$ \cite[Ch. I, \S3]{Macdonald1995} which are indexed by all partitions $\la\in\Yb$. I.G. Macdonald introduced a two-parameter deformation of the Schur functions --- the Macdonald symmetric functions $P_\la(x;q,t)$ \cite[Ch. VI, \S4]{Macdonald1995}. For most values of the parameters $(q,t)$ these functions also form a linear basis of $\La$.

There is a \emph{Pieri rule} which describes the branching of the Macdonald symmetric functions \cite[Ch. VI, (6.24.iv)]{Macdonald1995}:
\begin{equation}\label{qt_branching}
  p_1(x)P_\la(x;q,t)=\sum\nolimits_{\nu\colon\nu\searrow\la}\km_{q,t}(\la,\nu)
  P_\nu(x;q,t)\qquad \mbox{for all $\la\in\Yb$}.
\end{equation}
Here $p_1(x)=\sum_{i=1}^{\infty}x_i$ is the first Newton power sum, and $\km_{q,t}(\la,\nu)$ are rational functions in $q,t$ defined as
\begin{equation}\label{kqt}
  \km_{q,t}(\la,\nu)=\prod_{\text{$\square$ above $\nu/\la$}}
  F_{q,t}(a(\square),l(\square)),
  \quad
  F_{q,t}(a,l):=
  \frac{(1-q^{a}t^{l+2})
  (1-q^{a+1}t^{l})}
  {(1-q^{a}t^{l+1})(1-q^{a+1}t^{l+1})}.
\end{equation}
Here the product is taken over all boxes $\square$ in the $\j$th column of the Young diagram $\la$ containing the new box $\nu/\la$. The number $a(\square)=a(\i,\j)=\la_\i-\j$ is the \emph{arm length}, and $l(\square)=l(\i,\j)=\la_\j'-\i$ is the \emph{leg length} of a box $\square=(\i,\j)$ with row and column numbers $\i$ and $\j$, respectively. In other words, $a(\square)$ and $l(\square)$ are the horizontal and the vertical distances from $\square$ to the boundary of the smaller Young diagram $\la$, respectively:
\begin{figure}[htpb]
  \begin{center}
    \setlength{\unitlength}{3000sp}%
    \begingroup\makeatletter\ifx\SetFigFont\undefined%
    \gdef\SetFigFont#1#2#3#4#5{%
      \reset@font\fontsize{#1}{#2pt}%
      \fontfamily{#3}\fontseries{#4}\fontshape{#5}%
      \selectfont}%
    \fi\endgroup%
    \begin{picture}(2144,1444)(579,-1583)
    \put(1201,-736){\makebox(0,0)[lb]{\smash{{\SetFigFont{12}{14.4}{\rmdefault}{\mddefault}{\itdefault}{\color[rgb]{0,0,0}$\scriptstyle{}l(\square)$}%
    }}}}
    \thicklines
    {\color[rgb]{0,0,0}\put(601,-1561){\line( 1, 0){150}}
    \put(751,-1561){\line( 0, 1){300}}
    \put(751,-1261){\line( 1, 0){300}}
    \put(1051,-1261){\line( 0, 1){300}}
    \put(1051,-961){\line( 1, 0){900}}
    \put(1951,-961){\line( 0, 1){450}}
    \put(1951,-511){\line( 1, 0){300}}
    \put(2251,-511){\line( 0, 1){300}}
    \put(2251,-211){\line( 1, 0){150}}
    \put(2401,-211){\line( 0, 1){150}}
    }%
    {\color[rgb]{0,0,0}\put(1201,-961){\line( 0,-1){150}}
    \put(1201,-1111){\line(-1, 0){150}}
    \put(1051,-1111){\line( 0, 1){150}}
    \put(1051,-961){\line( 1, 0){150}}
    }%
    {\color[rgb]{0,0,0}\put(1051,-211){\line( 1, 0){150}}
    \put(1201,-211){\line( 0,-1){150}}
    \put(1201,-361){\line(-1, 0){150}}
    \put(1051,-361){\line( 0, 1){150}}
    }%
    \thinlines
    {\color[rgb]{0,0,0}\put(1201,-286){\vector(-1, 0){  0}}
    \put(1201,-286){\vector( 1, 0){1050}}
    }%
    {\color[rgb]{0,0,0}\put(1126,-361){\vector( 0, 1){  0}}
    \put(1126,-361){\vector( 0,-1){600}}
    }%
    \put(1576,-436){\makebox(0,0)[lb]{\smash{{\SetFigFont{12}{14.4}{\rmdefault}{\mddefault}{\itdefault}{\color[rgb]{0,0,0}$\scriptstyle{}a(\square)$}%
    }}}}
    \put(701,-1086){\makebox(0,0)[lb]{\smash{{\SetFigFont{12}{14.4}{\rmdefault}{\mddefault}{\itdefault}{\color[rgb]{0,0,0}$\scriptstyle{}\nu/\la$}%
    }}}}
    \thicklines
    {\color[rgb]{0,0,0}\put(2701,-61){\line(-1, 0){2100}}
    \put(601,-61){\line( 0,-1){1800}}
    }%
    \end{picture}%
    \label{fig:arm_leg}
  \end{center}  
\end{figure}

For $-1<q,t<1$, the numbers $\km_{q,t}(\la,\nu)$ are positive for any $\la\nearrow\nu$, and thus they define an edge multiplicity function on the Young graph. In this way one arrives at a branching graph of ideals $(\Yb,\km_{q,t})$. This graph was considered by Kerov \cite{Kerov-book}. It is also a multiplicative graph in the sense of \cite{Borodin2000} and \cite{VK1984Kfunctor}, \cite{Kerov1990} corresponding to the algebra $\La$ with the basis of Macdonald symmetric functions.


\subsection{Duality and $(q,t)$-Plancherel measures} 
\label{sub:duality_of_qt}

We introduce another edge multiplicities equivalent to $\km_{q,t}$ (see Definition \ref{def:equivalent_multilpicities}). Let $\la\nearrow \nu$ be two Young diagrams. By definition, put
\begin{equation}\label{kqt_star}
  \km_{q,t}^{*}(\la,\nu):=
  \prod_{\text{$\square$ below $\nu/\la$}}
  F_{q,t}(a(\square),l(\square)),
\end{equation}
where $F_{q,t}$ is given in (\ref{kqt}), the product is taken over all boxes in the complement $\Z_{>0}^{2}\setminus \nu$ lying in the same column as the new box $\nu/\la$, and $a(\square)$ and $l(\square)$ are the horizontal and the vertical distances from $\square$ to the boundary of the larger Young diagram $\nu$, respectively (see Figure \ref{fig:arm_leg_star}). Though the product in (\ref{kqt_star}) is infinite, one readily sees that it telescopes, and therefore $\km^*_{q,t}(\la,\nu)$ actually is also a rational function in $q$ and $t$:
\begin{figure}[htpb]
  \begin{center}
    \setlength{\unitlength}{3000sp}%
    \begingroup\makeatletter\ifx\SetFigFont\undefined%
    \gdef\SetFigFont#1#2#3#4#5{%
      \reset@font\fontsize{#1}{#2pt}%
      \fontfamily{#3}\fontseries{#4}\fontshape{#5}%
      \selectfont}%
    \fi\endgroup%
    \begin{picture}(2144,1444)(579,-1583)
    \put(700,-1711){\makebox(0,0)[lb]{\smash{{\SetFigFont{12}{14.4}{\rmdefault}{\mddefault}{\itdefault}{\color[rgb]{0,0,0}$\scriptstyle{}a(\square)$}%
    }}}}
    \thicklines
    {\color[rgb]{0,0,0}\put(601,-1561){\line( 1, 0){150}}
    \put(751,-1561){\line( 0, 1){300}}
    \put(751,-1261){\line( 1, 0){300}}
    \put(1051,-1261){\line( 0, 1){300}}
    \put(1051,-961){\line( 1, 0){900}}
    \put(1951,-961){\line( 0, 1){450}}
    \put(1951,-511){\line( 1, 0){300}}
    \put(2251,-511){\line( 0, 1){300}}
    \put(2251,-211){\line( 1, 0){150}}
    \put(2401,-211){\line( 0, 1){150}}
    }%
    {\color[rgb]{0,0,0}\put(1201,-961){\line( 0,-1){150}}
    \put(1201,-1111){\line(-1, 0){150}}
    \put(1051,-1111){\line( 0, 1){150}}
    \put(1051,-961){\line( 1, 0){150}}
    }%
    \thinlines
    {\color[rgb]{0,0,0}\put(1126,-1111){\vector( 0, 1){  0}}
    \put(1126,-1111){\vector( 0,-1){300}}
    }%
    {\color[rgb]{0,0,0}\put(1051,-1486){\vector( 1, 0){  0}}
    \put(1051,-1486){\vector(-1, 0){300}}
    }%
    \thicklines
    {\color[rgb]{0,0,0}\put(1051,-1561){\framebox(150,150){}}
    }%
    \put(701,-1066){\makebox(0,0)[lb]{\smash{{\SetFigFont{12}{14.4}{\rmdefault}{\mddefault}{\itdefault}{\color[rgb]{0,0,0}$\scriptstyle{}\nu/\la$}%
    }}}}
    \put(1201,-1336){\makebox(0,0)[lb]{\smash{{\SetFigFont{12}{14.4}{\rmdefault}{\mddefault}{\itdefault}{\color[rgb]{0,0,0}$\scriptstyle{}l(\square)$}%
    }}}}
    {\color[rgb]{0,0,0}\put(2701,-61){\line(-1, 0){2100}}
    \put(601,-61){\line( 0,-1){1800}}
    }%
    \end{picture}%
    \label{fig:arm_leg_star}
  \end{center}  
\end{figure}
\begin{proposition}
  The edge multiplicity functions $\km_{q,t}$ (\ref{kqt}) and $\km_{q,t}^{*}$ (\ref{kqt_star}) are UD-dual in the sense of Definition \ref{def:UD-dual}.
\end{proposition}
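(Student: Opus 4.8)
The plan is to reduce everything to a single quadrangle and to exploit that each of the four factors involved is a product of $F_{q,t}$-values over the boxes of one column. Fix a quadrangle $\mu\rho\nu\la$ as in (\ref{quadrangle}); then $\nu\setminus\mu$ consists of two boxes $A$ and $B$. Since both must be simultaneously addable to $\mu$ they form an antichain, hence lie in distinct rows and distinct columns; label them so that $A=(i_A,j_A)$ lies strictly above and to the right of $B=(i_B,j_B)$, i.e. $i_A<i_B$ and $j_A>j_B$. The two intermediate vertices are $\mu+A$ and $\mu+B$, and the definition of UD-duality must be checked for both assignments $\{\la,\rho\}=\{\mu+A,\mu+B\}$. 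Since the required identity $\km_{q,t}(\la,\nu)\,\km^{*}_{q,t}(\rho,\nu)=\km^{*}_{q,t}(\mu,\la)\,\km_{q,t}(\mu,\rho)$ is equivalent to
\[
\frac{\km_{q,t}(\la,\nu)}{\km_{q,t}(\mu,\rho)}=\frac{\km^{*}_{q,t}(\mu,\la)}{\km^{*}_{q,t}(\rho,\nu)},
\]
I would compute the two ratios separately and show they coincide.

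First the \emph{aligned} assignment $\la=\mu+B$, $\rho=\mu+A$. Here $\km_{q,t}(\la,\nu)$ and $\km_{q,t}(\mu,\rho)$ are both products over column $j_A$ over the rows strictly above $A$ (rows $<i_A$), and the only box by which $\mu+B$ and $\mu$ differ is $B$, sitting in row $i_B>i_A$ and column $j_B\neq j_A$; it therefore alters neither the arm lengths (row lengths in rows $<i_A$) nor the leg lengths (the height of column $j_A$) of any box entering these products, so they are termwise equal and the first ratio is $1$. Symmetrically, $\km^{*}_{q,t}(\mu,\la)$ and $\km^{*}_{q,t}(\rho,\nu)$ are products over column $j_B$ over the rows strictly below $B$ (rows $>i_B$), and the differing box $A$ lies in row $i_A<i_B$ and column $j_A\neq j_B$, so again the products agree termwise and the second ratio is $1$. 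The identity holds trivially in this orientation.

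The content lies in the \emph{crossed} assignment $\la=\mu+A$, $\rho=\mu+B$. Now $\km_{q,t}(\la,\nu)$ and $\km_{q,t}(\mu,\rho)$ are products over column $j_B$ over rows $<i_B$, and the differing box $A$ now lies in this range (indeed $(i_A,j_B)$ is an interior box of both diagrams because $j_A>j_B$). Only that single box has a different arm length, $j_A-j_B$ with respect to $\mu+A$ versus $j_A-j_B-1$ with respect to $\mu$, while its leg length is $i_B-i_A-1$ in both; hence the first ratio collapses to $F_{q,t}(a^{*}+1,l^{*})/F_{q,t}(a^{*},l^{*})$ with $a^{*}:=j_A-j_B-1$ and $l^{*}:=i_B-i_A-1$. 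For the $\km^{*}$-factors, both are products over column $j_A$ over rows $>i_A$, and now the differing box $B$ lies in this range; the single discrepant complement box $(i_B,j_A)$ has the same vertical distance $l^{*}$ to both boundaries (column $j_A$ has height $i_A$ in $\mu+A$ and in $\nu$), but horizontal distances differing by one ($a^{*}+1$ to $\mu+A$, $a^{*}$ to $\nu$), so the second ratio is again $F_{q,t}(a^{*}+1,l^{*})/F_{q,t}(a^{*},l^{*})$. The two ratios agree, which proves the proposition. The main obstacle is exactly this final matching: one must fix the convention for the arm/leg distances of a complement box — counting the cells strictly separating the box from the diagram — and verify that the two discrepant boxes are governed by the same pair $(a^{*},l^{*})$, which is natural since $(a^{*},l^{*})$ records the rectangle spanned by the corners $A$ and $B$ and is an invariant of the quadrangle.

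Conceptually the coincidence of the two ratios is not accidental. Under $\un\la\mapsto P_\la$ the operator $\Uf^{\circ}$ attached to $\km_{q,t}$ is multiplication by $p_1$ (this is precisely the Pieri rule (\ref{qt_branching})), while the operator $\Df^{\circ}$ attached to $\km^{*}_{q,t}$ is the skewing operator $p_1^{\perp}$, adjoint to multiplication by $p_1$ in the Macdonald inner product; because $[p_1^{\perp},p_1]=\tfrac{1-q}{1-t}\,\mathbf{1}$ is scalar, the commutator $[\Df^{\circ},\Uf^{\circ}]$ is diagonal, and by the same computation that produced condition (\ref{UD-self-duality}) this diagonality is exactly UD-duality of $\km_{q,t}$ and $\km^{*}_{q,t}$. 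This yields a computation-free alternative, at the cost of first identifying $\km^{*}_{q,t}$ with the $Q$-Pieri coefficient $\varphi_{\nu/\la}$.
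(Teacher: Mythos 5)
Your proof is correct and is precisely the ``straightforward verification'' that the paper asserts without writing out: reduce to a single quadrangle, observe that in the aligned orientation all four products agree termwise, and in the crossed orientation both sides pick up the same single-factor discrepancy $F_{q,t}(a^{*}+1,l^{*})/F_{q,t}(a^{*},l^{*})$ governed by the rectangle spanned by the two added boxes. Your identification of the convention for arm/leg lengths of complement boxes as the one delicate point is apt, and the conceptual remark via $p_1$ and $p_1^{\perp}$ is a nice complement that mirrors the argument the paper later uses for Theorem \ref{thm:qt_Plancherel}.
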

\begin{proof}
  A straightforward verification.
\end{proof}

Thus, by Proposition \ref{prop:duality_and_gauge}, the multiplicity functions $\km_{q,t}$ and $\km_{q,t}^{*}$ are equivalent to each other (Definition \ref{def:equivalent_multilpicities}). We define the UD-self-dual multiplicity function $\tilde \km_{q,t}$ equivalent to both $\km_{q,t}$ and $\km_{q,t}^{*}$ as follows:
\begin{equation*}
  \tilde\km_{q,t}(\la,\nu):=\sqrt{
  \km_{q,t}(\la,\nu)\km^*_{q,t}(\la,\nu)},
  \qquad \mbox{for all $\la\nearrow\nu$}.
\end{equation*}

Let $\Uf^{\circ}$ and $\Df^{\circ}$ be the operators constructed from the UD-self-dual multiplicity function $\tilde\km_{q,t}$ as in Remark \ref{rmk:Plancherel_operators}. The self-duality means that the commutator $[\Df^{\circ},\Uf^{\circ}]$ acts diagonally. In fact, much more can be said:
\begin{theorem}\label{thm:qt_Plancherel}
  One has
  \begin{equation}\label{qt_commut}
    [\Df^{\circ},\Uf^{\circ}]=\frac{1-q}{1-t}\mathbf{1}.
  \end{equation}
\end{theorem}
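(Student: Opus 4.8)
The plan is to reduce the commutator to a single diagonal eigenvalue, and then to identify that eigenvalue by realizing $\Uf^\circ$ and $\Df^\circ$ as the operators of multiplication by $p_1$ and its adjoint on the algebra of symmetric functions $\La$ equipped with the Macdonald inner product. First, since $\tilde\km_{q,t}$ is UD-self-dual, the quadrangle analysis of \S\ref{sub:kerov_s_operators_and_self_dual_edge_multiplicities} (the cancellation of off-diagonal terms in (\ref{[D,U]}) forced by (\ref{UD-self-duality})) applies verbatim to $\Uf^\circ,\Df^\circ$. Hence $[\Df^\circ,\Uf^\circ]$ is diagonal,
\begin{equation*}
  [\Df^\circ,\Uf^\circ]\un\la=h(\la)\un\la,\qquad
  h(\la):=\sum_{\nu\colon\nu\searrow\la}\km_{q,t}(\la,\nu)\km_{q,t}^*(\la,\nu)
  -\sum_{\mu\colon\mu\nearrow\la}\km_{q,t}(\mu,\la)\km_{q,t}^*(\mu,\la),
\end{equation*}
where I used $\tilde\km_{q,t}(\la,\nu)^2=\km_{q,t}(\la,\nu)\km_{q,t}^*(\la,\nu)$. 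It thus suffices to prove that $h(\la)\equiv\frac{1-q}{1-t}$.

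Second, I would equip $\La$ with the Macdonald scalar product $\langle\,\cdot\,,\,\cdot\,\rangle_{q,t}$ of \cite[Ch.~VI]{Macdonald1995}, for which $\langle P_\la,P_\la\rangle_{q,t}=b_\la^{-1}$ with $b_\la=\prod_{\square\in\la}\frac{1-q^{a(\square)}t^{l(\square)+1}}{1-q^{a(\square)+1}t^{l(\square)}}$, so that $\{b_\la^{1/2}P_\la\}_{\la\in\Yb}$ is an orthonormal basis (for the relevant parameter range $b_\la>0$). The one genuinely computational step is the identity
\begin{equation*}
  \km_{q,t}^*(\la,\nu)=\frac{b_\la}{b_\nu}\,\km_{q,t}(\la,\nu),
  \qquad\la\nearrow\nu,
\end{equation*}
equivalently $\tilde\km_{q,t}(\la,\nu)=\sqrt{b_\la/b_\nu}\,\km_{q,t}(\la,\nu)$. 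Both $\km_{q,t}$ (product (\ref{kqt}) over boxes of $\la$ above $\nu/\la$) and $\km_{q,t}^*$ (product (\ref{kqt_star}) over boxes of the complement below $\nu/\la$) run down the single column of the added box $\nu/\la$, and $b_\la/b_\nu$ collects exactly the arm--leg factors along that column, so the identity follows by a telescoping comparison of the two single-column products.

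Third, let $\Phi\colon\ell^2(\Yb)\to\overline\La$ be the isometry $\Phi\un\la=b_\la^{1/2}P_\la$, restricting to $\ellf^2(\Yb)\to\La$ on the dense subspaces where all operators are defined. The Pieri rule (\ref{qt_branching}) gives
\begin{equation*}
  p_1\cdot\big(b_\la^{1/2}P_\la\big)
  =\sum_{\nu\colon\nu\searrow\la}b_\la^{1/2}\km_{q,t}(\la,\nu)P_\nu
  =\sum_{\nu\colon\nu\searrow\la}\sqrt{b_\la/b_\nu}\,\km_{q,t}(\la,\nu)\,\big(b_\nu^{1/2}P_\nu\big),
\end{equation*}
and by the identity above the coefficient equals $\tilde\km_{q,t}(\la,\nu)$, so $\Phi\Uf^\circ=(p_1\cdot)\Phi$. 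Since $\Df^\circ=(\Uf^\circ)^*$ in $\ell^2(\Yb)$ and $\Phi$ is unitary, $\Phi\Df^\circ\Phi^{-1}$ is the adjoint of $p_1\cdot$ with respect to $\langle\,\cdot\,,\,\cdot\,\rangle_{q,t}$, i.e.\ the skewing operator $p_1^{\perp}$. Consequently $[\Df^\circ,\Uf^\circ]=\Phi^{-1}[p_1^{\perp},p_1\cdot]\Phi$.

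Finally, in the power-sum realization one has $p_1^{\perp}=\langle p_1,p_1\rangle_{q,t}\,\frac{\partial}{\partial p_1}$, whence $[p_1^{\perp},p_1\cdot]=\langle p_1,p_1\rangle_{q,t}\,\mathbf{1}$, and $\langle p_1,p_1\rangle_{q,t}=\frac{1-q}{1-t}$ by the standard formula $\langle p_\rho,p_\si\rangle_{q,t}=\delta_{\rho\si}z_\rho\prod_i\frac{1-q^{\rho_i}}{1-t^{\rho_i}}$. Transporting back through the unitary $\Phi$ yields $[\Df^\circ,\Uf^\circ]=\frac{1-q}{1-t}\mathbf{1}$, which in particular re-confirms $h(\la)\equiv\frac{1-q}{1-t}$. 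The main obstacle is the telescoping identity $\km_{q,t}^*=(b_\la/b_\nu)\km_{q,t}$ relating the two column products to the ratio of $b_\la$'s; once it is in hand, the remaining steps are purely structural.
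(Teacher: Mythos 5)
Your proposal is correct and follows essentially the same route as the paper: the key identity $\tilde\km_{q,t}(\la,\nu)=\sqrt{b_\la/b_\nu}\,\km_{q,t}(\la,\nu)$, the transport to $\La$ via the orthonormal basis $\{b_\la^{1/2}P_\la\}$ identifying $\Uf^\circ$ with multiplication by $p_1$ and $\Df^\circ$ with $p_1^\perp=\frac{1-q}{1-t}\frac{\partial}{\partial p_1}$, and the resulting scalar commutator. The only (harmless) addition is your preliminary quadrangle reduction showing the commutator is diagonal, which the unitary-conjugation argument already renders unnecessary.
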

\begin{proof}
  First, it can be checked that the multiplicity function $\tilde\km_{q,t}$ can be written as $\tilde\km_{q,t}(\la,\nu)=\km_{q,t}\sqrt{\frac{b_\la(q,t)}{b_\nu(q,t)}}$, where $b_\mu(q,t)$, $\mu\in\Yb$, is defined in \cite[Ch. VI, (6.19)]{Macdonald1995}. Let $\qf{\cdot,\cdot}=\qf{\cdot,\cdot}_{q,t}$ denote the scalar product in the algebra $\La$ which is associated with the Macdonald symmetric functions \cite[Ch. VI, \S1]{Macdonald1995}. Let $\tilde P_\mu(x;q,t):=\sqrt{b_\mu(q,t)}P_\mu(x;q,t)$. It follows from \cite[Ch. VI, \S4]{Macdonald1995} that these functions form a self-dual basis in $\La$, i.e.,$\langle{\tilde P_\mu,\tilde P_\la}\rangle=\delta_{\mu,\la}$ for all $\mu,\la\in\Yb$. Moreover, the operator of multiplication by $p_1$ has the form (see (\ref{qt_branching})):
  \begin{equation}\label{p1_tildePmu}
    p_1(x)\tilde P_\mu (x;q,t)=\sum\nolimits_{\la\colon\la\searrow\mu}\tilde\km_{q,t}(\mu,\la)
    \tilde P_\la(x;q,t)\qquad \mbox{for all $\mu\in\Yb$}.
  \end{equation}
  Let $p_1^{\perp}$ denote the operator which is adjoint (with respect to $\qf{\cdot,\cdot}$) to the operator of multiplication by $p_1$. Clearly,
  \begin{equation}\label{p1_perp_tildePmu}
    p_1^{\perp}\tilde P_\mu(x;q,t)=
    \sum\nolimits_{\varkappa\colon\varkappa\nearrow\mu}\tilde\km_{q,t}(\varkappa,\mu)
    \tilde P_\varkappa(x;q,t)\qquad \mbox{for all $\mu\in\Yb$}.
  \end{equation}
  Moreover, it can be shown similarly to \cite[Ch.~I, \S5, Ex.~3(c)]{Macdonald1995} that this operator acts as the formal differential operator
  \begin{equation*}
    p_1^{\perp}=\frac{1-q}{1-t}\frac{\partial}{\partial p_1}
  \end{equation*}
  in the polynomial algebra $\La=\R[p_1,p_2,p_3,\dots]$. Thus, $[p_1^{\perp},p_1]=p_1^{\perp}p_1-p_1p_1^{\perp}=\frac{1-q}{1-t}\mathbf{1}$. Comparing this with (\ref{p1_tildePmu}) and (\ref{p1_perp_tildePmu}), we readily see that the claim follows.
\end{proof}

We see that the fact that $[\Df^\circ,\Uf^\circ]$ is a diagonal operator is an easy combinatorial exercise, but to show that the operator $[\Df^\circ,\Uf^\circ]$ is actually scalar requires to use the technique of Macdonald symmetric functions. It seems that there is no direct combinatorial proof of that fact.

Theorem \ref{thm:qt_Plancherel} allows to apply the formalism of \S \ref{sec:heisenberg_operators} and define the \emph{$(q,t)$-Plancherel measures} $\Pl_n^{q,t}$ on the graph $(\Yb,\tilde\km_{q,t})$ which form a coherent system. One can also consider the poissonization $\Pl_\ga^{q,t}$ of these measures and introduce a Markov jump dynamics on partitions preserving $\Pl_\ga^{q,t}$. The generator of this dynamics acts diagonally on an orthogonal basis in $\ell^2(\Yb,\Pl_\ga^{q,t})$ formed by functions $\{\Cf^{q,t}_\la\}_{\la\in\Yb}$ given by formula (\ref{Cf_la}) (with quantities $\dim(\mu,\la)$ and the relative dimension functions $\Pc_\mu^*$ specialized for our graph $(\Yb,\tilde\km_{q,t})$). These functions $\Cf_\la^{q,t}$ provide a two-parameter extension of the Charlier symmetric functions of \cite{Olshanski2011Meixner}. Note that one cannot view the functions $\Cf_\la^{q,t}$ as elements of the algebra of symmetric functions (see also \S\ref{ssub:te-regular} below).

Unfortunately, for general parameters $(q,t)$ the graph $(\Yb,\tilde\km_{q,t}))$ does not carry any Kerov's operators: the set of solutions of (\ref{q_condition})--(\ref{q_condition_0}) in this case is empty. In the next subsections we consider various degenerations of the general $(q,t)$ edge multiplicities. For some of these degenerations the Kerov's operators exist, and we discuss the measures and dynamics arising from the formalism of the present paper. 


\subsection{Degenerations of $(q,t)$ edge multiplicities} 
\label{sub:degenerations_of_qt}

There are various degenerations of the two-parameter Macdonald symmetric functions:
\begin{enumerate}[{\bf1.}]
  \item ($q=0$) Hall-Littlewood symmetric functions \cite[Ch. III]{Macdonald1995}.
  \item ($t=q^\theta$ and $q\to1$, where $\te\ge0$ is a new parameter\footnote{Macdonald's $\al$ parameter is related to $\theta$ as $\al=1/\theta$. We follow the notation of \cite{Kerov1998}, \cite{Olshanski2009} and use $\te$ instead of $\al$.}) Jack's symmetric functions \cite[Ch. VI, \S10]{Macdonald1995}.
  \item ($q=t$) Schur functions.
  \item ($q=0$, $t=1$) Monomial symmetric functions \cite[Ch. I, \S2]{Macdonald1995}.
  \item ($q=0$, $t=-1$) Schur's Q-functions \cite[Ch. III, \S8]{Macdonald1995}.
\end{enumerate}
All these families of symmetric functions except for the last one are indexed by all partitions and form a linear basis of $\La$. The Schur's Q-functions are indexed by all strict partitions (i.e., partitions in which all nonzero parts are distinct) and span a proper subalgebra of so-called \emph{doubly symmetric functions} $\Gamma\subset\La$.

Thus, in the first four cases one gets branching graphs with the underlying poset $\Z^{2}_{>0}$ and various edge multiplicity functions. In particular, the Young graph with simple edges corresponds to the basis of Schur functions. In the case of Schur's Q-functions one arrives at the Schur graph. This is the branching graph of ideals with the underlying poset $\{(\i,\j)\in\Z_{>0}^{2}\colon \j\ge \i\}$ and simple edges. The vertices of this graph are the so-called shifted shapes which are identified with strict partitions \cite[Ch. I, \S1, Ex. 9]{Macdonald1995}.

It turns out that all branching graphs arising in the above degenerations except for the first one (with Hall-Littlewood multiplicities) carry Kerov's operators. In the next three sections we discuss measures and processes arising in the cases 2--5.


\subsection{$z$-measures with Jack parameter} 
\label{sub:measures_with_jack_parameter}

The Pieri rule for the Jack's symmetric functions $P_\la(x;\te)$ \cite[Ch. VI, \S10]{Macdonald1995} gives rise to the edge multiplicity function $\km_\te$ on $\Yb$ which is defined exactly as in (\ref{kqt}) with $F_{q,t}$ replaced by its limit $F_{\te}(a,l)=\frac{(a+\te(l+2))(a+1+\te l)}{(a+\te(l+1))(a+1+\te(l+1))}$. We are assuming that $\te>0$ is fixed. The edge multiplicities $\km_\te$ were considered in \cite{Kerov1998}, \cite{Kerov2000}, \cite{Borodin2000}, \cite{Borodin2005b}, \cite{Olshanski2009}, and other papers. For $\te=1$ one has $\km_\te\equiv1$, and one recovers the Young graph. The results in that particular case were discussed in Introduction. 

\subsubsection{Kerov's operators} 

Following \S\S \ref{sub:macdonald}--\ref{sub:duality_of_qt}, we define the UD-self-dual edge multiplicities $\tilde \km_\te$ equivalent to $\km_\te$.
\begin{theorem}\label{thm:Kerov_Jack}
  Triplets of Kerov's operators for the Young graph with Jack edge multiplicities $\tilde\km_\te$ are parametrized by two complex parameters $z,z'$. Each triplet is defined by (\ref{Kerov_UfDf_def}) with the corresponding function $\ufunc(\i,\j)$ on the underlying poset $\Z_{>0}^{2}$ given by
  \begin{equation}\label{Jack_ufunc}
    \ufunc_{zz'\te}(\i,\j)=\big\{\big(z+(\j-1)-\te(\i-1)\big)\big(z'+(\j-1)-\te(\i-1)\big)\big\}^{\frac12},
  \end{equation}
  where $\square=(\i,\j)\in\Z_{>0}^{2}$. The parameter $\zz=(\Hf\un\varnothing,\un\varnothing)$ is equal to $zz'/\te$.
\end{theorem}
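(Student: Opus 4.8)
The plan is to reduce the classification entirely to solving the functional equation (\ref{q_condition}). By the Proposition at the end of \S\ref{sub:kerov_s_operators_and_self_dual_edge_multiplicities}, triplets of Kerov's operators on $(\Yb,\tilde\km_\te)$ are in bijection with functions $\ufunc(\cdot)^2$ on the underlying poset $\Z_{>0}^2$ satisfying (\ref{q_condition}), with $\zz$ prescribed by (\ref{q_condition_0}). Writing $c_\te(\square):=(\j-1)-\te(\i-1)$ for the anisotropic content of a box $\square=(\i,\j)$, I observe that the candidate (\ref{Jack_ufunc}) is simply the quadratic polynomial $\ufunc_{zz'\te}(\square)^2=c_\te(\square)^2+(z+z')c_\te(\square)+zz'$ in $c_\te$. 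Since (\ref{q_condition}) is affine-linear in the unknown $\ufunc(\cdot)^2$, it suffices to treat separately the three \emph{content sum rules} obtained by substituting $1$, $c_\te(\square)$, and $c_\te(\square)^2$ for $\ufunc(\square)^2$, where the inhomogeneous term $2|\la|$ must be produced entirely by the quadratic rule. Evaluating (\ref{q_condition_0}) at $\la=\varnothing$ (whose only addable box is $(1,1)$, with $c_\te(1,1)=0$ and $\tilde\km_\te(\varnothing,(1,1))^2=1/\te$) immediately yields $\zz=zz'/\te$ and fixes the normalization in the three rules.

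First I would establish existence, i.e. show that for every $\la\in\Yb$ the expression
\begin{equation*}
  \sum_{\nu\colon\nu\searrow\la}\tilde\km_\te(\la,\nu)^2\,g\big(c_\te(\nu/\la)\big)
  -\sum_{\mu\colon\mu\nearrow\la}\tilde\km_\te(\mu,\la)^2\,g\big(c_\te(\la/\mu)\big)
\end{equation*}
is a constant for $g\in\{1,c_\te\}$ and equals $2|\la|+\mathrm{const}$ for $g=c_\te^2$. The natural tool is the Jack realization already used in the proof of Theorem~\ref{thm:qt_Plancherel}: in the self-dual basis $\tilde P_\la$ of $\La$ the operators $\Uf^\circ,\Df^\circ$ are multiplication by $p_1$ and its adjoint $p_1^{\perp}=\te^{-1}\frac{\partial}{\partial p_1}$, so the weighted addable/removable sums above are diagonal matrix elements of commutators of these bosonic operators with the diagonal \emph{content operator} $C$ whose eigenvalue on $\tilde P_\la$ is $\sum_{\square\in\la}c_\te(\square)$ (a Laplace--Beltrami/Sekiguchi operator for Jack polynomials). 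Concretely, the rule for $g=1$ is $\langle\tilde P_\la,[p_1^\perp,p_1]\tilde P_\la\rangle=\te^{-1}$, which is the coherency of the Plancherel system; the rules for $g=c_\te$ and $g=c_\te^2$ follow by inserting $[C,p_1]$ and its double commutator, using $[p_1^\perp,p_1]=\te^{-1}\mathbf 1$ together with the commutation relations of $C$ with $p_1,p_1^\perp$, the quadratic rule producing the $2|\la|$ term. This is the step I expect to be the main obstacle, since it requires explicit control of $\tilde\km_\te^2=\km_\te\km_\te^*$ and of the exact eigenvalue of $C$; the computation generalizes Okounkov's $\te=1$ verification, and it is precisely here that the quadratic-in-content shape of (\ref{Jack_ufunc}) is forced.

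Finally I would prove uniqueness, namely that the affine solution space of (\ref{q_condition}) has dimension exactly two. I would argue by a triangular induction filling in the values of $\ufunc(\cdot)^2$ box by box: the equation at $\la=\varnothing$ forces $\ufunc(1,1)^2=\te\,\zz$ in terms of the free parameter $\zz$; the equation at $\la=(1)$ is a single relation between the two new unknowns $\ufunc(1,2)^2$ and $\ufunc(2,1)^2$, contributing one further degree of freedom; and for every larger diagram, processed in order of increasing size, the corresponding instance of (\ref{q_condition}) involves exactly one box not occurring in any smaller diagram, with nonzero (positive) coefficient $\tilde\km_\te^2$, hence determines that box from previously fixed values. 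Thus the solution space is at most two-dimensional, and since the family (\ref{Jack_ufunc}) already supplies two parameters (the map $(z,z')\mapsto(z+z',zz')$ being onto $\C^2$), it exhausts all solutions, giving the stated parametrization together with $\zz=zz'/\te$.
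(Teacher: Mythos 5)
Your proposal is correct and follows essentially the same route as the paper: both reduce the classification to the functional equation (\ref{q_condition}) via the bijection of \S\ref{sub:kerov_s_operators_and_self_dual_edge_multiplicities}, both establish uniqueness by a box-by-box induction (the paper uses rows, columns, and then rectangles $(b^{a})$ to pin down $\ufunc(a,b)^2$, while you use the minimal diagram to which each box is addable — same idea), and both leave the existence verification to symmetric-function operator calculus; your three content sum rules for $g\in\{1,c_\te,c_\te^2\}$ are exactly equivalent to requiring (\ref{te_jack_commut}) for all $z,z'$, and proving them via commutators of $p_1$, $p_1^\perp$ and a content operator is in substance the same computation as the commutation relation $[U,D]=2\te\mathbf{h}_2+\te zz'$ that the paper imports from \cite{Olshanski2009}. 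One small inaccuracy: it is not true that \emph{every} diagram's instance of (\ref{q_condition}) contains exactly one new box (for $\la=(2,2)$ all of $(1,3)$, $(3,1)$, $(2,2)$ are already determined by smaller diagrams, so that equation is a pure consistency constraint); this does not harm your conclusion, since each box is new for \emph{some} diagram and the surplus equations only shrink, never enlarge, the solution space.
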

\begin{proof}
  Solving (\ref{q_condition}) for $\la=\varnothing$, $\la=(n)$ and $\la=(1^{n})$ (where $n=1,2,\ldots$), one arrives at the desired expressions (\ref{Jack_ufunc}) for $(\i,\j)=(1,n)$ and $(n,1)$ for all $n\ge1$. Then, considering all possible rectangular Young diagrams, one checks that (\ref{Jack_ufunc}) must hold for all $(\i,\j)\in\Z_{>0}^{2}$. 

  Now it remains to show that (\ref{q_condition}) is true for any $\la\in\Yb$, i.e., to check the following identity (here $c_\te(\i,\j):=(\j-1)-\te(\i-1)$ is the $\te$-content of a box $\square=(\i,\j)$):
  \begin{align}&
    \label{te_jack_commut}
    \sum\nolimits_{\nu\colon\nu\searrow\la}
    \tilde\km_{\te}(\la,\nu)^{2}\big(z+c_\te(\nu/\la)\big)
    \big(z'+c_\te(\nu/\la)\big)\\&\qquad \qquad-
    \sum\nolimits_{\mu\colon\mu\nearrow\la}
    \tilde\km_{\te}(\mu,\la)^{2}
    \big(z+c_\te(\la/\mu)\big)
    \big(z'+c_\te(\la/\mu)\big)
    =2|\la|+{zz'}/{\te}.\nonumber
  \end{align}
  This can be checked using the results of \cite{Olshanski2009}. Namely, in \cite[Thm. 7.1]{Olshanski2009} certain operators $U$ and $D$ are written as formal differential operators (in the algebra of $\te$-regular functions on Young diagrams). Arguing by analogy with \cite[Thm. 8.2]{Olshanski2009}, it we readily see that for those operators one has $[U,D]=2\te\mathbf{h}_2+\te zz'$ (in the notation of \cite{Olshanski2009}). Using the definitions of $U$ and $D$ in \cite[\S7.1]{Olshanski2009}, one concludes that this commutation relation is equivalent to (\ref{te_jack_commut}).
\end{proof}

\subsubsection{Measures $M_n^{zz'\te}$}

The measures $M_n^{zz'\te}$ on $\Yb_n$ arising from the Kerov's operators via (\ref{Kerov_coherent}) are exactly the $z$-measures on partitions with Jack parameter $\te>0$. They were defined in \cite{Kerov2000}, \cite{Borodin2000}, see also \cite{Borodin2005b}. In the latter paper conditions under which $M_n^{zz'\te}(\la)>0$ for all $\la\in\Yb_n$ are given.

Thus, Theorem \ref{thm:Kerov_Jack} provides a new characterization of the $z$-measures on partitions with Jack parameter together with all their degenerations. See, e.g., \cite{Borodin2005b} for a description of some degenerations for a general $\te$, and also \cite{borodin2006meixner} for discussion of all degenerate series of the $z$-measures on the Young graph (case $\te=1$) when they live on Young diagrams with bounded number of rows, or columns, or both (i.e., on Young diagrams inside a fixed rectangular shape).

Note that if one divides both sides of (\ref{te_jack_commut}) by $zz'$ and lets $z,z'\to\infty$, one recovers the commutation relation $[\Df^\circ,\Uf^\circ]=1/\te$ for $(\Yb,\tilde\km_\te)$. Moreover, as $z,z'\to\infty$, the measures $M_n^{zz'\te}$ converge to the Plancherel measure $\Pl_n^{\te}$ on the $n$th floor of the graph $(\Yb,\tilde\km_\te)$, see \S \ref{sec:heisenberg_operators}.

The paper \cite{Olshanski2009} is devoted to the study of the up/down Markov chains preserving $M_n^{zz'\te}$, and their asymptotics. In a scaling limit  as $n\to\infty$, the up/down chains converge to infinite-dimensional diffusions on the boundary of $(\Yb,\tilde\km_\te)$ which can be identified with the Thoma simplex $\Omega$ (\ref{Thoma_simplex}).

\subsubsection{Dynamics $\labf_\xi^{zz'\te}$}

Let $M_{\xi}^{zz'\te}$ denote the mixed measures for $(\Yb,\tilde\km_\te)$ as in \S \ref{sub:mixing_of_measures}. For $\te=1$, they form a determinantal random point process \cite{Borodin2000a}. Apart from $\te=1$, there are two other special cases, namely, $\te=2$ and $\te=\frac12$, when $M_{\xi}^{zz'\te}$ can be interpreted as a Pfaffian point process \cite{strahov2009z}, \cite{Strahov2009}. 

Let $\labf_{\xi}^{zz'\te}(t)$ be the Markov jump dynamics on the set of all partitions which preserves the measure $M_{\xi}^{zz'\te}$ (\S \ref{sub:markov_processes_on_}). The Markov generator of $\labf_{\xi}^{zz'\te}$ acts diagonally in the orthogonal basis $\{\Mf_\la^{zz'\te\xi}\}_{\la\in\Yb}$ of the Hilbert space $\ell^2(\Yb,M_\xi^{zz'\te})$. The functions $\Mf_\la^{zz'\te\xi}$ are given by Definition \ref{def:Mf_la} with $\dim=\dim_\te$, $\Pc_\mu^*$, and $\ufunc=\ufunc_{zz'\te}$ suitably specialized for our graph $(\Yb,\tilde\km_\te)$.

\subsubsection{Algebraic structure of the functions $\Mf_\la^{zz'\te\xi}$}
\label{ssub:te-regular} 

The functions $\Mf_\la^{zz'\te\xi}$ are expressed through the relative dimension functions $\Pc_\mu^*$ on the graph $(\Yb,\tilde\km_\te)$. Let us discuss the algebraic nature of the $\Pc_\mu^*$'s. From \cite{OkounkovOlshanskiJack1996} (see also \cite{Kerov1998}) it follows that these functions have the form $\Pc_\mu^*(\la)=({b_\mu^{(1/\te)}})^{\frac12}P_\mu^*(\la;\te)$ for all $\mu,\la\in\Yb$, where ${b_\mu^{(1/\te)}}$ is given by \cite[Ch. VI, (10.16)]{Macdonald1995}.

This implies that the space $\A$ for $(\Yb,\tilde\km_\te)$ is the same as the algebra of $\te$-regular functions on Young diagrams (e.g., see \cite[\S6]{Olshanski2009}) which is a filtered algebra under pointwise multiplication. The graded algebra associated with $\A$ is (regardless of the value of $\te>0$) isomorphic to the algebra of symmetric functions $\La$. For $\te\ne1$ there is no natural way of identifying $\A$ with $\La$. This identification for $\te=1$ is possible my means of the Frobenius-Schur functions on Young diagrams. This construction is explained in, e.g., \cite{OlshRegVer2003}.

Thus, the functions $\Mf_\la^{zz'\te\xi}$ which diagonalize the generator of the dynamics $\labf_\xi^{zz'\te}$ form a linear basis of the algebra of $\te$-regular functions on Young diagrams. The functions $\Mf_\la^{zz',\te=1,\xi}$ are the Meixner symmetric functions introduced in \cite{Olshanski2010LaguerreMeixner}, \cite{Olshanski2011Meixner}. We see that our technique provides a Jack extension of the Meixner symmetric functions of those papers which are, however, cannot longer be seen as symmetric functions.

\subsubsection{Limit as $\xi\to1$ and Laguerre symmetric functions with Jack parameter}
\label{ssub:limit_Laguerre}

Let us briefly discuss the scaling limit as $\xi\to1$ of the dynamics and measures for the graph $(\Yb,\tilde\km_\te)$. This limit is related to the one described in \S \ref{sub:birth_and_death_processes_related_to_the_meixner_polynomials}.

There are embeddings $\phi_n\colon \Yb_n\hookrightarrow\Omega$ for all $n\in\Z_{\ge0}$ (denoted by $\nu\mapsto \omega_\nu(\te)$ in \cite{Kerov1998}) under which the up/down Markov chains for the graph $(\Yb,\tilde\km_\te)$ converge to diffusions on $\Omega$ \cite{Olshanski2009}. Let $\tilde\Omega= \Omega\times\R_{>0}$ be the cone over $\Omega$ (see \cite{Olshanski2010LaguerreMeixner}, \cite{Olshanski2011Meixner} for more detail). Consider embeddings
\begin{equation*}
  \phi_\xi\colon \Yb\hookrightarrow \tilde\Omega,\qquad
  \phi_\xi\colon\la\mapsto (\phi_{|\la|}(\la),(1-\xi)|\la|)\in\Omega\times\R_{>0}.
\end{equation*}
Under these embeddings, the process $\labf_\xi^{zz'\te}(t)$ converges to a diffusion process $X^{zz'\te}(t)$ on $\tilde\Omega$. This fact for $\te=1$ is a subject of Borodin--Olshanski's future paper \cite{BorodinOlsh2011Prep}, and for all $\te>0$ it can be established in a similar manner.

Consider the following functions of $(\omega,r)\in\tilde\Omega$ indexed by $\la\in\Yb$:
\begin{equation}\label{Laguerre_Jack}
  \Lf^{zz'\te}_\la(\omega,r)=\sum_{\mu\subseteq\la}
  (-1)^{|\la|-|\mu|}\frac{\dim_{\te}(\mu,\la)}{(|\la|-|\mu|)!}
  \Big(\prod_{\square\in\la/\mu}
  \ufunc_{zz'\te}(\square)^{2}\Big)(b_\mu^{(1/\te)})^{\frac12}P_\mu((\omega,r);\te).
\end{equation}
Here $\ufunc_{zz'\te}$ is given in Theorem \ref{thm:Kerov_Jack}, and $P_\mu$ is the Macdonald symmetric function. The value $P_\mu((\omega,r);\te):=\psi_{(\omega,r)}(P_\mu)$ is determined via the algebra homomorphism (specialization) $\psi_{(\omega,r)}\colon\La\to\fun(\tilde\Omega)$ defined on Newton power sums as 
\begin{equation}\label{Jack_spec}
  \psi_{(\omega,r)}(p_k)=\begin{cases}
    r,&k=1,\\
    r^k\Big(\sum\nolimits_{i=1}^{\infty}\al_i^{k}+(-\te)^{k-1}
    \sum\nolimits_{i=1}^{\infty}\be_i^{k}\Big),&k\ge2.
  \end{cases}
\end{equation}
The $\Lf^{zz'\te}_\la$'s are eigenfunctions of the Markov generator $\bar{\mathsf{A}}$ of the limiting diffusion $X^{zz'\te}$, and $\bar{\mathsf{A}}\Lf^{zz'\te}_\la=-|\la|\Lf^{zz'\te}_\la$ for all $\la\in\Yb$. 
\begin{remark}\label{rmk:separation_of_var}
  Similarly to \cite[\S9]{Olshanski2010LaguerreMeixner}, one can separate the variable $r$ in the generator $\bar{\mathsf{A}}$ and view the diffusion $X^{zz'\te}(t)$ (at least on an algebraic level) as a skew product of the radial part (which is the one-dimensional diffusion with generator (\ref{1dim_Laguerre})) and the diffusion in the simplex $\Omega$ introduced in \cite{Olshanski2009}. 
\end{remark}

For $\te=1$, the functions (\ref{Laguerre_Jack}) become the Laguerre symmetric functions of \cite{Olshanski2010LaguerreMeixner}, \cite{Olshanski2011Meixner}. For a general $\te$, forgetting about the specialization $\psi_{(\omega,r)}$ in (\ref{Laguerre_Jack}), we can view $\Lf^{zz'\te}_\la$'s also as elements of the algebra of symmetric functions $\La$. Moreover, they form a linear basis in $\La$. The functions $\Lf^{zz'\te}_\la$ are the Laguerre symmetric functions with Jack parameter introduced in \cite{Desrosiers2011Laguerre}.

Let us denote by $\Mc^{zz'\te}$ the scaling limit as $\xi\to1$ (under embeddings $\phi_\xi$) of the measures $M_\xi^{zz'\te}$. Thus, $\Mc^{zz'\te}$ is a measure on $\tilde\Omega$. The Laguerre symmetric functions $\{\Lf^{zz'\te}_\la\}_{\la\in\Yb}$ viewed as functions on $\tilde\Omega$ form an orthogonal basis in $L^2(\tilde\Omega,\Mc^{zz'\te})$, and 
\begin{equation*}
  \left\langle\Lf^{zz'\te}_\la,\Lf^{zz'\te}_\mu\right\rangle_{\Mc^{zz'\te}}=
  \delta_{\la\mu}\prod_{\square\in\la}\ufunc_{zz'\te}(\square)^{2},\qquad
  \la,\mu\in\Yb.
\end{equation*}

The functions $\Lf^{zz'\te}_\la$ can be obtained in a scaling limit as $\xi\to1$ from the functions $\Mf_\la^{zz'\te\xi}$ on Young diagrams. For $\te=1$ this is explained in \cite[\S4.10]{Olshanski2011Meixner}, and for general $\te$ this can be done in a similar way, but in the limit $\te$-regular symmetric functions (elements of $\A$) become symmetric functions (elements of $\La$). See \cite{Kerov1998} and \cite{Olshanski2009} for more detail.


\subsection{Kingman graph and Poisson-Dirichlet distributions} 
\label{sub:kingman_graph_and_poisson_dirichlet}

\subsubsection{Kerov's operators and measures $M_n^{\al\tau}$}

Now let us consider another multiplicity function on the lattice of Young diagrams which defines the Kingman graph corresponding to branching of set partitions. As is explained in \cite{Kerov1998}, the Kingman multiplicities can be understood as $\te\to0$ limits of the $\km_\te$'s from the previous subsection:
\begin{equation}\label{km_0}
  \lim_{\te\to0}\km_\te(\la,\mu)=\km_0(\la,\nu):=r_{k}(\nu),\qquad \la\nearrow\nu,
\end{equation}
where $k$ is the length of the row in $\nu$ which contains the box $\nu/\la$, and $r_k(\nu)$ is the number of rows of length $k$ in $\nu$. The dual multiplicities $\km_\te^*$ do not have a limit as $\te\to0$, but if one multiplies them by $\te$ (which leads to an equivalent multiplicity function), then
\begin{equation}\label{km_0_star}
  \lim_{\te\to0}\te\cdot\km_{\te}^*(\la,\nu)=: \km_0^*(\la,\nu)=r_{k-1}(\la),\qquad\la\nearrow\nu
\end{equation}
(we use the same notation as in (\ref{km_0})). It was noted in \cite{Kerov1989} that the multiplicities $\km_0$ and $\km_0^{*}$ are equivalent. It can be also shown that they are UD-dual. As above, define the UD-self-dual multiplicy function by $\tilde\km_0:=\sqrt {\km_0\km_0^*}$.

\begin{theorem}\label{thm:KO_Kingman}
  Triplets of Kerov's operators on the Kingman graph $(\Yb,\tilde\km_0)$ are parametrized by two complex parameters $\al,\tau$. Each triplet is defined by (\ref{Kerov_UfDf_def}) with the function $\ufunc(\i,\j)$ on the underlying poset $\Z_{>0}^{2}$ given by
  \begin{equation*}
    \ufunc_{\al\tau}(\i,\j)=\begin{cases}
      \sqrt{\tau+(\i-1)\al},&\ \mbox{if $\j=1$};\\
      \sqrt{\j(\j-1-\al)},&\ \mbox{if $\j\ge2$}.
    \end{cases}
  \end{equation*}
  One has $\zz=(\Hf\un\varnothing,\un\varnothing)=\tau$.
\end{theorem}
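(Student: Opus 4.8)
The plan is to apply the characterization established in \S\ref{sub:kerov_s_operators_and_self_dual_edge_multiplicities}: triplets of Kerov's operators on $(\Yb,\tilde\km_0)$ are in bijection with functions $\ufunc(\cdot)^2$ on the poset $\Z_{>0}^2$ solving (\ref{q_condition}), with $\zz$ then fixed by (\ref{q_condition_0}). Thus everything reduces to solving this single family of linear relations, exactly as in the Jack case (Theorem \ref{thm:Kerov_Jack}). The first ingredient is the explicit squared multiplicity: from (\ref{km_0}) and (\ref{km_0_star}), for $\la\nearrow\nu$ obtained by placing a box in a row that has length $k$ in $\nu$, one has $\tilde\km_0(\la,\nu)^2=\km_0(\la,\nu)\km_0^*(\la,\nu)=r_k(\nu)\,r_{k-1}(\la)$, where $r_j(\cdot)$ counts rows of length $j$ and, by the convention implicit in (\ref{km_0_star}), $r_0\equiv1$.

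For the existence half I would fix $\la$ with row-length multiplicities $\{r_k\}_{k\ge1}$ and number of rows $\ell=\sum_k r_k$, and split the two sums in (\ref{q_condition}) into four groups according to the type of box move: (i) adjoining a new length-$1$ row, (ii) extending a length-$k$ row to length $k+1$ ($k\ge1$), and the reverse moves (iii) deleting a length-$1$ row and (iv) shortening a length-$k$ row ($k\ge2$). Using $\tilde\km_0^2$ above together with the proposed $\ufunc_{\al\tau}$, group (ii) at a row of length $k$ contributes $(r_{k+1}+1)r_k\,(k+1)(k-\al)$ and group (iv) at length $k$ contributes $r_k(r_{k-1}+1)\,k(k-1-\al)$; after reindexing $m=k+1$ their difference collapses to $\sum_{m\ge2}(r_{m-1}-r_m)\,m(m-1-\al)$, which telescopes (via $g(m+1)-g(m)=2m-\al$ for $g(m)=m(m-1-\al)$) to $2|\la|-\al(\ell+r_1)$. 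Groups (i) and (iii) evaluate the column-$1$ values $\tau+\ell\al$ and $\tau+(\ell-1)\al$ against squared multiplicities $(r_1+1)$ and $r_1$, and their difference is $\tau+(\ell+r_1)\al$. Adding the two pieces gives exactly $2|\la|+\tau$, which is (\ref{q_condition}) with $\zz=\tau$; specializing to $\la=\varnothing$ in (\ref{q_condition_0}) confirms $\zz=\ufunc_{\al\tau}(1,1)^2=\tau$.

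For the uniqueness half --- that every solution has the stated form --- I would run (\ref{q_condition}) on a nested family of special shapes, mirroring the $\la=\varnothing,(n),(1^n)$ step in the proof of Theorem \ref{thm:Kerov_Jack}. Evaluating at column shapes $(1^n)$ determines the column-$1$ values recursively and shows they are affine in the row index, forcing $\ufunc(\i,1)^2=\tau+(\i-1)\al$ with $\tau:=\ufunc(1,1)^2$ and $\al:=\ufunc(2,1)^2-\tau$; evaluating at row shapes $(n)$ pins down $\ufunc(1,\j)^2=\j(\j-1-\al)$; and evaluating at rectangles $(c^r)$ propagates these relations to all interior boxes, forcing the $\j\ge2$ values to be independent of the row index $\i$.

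The step I expect to be the main obstacle is the uniqueness direction, and within it the claim that $\ufunc(\i,\j)^2$ for $\j\ge2$ cannot depend on the row index $\i$: a priori $\ufunc^2$ is an arbitrary function on $\Z_{>0}^2$, and one must show that (\ref{q_condition}) --- evaluated, e.g., at two-row rectangles $(c,c)$, where the shortening move exposes the interior value $\ufunc(2,c)^2$ --- together with the already-determined column-$1$ and first-row values forces every interior value into the stated $\i$-free form, leaving exactly the two parameters $\tau$ and $\al$. A secondary, purely computational pitfall lies in the existence half: one must correctly locate the row index of each affected box (a fresh length-$1$ row occupies position $\ell+1$, giving weight $\tau+\ell\al$) and keep the convention $r_0\equiv1$ consistent in the new-row and delete-row terms, since a single index slip breaks the exact cancellation to $2|\la|+\tau$. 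Finally, as a consistency check and an alternative route one may view $(\Yb,\tilde\km_0)$ as the $\te\to0$ degeneration of the Jack graph of Theorem \ref{thm:Kerov_Jack}, with $\ufunc_{\al\tau}$ arising as a rescaled limit of $\ufunc_{zz'\te}$; but that limit is singular precisely at the column-$1$ boxes, where (\ref{km_0_star}) forced a multiplication by $\te$, so the direct verification above is cleaner than transporting (\ref{te_jack_commut}) through the degeneration.
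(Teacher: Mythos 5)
Your proposal is correct and follows essentially the same route as the paper: reduce to solving (\ref{q_condition})--(\ref{q_condition_0}), pin down $\ufunc(\cdot)^2$ uniquely by evaluating on special shapes (columns, rows, rectangles) exactly as in the proof of Theorem \ref{thm:Kerov_Jack}, and then verify (\ref{q_condition}) for arbitrary $\la$. The only difference is that you carry out the four-group telescoping verification of (\ref{q_condition}) explicitly (and correctly, including the $r_0\equiv1$ convention for new rows), whereas the paper delegates precisely this step to the arXiv ``v1'' version of \cite{Petrov2007}.
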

\begin{proof}
  As for Theorem \ref{thm:Kerov_Jack}, one first verifies that all Kerov's operators must have the desired form. Then it is not hard to show (it was actually done in \cite{Petrov2007}, the arXiv ``v1'' version) that (\ref{q_condition}) holds for all Young diagrams $\la$.
\end{proof}

The measures $\{M_n^{\al\tau}\}$ arising from these Kerov's operators via (\ref{Kerov_coherent}) coincide with the Ewens-Pitman's partition structures, see \cite{Ewens1979}, \cite{Pitman1992}, \cite{Pitman1997}, and also \cite[\S1.3]{Petrov2007}.\footnote{We denote by $\tau$ the parameter which is usually denoted by $\te$ in the literature to avoid notational conflict with the Jack parameter $\te$.} These measures $\{M_n^{\al\tau}\}$ satisfy $M_n^{\al\tau}(\la)\ge0$ for all $\la\in\Yb_n$ in one of the following three cases:

(1) $0\le\al<1$ and $\tau>-\al$. Then each measure $M_n^{\al\tau}$ is concentrated on the whole $\Yb_n$.

(2) $\al=-\be<0$ and $\tau=N\be$ for some $N=1,2,\ldots$. Then $M_n^{\al\tau}(\la)>0$ iff $\la$ has no more than $N$ rows.
  
(3) $\al=1$ and $\tau>-1$. Then for each $n$, the measure $M_n^{\al\tau}$ is concentrated on a single one-column Young diagram $\la=(1^{n})$.

We see that our approach provides a new characterization of the two-parameter Ewens-Pitman's partition structures together with all their degenerations. Moreover (\cite[Rmk. 9.12]{Olshanski2009}), these measures on partitions can also be obtained from the $z$-measures with Jack parameter in a suitable limit. 

The third (the trivial) case above actually corresponds to the Heisenberg algebra structure on the graph $(\Yb,\tilde\km_0)$. We see that the highly nontrivial Plancherel measures with Jack parameter $\te>0$ become a trivial object for $\te=0$.

The scaling limit of up/down Markov chains leads to infinite-dimensional diffusions in the boundary of the Kingman graph $\overline\nabla_\infty:=\big\{(x_1\ge x_2\ge \ldots\ge 0),\ \sum_{i=1}^{\infty}x_i\le 1\big\}$. These diffusions preserve the remarkable two-parameter Poisson-Dirichlet measures (e.g., see \cite{Pitman1997} or \cite{Feng2010book}). In the case $\al=0$ these diffusions were constructed in \cite{Ethier1981}, the two-parameter case was investigated in \cite{Petrov2007}.
    
\subsubsection{Relative dimension functions}

It is worth noting that the nondegenerate measures $M_n^{\al\tau}$ can correspond to the negative parameter $\zz=\tau$. However, our considerations of the dynamics in \S \ref{sec:markov_jump_dynamics} are only valid for $\zz>0$, so from now on we work under this assumption. Let $M_\xi^{\al\tau}$ denote the mixture of the measures $M_n^{\al\tau}$ (\S \ref{sub:mixing_of_measures}), and $\labf_\xi^{\al\tau}$ be the Markov jump dynamics on partitions which preserves the measures $M_\xi^{\al\tau}$. 

Let $m_\la$ be the monomial symmetric functions
\begin{equation}\label{m_la}
  m_\la(x_1,x_2,x_3,\ldots)=\sum x_{i_1}^{\la_1}\ldots x_{i_{\ell}}^{\la_\ell},
  \qquad \la=(\la_1,\ldots,\la_\ell)\in\Yb,
\end{equation}
where the sum is taken over all distinct summands. Let $m_\la^{*}$ be the factorial monomial symmetric function which is given by the same sum as (\ref{m_la}) with each power $x^k$ replaced by the falling factorial power $x^{\da k}$. Note that each $m_\la^*$ is also a symmetric function (in constrast to \S\ref{ssub:te-regular}). We have
\begin{equation*}
  \Pc_\mu^*=r_\mu^{1/2}m_\mu^*,\qquad \mu\in\Yb.
\end{equation*}
Here and below $r_\mu:=r_1(\mu)!r_2(\mu)!\ldots$. This fact can be checked in a straightforward way using the recurrence for the relative dimension.

\subsubsection{Degenerate parameters}

Let $\{\Mf_{\la}^{\al\tau\xi}\}_{\la\in\Yb}$ be the eigenfunctions of the generator of the Markov jump dynamics $\labf_\xi^{\al\tau}$. They are given by (suitably specialized) formula of Definition \ref{def:Mf_la}. However, these functions can be also characterized in a way similar to \cite[Prop. 4.22]{Olshanski2011Meixner}. One first needs to consider these functions in the case of degenerate parameters $(\al=-\be,\tau=N\be)$. In this case $\Mf_{\la}^{\al\tau\xi}$'s become functions in $N$ variables which are indexed by partitions with $\le N$ rows. Denote $\Yb(N):=\{\la\in\Yb\colon\ell(\la)\le N\}$.

\begin{proposition}\label{prop:Mf_la_degen}
  For degenerate parameters $(\al=-\be,\tau=N\be)$, the functions $\{\Mf_\la^{\al\tau\xi}\}_{\la\in\Yb(N)}$ have the form
  \begin{equation}\label{Mf_la_degen}
    \Mf^{\al\tau\xi}_\la(\nu)=r_\la^{-1/2}
    \sum_{i_1,\dots,i_{\ell(\la)}}
    \meix^{\be,\xi}_{\la_1}(\nu_{i_1})\ldots\meix^{\be,\xi}_{\la_{\ell(\la)}}(\nu_{i_{\ell(\la)}})
  \end{equation}
  where $\la, \nu\in\Yb(N)$, and the sum is taken over all pairwise distinct indices $i_1,\ldots,i_{\ell(\la)}$ from $1$ to $N$. Here $\meix^{\be,\xi}_n$ are the monic Meixner orthogonal polynomials (\S \ref{sub:birth_and_death_processes_related_to_the_meixner_polynomials}).
\end{proposition}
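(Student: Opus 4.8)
The plan is to identify the right-hand side of (\ref{Mf_la_degen}) with $\Mf_\la^{\al\tau\xi}$ by invoking the characterization of the eigenfunctions stated as item \textbf{4} in \S\ref{sub:properties_of_the_functions_}: it suffices to check that the proposed family lies in $\A$, has top term $\Pc_\la^*$ with respect to the filtration, and is annihilated by $\bar{\mathsf{A}}_\xi^{\al\tau}+|\la|\mathbf{1}$. (Note that here $\zz=\tau=N\be>0$, so the results of \S\ref{sec:markov_jump_dynamics} apply.) The conceptual reason the formula should hold is that for the degenerate parameters $(\al=-\be,\tau=N\be)$ the measure $M_\xi^{\al\tau}$ is supported on $\Yb(N)$, and a diagram $\nu\in\Yb(N)$ is nothing but an unordered configuration of $N$ nonnegative integers $\nu_1\ge\dots\ge\nu_N\ge0$; under this identification the dynamics $\labf_\xi^{\al\tau}$ should become the symmetrization of $N$ \emph{independent} one-dimensional birth and death processes $\n_{\be,\xi}$ of \S\ref{sub:markov_processes_on_}, each diagonalized by the Meixner polynomials $\meix_n^{\be,\xi}$.

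First I would verify the top-term condition. Using $\Pc_\mu^*=r_\mu^{1/2}m_\mu^*$ (established in \S\ref{sub:kingman_graph_and_poisson_dirichlet}) together with the fact that the leading factorial term of $\meix_n^{\be,\xi}(x)$ is $x^{\da n}$, the top-degree part of the right-hand side of (\ref{Mf_la_degen}) equals $r_\la^{-1/2}\sum \nu_{i_1}^{\da\la_1}\cdots\nu_{i_{\ell(\la)}}^{\da\la_{\ell(\la)}}$, the sum being over pairwise distinct indices $i_1,\dots,i_{\ell(\la)}$. This symmetric sum over ordered distinct tuples reproduces each factorial monomial of $m_\la^*$ exactly $r_\la$ times, so it equals $r_\la\,m_\la^*$, whence after the prefactor $r_\la^{-1/2}$ the top-degree part is $r_\la^{1/2}m_\la^*=\Pc_\la^*$. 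Expanding the lower Meixner terms shows that the remaining summands are combinations of the $\Pc_\mu^*$ with $|\mu|<|\la|$, so the whole expression lies in $\A$ and has the required form $\Pc_\la^*+(\text{lower degree terms})$.

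Second, the eigenvalue condition. I would show that, acting on symmetric functions of the row coordinates $\nu_1,\dots,\nu_N$, the generator $\bar{\mathsf{A}}_\xi^{\al\tau}$ coincides with the symmetrization of $\sum_{\i=1}^{N}\bar{\mathsf{A}}^{(\i)}$, where $\bar{\mathsf{A}}^{(\i)}$ is the one-dimensional Meixner generator with parameter $\be$ acting in the $\i$-th coordinate. Since each factor $\meix_{\la_j}^{\be,\xi}(\nu_{i_j})$ is an eigenfunction of the corresponding one-dimensional generator with eigenvalue $-\la_j$ (property (\ref{Mf_diag}) for the chain, \S\ref{sub:birth_and_death_processes_related_to_the_meixner_polynomials}), the symmetrized product is an eigenfunction of the total operator with eigenvalue $-\sum_j\la_j=-|\la|$. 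Together with the top-term computation and the characterization, this forces the right-hand side to equal $\Mf_\la^{\al\tau\xi}$. As an alternative to this process-theoretic decomposition, the same eigenvalue identity could be checked purely algebraically by applying Corollary \ref{corollary:action_of_Axi_on_Pmu} together with $\Pc_\mu^*=r_\mu^{1/2}m_\mu^*$ and the explicit data $\tilde\km_0$, $\ufunc_{\al\tau}$ of Theorem \ref{thm:KO_Kingman}, thereby reducing (\ref{Mf_diag}) to the three-term recurrence and difference equation satisfied by the one-variable $\meix_n^{\be,\xi}$.

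The main obstacle is exactly the identification of $\bar{\mathsf{A}}_\xi^{\al\tau}$ with the symmetrized sum of one-dimensional generators. The delicate points are the first-column weights $\ufunc_{\al\tau}(\i,1)^2=\be(N-\i+1)$, which encode the coupling between the $N$ ``particles'' and must be reconciled with the rates of genuinely independent walkers after passing to unordered configurations, and the bookkeeping of the combinatorial factors $r_\nu$ together with the multinomial counts of orderings (including the $N-\ell(\nu)$ vanishing parts) that relate $M_\xi^{\al\tau}(\nu)$ to $\prod_{\i}\pi_{\be,\xi}(\nu_\i)$. Once this matching of jump rates is carried out --- equivalently, once one checks that the symmetrized independent measure coincides with $M_\xi^{\al\tau}$ and that both dynamics are reversible with respect to it --- the diagonalization (and, if desired, the orthogonality) follows from the one-dimensional Meixner facts by taking tensor products and symmetrizing.
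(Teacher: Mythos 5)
Your strategy is genuinely different from the paper's. The paper proves Proposition \ref{prop:Mf_la_degen} by a direct computation: it substitutes $\Pc_\mu^*=r_\mu^{1/2}m_\mu^*$, $\dim_0\la=r_\la^{-1/2}|\la|!/\la!$ and $\prod_{\square\in\la}\ufunc_{\al\tau}(\square)^2=N^{\da\ell(\la)}\prod_i\la_i!(\be)_{\la_i}$ into Definition \ref{def:Mf_la}, expands each $m_\mu^*$ as a sum over $\Sym(N)$, replaces the sum over ordered $\mu\subseteq\la$ by a free sum over $0\le\mu_i\le\la_{\si(i)}$ divided by the multinomial coefficient $N!/\tilde r_\mu$, and then recognizes the one-variable Meixner polynomial in each coordinate; no uniqueness principle is invoked. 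You instead propose to verify the two hypotheses of the characterization in \S\ref{sub:properties_of_the_functions_} (item 4). That route is legitimate in principle, and your top-term verification is correct: the sum over ordered pairwise-distinct tuples overcounts each monomial of $m_\la^*$ exactly $r_\la$ times, giving $r_\la^{-1/2}\cdot r_\la\, m_\la^*=\Pc_\la^*$.

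The gap is the eigenvalue condition, which you yourself call ``the main obstacle'' and then do not resolve. Your primary plan --- identifying $\bar{\mathsf{A}}_\xi^{\al\tau}$ on $\Yb(N)$ with the symmetrization of $N$ independent one-dimensional Meixner generators --- requires matching the jump rates $Q_{\la\mu}$ (built from $p^{\ua}$, $p^{\da}$, $\dim_0$ and the first-column weights $\ufunc_{\al\tau}(\i,1)^2=\be(N-\i+1)$) with those of the order-statistics process of $N$ independent copies of $\n_{\be,\xi}$; this is a computation of at least the same weight as the paper's direct proof, and the paper in fact obtains that identification as a \emph{consequence} of the proposition (see the remark immediately following it), not as an input. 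Your fallback --- checking $\bar{\mathsf{A}}_\xi F_\la=-|\la|F_\la$ algebraically via Corollary \ref{corollary:action_of_Axi_on_Pmu} --- reduces to the same bookkeeping with $\tilde\km_0$, $\ufunc_{\al\tau}$ and the $r$-factors that the direct proof performs, so it is not a shortcut either. As written, the argument is incomplete: one of the two branches of your second step must actually be carried out before the characterization can be applied.
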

It can be readily seen that the top degree term (in the sense of the filtration of $\A$, see \S \ref{sub:space_A}) of $\Mf^{\al\tau\xi}_\la$ (\ref{prop:Mf_la_degen}) is equal to $\Pc_\la^*=r_\la^{1/2}m_\la^*$, as it should be: the factor $r_\la^{1/2}$ appears due to the summation in (\ref{Mf_la_degen}) over all distrinct indices and not over distinct summands as in $m_\la^{*}$, see (\ref{m_la}).
\begin{proof}
  Let us represent partitions $\varkappa\in\Yb(N)$ as $\varkappa=(\varkappa_1,\ldots,\varkappa_N)$ (with possible zeroes in the end). Let $r_i(\varkappa)$, $i\ge1$, be as above, and $r_0(\varkappa)$ denote the number of zeroes in $\varkappa$. Denote $\tilde r_\varkappa:=r_0(\varkappa)!{r}_\varkappa$. One can write
  \begin{equation}\label{m^*_expansion}
    m_\varkappa^{*}(x_1,x_2,\ldots,x_N)=
    \tilde {r}_\varkappa^{-1}
    \sum\nolimits_{\si\in \Sym(N)}
    x_{\si(1)}^{\da\varkappa_1}\ldots x_{\si(N)}^{\da\varkappa_N}
  \end{equation}
  ($\Sym(N)$ is the symmetric group). Next, we have
  $\prod\limits_{\square\in\la}\ufunc_{\al\tau}(\square)^{2}=N^{\da\ell(\la)}\prod\limits_{i=1}^{\ell(\la)}\la_i!(\be)_{\la_i}$. Denote $\la!:=\la_1!\ldots\la_{\ell(\la)}!$, and $(\be)_{\la}:=(\be)_{\la_1}\ldots(\be)_{\la_{\ell(\la)}}$. Finally, it can be established (e.g., see \cite{Kerov1989}) that $\dim\la=r_\la^{-1/2}{|\la|!}/\la!$.

  With all these preparations, we can write $\Mf_\la^{\al\tau\xi}$ (Definition \ref{def:Mf_la}), where $\nu=(\nu_1,\ldots,\nu_N)\in\Yb(N)$:
  \begin{align*}
    \Mf_\la^{\al\tau\xi}(\nu)&=
    \sum_{\mu\subseteq\la}
    \left(\frac\xi{\xi-1}\right)^{|\la|-|\mu|}
    \frac{\dim_{0}(\mu,\la)}{(|\la|-|\mu|)!}
    \Big(\prod_{\square\in\la/\mu}\ufunc_{\al\tau}(\square)^{2}\Big)
    {r}_\mu^{1/2}
    m_\mu^*(\nu)\\&=
    \sum_{\mu\subseteq\la}
    \left(\frac\xi{\xi-1}\right)^{|\la|-|\mu|}
    \frac{({r}_\la)^{-\frac12}(\be)_{\la}}
    {(N-\ell(\la))!\mu!(\be)_{\mu}}
    \tilde{r}_\mu
    m_\mu^*(\nu)m_\mu^*(\la).
  \end{align*}
  Now observe that the above sum is over $\mu_1\ge\ldots\ge\mu_N$ such that $0\le \mu_i\le \la_i$ ($i=1,\ldots,N$). This sum is equal to the same sum over unordered $\mu_1,\ldots,\mu_N$ divided by the multinomial coefficient $N!/\tilde{r}_\mu$. Thus, we can write (expanding the factorial monomial functions as in (\ref{m^*_expansion})):
  \begin{align*}
    \Mf_\la^{\al\tau\xi}(\nu)&=
    \frac{{r}_\la^{1/2}}{(N-\ell(\la))!}
    \sum_{\si,\tau\in \Sym(N)}\frac1{N!}
    \prod_{i=1}^{N}
    \sum_{\mu_i=0}^{\la_{\si(i)}}
    \left(\frac{\xi}{\xi-1}\right)^{\la_{\si(i)}-\mu_i}
    \frac{\la_{\si(i)}^{\da\mu_i}}{\mu_i!}
    \frac{(\be)_{\la_{\si(i)}}}{(\be)_{\mu_i}}\nu_{\tau(i)}^{\da\mu_i}
    \\&=
    \frac{{r}_\la^{1/2}}{(N-\ell(\la))!}
    \sum_{\rho\in \Sym(N)}\meix^{\be,\xi}_{\la_1}(\nu_{\rho(1)})\ldots
    \meix^{\be,\xi}_{\la_N}(\nu_{\rho(N)}).
  \end{align*}
  Here we have used the definition of the Meixner polynomials (\S \ref{sub:birth_and_death_processes_related_to_the_meixner_polynomials}), and the fact that each summand depends only on the difference $\rho=\tau\si^{-1}$, which makes the factor $N!$ disappear. Now passing from the summation over $\rho\in \Sym(N)$ to the summation over $i_1,\ldots,i_{\ell(\la)}$ as in (\ref{Mf_la_degen}), we conclude the proof.
\end{proof}

\begin{remark}
  It can be readily verified that the measure $M_\xi^{\al\tau}$ for the degenerate parameters $(\al=-\be,\tau=N\be)$ has the form 
  $M_\xi^{\al\tau}(\la)={N!}\tilde{r}_\la^{-1}\prod_{i=1}^{N}\pi_{\be,\xi}(\la_i)$. This means that the components of a random partition $\la=(\la_1\ge \ldots\ge\la_N)$ with distribution $M_\xi^{\al\tau}$ can be identified with the decreasing order statistics of $N$ independent random variables on $\Z_{\ge0}$ with the negative binomial distribution $\pi_{\be,\xi}$ (\ref{NegBinom}).

  The same structure appears in the dynamical picture. Namely, if we consider the evolution of decreasing order statistics of $N$ independent birth and death processes $\n_{\be,\xi}(t)$ on $\Z_{\ge0}$ (\S \ref{sub:markov_processes_on_}), then we get the dynamics $\labf_{\xi}^{\al\tau}(t)$. This can be deduced from Proposition \ref{prop:Mf_la_degen} by comparing actions of two generators (of $\labf_{\xi}^{\al\tau}$ and of the dynamics of decreasing order statistics) on the functions $\Mf_\la^{\al\tau\xi}$.
\end{remark}

\subsubsection{Characterization of $\Mf_\la^{\al\tau\xi}$}

Denote the right-hand side of (\ref{Mf_la_degen}) by $\meix_{\la\mid N,\be,\xi}$. These functions of $N$ variables can be viewed as ``bosonic'' Meixner symmetric polynomials as opposed to the ``fermionic'' ones arising for the graph $(\Yb,1)$ \cite{Olshanski2011Meixner} (see also references there to earlier work).
\begin{proposition}[{cf. \cite[Prop. 4.22]{Olshanski2011Meixner}}]
\label{prop:Kingman_degen}
  For any $\la\in\Yb$, the function $\Mf_\la$ is characterized as the unique element of the algebra
  \begin{equation*}
    \La[\al,\tau,\xi,(1-\xi)^{-1}]=\La\otimes\C[\al,\tau,\xi,(1-\xi)^{-1}]
  \end{equation*}
  such that for any $N=1,2,\ldots$, $\be>0$, and $\xi\in(0,1)$, one has
  \begin{equation*}
    \Mf_\la|_{{}_{\scriptstyle\al=-\be,\;\tau=N\be}}=
    \begin{cases}
      \meix_{\la\mid N,\be,\xi},&\mbox{if $\ell(\la)\le N$},
      \\
      0,&\mbox{if $\ell(\la)>N$}.
    \end{cases}
  \end{equation*} 
\end{proposition}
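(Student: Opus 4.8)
The plan is to read the statement as two assertions --- that the function $\Mf_\la$ of Definition \ref{def:Mf_la} (specialized to the Kingman graph) actually lies in $\La\otimes\C[\al,\tau,\xi,(1-\xi)^{-1}]$ and has the asserted specializations, and that no other element of this algebra shares them --- and to prove each in turn. First I would settle membership: substituting $\Pc_\mu^*=r_\mu^{1/2}m_\mu^*$ and expanding each $m_\mu^*$ in the monomial basis $\{m_\nu\}$ of $\La$, the defining sum of Definition \ref{def:Mf_la} becomes a finite combination of the $m_\nu$ whose coefficients factor as a fixed real constant (absorbing the $r_\mu^{1/2}$ and the relative dimensions $\dim_0(\mu,\la)$) times $\big(\tfrac{\xi}{\xi-1}\big)^{|\la|-|\mu|}\in\C[\xi,(1-\xi)^{-1}]$ times $\prod_{\square\in\la/\mu}\ufunc_{\al\tau}(\square)^2$, which by Theorem \ref{thm:KO_Kingman} is polynomial in $\al,\tau$. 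Hence $\Mf_\la\in\La\otimes\C[\al,\tau,\xi,(1-\xi)^{-1}]$.

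For the specialization in the range $\ell(\la)\le N$ I would simply invoke Proposition \ref{prop:Mf_la_degen}: setting $\al=-\be$, $\tau=N\be$ and passing to $N$ variables gives $\meix_{\la\mid N,\be,\xi}$, and this equality of symmetric polynomials follows from Proposition \ref{prop:Mf_la_degen} because both sides agree at every $\nu\in\Yb(N)$ and a factorial symmetric function in $N$ variables is determined by its values there. For $\ell(\la)>N$ I would show that every summand of $\Mf_\la$ disappears after restriction to $N$ variables. Split the sum over $\mu\subseteq\la$: if $\ell(\mu)>N$, then $m_\mu^*$ (hence $\Pc_\mu^*$) vanishes in $N$ variables, since each factorial monomial involves $\ell(\mu)$ distinct variables to strictly positive powers, so its monomial expansion only contains $m_\nu$ with $\ell(\nu)=\ell(\mu)>N$; if $\ell(\mu)\le N$, then the box $(N+1,1)$ belongs to $\la/\mu$ (it lies in $\la$ because $\la_{N+1}\ge1$, and not in $\mu$ because $\ell(\mu)\le N$), so the coefficient of $\Pc_\mu^*$ carries the factor $\ufunc_{\al\tau}(N+1,1)^2=\tau+N\al$, which is $0$ at $\al=-\be,\tau=N\be$. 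Thus the specialization of $\Mf_\la$ restricts to $0$ in $N$ variables.

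The uniqueness is where I would spend the real effort, though the argument is soft. Suppose $f\in\La\otimes\C[\al,\tau,\xi,(1-\xi)^{-1}]$ has all the prescribed specializations equal to zero, and write $f=\sum_\mu m_\mu\otimes c_\mu$. Restriction to $N$ variables kills the $m_\mu$ with $\ell(\mu)>N$ and keeps the linearly independent family $\{m_\mu|_N\}_{\ell(\mu)\le N}$, so the hypothesis forces $c_\mu(-\be,N\be,\xi,(1-\xi)^{-1})=0$ for every $\mu$ with $\ell(\mu)\le N$, and for all $\be>0$, $\xi\in(0,1)$. Fixing $\mu$ and a value $\xi$, the polynomial $c_\mu(\cdot,\cdot,\xi,\cdot)\in\C[\al,\tau]$ vanishes on each line $\{\tau=-N\al\}$ for all integers $N\ge\ell(\mu)$; as $\be>0$ is Zariski-dense in each such line, it vanishes on infinitely many distinct lines through the origin, hence is divisible by infinitely many pairwise non-associate irreducibles $\tau+N\al$ and must be $0$. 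Since this holds for every $\xi\in(0,1)$ and $c_\mu$ is rational in $\xi$ with poles only at $\xi=1$, we get $c_\mu=0$, whence $f=0$; this identifies $\Mf_\la$ as the unique element with the stated properties.

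The main obstacle I anticipate is purely organizational rather than deep: one must keep rigorously separate the two operations packaged into the phrase ``$\Mf_\la|_{\al=-\be,\,\tau=N\be}$'' --- the specialization of the formal parameters $(\al,\tau)$ along the ray $(-\be,N\be)$ and the projection $\La\to\La_N$ to $N$ variables --- and check that they conspire correctly, producing the vanishing factor $\tau+N\al$ exactly on the terms indexed by $\mu$ of short length when $\ell(\la)>N$, while reproducing the product-of-Meixner formula of Proposition \ref{prop:Mf_la_degen} when $\ell(\la)\le N$. Once this interplay is pinned down, both the existence verification and the line-vanishing argument for uniqueness go through with no further analytic input.
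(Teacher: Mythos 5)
Your proof is correct, and it is worth noting that the paper itself supplies no argument for this proposition: it is stated bare, with only a ``cf.''\ pointer to the Young-graph analogue \cite[Prop.~4.22]{Olshanski2011Meixner}, where the corresponding statement is obtained through the analytic continuation in $N$ of the $N$-particle (``fermionic'') Meixner ensembles. Your argument is therefore a self-contained justification rather than a variant of one in the text, and all three of its components check out. (i) Membership in $\La\otimes\C[\al,\tau,\xi,(1-\xi)^{-1}]$ is immediate from Definition \ref{def:Mf_la} after substituting $\Pc_\mu^*=r_\mu^{1/2}m_\mu^*$, since $\prod_{\square\in\la/\mu}\ufunc_{\al\tau}(\square)^2$ is polynomial in $(\al,\tau)$ by Theorem \ref{thm:KO_Kingman} and the remaining coefficient lies in $\C[\xi,(1-\xi)^{-1}]$. (ii) For $\ell(\la)\le N$ the specialization is exactly Proposition \ref{prop:Mf_la_degen}, correctly upgraded from an equality of values on $\Yb(N)$ to an identity in $\La_N$ by symmetry together with Zariski density of $\Z_{\ge0}^N$; for $\ell(\la)>N$ your case split is the right mechanism: the box $(N+1,1)\in\la/\mu$ contributes the factor $\ufunc_{\al\tau}(N+1,1)^2=\tau+N\al$, which vanishes at $(\al,\tau)=(-\be,N\be)$, for the terms with $\ell(\mu)\le N$, while $m_\mu^*$ restricts to $0$ in $N$ variables when $\ell(\mu)>N$ because its monomial expansion involves only $m_\nu$ with $\ell(\nu)=\ell(\mu)$. (iii) Uniqueness via linear independence of $\{m_\mu|_N\}_{\ell(\mu)\le N}$, followed by the observation that a nonzero element of $\C[\al,\tau]$ cannot be divisible by the infinitely many pairwise non-associate linear forms $\tau+N\al$, $N\ge\ell(\mu)$, and finally that a nonzero element of $\C[\xi,(1-\xi)^{-1}]$ has only finitely many zeros in $(0,1)$, is sound. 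You were also right to flag, and then resolve, the only genuinely delicate point, namely that the notation $\Mf_\la|_{\al=-\be,\,\tau=N\be}$ bundles the parameter specialization with the projection $\La\to\La_N$; the term-by-term analysis in (ii) is precisely what makes the two operations interact correctly.
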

That is, the symmetric functions $\Mf_\la$ for general $(\al,\tau)$ can be viewed as analytic continuations of the Meixner symmetric polynomials $\meix_{\la\mid N,\be,\xi}$ of Proposition \ref{prop:Mf_la_degen} with respect to the dimension $N$ and the parameter $\be$.

\subsubsection{Limit $\xi\to1$ and Laguerre-type functions}

The discussion of \S \ref{ssub:limit_Laguerre} about the limit behavior can be essentially repeated for the Kingman graph. Let $\widetilde{\overline\nabla}_\infty=\overline\nabla_\infty\times\R_{>0}$ be the cone over the boundary of $(\Yb,\tilde\km_0)$. As $\xi\to1$, under the embeddings
\begin{equation*}
  \Yb\ni\la\mapsto \left(\Big(\frac{\la_1}{|\la|},\ldots,\frac{\la_{\ell(\la)}}{|\la|},0,0,\ldots\Big);(1-\xi)|\la|\right)\in \widetilde{\overline\nabla}_\infty,
\end{equation*}
the dynamics $\labf_\xi^{\al\tau\xi}$ converges to a diffusion $X^{\al\tau}(t)$ process in $\widetilde{\overline\nabla}_\infty$. The generator of this diffusion has eigenfunctions which have the form
\begin{equation*}
  \Lf_\la^{\al\tau}=
  \sum_{\mu\subseteq\la}
  (-1)^{|\la|-|\mu|}\frac{\dim_{0}(\mu,\la)}{(|\la|-|\mu|)!}
  \Big(\prod_{\square\in\la/\mu}\ufunc_{\al\tau}(\square)^{2}\Big)
  {r}_\mu^{1/2}m_\mu,\qquad\la\in\Yb.
\end{equation*}
To view these symmetric functions as functions on $\widetilde{\overline\nabla}_\infty$, one must consider the specializations of the algebra of symmetric functions under which (cf. (\ref{Jack_spec}))
\begin{equation*}
  p_1\mapsto r,\qquad
  p_k\mapsto r^k\sum_{i=1}^{\infty}x_i^{k},\qquad k\ge2.
\end{equation*}

In the same way as for the Meixner-type functions on the Kingman graph (Propositions \ref{prop:Mf_la_degen} and \ref{prop:Kingman_degen}), the Laguerre-type functions are analytic continuations of the following ``bosonic'' Laguerre symmetric polynomials in $N$ variables:
\begin{equation*}
  \mathrm{L}_{\la|N,\be}(y_1,\ldots,y_N)=
  {r}_\la^{1/2}
  \sum_{i_1,\dots,i_{\ell(\la)}}
  \mathrm{L}^{\be}_{\la_1}(y_{i_1})\ldots\mathrm{L}^{\be}_{\la_{\ell(\la)}}(y_{i_{\ell(\la)}}).
\end{equation*}
Here $\mathrm{L}^{\be}_n(y)$ are the monic Laguerre polynomials \cite[\S1.11]{Koekoek1996}, and the sum is taken over all pairwise distinct indices $i_1,\ldots,i_{\ell(\la)}$ from 1 to $N$. 

The diffusion process arising for the multidimensional Pascal triangle (see \S\S \ref{sub:kerov_s_operators_for_pascal_triangle}, \ref{sub:birth_and_death_processes_related_to_the_meixner_polynomials}) are direct products of copies of the Laguerre one-dimensional diffusion with generator (\ref{1dim_Laguerre}). The process $X^{\al\tau}$ for $\la=-\be$, $\tau=N\be$ can be viewed as dynamics of order statistics of a diffusion coming from the $N$-dimensional Pascal triangle.

\begin{remark}
  The diffusion process $X^{\al=0,\tau}$ on $\widetilde{\overline\nabla}_\infty$ coincides with the dynamics of ranked atoms of the measure-valued Jirina process (a special case of Dawson-Watanabe superprocesses), see, e.g., \cite{Jirina1964}, \cite{Dawson1991}, \cite{Etheridge:2000fk}. For $0<\al<1$ and $\tau>0$ we thus get an extension of the diffusion $X^{\al=0,\tau}$ on $\widetilde{\overline\nabla}_\infty$. However, it seems that there is no measure-valued process for $\al\ne0$ corresponding to $X^{\al\tau}$ (see also comments after (14) in \cite{Ruggiero2009a}).

  The generator of the process $X^{\al\tau}$ admits a separation of variables as in Remark \ref{rmk:separation_of_var}. This reflects the fact that conditioning the Jirina process to have total mass 1, one arrives at the Fleming-Viot measure-valued diffusion \cite{etheridge1991note}.
\end{remark}


\subsection{Schur graph} 
\label{sub:schur_graph}

The Schur graph $\Sb$ is usually defined as the lattice of all strict partitions $\la=(\la_1>\la_2>\ldots>\la_\ell>0)$ (where $\la\in\Z$) ordered by inclusion. Strict partitions are represented by shifted Young diagrams as in \cite[Ch. I, \S1, Ex. 9]{Macdonald1995}. Two shifted diagrams are connected by an edge iff one is obtained from another by adding a box. Thus defined, Schur graph can be identified with the branching graph of ideals $(J(P),1)$ with the trivial multiplicity function, where $P=((\i,\j)\in\Z_{>0}^{2}\colon \j\ge\i)$. Note that $P\subset\Z_{>0}^{2}$ is not a subposet, so the Schur graph is not reduced to a sublattice of the Young graph. We choose another edge multiplicity function on $\Sb$ which is equivalent to the trivial one:
\begin{equation*}
  \km_{\Sb}(\la,\nu):=\begin{cases}
    1,&\mbox{if $\la\nearrow\nu$ and $\ell(\nu)=\ell(\la)+1$},\\
    \sqrt 2,&\mbox{if $\la\nearrow\nu$ and $\ell(\nu)=\ell(\la)$}.
  \end{cases}
\end{equation*}
The reason is that $\km_{\Sb}$ is an UD-self-dual multiplicity function.

The Kerov's operators and Markov jump dynamics on the Schur graph were studied in detail in the paper \cite{Petrov2010Pfaffian}. We refer to it for the characterization of the Kerov's operators. They depend on one parameter $a$ which is assumed to be real positive for the corresponding measures $M_n^{a}$ to be nonnegative. One has $\zz=\frac a2$. The measures $\{M_n^{a}\}$ on the Schur graph were introduced in \cite{Borodin1997}. We denote the parameter by $a$ instead of $\al$ as in \cite{petrov2009eng}, \cite{Petrov2010}, \cite{Petrov2010Pfaffian} to avoid a notational conflict with \S \ref{sub:kingman_graph_and_poisson_dirichlet}. In \cite{petrov2009eng} a limit behavior of the up/down Markov chains on $\Sb$ was studied, it leads to infinite-dimensional diffusion processes on the boundary of the Schur graph which is $\Omega_+:=\big\{\al\in\R^{\infty}\colon \al_1\ge\al_2\ge\dots\ge0,\ \sum\nolimits_{i=1}^\infty\al_i\le1\big\}$.

By $M_\xi^{a}$ and $\labf_\xi^{a}$ we denote the mixed measure and the Markov jump process on $\Sb$ constructed from the Kerov's operators. The measures $M_\xi^{a}$ is a determinantal point process on $\Z_{\ge0}$ \cite{Petrov2010}, and the dynamics $\labf_\xi^{a}$ is Pfaffian \cite{Petrov2010Pfaffian}. The technique of the present paper allows to construct eigenfunctions for the Markov generator of the dynamics $\labf_\xi^{a}$.

The relative dimension functions on the Schur graph essentially coincide with the factorial Schur's Q-functions $Q_\mu^*$ introduced in \cite{IvanovNewYork3517-3530}. These functions are symmetric (there is no effect of \S \ref{ssub:te-regular}), and they span a proper subalgebra $\Ga\subset\La$ of the algebra of symmetric functions, see \cite[Ch. III, \S8]{Macdonald1995}. Elements of $\Ga$ can be called doubly symmetric functions (the name is taken from \cite{Berele2009}, see also \cite{petrov2009eng}). They can be characterized as follows \cite{Stembridge1985}. A symmetric function $f(x_1,x_2,\ldots)$ is doubly symmetric iff under the substitution $x_i=u$, $x_j=-u$ (where $i<j$, and $u$ is another formal variable) the function $f(x_1,\ldots,u,\ldots,-u,\ldots)$ does not depend on $u$. An equivalent description is that $\Gamma\subset\La$ is the subalgebra generated by the Newton power sums with odd numbers. 

The eigenfunctions of the generator of the jump dynamics $\labf_\xi^a$ are indexed by strict partitions $\la\in\Sb$ and look as
\begin{equation*}
  \Mf_\la^{a\xi}=
  \sum_{\mu\subseteq\la}
  \left(\frac{\xi}{\xi-1}\right)^{|\la|-|\mu|}
  \frac{\dim_{\Sb}(\mu,\la)}{(|\la|-|\mu|)!}2^{-\ell(\mu)/2}
  \Big(\prod_{\square\in\la/\mu}\ufunc_a(\square)\Big)Q_\mu^*.
\end{equation*}
Here $\ufunc_a(\i,\j):=\{\frac12((\j-\i+1)(\j-\i)+a)\}^{\frac12}$. The functions $\{\Mf_\la^{a\xi}\}_{\la\in\Sb}$ form an orthogonal basis in the Hilbert space $\ell^2(\Sb,M_\xi^{{a}})$, and also a linear basis in the algebra $\Ga$ of doubly symmetric functions.

\begin{remark}
  As for the case of the $z$-measures and the Ewens-Pitman's partition structures, for the measures $M_\xi^{a}$ there exist degenerate values of the parameter $a$. These are $a=-N(N+1)$, where $N=1,2,\ldots$. The degenerate measure $M_\xi^{a}$ lives on shifted diagrams which are inside a staircase shape $(N,N-1,\ldots,1,0)$. Thus, one can define the functions $\Mf_\la^{a\xi}$ for these degenerate parameters. It would be interesting to find an expression (similar to \cite[Prop. 4.22]{Olshanski2011Meixner} and Proposition \ref{prop:Kingman_degen}) for these functions $\Mf_\la^{a\xi}$ on the Schur graph in terms of the one-row functions $\Mf_{(k)}^{a\xi}(x)$.
\end{remark}

As $\xi\to1$, the dynamics $\labf_\xi^{a}$ converges to a diffusion process on the cone $\tilde\Omega_+$ over $\Omega_+$. The limiting diffusion also has Pfaffian dynamical correlation functions \cite{Petrov2010Pfaffian}. The situation here is parallel to the discussion of \S \ref{ssub:limit_Laguerre}, and leads to the following doubly symmetric Laguerre-type functions:
\begin{equation*}
  \Lf_\la^{a}=
  \sum_{\mu\subseteq\la}
  \left(-1\right)^{|\la|-|\mu|}
  \frac{\dim_{\Sb}(\mu,\la)}{(|\la|-|\mu|)!}2^{-\ell(\mu)/2}
  \Big(\prod_{\square\in\la/\mu}\ufunc_a(\square)\Big)Q_\mu,\qquad \la\in\Sb.
\end{equation*}
Here $Q_\mu$ are the usual Schur's Q-functions \cite[Ch. III, \S8]{Macdonald1995}. The functions $\{\Lf_\la^a\}_{\la\in\Sb}$ form a basis of the algebra $\Ga$. There also exists a measure on $\tilde \Omega_+$ (a scaling limit of the measures $M_\xi^{a}$) which is an orthogonality measure for the $\Lf_\la^a$'s. 


\subsection{Rim-hook lattices} 
\label{sub:rim_hook_lattices}

Let us briefly discuss the $\slf(2,\C)$ operators for the rim-hook lattices which appeared in \cite{Okounkov2001a} along with the Kerov's operators for the Young graph.

Let us fix a natural number $r\ge1$. A rim-hook of a Young diagram $\la$ is a skew diagram $\la/\mu$ which is connected (the squares of it are connected by common edges) and lies in the rim of $\la$. A diagram $\la$ is called $r$-decomposable if one can consecutively remove rim-hooks of length $r$ ($r$-rim-hooks) from $\la$ and obtain an empty diagram in the end. A diagram $\la$ is said to be an $r$-core if one cannot remove any $r$-rim-hooks from it. The operation of adding/removing an $r$-rim-hook defines a branching of $r$-decomposable diagrams. Denote the branching graph thus arising by $\Yb^{(r)}$. It can be shown (e.g., see \cite{fomin1997rim}) that $\Yb^{(r)}$ is a lattice which is isomorphic to the direct product $\Yb\times \ldots\times \Yb$ ($r$ times). In other words, $\Yb^{(r)}$ is isomorphic to a branching graph of ideals whose underlying poset is $\Z_{>0}^{2}\sqcup \ldots\sqcup\Z_{>0}^{2}$ ($r$ times) and the multiplicity function $\km\equiv1$ is trivial.

\begin{remark}
  The whole set of Young diagrams with the branching defined by removing $r$-rim-hooks (for fixed $r$) is isomorphic to a disjoint union $\bigsqcup_{\text{$r$-cores}}\Yb^{(r)}$. For the study of the Kerov's operators it is enough to consider only one component consisting of $r$-decomposable diagrams.
\end{remark}

Using the isomorphism $\Yb^{(r)}\cong\Yb\times \ldots\times \Yb$ (we say that $\la\in\Yb^{(r)}$ corresponds to an $r$-tuple $(\la^{1},\ldots,\la^{r})$), one can readily characterize all Kerov's operators for the lattice $\Yb^{(r)}$. The Kerov's operators $(\Uf_r^{\mathbf{z}},\Df_r^{\mathbf{z}},\Hf_r^{\mathbf{z}})$ for $\Yb^{(r)}$ depend on $2r$ parameters $\mathbf{z}:=(z_1,z_1',\ldots,z_r,z_r')\in\C^{2r}$ and are defined with the help of Proposition \ref{prop:KO_disjoint} by taking the Kerov's operators for the Young graph with parameters $z_i,z_i'$ at the $i$th copy of $\Yb$ (i.e., acting on $\la^{i}$). On the lattice $\Yb^{(r)}$, the operators $\Uf_r^{\mathbf{z}}$ and $\Df_r^{\mathbf{z}}$ act by adding or removing an $r$-rim-hook, respectively. One can write formulas for their action, but we omit them (see (\ref{KO_rimhook}) below for a particular case). The Kerov's operator $\Hf_r^{\mathbf{z}}$ acts on $\un\la$, $\la\in\Yb^{(r)}$ as (clearly, $r^{-1}|\la|=|\la^{1}|+\ldots+|\la^{r}|$)
\begin{equation*}
  \Hf_r^{\mathbf{z}}\un\la=\left(2r^{-1}{|\la|}+z_1z_1'+\ldots+z_rz_r'\right)\un\la.
\end{equation*}

Now consider the $r$-rim-hook Kerov's operators from \cite[\S3.5]{Okounkov2001a} depending on two parameters $z,z'$:
\begin{equation}\label{KO_rimhook}
  \begin{array}{ll}
    U_r\un\la&\displaystyle=\sum_{\nu\colon\nu=\la+\text{$r$-rim-hook}}
    \Big(z+\frac1{r^2}\sum_{\square=(\i,\j)\in\text{rim-hook $\nu/\la$}}(\j-\i)\Big)\un\nu,\\
    D_r\un\la&\displaystyle=\sum_{\mu\colon\mu=\la-\text{$r$-rim-hook}}
    \Big(z'+\frac1{r^2}\sum_{\square=(\i,\j)\in\text{rim-hook $\la/\mu$}}(\j-\i)\Big)\un\mu,
  \end{array}
\end{equation}
where the row and column coordinates $(\i,\j)$ of a box are defined with respect to the Young diagram $\la\in\Yb^{(r)}$. We have removed the factors $(-1)^{\mathrm{ht}+1}$ which are present in \cite[\S3.5]{Okounkov2001a} because otherwise Proposition \ref{prop:KO_rim} below fails. However, these factors could only change signs in certain formulas, and this sign disappears in some of them (see also \cite[\S3.5]{Okounkov2001a}).

One can establish that
\begin{equation*}
  H_r\un\la=[D_r,U_r]\un\la=\left(2r^{-1}|\la|+rzz'+\frac{r^2-1}{12r}\right)\un\la,\qquad\la\in\Yb^{(r)}. 
\end{equation*}
\begin{proposition}\label{prop:KO_rim}
  The operators (\ref{KO_rimhook}) coincide (up to a gauge transformation of \S \ref{sub:definition_KO}) with the Kerov's operators $\Uf_r^{\mathbf{z}}$ and $\Df_r^{\mathbf{z}}$ constructed using the isomorphism $\Yb^{(r)}\cong\Yb\times \ldots\times \Yb$, where the parameters $\mathbf{z}$ and $z,z'$ are related as
  \begin{equation*}
    z_i=z+\frac{r+1-2i}{2r},\qquad
    z_i'=z'+\frac{r+1-2i}{2r},\qquad
    i=1,\ldots,r.
  \end{equation*}
\end{proposition}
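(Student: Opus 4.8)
The plan is to compare the two families of operators coefficient by coefficient after transporting everything through the isomorphism $\Yb^{(r)}\cong\Yb\times\dots\times\Yb$. Since $\Uf_r^{\mathbf z}$ and $\Df_r^{\mathbf z}$ are, by construction (Proposition \ref{prop:KO_disjoint} applied to the Young-graph operators of (\ref{UDYoung_Ok})), defined componentwise, it suffices to understand how one elementary move in $\Yb^{(r)}$ --- adding or deleting a single $r$-rim-hook --- is seen in the $r$-quotient. First I would recall the abacus (beta-number) description of the $r$-quotient with the fixed $r$-core: a partition $\la\in\Yb^{(r)}$ is encoded by beads on $r$ runners, its quotient $(\la^1,\dots,\la^r)$ is read off runner by runner, and adding an $r$-rim-hook to $\la$ amounts to moving a single bead one step along exactly one runner $i$, i.e.\ to adding exactly one box to the component $\la^i$ (see \cite{fomin1997rim}). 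This gives the required bijection between $r$-rim-hooks $\nu/\la$ and pairs (component index $i$, added box in $\la^i$), and identifies the ``disjoint-union'' action of $\Uf_r^{\mathbf z},\Df_r^{\mathbf z}$ with the sum over $r$-rim-hooks in (\ref{KO_rimhook}).

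The heart of the matter is a content identity. The plan is to use the classical fact that a border strip of length $r$ occupies boxes whose contents $\j-\i$ are $r$ consecutive integers, say $\{a,a+1,\dots,a+r-1\}$, whence
\[
  \frac1{r^2}\sum_{\square=(\i,\j)\in\nu/\la}(\j-\i)
  =\frac1{r^2}\Big(ra+\binom r2\Big)=\frac ar+\frac{r-1}{2r}.
\]
Then I would establish --- this is the step I expect to be the real obstacle, since it is exactly what pins the labeling convention of the runners against the normalization of the parameters in the statement --- that if the move corresponds to adding a box of content $c'$ to the $i$th quotient component, then the bottom content of the strip satisfies $a=rc'-i+1$, so that $a+i-1$ is divisible by $r$ and $c'=(a+i-1)/r$. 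Substituting this into the display gives
\[
  z+\frac1{r^2}\sum_{\square\in\nu/\la}(\j-\i)=z+\frac{r+1-2i}{2r}+c'=z_i+c',
\]
which is precisely the coefficient with which the Young-graph up operator of (\ref{UDYoung_Ok}) with parameter $z_i$ adds the box of content $c'$ to $\la^i$; replacing $z$ by $z'$ handles $D_r$ against $\Df_r^{\mathbf z}$.

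Finally I would account for the phrase ``up to a gauge transformation.'' The operators (\ref{KO_rimhook}) are written in the asymmetric $u,d$ form (as in (\ref{UDYoung_Ok})), whereas $\Uf_r^{\mathbf z},\Df_r^{\mathbf z}$ are in the symmetric gauge $\ufunc=\sqrt{ud}$; by the discussion in \S\ref{sub:definition_KO} these are identified, and the computation above shows that both representations carry the same functions $u(\square)=z_i+c'$, $d(\square)=z_i'+c'$ on each runner, hence the same $\ufunc(\square)^2=ud$. As a consistency check I would verify that the diagonal actions agree: writing $z_i=z+\delta_i$, $z_i'=z'+\delta_i$ with $\delta_i=\frac{r+1-2i}{2r}$, one has $\sum_{i=1}^r\delta_i=0$ and $\sum_{i=1}^r\delta_i^2=\frac1{4r^2}\sum_{i=1}^r(r+1-2i)^2=\frac{r^2-1}{12r}$, so that $\sum_{i=1}^r z_iz_i'=rzz'+\frac{r^2-1}{12r}$, matching the eigenvalue of $H_r=[D_r,U_r]$ recorded just before the statement. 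This confirms the parameter dictionary and completes the identification.
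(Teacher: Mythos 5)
The paper states this proposition without any proof at all --- there is no \verb|proof| environment after it, only the preceding remark that the sign factors $(-1)^{\mathrm{ht}+1}$ of Okounkov's original operators have been dropped --- so there is nothing of the author's to compare your argument against; I can only judge it on its own terms, and it is correct and complete in outline. Your reduction to a single content identity is the right move. To confirm the one step you flagged: in the particle encoding $\la\mapsto\{\la_j-j\}_{j\ge1}$, adding an $r$-rim-hook moves one particle from a position $p$ to $p+r$, the contents of the strip are the consecutive integers $p+1,\dots,p+r$ (so $a=p+1$), and the move lives entirely on the runner $p\bmod r$, adding one box to a single quotient component. Writing $p=rm+c$ with $c\in\{0,\dots,r-1\}$, the empty $r$-core forces each runner to carry the vacuum configuration $\Z_{<0}$, the added box in that component has content $c'=m+1$, and hence $a=rc'-r+c+1$; labelling the components by $i=r-c$ gives exactly your $a=rc'-i+1$, after which
\[
z+\frac1{r^2}\Bigl(ra+\binom r2\Bigr)=z+c'+\frac{r+1-2i}{2r}=z_i+c'
\]
as you computed. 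Any other labelling of the runners merely permutes the indices, which is absorbed into the (unspecified) labelling of the isomorphism $\Yb^{(r)}\cong\Yb\times\dots\times\Yb$; and since the same shift is applied to $z$ and $z'$ for a given runner (both coefficients use the content sum of the same strip), the pairing $(z_i,z_i')$ is forced. Your gauge remark (identical $\ufunc^2=(z_i+c')(z_i'+c')$ in both presentations) and the consistency check $\sum_i z_iz_i'=rzz'+\frac{r^2-1}{12r}$ against the displayed eigenvalue of $H_r$ are both correct, so the argument goes through as planned.
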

The Kerov's operators on $\Yb^{(r)}$ depending on a parameter $\mathbf{z}$ define mixed measures $M_\xi^{\mathbf{z}}$ and jump dynamics $\labf_\xi^{\mathbf{z}}$ on $\Yb^{(r)}$. At each jump the process $\labf_\xi^{\mathbf{z}}$ adds or removes an $r$-rim-hook. This process can be viewed as a direct product of $r$ processes on the Young graph corresponding to the $z$-measures (see \S \ref{ssub:Stochastic dynamics associated with the z-measures} and \S\ref{sub:measures_with_jack_parameter}). All properties of $\labf_\xi^{\mathbf{z}}$ can be obtained using that fact. For example, the dynamical correlation functions of $\labf_\xi^{\mathbf{z}}$ are determinantal, and the correlation kernel has a block form as in \cite[\S3.5]{Okounkov2001a}. The eigenfunctions of the Markov generator of the jump dynamics look as 
\begin{equation*}
  \Mf_\la^{\mathbf{z},\xi}(\nu)=
  \Mf_{\la^{1}}^{z_1z_1'\xi}(\nu^{1})\ldots
  \Mf_{\la^{r}}^{z_rz_r'\xi}(\nu^{r}),\qquad 
  \la,\nu\in\Yb^{(r)},
\end{equation*}
where $\Mf^{zz'\xi}_{mu}$, $\mu\in\Yb$, are the Meixner symmetric functions of \cite{Olshanski2011Meixner} (see also \S \ref{sub:measures_with_jack_parameter}).

\begin{remark}
  There exists a similar lattice of shifted rim-hook shapes, and it also can be decomposed as a direct product of several copies of the Young and the Schur graphs \cite[Thm. 3.7]{fomin1997rim}. This implies that our constructions also work for those branching graphs.
\end{remark}

\begin{remark}[Slow graphs]
  In \cite{VershikNikitin2011} a graph describing branching of representations of symmetric inverse semigroups was described. This is the so-called slow Young graph which is obtained from the original Young graph by replacing the underlying poset $P=\Z_{>0}^{2}$ by $P\sqcup \Z_{>0}$. (This procedure can be applied to any branching graph, see \cite{VershikNikitin2011}.) For the graph $J(\Z_{>0}^{2}\sqcup \Z_{>0})$ one can readily describe the Kerov's operators and the corresponding coherent systems of measures. They depend on three parameters: two of them come from the $z$-measures on the Young graph, and there is one additional parameter corresponding to the Kerov's operators on the chain $J(\Z_{>0})$ (see Proposition \ref{prop:KO_disjoint}). It might be of interest to find an explicit representation-theoretic construction of the distinguished coherent systems on the slow Young graph in the spirit of \cite{Kerov1993}, \cite{Kerov2004}.
\end{remark}


\subsection{Rooted unlabeled trees} 
\label{sub:rooted_trees}

Fulman \cite{Fulman2009Trees} introduced and studied down/up Markov chains on unlabeled rooted trees. The branching here does not define a graph of ideals. However, the model of \cite{Fulman2009Trees} possesses a $\slf(2,\C)$ structure similar to that of the Kerov's operators.

Let $\Tc_n$ denote the set of all unlabeled rooted trees with $n$ vertices. For example, these are all the trees with $4$ vertices (picture taken from \cite{Fulman2009Trees}):

\begin{figure}[h]
\setlength{\unitlength}{.14in}
\begin{picture}(10,0)
\put(0,0){\circle*{.6}}
\put(0,0){\line(0,-1){1}}
\put(0,-1){\circle*{.4}}
\put(0,-1){\line(0,-1){1}}
\put(0,-2){\circle*{.4}}
\put(0,-2){\line(0,-1){1}}
\put(0,-3){\circle*{.4}}
\put(4,0){\circle*{.6}}
\put(4,0){\line(0,-1){1}}
\put(4,-1){\circle*{.4}}
\put(4,-1){\line(1,-1){1}}
\put(4,-1){\line(-1,-1){1}}
\put(5,-2){\circle*{.4}}
\put(3,-2){\circle*{.4}}
\put(8,0){\circle*{.6}}
\put(8,0){\line(1,-1){1}}
\put(8,0){\line(-1,-1){1}}
\put(9,-1){\circle*{.4}}
\put(7,-1){\circle*{.4}}
\put(9,-1){\line(0,-1){1}}
\put(9,-2){\circle*{.4}}
\put(12,0){\circle*{.6}}
\put(12,0){\line(-1,-1){1}}
\put(12,0){\line(0,-1){1}}
\put(12,0){\line(1,-1){1}}
\put(11,-1){\circle*{.4}}
\put(12,-1){\circle*{.4}}
\put(13,-1){\circle*{.4}}
\end{picture}
\end{figure}

\vspace{0.8cm}

It is more convenient for us to denote $\Tb_n:=\Tc_{n+1}$, $n=0,1,\ldots$, i.e., to count the number of edges $|t|$ of a tree $t$. The union $\Tb=\bigsqcup_{n=0}^{\infty}\Tb_n$ can be equipped with a structure of a branching graph. If a tree $t\in\Tb_{n}$ is obtained from $t'\in\Tb_{n+1}$ by removing a single terminal vertex together with an edge, we say that $t$ and $t'$ are connected in $\Tb$ and write as usual, $t\nearrow t'$. Now we describe multiplicities of edges in $\Tb$. Let for $t\nearrow t'$ denote
\begin{align*}
  n(t,t') &:= \#\{ 
  \mbox{vertices of $t$ to which a new edge can be added to get $t'$}\},\\
  m(t,t') &:= \#\{\mbox{edges of $t'$ which when removed give $t$}\}. 
\end{align*}
Set for $t\nearrow t'$, $\km_{\Tb}(t,t'):=\sqrt {n(t,t')m(t,t')}$. We consider the branching graph $(\Tb,\km_{\Tb})$. Though it is not a graph of ideals, it turns out that there are operators on $\Tb$ which span an $\slf(2,\C)$-module similarly to the Kerov's operators. The difference is that there is only a single triplet of such operators on $\Tb$ and there is no dependence on any parameters.

By \cite[Prop. 2.2]{Hoffman2003TreesHopf}, the multiplicity function $\km_\Tb$ is UD-self-dual. Define the following operators in $\ellf^2(\Tb)$ with the standard basis $\{\un t\}_{t\in\Tb}$:
\begin{align*}
  \Uf_{\Tb}\un t:=\sum_{t'\colon t'\searrow t}\sqrt2 \km_{\Tb}(t,t')\un{t'},
  \qquad
  \Df_{\Tb}\un t:=\sum_{t''\colon t''\nearrow t}\sqrt2 \km_{\Tb}(t'',t)\un{t''}.
\end{align*}
Define $\Hf_{\Tb}:=[\Df_{\Tb},\Uf_{\Tb}]$. We have (see \cite[Prop. 2.2]{Hoffman2003TreesHopf} and \cite[(8)]{Fulman2009Trees}):
\begin{equation*}
  \Hf_{\Tb}\un t=(2|t|+2)\un t,\qquad t\in\Tb.
\end{equation*}
Therefore, the operators $(\Uf_{\Tb},\Df_{\Tb},\Hf_{\Tb})$ span an $\slf(2,\C)$-module in $\ellf^2(\Tb)$. The parameter $\zz=(\Hf_{\Tb}\un\varnothing,\un\varnothing)=2$ (where $\varnothing$ denotes the tree with only one vertex and no edges). The measures considered in \cite{Fulman2009Trees} arise from our construction of \S \ref{sec:coherent_systems_of_measures}:
\begin{equation*}
  M_n(t)=\frac1{(2)_nn!}(\Uf_{\Tb}^{n}\un\varnothing,\un t)(\Df_{\Tb}^{n}\un t,\un\varnothing),\qquad t\in\Tb_n.
\end{equation*}
In the notation of \cite[Def. 1]{Fulman2009Trees}, $M_n=\pi_{n+1}$.

The Markov chains studied in \cite{Fulman2009Trees} are the down/up chains on $\Tb_n$. The spectral structure of the closely related up/down Markov chains on $\Tb_n$ can be read from \S \ref{sec:up_down_markov_chains} (see Proposition \ref{prop:T_n_spectrum}).

It is possible to define the mixture $M_\xi$ of the measures $M_n$ and introduce a Markov jump dynamics on rooted trees. Since $\zz=2$, the mixing distribution (depending on a new parameter $\xi\in(0,1)$) looks as
\begin{equation*}
  (1-\xi)^{2}(n+1){\xi}^{n},\qquad n=0,1,\ldots. 
\end{equation*}
The Markov jump dynamics on all unlabeled rooted trees is defined as in \S \ref{sec:markov_jump_dynamics}. The Markov generator of this dynamics has eigenfunctions which are explicitly described (Definition \ref{def:Mf_la}). These eigenfunctions form an orthogonal basis in $\ell^2(\Tb,M_\xi)$.


\subsection{A remark on plane partitions} 
\label{sub:plane_partitions}

Let us mention one important example of a branching graph of ideals to which our technique is not applicable. This is the graph of 3D Young diagrams $(J(\Z_{>0}^{3}),1)$ which are also called plane partitions (e.g., see \cite{Stanley1999}, \cite{okounkov2003correlation}). The branching here corresponds to adding $1\times 1\times 1$ boxes to 3D diagrams. There are no Kerov's operators on it this graph: the equations (\ref{q_condition})--(\ref{q_condition_0}) on $\ufunc$ have an empty set of solutions. There is also no Heisenberg algebra structure (\S \ref{sec:heisenberg_operators}) on this graph.  

However, there exists a related higher-dimensional structure. Namely, the set of all 3D Young diagrams contained inside a box of dimensions $a\times b\times c$ parametrizes a basis of an irreducible representation of the Lie algebra $\slf(a+b,\C)$ (Irreducible finite-dimensional $\slf(2,\C)$-modules in \S \ref{sub:lowest_weight_slf_2_c_modules} arise in a particular case $a=b=1$.) About this structure see \cite{Kuperberg1994SCPP}, and also \cite{Proctor1984Bruhat}, \cite{Stembridge1994minuscule}. It could be of interest to see whether this structure provides any probabilistic models like the measures and Markov dynamics considered in the present paper.



\providecommand{\bysame}{\leavevmode\hbox to3em{\hrulefill}\thinspace}
\providecommand{\MR}{\relax\ifhmode\unskip\space\fi MR }
\providecommand{\MRhref}[2]{%
  \href{http://www.ams.org/mathscinet-getitem?mr=#1}{#2}
}
\providecommand{\href}[2]{#2}


\begin{thebibliography}{Oko01b}

\bibitem[AESW51]{AESW51}
M.~Aissen, A.~Edrei, I.~J. Schoenberg, and A.~Whitney, \emph{On the generating
  functions of totally positive sequences}, Proc. Nat. Acad. Sci. U. S. A.
  \textbf{37} (1951), 303--307.

\bibitem[Akh65]{Akhiezer1965Moment}
N.~I. Akhiezer, \emph{The classical moment problem and some related questions
  in analysis}, Translated by N. Kemmer, Hafner Publishing Co., New York, 1965.

\bibitem[BDJ99]{baik1999distribution}
J.~Baik, P.~Deift, and K.~Johansson, \emph{{On the distribution of the length
  of the longest increasing subsequence of random permutations}}, Journal of
  the American Mathematical Society \textbf{12} (1999), no.~4, 1119--1178,
  arXiv:math/9810105 [math.CO].

\bibitem[BO98]{Borodin1998}
A.~Borodin and G.~Olshanski, \emph{Point processes and the infinite symmetric
  group}, Math. Res. Lett. \textbf{5} (1998), 799--816, arXiv:math/9810015
  [math.RT].

\bibitem[BO00a]{Borodin2000a}
\bysame, \emph{Distributions on partitions, point processes, and the
  hypergeometric kernel}, Commun. Math. Phys. \textbf{211} (2000), no.~2,
  335--358, arXiv:math/9904010 [math.RT].

\bibitem[BO00b]{Borodin2000}
\bysame, \emph{Harmonic functions on multiplicative graphs and interpolation
  polynomials}, Electronic Journal of Combinatorics \textbf{7} (2000), R28,
  arXiv:math/9912124 [math.CO].

\bibitem[BO01]{Borodin1999RSK}
\bysame, \emph{{Z-Measures on partitions, Robinson-Schensted-Knuth
  correspondence, and $\beta=2$ random matrix ensembles}}, Random matrix models
  and their applications (P.M.Bleher and R.A.Its, eds.), Math. Sci. Res. Inst.
  Publ, vol.~40, Cambridge Univ. Press, Cambridge, 2001, arXiv:math/9905189
  [math.CO], pp.~71--94.

\bibitem[BO05a]{Borodin2005}
\bysame, \emph{{Random partitions and the Gamma kernel}}, Adv. Math.
  \textbf{194} (2005), no.~1, 141--202, arXiv:math-ph/0305043.

\bibitem[BO05b]{Borodin2005b}
\bysame, \emph{Z-measures on partitions and their scaling limits}, European J.
  Combin. \textbf{26} (2005), no.~6, 795--834, arXiv:math-ph/0210048.

\bibitem[BO06a]{Borodin2006}
\bysame, \emph{Markov processes on partitions}, Probab. Theory Related Fields
  \textbf{135} (2006), no.~1, 84--152, arXiv:math-ph/0409075.

\bibitem[BO06b]{borodin2006meixner}
\bysame, \emph{{Meixner polynomials and random partitions}}, Moscow
  Mathematical Journal \textbf{6} (2006), no.~4, 629--655, arXiv:math/0609806
  [math.PR].

\bibitem[BO06c]{borodin2006stochastic}
\bysame, \emph{{Stochastic dynamics related to Plancherel measure on
  partitions}}, Representation Theory, Dynamical Systems, and Asymptotic
  Combinatorics (V.~Kaimanovich and A.~Lodkin, eds.), 2, vol. 217, Transl. AMS,
  2006, pp.~9--22, arXiv:math--ph/0402064.

\bibitem[BO09]{Borodin2007}
\bysame, \emph{Infinite-dimensional diffusions as limits of random walks on
  partitions}, Prob. Theor. Rel. Fields \textbf{144} (2009), no.~1, 281--318,
  arXiv:0706.1034 [math.PR].

\bibitem[BO11a]{BorodinOlsh2011Prep}
\bysame, \emph{paper in preparation}.

\bibitem[BO11b]{BorodinOlsh2011Bouquet}
\bysame, \emph{{The Young bouquet and its boundary}}, arXiv:1110.4458
  [math.RT].

\bibitem[BOO00]{Borodin2000b}
A.~Borodin, A.~Okounkov, and G.~Olshanski, \emph{{Asymptotics of Plancherel
  measures for symmetric groups}}, J. Amer. Math. Soc. \textbf{13} (2000),
  no.~3, 481--515, arXiv:math/9905032 [math.CO].

\bibitem[Bor99]{Borodin1997}
A.~Borodin, \emph{Multiplicative central measures on the {S}chur graph}, Jour.
  Math. Sci. (New York) \textbf{96} (1999), no.~5, 3472--3477, in Russian: Zap.
  Nauchn. Sem. POMI {\bf{}240\/} (1997), 44--52, 290--291.

\bibitem[Bor01]{borodin2000harmonic}
A.M. Borodin, \emph{{Harmonic analysis on the infinite symmetric group, and the
  Whittaker kernel}}, Saint-Petersburg Math. J. \textbf{12} (2001), no.~5,
  733--759.

\bibitem[Boy83]{Boyer1983}
R.~Boyer, \emph{Infinite traces of {AF-algebras} and characters of
  {$U(\infty)$}}, J. Operator Theory \textbf{9} (1983), 205--236.

\bibitem[BT09]{Berele2009}
A.~Berele and B.E. Tenner, \emph{Doubly symmetric functions}, arXiv:0903.5306
  [math.CO].

\bibitem[Daw93]{Dawson1991}
D.~Dawson, \emph{Measure-valued {M}arkov processes}, \'{E}cole d'\'{E}t{\'e} de
  {P}robabilit{\'e}s de {S}aint-{F}lour {XXI}---1991, Lecture Notes in Math.,
  vol. 1541, Springer, Berlin, 1993, pp.~1--260.

\bibitem[DH11]{Desrosiers2011Laguerre}
P.~Desrosiers and M.~Hallnas, \emph{{Hermite and Laguerre symmetric functions
  associated with operators of Calogero-Moser-Sutherland type}},
  arXiv:1103.4593 [math.QA].

\bibitem[Edr52]{Edrei1952}
A.~Edrei, \emph{On the generating functions of totally positive sequences.
  {II}}, J. Analyse Math. \textbf{2} (1952), 104--109.

\bibitem[Eie83]{Eie83}
B.~Eie, \emph{The generalized {B}essel process corresponding to an
  {O}rnstein-{U}hlenbeck process}, Scand. J. Statist. \textbf{10} (1983),
  no.~3, 247--250.

\bibitem[EK81]{Ethier1981}
S.N. Ethier and T.G. Kurtz, \emph{The {I}nfinitely-{M}any-{N}eutral-{A}lleles
  {D}iffusion {M}odel}, Advances in Applied Probability \textbf{13} (1981),
  no.~3, 429--452.

\bibitem[EK86]{Ethier1986}
\bysame, \emph{Markov processes: {C}haracterization and convergence},
  Wiley-Interscience, New York, 1986.

\bibitem[EM91]{etheridge1991note}
A.~Etheridge and P.~March, \emph{A note on superprocesses}, Probab. Theory
  Related Fields \textbf{89} (1991), no.~2, 141--147.

\bibitem[Erd53]{Erdelyi1953}
A.~Erd{\'e}lyi (ed.), \emph{{Higher transcendental functions}}, McGraw--Hill,
  1953.

\bibitem[Eth00]{Etheridge:2000fk}
A.~Etheridge, \emph{An introduction to superprocesses}, University Lecture
  Series, vol.~20, American Mathematical Society, Providence, RI, 2000.

\bibitem[Ewe79]{Ewens1979}
W.~J. Ewens, \emph{{M}athematical {P}opulation {G}enetics}, Springer-Verlag,
  Berlin, 1979.

\bibitem[Fen10]{Feng2010book}
S.~Feng, \emph{{The Poisson-Dirichlet distributions and related topics: Models
  and asymptotic behaviours}}, Springer, 2010.

\bibitem[Fom79]{fomin1979thesis}
S.~Fomin, \emph{Two-dimensional growth in dedekind lattices}, Master's thesis,
  Leningrad State University, 1979.

\bibitem[Fom94]{fomin1994duality}
\bysame, \emph{{Duality of graded graphs}}, Journal of Algebraic Combinatorics
  \textbf{3} (1994), no.~4, 357--404.

\bibitem[FS98]{fomin1997rim}
S.~Fomin and D.~Stanton, \emph{{Rim hook lattices}}, St. Petersburg
  Mathematical Journal \textbf{9} (1998), no.~5, 1007--1016, Translated from
  Algebra i Analiz, {\bf{}9\/} (1997), no. 5, 140--150.

\bibitem[Ful05]{Fulman2005}
J.~Fulman, \emph{Stein's method and {P}lancherel measure of the symmetric
  group}, Trans. Amer. Math. Soc. \textbf{357} (2005), no.~2, 555--570,
  arXiv:math/0305423 [math.RT].

\bibitem[Ful09a]{Fulman2007}
\bysame, \emph{Commutation relations and {M}arkov chains}, Prob. Theory Rel.
  Fields \textbf{144} (2009), no.~1, 99--136, arXiv:0712.1375 [math.PR].

\bibitem[Ful09b]{Fulman2009Trees}
\bysame, \emph{Mixing time for a random walk on rooted trees}, Electronic
  Journal of Combinatorics \textbf{16} (2009), R139, arXiv:0908.1141 [math.CO].

\bibitem[Hof03]{Hoffman2003TreesHopf}
M.E. Hoffman, \emph{Combinatorics of rooted trees and {H}opf algebras}, Trans.
  Amer. Math. Soc. \textbf{355} (2003), no.~9, 3795--3811 (electronic),
  arXiv:math/0201253 [math.CO].

\bibitem[Iva99]{IvanovNewYork3517-3530}
V.~Ivanov, \emph{The {D}imension of {S}kew {S}hifted {Y}oung {D}iagrams, and
  {P}rojective {C}haracters of the {I}nfinite {S}ymmetric {G}roup}, Jour. Math.
  Sci. (New York) \textbf{96} (1999), no.~5, 3517--3530, in Russian: Zap.
  Nauchn. Sem. POMI {\bf{}240\/} (1997), 115-135, arXiv:math/0303169 [math.CO].

\bibitem[Jir64]{Jirina1964}
M.~Jirina, \emph{Branching processes with measure-valued states}, Trans. Third
  Prague Conf. on Inf. Th., 1964, pp.~333--357.

\bibitem[Ker89]{Kerov1989}
S.~Kerov, \emph{{C}ombinatorial examples in the theory of {AF}-algebras},
  Zapiski Nauchn. Semin. LOMI \textbf{172} (1989), 55--67, English translation:
  J. Soviet Math., {\bf{}59\/} (1992), 1063-1071.

\bibitem[Ker00]{Kerov2000}
\bysame, \emph{Anisotropic {Y}oung diagrams and {J}ack symmetric functions},
  Functional Analysis and Its Applications \textbf{34} (2000), no.~1, 41--51,
  arXiv:math/9712267 [math.CO].

\bibitem[Ker03]{Kerov-book}
S.~Kerov, \emph{Asymptotic representation theory of the symmetric group and its
  applications in analysis}, vol. 219, AMS, Translations of Mathematical
  Monographs, 2003.

\bibitem[KM57]{KMG57BDClassif}
S.~Karlin and J.~McGregor, \emph{The classification of birth and death
  processes}, Trans. Amer. Math. Soc. \textbf{86} (1957), 366--400.

\bibitem[KM58]{KMG58Linear}
\bysame, \emph{Linear growth, birth and death processes}, J. Math. Mech.
  \textbf{7} (1958), 643--662.

\bibitem[K{\"o}n05]{Konig2005}
W.~K{\"o}nig, \emph{{Orthogonal polynomial ensembles in probability theory}},
  Probab. Surv. \textbf{2} (2005), 385--447, arXiv:math/0403090 [math.PR].

\bibitem[Koo82]{Koornwinder1982Krawtchouk}
T.H. Koornwinder, \emph{Krawtchouk polynomials, a unification of two different
  group theoretic interpretations}, SIAM J. Math. Anal. \textbf{13} (1982),
  no.~6, 1011--1023.

\bibitem[KOO98]{Kerov1998}
S.~Kerov, A.~Okounkov, and G.~Olshanski, \emph{{T}he boundary of {Y}oung graph
  with {J}ack edge multiplicities}, Intern. Math. Research Notices \textbf{4}
  (1998), 173--199, arXiv:q-alg/9703037.

\bibitem[KOV93]{Kerov1993}
S.~Kerov, G.~Olshanski, and A.~Vershik, \emph{Harmonic analysis on the infinite
  symmetric group. {A} deformation of the regular representation}, Comptes
  Rendus Acad. Sci. Paris Ser. I \textbf{316} (1993), 773--778.

\bibitem[KOV04]{Kerov2004}
\bysame, \emph{Harmonic analysis on the infinite symmetric group}, Invent.
  Math. \textbf{158} (2004), no.~3, 551--642, arXiv:math/0312270 [math.RT].

\bibitem[KS96]{Koekoek1996}
R.~Koekoek and R.F. Swarttouw, \emph{{The Askey-scheme of hypergeometric
  orthogonal polynomials and its q-analogue}}, Tech. report, Delft University
  of Technology and Free University of Amsterdam, 1996.

\bibitem[Kup94]{Kuperberg1994SCPP}
G.~Kuperberg, \emph{{Self-complementary plane partitions by Proctor's minuscule
  method}}, European J. Combin. \textbf{15} (1994), no.~6, 545--553,
  arXiv:math/9411239 [math.CO].

\bibitem[Mac95]{Macdonald1995}
I.G. Macdonald, \emph{Symmetric functions and {H}all polynomials}, 2nd ed.,
  Oxford University Press, 1995.

\bibitem[Mat05]{Matsumoto2005}
S.~Matsumoto, \emph{{Correlation functions of the shifted Schur measure}}, J.
  Math. Soc. Japan, vol. \textbf{57} (2005), no.~3, 619--637,
  arXiv:math/0312373 [math.CO].

\bibitem[Mat08]{matsumoto2008jack}
\bysame, \emph{{Jack deformations of Plancherel measures and traceless Gaussian
  random matrices}}, Electronic Journal of Combinatorics \textbf{15} (2008),
  no.~R149, 1, arXiv:0810.5619 [math.CO].

\bibitem[Nel59]{nelson1959analytic}
E.~Nelson, \emph{{Analytic vectors}}, Ann. Math. \textbf{2} (1959), no.~70,
  572--615.

\bibitem[Oko96]{Okounkov1996quantumImm}
A.~Okounkov, \emph{Quantum immanants and higher {C}apelli identities},
  Transform. Groups \textbf{1} (1996), no.~1-2, 99--126, arXiv:q-alg/9602028.

\bibitem[Oko00]{okounkov2000random}
\bysame, \emph{{Random matrices and random permutations}}, International
  Mathematics Research Notices \textbf{2000} (2000), no.~20, 1043--1095,
  arXiv:math/9903176 [math.CO].

\bibitem[Oko01a]{okounkov2001infinite}
\bysame, \emph{{Infinite wedge and random partitions}}, Selecta Mathematica,
  New Series \textbf{7} (2001), no.~1, 57--81, arXiv:math/9907127 [math.RT].

\bibitem[Oko01b]{Okounkov2001a}
\bysame, \emph{{S}{L}(2) and z-measures}, Random matrix models and their
  applications (P.~M. Bleher and A.~R. Its, eds.), Mathematical Sciences
  Research Institute Publications, vol. {\bf{}40\/}, pp.~407--420, Cambridge
  Univ. Press, 2001, arXiv:math/0002135 [math.RT].

\bibitem[Oko02]{Okounkov2002}
\bysame, \emph{Symmetric functions and random partitions}, Symmetric functions
  2001: Surveys of Developments and Perspectives (S.~Fomin, ed.), Kluwer
  Academic Publishers, 2002, arXiv:math/0309074 [math.CO].

\bibitem[Ols08]{Olshanski-fockone}
G.~Olshanski, \emph{{Fock Space and Time-dependent Determinantal Point
  Processes}}, unpublished work, 2008.

\bibitem[Ols10a]{Olshanski2009}
\bysame, \emph{Anisotropic {Y}oung diagrams and infinite-dimensional diffusion
  processes with the {J}ack parameter}, International Mathematics Research
  Notices \textbf{2010} (2010), no.~6, 1102--1166, arXiv:0902.3395 [math.PR].

\bibitem[Ols10b]{Olshanski2010LaguerreMeixner}
\bysame, \emph{Laguerre and {M}eixner symmetric functions, and
  infinite-dimensional diffusion processes}, Zap. Nauchn. Sem. S.-Peterburg.
  Otdel. Mat. Inst. Steklov. (POMI) \textbf{378} (2010), no.~Teoriya
  Predstavlenii, Dinamicheskie Sistemy, Kombinatornye Metody. XVIII, 81--110,
  230, arXiv:1009.2037 [math.CO].

\bibitem[Ols11]{Olshanski2011Meixner}
\bysame, \emph{{Laguerre and Meixner orthogonal bases in the algebra of
  symmetric functions}}, arXiv:1103.5848 [math.CO].

\bibitem[OO97a]{OkounkovOlshanskiJack1996}
A.~Okounkov and G.~Olshanski, \emph{Shifted {J}ack polynomials, binomial
  formula, and applications}, Math. Res. Lett. \textbf{4} (1997), no.~1,
  69--78, arXiv:q-alg/9608020.

\bibitem[OO97b]{OkounkovOlshanski1996ShiftSchur}
\bysame, \emph{Shifted {S}chur functions}, Algebra i Analiz \textbf{9} (1997),
  no.~2, 73--146, translation in St. Petersburg Math. J. {\bf9} (1998), no. 2,
  239---300, arXiv:q-alg/9605042.

\bibitem[OO98]{OkOl1998}
\bysame, \emph{{Asymptotics of Jack polynomials as the number of variables goes
  to infinity }}, Int. Math. Res. Notices \textbf{1998} (1998), no.~13,
  641--682, arXiv:q-alg/9709011.

\bibitem[OR03]{okounkov2003correlation}
A.~Okounkov and N.~Reshetikhin, \emph{{Correlation function of Schur process
  with application to local geometry of a random 3-dimensional Young diagram}},
  Journal of the American Mathematical Society \textbf{16} (2003), no.~3,
  581--603, arXiv:math/0107056 [math.CO].

\bibitem[ORV03]{OlshRegVer2003}
G.~Olshanski, A.~Regev, and A.~Vershik, \emph{{Frobenius--Schur functions}},
  {Studies in Memory of Issai Schur} (A.~Joseph, A.~Melnikov, and
  R.~Rentschler, eds.), Progress in Mathematics, vol. 210, Birkhauser, 2003,
  arXiv:math/0110077 [math.CO], pp.~251--300.

\bibitem[Pet09]{Petrov2007}
L.~Petrov, \emph{A two-parameter family of infinite-dimensional diffusions in
  the {K}ingman simplex}, Functional Analysis and Its Applications \textbf{43}
  (2009), no.~4, 279--296, arXiv:0708.1930 [math.PR].

\bibitem[Pet10a]{petrov2009eng}
\bysame, \emph{{Random walks on strict partitions}}, Journal of Mathematical
  Sciences \textbf{168} (2010), no.~3, 437--463, in Russian: Zap. Nauchn. Sem.
  POMI {\bf{}373\/} (2009), 226--272, arXiv:0904.1823 [math.PR].

\bibitem[Pet10b]{Petrov2010}
\bysame, \emph{{Random Strict Partitions and Determinantal Point Processes}},
  Electronic Communications in Probability \textbf{15} (2010), 162--175,
  arXiv:1002.2714 [math.PR].

\bibitem[Pet11]{Petrov2010Pfaffian}
\bysame, \emph{Pfaffian stochastic dynamics of strict partitions}, 2011,
  arXiv:1011.3329v2 [math.PR], to appear in Electronic Journal of Probability.

\bibitem[Pit92]{Pitman1992}
J.~Pitman, \emph{The two-parameter generalization of {E}wens' random partition
  structure}, Technical report 345, Dept. Statistics, U. C. Berkeley, 1992,
  http://www.stat.berkeley.edu/tech-reports/.

\bibitem[Pro82]{proctor1982solution}
R.A. Proctor, \emph{Solution of two difficult combinatorial problems with
  linear algebra}, The American Mathematical Monthly \textbf{89} (1982),
  no.~10, 721--734.

\bibitem[Pro84]{Proctor1984Bruhat}
\bysame, \emph{Bruhat lattices, plane partition generating functions, and
  minuscule representations}, European J. Combin. \textbf{5} (1984), no.~4,
  331--350.

\bibitem[PS02]{PhahoferSpohn2002}
M.~Pr{\"a}hofer and H.~Spohn, \emph{{Scale invariance of the PNG droplet and
  the Airy process}}, J. Stat. Phys. \textbf{108} (2002), 1071--1106, arXiv:
  math.PR/0105240.

\bibitem[Puk64]{Pukanszky_SL2_1964}
L.~Pukanszky, \emph{{The Plancherel formula for the universal covering group of
  $SL(2,\mathbb{R})$}}, Mathematische Annalen \textbf{156} (1964), no.~2,
  96--143.

\bibitem[PY97]{Pitman1997}
J.~Pitman and M.~Yor, \emph{Two-parameter {P}oisson-{D}irichlet distribution
  derived from a stable subordinator}, The Annals of Probability \textbf{25}
  (1997), no.~2, 855--900.

\bibitem[Rie23]{riesz1923probleme}
M.~Riesz, \emph{Sur le probleme des moments et le th{\'e}oreme de parseval
  correspondant}, Acta Litt. Acad. Sci. Szeged \textbf{1} (1923), 209--225.

\bibitem[Roz99]{rozhkovskaya1997multiplicative}
N.~Rozhkovskaya, \emph{{Multiplicative distributions on Young graph}}, Jour.
  Math. Sci. (New York) \textbf{96} (1999), no.~5, 3600--3608, in Russian: Zap.
  Nauchn. Sem. POMI {\bf{}240\/} (1997), 245--256.

\bibitem[RW09]{Ruggiero2009a}
M.~Ruggiero and S.G. Walker, \emph{{Countable representation for infinite
  dimensional diffusions derived from the two-parameter Poisson-Dirichlet
  process}}, Electronic Communications in Probability \textbf{14} (2009),
  501--517.

\bibitem[Sta88]{stanley1988differential}
R.~Stanley, \emph{Differential posets}, Journal of the American Mathematical
  Society \textbf{1} (1988), no.~4, 919--961.

\bibitem[Sta90]{stanley1990variations}
\bysame, \emph{Variations on differential posets}, Invariant theory and
  tableaux ({M}inneapolis, {MN}, 1988), IMA Vol. Math. Appl., vol.~19,
  Springer, New York, 1990, pp.~145--165.

\bibitem[Sta97]{Stanley1997}
\bysame, \emph{Enumerative {C}ombinatorics. {V}ol. 1}, Cambridge University
  Press, Cambridge, 1997, With a foreword by Gian-Carlo Rota, Corrected reprint
  of the 1986 original.

\bibitem[Sta99]{Stanley1999}
\bysame, \emph{Enumerative {C}ombinatorics. {V}ol. 2}, Cambridge University
  Press, Cambridge, 1999, With a foreword by Gian-Carlo Rota and appendix 1 by
  Sergey Fomin.

\bibitem[Ste85]{Stembridge1985}
J.~Stembridge, \emph{A characterization of supersymmetric polynomials}, J.
  Algebra \textbf{95} (1985), 439--444.

\bibitem[Ste94]{Stembridge1994minuscule}
\bysame, \emph{On minuscule representations, plane partitions and involutions
  in complex {L}ie groups}, Duke Math. J. \textbf{73} (1994), no.~2, 469--490.

\bibitem[Str10a]{Strahov2009}
E.~Strahov, \emph{{The z-measures on partitions, Pfaffian point processes, and
  the matrix hypergeometric kernel}}, Advances in mathematics \textbf{224}
  (2010), no.~1, 130--168, arXiv:0905.1994 [math-ph].

\bibitem[Str10b]{strahov2009z}
\bysame, \emph{{Z-measures on partitions related to the infinite Gelfand pair
  $(S (2\infty), H (\infty))$}}, Journal of Algebra \textbf{323} (2010), no.~2,
  349--370, arXiv:0904.1719 [math.RT].

\bibitem[Tho64]{Thoma1964}
E.~Thoma, \emph{Die unzerlegbaren, positive-definiten {K}lassenfunktionen der
  abz\"ahlbar unendlichen, symmetrischen {G}ruppe}, Math. Zeitschr \textbf{85}
  (1964), 40--61.

\bibitem[TW04]{Tracy2004}
C.A. Tracy and H.~Widom, \emph{{A Limit Theorem for Shifted Schur Measures}},
  Duke Mathematical Journal \textbf{123} (2004), 171--208, arXiv:math/0210255
  [math.PR].

\bibitem[Ver96]{Vershik1996StatMech}
A.~Vershik, \emph{Statistical mechanics of combinatorial partitions, and their
  limit shapes}, Funct. Anal. Appl. \textbf{30} (1996), 90--105.

\bibitem[VK81a]{VK81AsymptoticTheory}
A.~Vershik and S.~Kerov, \emph{Asymptotic theory of the characters of a
  symmetric group}, Funktsional. Anal. i Prilozhen. \textbf{15} (1981), no.~4,
  15--27, 96.

\bibitem[VK81b]{VK1981Characters}
\bysame, \emph{Characters and factor representations of the infinite symmetric
  group}, Dokl. Akad. Nauk SSSR \textbf{257} (1981), no.~5, 1037--1040.

\bibitem[VK82]{VK82CharactersU}
\bysame, \emph{Characters and factor-representations of the infinite unitary
  group}, Dokl. Akad. Nauk SSSR \textbf{267} (1982), no.~2, 272--276.

\bibitem[VK84]{VK1984Kfunctor}
\bysame, \emph{Characters, factor representations and {$K$}-functor of the
  infinite symmetric group}, Operator algebras and group representations,
  {V}ol. {II} ({N}eptun, 1980), Monogr. Stud. Math., vol.~18, Pitman, Boston,
  MA, 1984, pp.~23--32.

\bibitem[VK88]{Vilenkin-Klimyk-DAN_UKR_1988}
N.Y. Vilenkin and A.U. Klimyk, \emph{{Representations of the group $SU(1, 1)$,
  and the Krawtchouk-Meixner functions}}, Dokl. Akad. Nauk Ukrain. SSR Ser. A
  (1988), no.~6, 12--16.

\bibitem[VK90]{Kerov1990}
A.~Vershik and S.~Kerov, \emph{The {G}rothendieck {G}roup of the {I}nfinite
  {S}ymmetric {G}roup and {S}ymmetric {F}unctions with the {E}lements of the
  ${K}_0$-functor theory of {AF}-algebras}, Adv. Stud. Contemp. Math., Gordon
  and Breach, \textbf{7} (1990), 36--114.

\bibitem[VK95]{Vilenkin-Klimyk-ITOGI1995-en}
N.Y. Vilenkin and A.U. Klimyk, \emph{{Representations of Lie groups and special
  functions}}, {Representation Theory and Noncommutative Harmonic Analysis II}
  (A.A. Kirillov, ed.), Springer, 1995, (translation of VINITI vol. 59, 1990),
  pp.~137--259.

\bibitem[VN11]{VershikNikitin2011}
A.~Vershik and P.~Nikitin, \emph{Description of the characters and factor
  representations of the infinite symmetric inverse semigroup}, Functional
  Analysis and Its Applications \textbf{45} (2011), no.~1, 13--24,
  arXiv:1102.4425 [math.RT].

\bibitem[Voi76]{Voiculescu1976}
D.~Voiculescu, \emph{Representations factorielles de type {$II_1$} de
  {$U(\infty)$}}, J. Math. Pures Appl. \textbf{55} (1976), 1--20.

\end{thebibliography}
\end{document}